\titleformat*{\section}{\large\bfseries}
\titleformat*{\subsection}{\normalsize\bfseries}
\newlength{\VerticalSpaceAfterParagraph}
\titlespacing*{\paragraph}{0pt}{\VerticalSpaceAfterParagraph}{1em}
\setlist
  {
    topsep = 5.0pt plus 2.0pt minus 3.0pt,
    partopsep = 1.5pt plus 1.0pt minus 1.0pt,
    parsep = 2.5pt plus 1.25pt minus 0.5pt,
    itemsep = 0pt plus 1.25pt minus 0.5pt
  }
\theoremstyle{plain}
\newtheorem{theorem}{Theorem}
\newtheorem{proposition}[theorem]{Proposition}
\newtheorem{lemma}[theorem]{Lemma}
\newtheorem{corollary}[theorem]{Corollary}
\newtheorem{question}[theorem]{Question}
\theoremstyle{definition}
\newtheorem{definition}[theorem]{Definition}
\newtheorem{setting}[theorem]{Setting}
\newtheorem{condition}[theorem]{Condition}
\theoremstyle{remark}
\newtheorem{remark}[theorem]{Remark}
\numberwithin{theorem}{section}
\numberwithin{equation}{section}
\DeclareRobustCommand\ShowAuthors[2]{%
  \ShowAuthorsSignal.emit({#1},{#2})%
}
\DeclareRobustCommand\ShowAffiliations[1]{%
  \ShowAffiliationsSignal.emit(#1)%
}
\newcommand\Author[1]{
  \pgfoonew \CurrentPerson=new person()
  \CurrentPerson.set author(#1)
}
\newcommand\Email[1]{
  \CurrentPerson.set email(#1)
}
\newcommand\Address[1]{
  \CurrentPerson.set address(#1)
}
\newcommand\FirstPerson{0}
\newcommand\LastPerson{}
      \edef\FirstPerson{\theid}
    \edef\LastPerson{\theid}
    \noindent\begin{minipage}{\linewidth}
      \noindent\begin{tabular}[t]{@{}l}
\quad \texttt{\theemail}\\
\newcommand\blankfootnote[1]
      \renewcommand\thefootnote{}%
\title{Local inequalities for $cA_k$ singularities}
\author{\ShowAuthors{, }{, }}
\date{29th~August 2025}
\newcommand\keywords{Fano varieties, birational rigidity, rationality, terminal, compound Du Val}
\newcommand\subjclass{14J45, 14E08, 14J30, 14C17, 14M10}
\begin{document}

\maketitle

\begin{abstract}
We generalize an intersection-theoretic local inequality of Fulton–Lazarsfeld to weighted blowups. As a consequence, we obtain the $4n^2/(k+1)$-inequality for isolated $cA_k$ singularities, an analogue of the $4 n^2$-inequality for smooth points. We use this to prove birational rigidity of many families of Fano 3-fold weighted complete intersections with terminal quotient singularities and isolated $cA_k$ singularities, including sextic double solids with $cA_1$ and ordinary $cA_2$ points.
\blankfootnote{\textup{2020} \textit{Mathematics Subject Classification}. \subjclass{}.}%
\blankfootnote{\textit{Keywords}. \keywords{}.}
\end{abstract}

\tableofcontents

\section{Introduction}

The notion of birational rigidity was originally introduced in the context of proving non-rationality of Fano varieties.
It is also crucial for the classification of Fano varieties up to birational equivalence.

\begin{definition}
    We say that a Fano variety is \textit{birationally rigid} if it is the unique Mori fiber space in its birational class.
    We say that a birationally rigid Fano variety $X$ is \textit{superrigid} if $\operatorname{Aut}(X) = \operatorname{Bir}(X)$.
\end{definition}

Birational rigidity is typically proven using the Noether--Fano method: non-rigidity of a Fano variety $X$ implies the existence of a mobile linear system $\mathcal M \subset \left| -nK_X \right|$ such that the pair $(X, \frac{1}{n} \mathcal M)$ has worse than canonical singularities.
Thus, interpreting non-canonicity of the pair $(X, \frac{1}{n} \mathcal M)$ in geometric terms is the key to being able to prove the birational rigidity of a Fano variety.
The $4n^2$-inequality is one of the most important ingredients for proof of birational rigidity of Fano varieties.

\begin{theorem}[{$4 n^2$-inequality, \cite[Theorem 2.1]{PukBook}}]
\label{thm:4nineq}
Let $P \in X$ be the germ of a smooth $3$-fold.
Let $\mathcal M$ be a mobile linear system on $X$ and let $n > 0$ be a rational number.
If $P$ is a center of non-canonical singularities of the pair~$(X, \frac{1}{n} \mathcal M)$, then for general members $D_1, D_2 $ in $\mathcal M$ we have
\[
\operatorname{mult}_{P} (D_1 \cdot D_2) > 4 n^2.
\]
\end{theorem}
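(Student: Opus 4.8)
The plan is to resolve the non-canonical singularity by a tower of blowups beginning at $P$, to rewrite both the non-canonicity hypothesis and the number $\operatorname{mult}_P(D_1\cdot D_2)$ in terms of the multiplicities of $\mathcal M$ along the centres appearing in that tower, and then to conclude by an elementary quadratic optimisation in which the constant $4$ is forced by the fact that the blowup of a point on a $3$-fold has discrepancy $2$. First I would blow up $P$, say $f_1\colon Y_1 \to X$ with exceptional divisor $E_1 \cong \mathbb P^2$, and set $m := \operatorname{mult}_P\mathcal M$. Since $\operatorname{mult}_P(D_1\cdot D_2) \ge (\operatorname{mult}_P D_1)(\operatorname{mult}_P D_2) = m^2$, the inequality is immediate if $m > 2n$, so assume $m \le 2n$. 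The discrepancy of $(X,\tfrac1n\mathcal M)$ along $E_1$ equals $2 - \tfrac mn \ge 0$, hence any divisorial valuation realising the non-canonicity has centre on $Y_1$ a proper irreducible subvariety $Z_1 \subsetneq E_1$, and $(Y_1,\tfrac1n\mathcal M_{Y_1})$ is non-canonical at the generic point of $Z_1$; as $\mathcal M_{Y_1}|_{E_1}$ consists of plane curves of degree $m \le 2n$, a degree count on $\mathbb P^2$ forces $Z_1$ to be a point or a line.

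Fixing such a valuation $E^\ast$ and resolving it, I obtain a tower $Y_N \to \cdots \to Y_1 \to Y_0 = X$ with $E^\ast = E_N$, centres $Z_{i-1} = \operatorname{Center}_{Y_{i-1}}(E^\ast)$, exceptional divisors $E_i$, multiplicities $\nu_i := \operatorname{mult}_{Z_{i-1}}(\mathcal M_{Y_{i-1}})$, and discrepancy contributions $\delta_i := \operatorname{codim}_{Y_{i-1}}Z_{i-1} - 1 \in \{1,2\}$, where crucially $\delta_1 = 2$. In the standard tree bookkeeping one has $a(E^\ast, X) = \sum_i p_i\delta_i$ and $\operatorname{ord}_{E^\ast}(\mathcal M) = \sum_i p_i\nu_i$ for positive integers $p_i$ depending only on the combinatorics of the tower, so the hypothesis that $P$ is a non-canonical centre is precisely the Noether--Fano inequality $\sum_i p_i\nu_i > n\sum_i p_i\delta_i$.

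In parallel, I would push the $1$-cycle $D_1\cdot D_2$ down the tower: at the first blowup, $\operatorname{mult}_P(D_1\cdot D_2)$ equals $m^2$ plus the degree on $E_1 \cong \mathbb P^2$ of the part of $D_1^{Y_1}\cdot D_2^{Y_1}$ carried by $E_1$; iterating this relation through the blowups of the $Z_i$, each contributing a term of size $\nu_i^2$ weighted by the number of branches of the tower through $E_i$, yields a lower bound $\operatorname{mult}_P(D_1\cdot D_2) \ge \sum_i q_i\nu_i^2$ with positive weights $q_i$ matched to the $p_i$. This is the intersection-theoretic heart of the argument, and it is where one invokes the local inequality of Fulton--Lazarsfeld type (in the form the present paper later generalises to weighted blowups): it is what certifies that every exceptional curve of the tower, not merely $E_1$, contributes to the multiplicity at $P$ with the correct coefficient.

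It then remains to prove the purely numerical statement: normalising $n = 1$, nonnegative $\nu_i$ with $\sum p_i\nu_i > \sum p_i\delta_i$ and $\delta_1 = 2$ satisfy $\sum q_i\nu_i^2 > 4$. This is a convexity argument --- Cauchy--Schwarz together with the combinatorial identities relating the $p_i$, $q_i$ and $\delta_i$ --- reducing to the extremal configuration $N=1$, $\nu_1 = 2$, at which the value $4$ is approached but never attained, which gives the strict inequality. The genuinely hard part is this last pair of steps in the presence of curve centres: the naive two-blowup estimate $\operatorname{mult}_P(D_1\cdot D_2) \ge m^2 + \nu_2^2$ only delivers $2n^2$, so recovering the constant $4$ forces one to run the whole tower, track the path-coefficients through blowups that mix point and curve centres, and exploit the sharpness of the convexity lemma; by contrast the first-blowup reduction and the log-discrepancy bookkeeping are routine.
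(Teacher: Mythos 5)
Theorem~\ref{thm:4nineq} is quoted from \cite[Theorem~2.1]{PukBook} and is not proved in this paper, so there is no in-text argument to compare against; your sketch is a faithful outline of the standard ``counting of multiplicities'' proof in that reference, which is also the strategy the authors describe (tower of blowups plus the Fulton--Lazarsfeld degree identity \cite[Theorem~3]{FL82}) as underlying the $cA_1$ case in \cite{KOPP24}. The one place your write-up is looser than the actual argument is the closing numerical lemma: as literally stated, ``$\sum p_i\nu_i > \sum p_i\delta_i$ with $\delta_1=2$ implies $\sum q_i\nu_i^2>4$'' is false without the monotonicity $\nu_1\ge\nu_2\ge\cdots$ of the multiplicities along the tower and the graph inequalities relating $\Sigma_0=\sum_{\mathrm{point\ centres}}p_i$ and $\Sigma_1=\sum_{\mathrm{curve\ centres}}p_i$; with those constraints the minimisation gives the bound $\bigl(2\Sigma_0+\Sigma_1\bigr)^2/\bigl(\Sigma_0(\Sigma_0+\Sigma_1)\bigr)\cdot n^2 \ge 4n^2$, strict by the strict Noether--Fano inequality, exactly as you indicate.
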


Quasismooth Fano weighted hypersurfaces of index $1$ are proven to be birationally rigid using the $4n^2$-inequality and the uniqueness of divisorial contractions with center a cyclic quotient singularity (\cite{CP17}, \cite{CPR}).

In this paper we prove an analogue of Theorem \ref{thm:4nineq} for compound Du Val singularities of type $cA_k$.

\newcommand\maintheorem{%
    Let $P \in X$ be an isolated $cA_k$ singularity, where $k \ge 1$, let $\mathcal M$ be a mobile linear system of Cartier divisors and $n > 0$ a rational number.
    Suppose that $P$ is a non-canonical center of the pair $(X, \frac{1}{n} \mathcal M)$.
    Then for general members $D_1, D_2 \in \mathcal{M}$, we have
        \[
        \operatorname{mult}_P (D_1 \cdot D_2) > \frac{4}{k+1} n^2.
        \]
}

\begin{theorem}[= Theorem~\ref{body-MainThm}] \label{MainThm}
\maintheorem{}
\end{theorem}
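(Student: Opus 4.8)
\emph{Step 1: the adapted weighted blowup.} I would follow the architecture of Corti's proof of the classical $4n^2$-inequality, replacing the ordinary blowup of the point by a weighted blowup adapted to the $cA_k$-structure and the classical Fulton–Lazarsfeld bound by its weighted generalization. As the assertion is analytic-local, write $X = \{xy + g(z,t) = 0\} \subset \mathbb{C}^4$ with $0$ corresponding to $P$; since the singularity is an isolated $cA_k$ one may, after an analytic coordinate change, arrange that $\operatorname{mult}_0 g = k+1$ and that $g$ has vanishing quadratic part when $k \ge 2$. Let $\sigma \colon Y \to X$ be the restriction to the strict transform of $X$ of the weighted blowup of $\mathbb{C}^4$ with weights $(a,b,1,1)$ on $(x,y,z,t)$, where $a,b$ are positive integers with $a+b = k+1$ taken as balanced as possible. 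Then $E := \operatorname{Exc}(\sigma)$ is the prime divisor $\{xy + g_{k+1}(z,t) = 0\}$ in $\mathbb{P}(a,b,1,1)$, with cyclic quotient singularities of orders dividing $a$ and $b$ at the two relevant coordinate points; one computes $a(E, X) = 1$ and, using $\mathcal{O}_E(E) = \mathcal{O}_E(-1)$ together with $\deg_{\mathbb{P}(a,b,1,1)} E = k+1$,
\[
(E^3)_Y \;=\; \bigl(\mathcal{O}_E(1)\bigr)^2 \;=\; \frac{k+1}{ab} \;\ge\; \frac{4}{k+1},
\]
with equality precisely when $k$ is odd; for $k=1$ this is the ordinary blowup of the node and $E \cong \mathbb{P}^1 \times \mathbb{P}^1$.

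\emph{Step 2: the weighted Fulton–Lazarsfeld inequality.} Apply the paper's generalization of the Fulton–Lazarsfeld local inequality to $\sigma$ and the $1$-cycle $D_1 \cdot D_2$. Writing $\sigma^* D_i = \widetilde{D_i} + \ell_i E$ with $\ell_i = \operatorname{ord}_E D_i$, it yields
\[
\operatorname{mult}_P(D_1 \cdot D_2) \;\ge\; (E^3)_Y\,\ell_1\ell_2 \;+\; (\text{non-negative contributions from further weighted blowups}),
\]
and, iterating over a tower of weighted blowups, a lower bound of the shape $\sum_j \varepsilon_j\,\operatorname{ord}_{E_j}(D_1)\,\operatorname{ord}_{E_j}(D_2)$ with strictly positive coefficients $\varepsilon_j$, the first of which is $(E^3)_Y \ge 4/(k+1)$. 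For general $D_1, D_2$ all the orders $\operatorname{ord}_{E_j}(D_i)$ equal $\operatorname{ord}_{E_j}(\mathcal{M})$.

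\emph{Step 3: exploiting non-canonicity.} Non-canonicity at $P$ furnishes a divisorial valuation $v$ over $P$ with $\operatorname{ord}_v \mathcal{M} > n\,a(v,X)$. If $v = E$, then $\operatorname{ord}_E\mathcal{M} > n$ and Step 2 already gives $\operatorname{mult}_P(D_1\cdot D_2) \ge (E^3)_Y(\operatorname{ord}_E\mathcal{M})^2 > \tfrac{4}{k+1}n^2$. If instead $\operatorname{ord}_E\mathcal{M} \le n$, then on the crepant model $\bigl(Y,\, \tfrac1n\widetilde{\mathcal{M}} + (\tfrac{\operatorname{ord}_E\mathcal{M}}{n}-1)E\bigr)$ the coefficient of $E$ is $\le 0$, so $(Y,\tfrac1n\widetilde{\mathcal{M}})$ is already non-canonical at the centre $Z$ of $v$, a point or an irreducible curve contained in $E$; moreover $Y$ is smooth away from the two quotient points of $E$, and $\widetilde{\mathcal{M}}|_E \sim (\operatorname{ord}_E\mathcal{M})\,\mathcal{O}_E(1)$, which controls the multiplicity of $\widetilde{\mathcal{M}}$ along any curve in $E$ in terms of $\operatorname{ord}_E\mathcal{M}$ and that curve's degree. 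Feeding this into Step 2 and running a Corti-style optimization over the finitely many configurations of infinitely near centres that can realize $v$ then yields $\sum_j \varepsilon_j(\operatorname{ord}_{E_j}\mathcal{M})^2 > \tfrac{4}{k+1}n^2$.

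\emph{Expected main obstacle.} The crux is Step 3 in the ``deep centre'' case: excluding every configuration of infinitely near centres over the $cA_k$ point for which $\sum_j \varepsilon_j(\operatorname{ord}_{E_j}\mathcal{M})^2 \le \tfrac{4}{k+1}n^2$. This forces one to track the cyclic quotient singularities of $Y$ on $E$ and the fractional local intersection numbers they contribute to the weighted Fulton–Lazarsfeld inequality, to set up the adapted weighted blowup correctly for a general (non-monomial) $g$ so that $(E^3)_Y$ is genuinely at least $4/(k+1)$, and to carry out the combinatorial optimization so that the constant is the classical $4$ rather than something weaker. This is the $cA_k$-analogue of the most delicate step in Corti's proof of the $4n^2$-inequality, and is where most of the work should go.
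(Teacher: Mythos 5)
Your Steps 1--2 are in the right spirit: the paper does prove the inequality by applying a weighted generalization of the Fulton--Lazarsfeld bound to a \emph{single} weighted blowup of the $cA_k$ germ, and your computation $(E^3)_Y = (k+1)/(ab) \ge 4/(k+1)$ for weights $(a,b,1,1)$ with $a+b=k+1$ is exactly the arithmetic that produces the constant $4/(k+1)$. The sub-case you dispose of immediately, namely when the non-canonical valuation is the exceptional divisor of your chosen blowup, is genuinely complete.

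The gap is Step 3. You fix one weighted blowup in advance and then propose to handle a ``deep'' non-canonical centre by descending to $Y$ and running a Corti-style optimization over towers of infinitely near centres. That optimization is the entire difficulty, you have not carried it out, and it is far from clear that it closes: the intermediate models carry cyclic quotient singularities and non-Cartier exceptional divisors, the coefficients $\varepsilon_j$ are fractional, and the set of admissible configurations is not obviously finite or controllable. (This tower approach is precisely what the authors used for $cA_1$ in the predecessor paper and explicitly abandoned for $k \ge 2$ as intractable.) The idea that makes the deep-centre case disappear --- and which is absent from your proposal --- is to invoke the classification of divisorial contractions with centre an isolated $cA_k$ point (Theorem~\ref{thm:cldccA2}, due to Kawakita, Yamamoto and Paemurru): since $P$ is a non-canonical centre, there is a divisorial contraction $\varphi$ centred at $P$ whose exceptional divisor already computes the non-canonicity, and by the classification $\varphi$ is a weighted blowup with weights $(r_1, r_2, a, 1)$, $r_1+r_2=(k+1)a$, or one of two explicit exceptional types. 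One then applies the weighted Fulton--Lazarsfeld inequality once, to \emph{that} blowup, and concludes from $(r_1+r_2)^2 \ge 4 r_1 r_2$ together with the two exceptional cases. In short: do not choose the weights and then chase the valuation; let the valuation choose the weights.
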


As an application, we consider the birational rigidity of sextic double solids and also other weighted complete intersection index $1$ Fano varieties.

\begin{theorem}[{$=$ Theorem~\ref{thm:brWCI}}] \label{mainthm:WCI}
Let $X$ be a prime Fano $3$-fold weighted complete intersection which belongs to one of the families listed in Tables~\ref{table:Fanohyp} and~\ref{table:FanoWCI}.
Suppose that $X$ is quasismooth along the singular locus of the ambient weighted projective space, and $X$ has only $cA_k$ singularities besides terminal quotient singular points.
Suppose further that $k$ is not greater than the bound $k_{\mathrm{cA}}$ listed in the table.
Then $X$ is birationally rigid.
\end{theorem}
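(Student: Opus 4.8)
The plan is to run the Noether--Fano method and to reduce the statement, for each family in the tables, to the local inequality of Theorem~\ref{MainThm}. Suppose $X$ is not birationally rigid: then there is a birational map $\phi\colon X\dashrightarrow X'$ to a Mori fibre space $X'$ which is not an isomorphism. Since $X$ is prime, $\operatorname{Cl}(X)=\mathbb{Z}[-K_X]$, so the birational transform $\mathcal{M}=\phi^{-1}_*\lvert -mK_{X'}\rvert$ is a mobile linear system contained in $\lvert -nK_X\rvert$ for some positive integer $n$, and the Noether--Fano inequality forces $(X,\tfrac1n\mathcal{M})$ to have a non-canonical centre. It therefore suffices to prove that $(X,\tfrac1n\mathcal{M})$ is canonical for every mobile $\mathcal{M}\subset\lvert -nK_X\rvert$. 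Because $X$ has, besides terminal cyclic quotient points, only finitely many isolated $cA_k$ points, a non-canonical centre of $(X,\tfrac1n\mathcal{M})$ is one of four kinds: a curve, a nonsingular point, a terminal quotient point, or a $cA_k$ point; I will exclude each in turn.

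Curves, nonsingular points and quotient points are ruled out by the now-standard machinery, which I will carry out family by family for Tables~\ref{table:Fanohyp} and~\ref{table:FanoWCI}. For a curve centre $C$ one has $\operatorname{mult}_C\mathcal{M}>n$, hence for general $D_1,D_2\in\mathcal{M}$
\[
n^2(-K_X)^3=(-K_X)\cdot D_1\cdot D_2>n^2\,(-K_X)\cdot C,
\]
which, combined with the known geometry of curves of small anticanonical degree on $X$ and a second intersection estimate on a surface of $\lvert -K_X\rvert$ through $C$ — and, for the few families with $(-K_X)^3>4$, with a linear projection in the style of Pukhlikov and Cheltsov — is impossible. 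A nonsingular centre $P$ is excluded by the $4n^2$-inequality (Theorem~\ref{thm:4nineq}), which gives $\operatorname{mult}_P(D_1\cdot D_2)>4n^2$, against the standard global estimate $\operatorname{mult}_P(D_1\cdot D_2)\le n^2(-K_X)^3$ obtained by intersecting the effective $1$-cycle $D_1\cdot D_2$ with a general surface of $\lvert -K_X\rvert$ through $P$ containing no component of that cycle through $P$; this is a contradiction whenever $(-K_X)^3\le4$, and otherwise after the same projection. A terminal quotient point is excluded by Kawakita's theorem that the only divisorial contraction centred there is the Kawamata blowup, followed by the Corti--Pukhlikov--Reid exclusion of that blowup as a maximal singularity through a test-class computation on the blown-up variety; the hypothesis that $X$ is quasismooth along the singular locus of the ambient weighted projective space supplies precisely the local models these computations require and reduces the verification to finitely many numerical checks, all of which hold for the listed families.

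The genuinely new ingredient is the exclusion of a $cA_k$ centre, where Theorem~\ref{MainThm} enters. Let $P$ be an isolated $cA_k$ point of $X$ with $k\le k_{\mathrm{cA}}$, and suppose $P$ is a non-canonical centre of $(X,\tfrac1n\mathcal{M})$. Near $P$ the variety $X$ is Gorenstein, so $\mathcal{M}$ is locally a system of Cartier divisors and Theorem~\ref{MainThm} applies: for general $D_1,D_2\in\mathcal{M}$,
\[
\operatorname{mult}_P(D_1\cdot D_2)>\frac4{k+1}\,n^2 .
\]
On the other hand $D_1\cdot D_2$ is an effective $1$-cycle numerically equivalent to $n^2(-K_X)^2$, so intersecting with a general $S\in\lvert -K_X\rvert$ through $P$ that avoids the components of $D_1\cdot D_2$ at $P$ gives $\operatorname{mult}_P(D_1\cdot D_2)\le n^2(-K_X)^3$; at an \emph{ordinary} $cA_k$ point this bound can be sharpened, since then $S$ acquires only an $A_k$ Du Val singularity at $P$ and the local intersection of $D_1|_S$ with $D_2|_S$ on $S$ is more tightly controlled. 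Comparing the two estimates forces $\tfrac4{k+1}<(-K_X)^3$ — or the sharpened inequality at an ordinary point — that is, $k>k_{\mathrm{cA}}$, contradicting the hypothesis; so no $cA_k$ point is a non-canonical centre. The value of $k_{\mathrm{cA}}$ in each row of the tables is exactly the largest $k$ for which this comparison closes.

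Having excluded all four kinds of non-canonical centre, $(X,\tfrac1n\mathcal{M})$ is canonical, contradicting the Noether--Fano inequality; hence $\phi$ is an isomorphism and $X$ is birationally rigid. The bulk of the labour is the family-by-family verification of the curve, nonsingular-point and quotient-point exclusions — in particular re-checking the Kawamata-blowup test-class computation at each quotient point now that $X$ also carries $cA_k$ points, and handling by projection the families with $(-K_X)^3>4$. The genuinely delicate new point, however, is the $cA_k$ step: one must be sure that the global multiplicity estimate $\operatorname{mult}_P(D_1\cdot D_2)\le n^2(-K_X)^3$ survives when $P$ lies in the base locus of $\lvert -K_X\rvert$ or on a distinguished low-degree curve of $X$, and — to reach the ordinary $cA_2$ cases advertised for sextic double solids — that it can be sharpened there; matching this against Theorem~\ref{MainThm} is what pins down the threshold $k_{\mathrm{cA}}$.
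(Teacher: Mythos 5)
Your global framework (Noether--Fano, reduction to excluding maximal centres of each type, quoting the known exclusions of curves, smooth points and terminal quotient points from \cite{CP17} and \cite{KOPP24}) is the same as the paper's, and the idea of playing Theorem~\ref{MainThm} off against a global intersection bound at a $cA_k$ point is the right one. But the $cA_k$ step, which is the only genuinely new content, has a real gap. Your global estimate $\operatorname{mult}_P(D_1\cdot D_2)\le n^2(-K_X)^3$ presupposes a member $S\in|-K_X|=|A|$ through $P$ containing no component of $D_1\cap D_2$ through $P$. For the families in Tables~\ref{table:Fanohyp} and~\ref{table:FanoWCI} the linear system $|A|$ is spanned by one or two coordinates of weight $1$ (or is even empty of sections vanishing suitably at $P$ when $a_0=2$, as in family \textnumero 60 of Table~\ref{table:FanoWCI}), so no such $S$ need exist. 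The paper's actual argument replaces $|A|$ by a $P$-\emph{isolating class} $lA$ with $l\le l_{\mathrm{ic}}$ (the fourth column of the tables), constructed in Lemmas~\ref{lem:isoldivs}--\ref{lem:hyplidcspade} and \ref{lem:wcilidcclub}--\ref{lem:wcilidcheart} via explicit monomial sets and case-by-case analysis of the non-quasismooth locus; combined with Lemma~\ref{lem:intom} this gives $\operatorname{mult}_P(D_1\cdot D_2)\le l_{\mathrm{ic}}\,n^2(-K_X)^3$, and the threshold is accordingly $k_{\mathrm{cA}}=\lfloor 4/(l_{\mathrm{ic}}(-K_X)^3)-1\rfloor$, \emph{not} the comparison $4/(k+1)<(-K_X)^3$ that you write. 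The discrepancy is not cosmetic: for family \textnumero 95 one has $(-K_X)^3=1/330$ and $l_{\mathrm{ic}}=110$, so the paper's comparison yields $k_{\mathrm{cA}}=11$, whereas your inequality $4/(k+1)<1/330$ would only fail for $k>1319$. Without constructing the isolating classes of controlled degree your comparison does not reproduce the table and the contradiction does not close.

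Two further points. First, the ``sharpened bound at an ordinary $cA_2$ point'' you invoke is a red herring here: ordinariness plays a role only in the sextic double solid section (which is not a family of either table), and the proof of Theorem~\ref{thm:brWCI} nowhere restricts divisors to a Du Val surface. Second, you miss the special case of the families marked $\spadesuit$, where $X$ can have an isolated $cA_1$ point $P'$ on $(x=y=0)\cap X$ that is not reachable by the generic isolating class; the paper handles it with the separate isolating class $a_2a_3A$ and the bound $\operatorname{mult}_{P'}(D_1\cdot D_2)>2n^2$, checking $a_2a_3(-K_X)^3\le 2$. These verifications, together with the quasismoothness analysis showing $\operatorname{NQsm}(X)\subset U_x\cup U_y$ for the $\heartsuit$ families, constitute the bulk of the proof and are absent from your proposal.
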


\subsection{\texorpdfstring
    {Approach for local inequalities for $cA_k$-points}
    {Approach for local inequalities for cA\_k-points}
}

Theorem~\ref{MainThm} is a generalization of \cite[Theorem~1.2]{KOPP24} but we achieve it with different methods.
In \cite{KOPP24} we considered a tower of blowups over $P \in X$ and we tracked the intersections on every level of the tower.
The key ingredient for this approach is the following result.

\begin{theorem} [{\cite[Theorem~3]{FL82}}]
Let $P$ be a $\Bbbk$-rational point of a $d$-dimensional smooth integral algebraic \mbox{$\Bbbk$-scheme}~$X$, where $\Bbbk$ is a field and $d \geq 2$.
Let $\varphi \colon Y \to X$ be the blow up at $P$ and let $E$ be the exceptional prime divisor.
Let $D_1, \ldots, D_d$ be effective $\mathbb{Q}$-divisors on $X$ such that $P$ is an isolated point of $D_1 \cap \ldots \cap D_d$.
For every~$i$, let $\tilde D_i$ be the strict transform of~$D_i$.
Then, both of the following hold:
\begin{align}
    \operatorname{mult}_P(D_1 \cdot \ldots \cdot D_d) & = v_E(D_1) \cdot \ldots \cdot v_E(D_d) + \deg_E(\tilde D_1 \cdot \ldots \cdot \tilde D_d),\\
    \deg_E(\tilde D_1 \cdot \ldots \cdot \tilde D_d) & \geq \deg_E(\tilde D_1 \cap \ldots \cap \tilde D_d),
\end{align}
where $\deg_E(\tilde D_1 \cdot \ldots \cdot \tilde D_d)$ denotes the degree of the $0$-cycle class $\tilde D_1 \cdot \ldots \cdot \tilde D_d$ with coefficients in $\mathbb Q$ on a small neighborhood of $E$.
Moreover, $\deg_E(\tilde D_1 \cdot \ldots \cdot \tilde D_d)$ is zero if and only if $\tilde D_1 \cap \ldots \cap \tilde D_d \cap E$ is empty.
\end{theorem}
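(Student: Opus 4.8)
The plan is to move the whole computation onto the blowup $Y$, where $\varphi^* D_i = \tilde D_i + m_i E$ with $m_i := v_E(D_i) = \operatorname{mult}_P D_i > 0$ (positive because $P$ lies on the support of each $D_i$), and then use multilinearity of the refined intersection product together with the projection formula for the proper birational morphism $\varphi$. Throughout I use that $E \cong \mathbb P^{d-1}_\Bbbk$, that $(E|_E) = -h$ for the hyperplane class $h$ on $E$ (so $\deg h^{d-1} = 1$), and that $(\tilde D_i|_E) = m_i h$; the last identity follows by restricting $\varphi^* D_i = \tilde D_i + m_i E$ to $E$ and using $\varphi^* D_i|_E = 0$, since $\varphi$ contracts $E$ to the point $P$. (Concretely, $\tilde D_i \cap E$ is the projectivised tangent cone of $D_i$ at $P$, a degree-$m_i$ hypersurface in $\mathbb P^{d-1}$.)

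For the identity: because $P$ is isolated in $D_1 \cap \dots \cap D_d$ and $\varphi$ is an isomorphism away from $E = \varphi^{-1}(P)$, near $E$ the support of the $0$-cycle class $\varphi^* D_1 \cdots \varphi^* D_d \cap [Y]$ is contained in $E$, which is proper over $\Bbbk$; since $P$ is $\Bbbk$-rational and $\varphi_*[Y] = [X]$, the projection formula gives $\operatorname{mult}_P(D_1 \cdots D_d) = \deg_E(\varphi^* D_1 \cdots \varphi^* D_d)$. Expanding $\prod_i(\tilde D_i + m_i E)$ multilinearly, every summand indexed by a nonempty $S \subseteq \{1, \dots, d\}$ carries at least one factor $E$, so its local degree is computed on $E \cong \mathbb P^{d-1}$ as
\[
\deg_E\Bigl( \textstyle\prod_{i \in S} m_i \cdot E^{|S|} \cdot \prod_{i \notin S} \tilde D_i \Bigr) = \Bigl( \textstyle\prod_{i \in S} m_i \Bigr) (-h)^{|S| - 1} \Bigl( \textstyle\prod_{i \notin S} m_i h \Bigr) = (-1)^{|S| - 1} \textstyle\prod_{i=1}^d m_i,
\]
whereas the only summand with no factor $E$, namely $S = \emptyset$, contributes $\deg_E(\tilde D_1 \cdots \tilde D_d)$. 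Adding up, $\operatorname{mult}_P(D_1 \cdots D_d) = \deg_E(\tilde D_1 \cdots \tilde D_d) + \bigl( \prod_i m_i \bigr) \sum_{j=1}^d \binom{d}{j} (-1)^{j-1}$, and the binomial sum equals $1$ since $d \ge 1$. This is the first displayed equation.

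For the inequality: by $\mathbb Q$-linearity of all the operations involved it suffices to compare $\deg_E(\Gamma_1 \cdots \Gamma_d)$ with $\deg_E(\Gamma_1 \cap \dots \cap \Gamma_d)$ for prime divisors $\Gamma_i$ appearing in $\tilde D_i$ (so $\Gamma_i \ne E$, the strict transform containing no exceptional component). Near $E$, the set $\Gamma_1 \cap \dots \cap \Gamma_d$ is contained in $E$, again because $P$ is isolated upstairs. If it is finite, then at each of its points $Q$ the divisors $\Gamma_i$ cut out a subscheme of codimension $d$ in the smooth $d$-fold $Y$, hence form a regular sequence at $Q$, so the local intersection number is $\operatorname{length}\bigl( \mathcal O_{Y, Q} / (f_1, \dots, f_d) \bigr) \ge 1$; summing over the finitely many points gives $\deg_E(\Gamma_1 \cdots \Gamma_d) \ge \#\{Q\} = \deg_E(\Gamma_1 \cap \dots \cap \Gamma_d)$, with equality to $0$ exactly when there is no such point. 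Reassembling the prime components yields the inequality and, in this case, the assertion that $\deg_E(\tilde D_1 \cdots \tilde D_d) = 0$ iff $\tilde D_1 \cap \dots \cap \tilde D_d \cap E = \emptyset$.

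I expect the main obstacle to be the remaining case, where $\Gamma_1 \cap \dots \cap \Gamma_d$ has positive-dimensional components inside $E$: then $\deg_E(\Gamma_1 \cdots \Gamma_d)$ is a genuine excess-intersection number, and one must still show it is nonnegative, and positive whenever the locus meets $E$. The input that should make this work is that each $\Gamma_i$ is an effective divisor not containing $E$, so $\Gamma_i|_E$ is either $0$ or an effective divisor of positive degree on $\mathbb P^{d-1}$; one then wants a moving or specialisation argument on $E$ that brings the $\Gamma_i|_E$ into general position without decreasing the excess term — in other words, the Fulton–Lazarsfeld positivity of excess intersections. Carrying this out rigorously, and fixing the convention for $\deg_E(\tilde D_1 \cap \dots \cap \tilde D_d)$ in the non-finite case (compatibly with the $\mathbb Q$-multilinear reduction above), is where the real work lies; identity-wise, the first equation is purely formal.
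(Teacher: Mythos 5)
Your proof of the displayed identity is correct and is essentially the computation the paper itself performs for its generalization (Theorem~\ref{thm:generalization of Fulton for a smooth variety} with ${\bm w}=(1,\dots,1)$): write $\varphi^*D_i=\tilde D_i+m_iE$, push forward, and expand multilinearly. The paper evaluates $\eta_*(E^d)$ by intersecting the strict transforms of coordinate hyperplanes, while you use $\mathcal O_Y(E)|_E\cong\mathcal O_{\mathbb P^{d-1}}(-1)$ directly; for the ordinary blowup these are the same argument.

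The inequality, however, has a genuine gap, and you have located it yourself: you only treat the case where $\tilde D_1\cap\dots\cap\tilde D_d$ is finite near $E$, and you defer the excess case --- positive-dimensional components of the intersection inside $E$ --- to an unspecified ``moving or specialisation argument''. That excess case is the entire content of the Fulton--Lazarsfeld inequality, and no naive moving argument on $E$ can close it: the $\tilde D_i$ are fixed effective divisors on $Y$, their product is a refined (excess) intersection class supported on $\tilde D_1\cap\dots\cap\tilde D_d$, and deforming the restrictions $\tilde D_i|_E$ into general position changes that class. The paper's proof closes the gap with the normal-cone construction: by \cite[Example~12.4.8]{Ful98} the class $\tilde D_1\cdot\ldots\cdot\tilde D_d$ equals $s^*[C_{W}Y]$ for $W$ the scheme-theoretic intersection, the pulled-back normal bundle restricted to $W$ is $\bigoplus_i \mathcal O_Y(-m_iE)|_W=\bigoplus_i L^{\otimes m_i}|_W$ with $L=\mathcal O_Y(-E)|_E$ ample and globally generated, its twist by $L^{\vee}$ is again globally generated, and \cite[Theorem~12.1(b)]{Ful98} then bounds the degree from below by the sum of the $L$-degrees of the distinguished components of the normal cone --- each of which is positive whenever $W\cap E\neq\emptyset$. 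This single step yields the inequality, its nonnegativity, and the ``moreover'' equivalence all at once. Without it (or an equivalent positivity-of-excess-intersection input) your argument establishes the theorem only when the strict transforms meet in dimension zero.
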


Given a variety $X$ of dimension $3$ with a $cA_1$ singularity $P$ embedded in a $4$-dimensional variety $Y$ such that $(P \in Y)$ is smooth and two Cartier divisors $D_1,D_2$ on $X$, we can apply this theorem to $X, D_1^\prime, D_2^\prime, H$, where $D_i^\prime$ is some extension of $D_i$ to $Y$ and $H$ is a general very ample divisor passing through $P$.
This provides a way to estimate intersection multiplicities using multiplicities of divisors at points which allowed us to prove \cite[Theorem~1.2]{KOPP24}.

It was possible since on every level we had a factorial variety.
For $cA_2$-singularities, the exceptional divisor of the blow up is the sum of two non-Cartier divisors, and if we want to use the similar approach, then we have to consider factorialization of every level of the tower of blow ups.
In this case the computations become very cumbersome, so we opted for a different approach.

By Theorem~\ref{thm:cldccA2} (\cite[Theorem 1.13]{Kaw03}, \cite[Theorem 1.1]{Kaw02} and \cite[Theorem 2.6]{Yam18}), we know that every divisorial contraction to a $cA_k$ point is a weighted blowup with weights from a specific list.
This allows us to consider intersections for a single weighted blowup instead of tracking intersections through a tower of blowups.
On the other hand, in order to make this work we have to generalize \cite[Theorem~3]{FL82} to weighted blowups.

\newcommand\GeneralizationOfFulton[2]{%
    Let $P$ be a closed point of a $d$-dimensional regular integral algebraic \mbox{$\Bbbk$-scheme}~$X$, where $\Bbbk$ is a field and $d \geq 2$.
    Let ${\bm w} = (w_1, \ldots, w_d)$ be a tuple of positive integers such that $\gcd {\bm w} = 1$.
    Let $\varphi \colon Y \to X$ be the ${\bm w}$-blowup with respect to a regular system of parameters at $P$ and let $E$ be the exceptional prime divisor.
    Let $D_1, \ldots, D_d$ be effective $\mathbb{Q}$-divisors on $X$ such that $P$ is an isolated point of $\operatorname{Supp} D_1 \cap \ldots \cap \operatorname{Supp} D_d$.
    For every~$i$, let $\tilde D_i$ be the strict transform of~$D_i$.
    Then, both of the following hold:
    \begin{align}
        #1 \operatorname{mult}_P(D_1 \cdot \ldots \cdot D_d) & = \frac{v_E(D_1) \cdot \ldots \cdot v_E(D_d)}{w_1 \cdot \ldots \cdot w_d} + \frac{\deg_E(\tilde D_1 \cdot \ldots \cdot \tilde D_d)}{[\kappa(P) : \Bbbk]},\\
        #2 \deg_E(\tilde D_1 \cdot \ldots \cdot \tilde D_d) & \geq 0,
    \end{align}
    where $\deg_E(\tilde D_1 \cdot \ldots \cdot \tilde D_d)$ denotes the degree of the $0$-cycle class $\tilde D_1 \cdot \ldots \cdot \tilde D_d$ with coefficients in $\mathbb Q$ on a small neighborhood of $E$ and $[\kappa(P) : \Bbbk]$ is the degree of the field extension.
    Moreover, $\deg_E(\tilde D_1 \cdot \ldots \cdot \tilde D_d)$ is zero if and only if $\operatorname{Supp} \tilde D_1 \cap \ldots \cap \operatorname{Supp} \tilde D_d \cap E$ is empty.%
}

\begin{theorem} [=Theorem~\ref{thm:generalization of Fulton for a smooth variety}]
\GeneralizationOfFulton{}{}
\end{theorem}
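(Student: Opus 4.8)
The plan is to adapt the argument of \cite{FL82} for the ordinary blowup to the $\bm w$-blowup, inserting the weights at the appropriate places. First I would replace $X$ by an affine open neighbourhood of $P$ over which $\operatorname{Supp} D_1 \cap \ldots \cap \operatorname{Supp} D_d = \{P\}$; this affects neither $\operatorname{mult}_P$, nor the $v_E(D_i)$, nor $\deg_E$, and it ensures that $\operatorname{Supp}\tilde D_1 \cap \ldots \cap \operatorname{Supp}\tilde D_d$ is a closed, hence complete, subscheme $Z$ of $E$. I would then use the following standard features of the $\bm w$-blowup of a regular point: $Y$ is normal, $\mathbb Q$-factorial and Cohen--Macaulay (it has only cyclic quotient singularities); $E \cong \mathbb P(\bm w)$ over $\kappa(P)$; $\mathcal O_Y(-E)|_E \cong \mathcal O_{\mathbb P(\bm w)}(1)$, which satisfies $\mathcal O_{\mathbb P(\bm w)}(1)^{d-1} = 1/(w_1 \cdots w_d)$ on $E$; and $\varphi^{*} D_i = \tilde D_i + v_E(D_i)\,E$. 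The one extra ingredient, replacing the triviality of $\mathcal O(\varphi^{*}D_i)|_E$ used in the smooth case, is that $\mathcal O_Y(\varphi^{*}D_i)|_E$ is $\mathbb Q$-trivial: $\varphi|_E$ factors through the point $P$ and $D_i$ is $\mathbb Q$-Cartier near $P$, so $c_1(\mathcal O_Y(\varphi^{*}D_i)|_E) = 0$ in $A^1(E)_{\mathbb Q}$; in particular $\mathcal O_Y(\tilde D_i)|_E \cong \mathcal O_{\mathbb P(\bm w)}(v_E(D_i))$.

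For the equality I would argue as follows. Since $\varphi$ is proper and birational, repeated use of the projection formula gives $\varphi_{*}\bigl(\varphi^{*}D_1 \cdots \varphi^{*}D_d \cap [Y]\bigr) = D_1 \cdots D_d \cap [X]$, a $0$-cycle class supported at $P$; as degree is preserved by proper push-forward,
\[
\deg_{\Bbbk}\bigl(\varphi^{*}D_1 \cdots \varphi^{*}D_d \cap [Y]\bigr) = [\kappa(P):\Bbbk]\cdot\operatorname{mult}_P(D_1 \cdots D_d).
\]
Now I would substitute $\tilde D_i = \varphi^{*}D_i - v_E(D_i)\,E$ and expand $\tilde D_1 \cdots \tilde D_d \cap [Y]$, using that intersection with $\mathbb Q$-Cartier divisors on the $\mathbb Q$-factorial variety $Y$ is commutative and multilinear. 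Every monomial in the expansion that contains at least one factor $\varphi^{*}D_i$ and at least one factor $E$ vanishes: carrying out the $E$-factors first produces a cycle class supported on $E$, on which a subsequent factor $\varphi^{*}D_i$ acts through $c_1(\mathcal O_Y(\varphi^{*}D_i)|_E) = 0$. Only two monomials remain, namely $\varphi^{*}D_1 \cdots \varphi^{*}D_d \cap [Y]$ and $(-1)^d \bigl(\prod_i v_E(D_i)\bigr)\,E^d \cap [Y]$; and since $E^d \cap [Y] = (-1)^{d-1}\,\mathcal O_{\mathbb P(\bm w)}(1)^{d-1} \cap [E]$ has degree $(-1)^{d-1}[\kappa(P):\Bbbk]/(w_1 \cdots w_d)$ over $\Bbbk$, taking $\deg_{\Bbbk}$ of the resulting identity and dividing by $[\kappa(P):\Bbbk]$ yields precisely the asserted formula.

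For the inequality and the vanishing criterion, observe first that by the formula just proved they amount to the statement $\operatorname{mult}_P(D_1 \cdots D_d) \ge \prod_i v_E(D_i)/(w_1 \cdots w_d)$, with equality if and only if $Z = \varnothing$; and the implication ``$Z = \varnothing \Rightarrow$ equality'' is immediate, since then $\tilde D_1 \cdots \tilde D_d \cap [Y]$ lies in $A_0(\varnothing) = 0$. For the remaining content I would pass to the exceptional divisor by degeneration: after clearing denominators one may assume the $D_i$ are Cartier, cut out near $P$ by $f_i \in \mathcal O_{X,P}$; form the associated graded ring $R := \operatorname{gr}^{\bm w}_{\mathfrak m_P}\mathcal O_{X,P} \cong \kappa(P)[x_1, \ldots, x_d]$ with $\deg x_i = w_i$, so that $E = \operatorname{Proj} R$, the weighted leading form $\bar f_i$ of $f_i$ has degree $v_E(D_i)$, $\operatorname{Supp}\tilde D_i \cap E = V_{+}(\bar f_i)$, and $Z = V_{+}(\bar f_1, \ldots, \bar f_d)$. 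Because $I := (f_1, \ldots, f_d)$ is a parameter ideal of the regular local ring $\mathcal O_{X,P}$, one has $\operatorname{mult}_P(D_1 \cdots D_d) = \dim_{\kappa(P)} R/\operatorname{in}^{\bm w}(I)$, where $\operatorname{in}^{\bm w}(I) \supseteq (\bar f_1, \ldots, \bar f_d)$. If $Z = \varnothing$ then $\bar f_1, \ldots, \bar f_d$ form a homogeneous system of parameters in $R$, hence a regular sequence, hence a standard basis of $I$ (so $\operatorname{in}^{\bm w}(I) = (\bar f_1, \ldots, \bar f_d)$); the Hilbert series $\prod_i (1 - t^{v_E(D_i)})/\prod_i (1 - t^{w_i})$ of $R/(\bar f_1, \ldots, \bar f_d)$ is then a polynomial whose value at $t = 1$ is $\prod_i v_E(D_i)/(w_1 \cdots w_d)$, giving equality. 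If $Z \ne \varnothing$ the leading forms do not form a system of parameters, the inclusion $\operatorname{in}^{\bm w}(I) \supseteq (\bar f_1, \ldots, \bar f_d)$ is strict, and this forces $\operatorname{mult}_P(D_1 \cdots D_d) > \prod_i v_E(D_i)/(w_1 \cdots w_d)$, i.e.\ $\deg_E(\tilde D_1 \cdots \tilde D_d) > 0$.

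The main obstacle is precisely this last step: showing that the Hilbert--Samuel multiplicity $e(I) = \operatorname{mult}_P(D_1 \cdots D_d)$ strictly exceeds the ``generic'' value $\prod_i v_E(D_i)/(w_1 \cdots w_d)$ whenever the leading forms of the $f_i$ fail to be a regular sequence — equivalently, whenever the strict transforms of the $D_i$ meet $E$ along a positive-dimensional set. This is a strict-monotonicity statement for multiplicities of parameter ideals under passage to the leading-term ideal for the monomial valuation $v_E$, which I would prove by a direct Hilbert-function estimate on $R/\operatorname{in}^{\bm w}(I)$. Alternatively it can be obtained geometrically, by evaluating $\deg_{\Bbbk}(\tilde D_1 \cdots \tilde D_d \cap [Y])$ through the factors $\tilde D_i$ one at a time: a component of a running intersection contained in $E$ contributes strictly positively, because $\mathcal O_Y(\tilde D_i)|_E \cong \mathcal O_{\mathbb P(\bm w)}(v_E(D_i))$ is ample, and the delicate point is to control the remaining, possibly non-effective, contributions by reducing to a zero-dimensional intersection on the Cohen--Macaulay variety $Y$.
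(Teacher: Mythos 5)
Your treatment of the equality~(1) is sound and is essentially the paper's argument: after shrinking $X$ so that the supports meet only at $P$, you write $\tilde D_i = \varphi^* D_i - v_E(D_i)E$, kill the mixed terms via the projection formula, and evaluate the top self-intersection of $E$ through $\deg \mathcal O_{\mathbb P(\bm w)}(1)^{d-1} = 1/(w_1\cdots w_d)$ (the paper obtains the same number in a self-contained way by intersecting the strict transforms of the coordinate hyperplanes $V(x_i)$, whose common intersection with $E$ is empty).

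The genuine gap is exactly where you flag it, and it is not a removable one: the inequality $\deg_E(\tilde D_1 \cdot \ldots \cdot \tilde D_d) \ge 0$, together with strict positivity when $\operatorname{Supp}\tilde D_1 \cap \ldots \cap \operatorname{Supp}\tilde D_d \cap E \ne \emptyset$, is the substantive content of the theorem, and neither of your two proposed routes closes it. A strict inclusion $\operatorname{in}^{\bm w}(I) \supsetneq (\bar f_1,\ldots,\bar f_d)$ gives by itself no lower bound on $\dim_{\kappa(P)} R/\operatorname{in}^{\bm w}(I)$: when the leading forms fail to be a system of parameters the ideal they generate has infinite colength, and the additional elements of the initial ideal arising from syzygies are precisely what must be controlled; there is no off-the-shelf ``direct Hilbert-function estimate'' doing this, and indeed the paper deduces the algebraic statement (Corollary~\ref{thm:algebraic inequality}) \emph{from} the geometric theorem rather than the other way around. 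Your geometric alternative (peeling off the $\tilde D_i$ one factor at a time) stalls at the difficulty you yourself name, namely that the running intersection classes need not be represented by effective cycles. The paper's resolution, following \cite{FL82}, is to realize $b\tilde D_1 \cdot \ldots \cdot b\tilde D_d$ as $s^*[C_{b\tilde D_1 \cap \ldots \cap b\tilde D_d}Y]$ for the zero section $s$ of the bundle $\bigoplus_i L^{\otimes m_i}$ with $L = \mathcal O_Y(-bE)|_E$ ample and globally generated, and to invoke the positivity theorem \cite[Theorem~12.1(b)]{Ful98}: since $\bigl(\bigoplus_i L^{\otimes m_i}\bigr) \otimes L^{\vee}$ is globally generated, the degree of $s^*[C]$ is bounded below by the sum of the $L$-degrees of the distinguished varieties, each of which is nonnegative and is positive whenever the variety is nonempty. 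Without this (or an equivalent) input, both inequality~(2) and the ``moreover'' clause remain unproved.
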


\subsection{Sextic double solids}

We previously conjectured that factorial sextic double solids with $cA_2$-singularities are birationally superrigid (\cite[Conjecture~1.3]{KOPP24}).
In this paper we consider factorial sextic double solids with ordinary $cA_2$-points (see Definition~\ref{def:ordinary-cA2}) and it turns out that they are birationally rigid but might not be superrigid.
There are two possible divisorial contractions centered on an ordinary $cA_2$ point: regular and exceptional.
The regular one generates an involution in general (Lemma~\ref{lem:nonrigidlink}) and in special cases generates a ``bad link'' (Lemma~\ref{lem:SDSexcldivcont}).
Using Theorem~\ref{MainThm} we show that the exceptional one does not generate a Sarkisov link.
It follows that any birational map $X \dashrightarrow Y$ from a sextic double solid $X$ with $cA_2$-singularities to a Mori fiber space $Y$
is decomposed into involutions of $X$ and a map defined at $cA_2$-singularities (see section~\ref{sc:SDS}).
This results in the following theorem.

\begin{theorem}[$=$ Theorem~\ref{thm:sextic-double-solids}]
    Let $X$ be a factorial sextic double solid with only isolated $cA_1$ points and ordinary $cA_2$ points.
    Then $X$ is birationally rigid.
\end{theorem}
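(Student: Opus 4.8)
The plan is to run the Noether--Fano--Iskovskikh method: assuming $X$ is not birationally rigid, we produce a mobile linear system on $X$ carrying a non-canonical maximal singularity, classify the possible maximal centres, and show that the centres which are not excluded outright give rise only to birational self-maps of $X$, so that no birational map from $X$ to a Mori fibre space other than $X$ itself can exist. Since $X$ is factorial with $\rho(X)=1$, it is a terminal Fano $3$-fold and hence a Mori fibre space over a point; if it were not rigid there would be a birational map $\chi\colon X\dashrightarrow Y$ to a Mori fibre space $Y\not\cong X$, and, taking $\mathcal M$ to be the birational transform of a suitable mobile linear system on $Y$, the Noether--Fano inequality would give a mobile $\mathcal M\subset|{-nK_X}|$ with $(X,\tfrac1n\mathcal M)$ not canonical, together with a divisorial extraction $\sigma\colon\widetilde X\to X$ whose exceptional divisor $E$ satisfies $a_E(X,\tfrac1n\mathcal M)<0$; put $Z=\sigma(E)$. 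As $\mathcal M$ is mobile, $Z$ is not a divisor, so $\dim Z\le1$. Writing $\pi\colon X\to\mathbb P^3$ for the double cover, I will use repeatedly that $(-K_X)^3=2$, so that for general $D_1,D_2\in\mathcal M$ the effective $1$-cycle $D_1\cdot D_2$ satisfies $(-K_X)\cdot D_1\cdot D_2=2n^2$, and that a general member $T$ of $|{-K_X}|=\pi^{*}|\mathcal O_{\mathbb P^3}(1)|$ passing through a point $P$ contains no component of $D_1\cdot D_2$ and satisfies $T\cdot C\ge\operatorname{mult}_P C$ for every curve $C\ni P$ with $C\not\subset T$; hence the universal bound $\operatorname{mult}_P(D_1\cdot D_2)\le T\cdot D_1\cdot D_2=2n^2$ holds at every point $P\in X$.

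Suppose first that $Z$ is a curve. Since $\operatorname{Sing}X$ is finite, $Z$ lies generically in the smooth locus and blowing it up has discrepancy $1$, so $\operatorname{mult}_Z\mathcal M>n$; then the coefficient of $Z$ in $D_1\cdot D_2$ exceeds $n^2$, and $2n^2>n^2\,(-K_X)\cdot Z$ forces $(-K_X)\cdot Z=1$. Thus $\pi|_Z$ is birational onto a line $\ell\subset\mathbb P^3$, and since $\pi^{-1}(\ell)\to\ell$ is an irreducible double cover unless $\ell$ lies on the branch sextic $B$, we must have $\ell\subset B$ and $Z=\pi^{-1}(\ell)_{\mathrm{red}}$. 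Such branch lines are ruled out by a multiplicity computation on the K3 surface $T\in|{-K_X}|$ that contains $Z$, in the spirit of the quadratic-transformation argument of Iskovskikh--Pukhlikov, now carried out in the presence of the $cA_k$ points of $X$ lying on $T$. If instead $Z$ is a smooth point of $X$, the $4n^2$-inequality (Theorem~\ref{thm:4nineq}) gives $\operatorname{mult}_Z(D_1\cdot D_2)>4n^2$, contradicting the bound $\operatorname{mult}_Z(D_1\cdot D_2)\le2n^2$; and if $Z$ is a $cA_1$ point, Theorem~\ref{MainThm} with $k=1$ gives $\operatorname{mult}_Z(D_1\cdot D_2)>\tfrac{4}{2}n^2=2n^2$, again contradicting $\operatorname{mult}_Z(D_1\cdot D_2)\le2n^2$. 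Hence neither a smooth point nor a $cA_1$ point can be a maximal centre, and the only remaining possibility is that $Z$ is an ordinary $cA_2$ point $P$ in the sense of Definition~\ref{def:ordinary-cA2}.

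This last case is the crux, because for $k=2$ Theorem~\ref{MainThm} only yields $\operatorname{mult}_P(D_1\cdot D_2)>\tfrac{4}{3}n^2$, which is consistent with the crude bound $\le2n^2$, so the Sarkisov links issuing from $P$ have to be studied directly. By the classification of divisorial contractions to $cA_k$ points (Theorem~\ref{thm:cldccA2}) there are exactly two divisorial contractions centred at $P$, the \emph{regular} one and the \emph{exceptional} one. For the exceptional contraction I would track the birational transform of $\mathcal M$ and of the cycle $D_1\cdot D_2$ on the extraction and invoke Theorem~\ref{MainThm} to show that the $2$-ray game cannot be completed to a Sarkisov link (Lemma~\ref{lem:SDSexcldivcont}); this is the step in which Theorem~\ref{MainThm} is indispensable. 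For the regular contraction the link does exist, but Lemma~\ref{lem:nonrigidlink} shows that it returns to $X$ and induces a birational, non-biregular involution $\tau_P\in\operatorname{Bir}(X)$ associated to $P$, while in the remaining special configurations the link degenerates to a ``bad link'' that again never reaches a Mori fibre space different from $X$.

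To finish, decompose $\chi$ into Sarkisov links by the Sarkisov programme. The case analysis shows that every link occurring in the decomposition is either biregular or one of the involutions $\tau_P$, so composing $\chi$ with the inverses of these involutions yields $X\cong Y$, contradicting $Y\not\cong X$; therefore $X$ is birationally rigid. It need not be superrigid, since the $\tau_P$ are genuinely birational (see Section~\ref{sc:SDS}). The main obstacle is the ordinary $cA_2$ case: isolating the two divisorial contractions and their numerical invariants from Theorem~\ref{thm:cldccA2}, and then using Theorem~\ref{MainThm} both to exclude the exceptional contraction and to verify that the link attached to the regular contraction closes up on $X$ as an involution rather than producing a new Mori fibre space. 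A secondary technical point is the exclusion of the branch lines $\pi^{-1}(\ell)_{\mathrm{red}}$, $\ell\subset B$, as maximal centres.
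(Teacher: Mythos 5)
Your overall architecture matches the paper's: exclude smooth points, curves and $cA_1$ points as maximal centres, then at an ordinary $cA_2$ point show that the discrepancy-$1$ extractions either fail to be maximal or initiate an elementary self-link (Proposition~\ref{prop:DSbirinv}), exclude the exceptional extraction (Proposition~\ref{prop:SDSexclexc}), and conclude via Theorem~\ref{thm:chractBR}. Your treatment of smooth points and $cA_1$ points is exactly the paper's (via Theorem~\ref{thm:4nineq} and Theorem~\ref{MainThm} with $k=1$ against the bound $2n^2$), and your curve sketch is the standard Cheltsov--Park argument that the paper simply cites.

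There is, however, a genuine gap at the step you yourself single out as the crux. To exclude the exceptional divisorial contraction at an ordinary $cA_2$ point you propose to ``invoke Theorem~\ref{MainThm}'', but for $k=2$ that theorem yields only $\operatorname{mult}_P(D_1\cdot D_2)>\tfrac{4}{3}n^2$, which --- as you observe two sentences earlier --- is compatible with the upper bound $\operatorname{mult}_P(D_1\cdot D_2)\le 2n^2$ and therefore excludes nothing. The tool that actually closes this case is the contraction-specific inequality of Theorem~\ref{thm:locineqwblcA}(3): if the pair $(X,\tfrac1n\mathcal M)$ is non-canonical at the exceptional divisor of the $(4,3,2,1)$-blowup, then $\operatorname{mult}_P(D_1\cdot D_2)>\tfrac{9}{4}n^2$, which does contradict $2n^2=H\cdot D_1\cdot D_2\ge\operatorname{mult}_P(D_1\cdot D_2)$ (Lemma~\ref{lem:intom}). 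Without this sharper, contraction-dependent bound your ``$2$-ray game'' for the exceptional extraction has no mechanism to terminate. You also misattribute Lemma~\ref{lem:SDSexcldivcont}: it concerns the \emph{regular} discrepancy-$1$ extraction in the degenerate situation where the associated elliptic fibration has infinitely many reducible fibres (there the extraction is shown not to be maximal), not the exceptional extraction. A minor further imprecision: an ordinary $cA_2$ point carries in general two regular contractions (weights $(1,2,1,1)$ and $(2,1,1,1)$, obtained by interchanging $\xi$ and $\zeta$) plus possibly the exceptional one, not ``exactly two'' contractions in total; this does not affect the structure of the argument.
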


Sextic double solids with non-ordinary $cA_2$-singularities remain out of reach since they admit divisorial contractions with weights $(3,3,2,1)$ and possibly more.
Theorem~\ref{MainThm} is not enough to exclude these contractions and there are infinitely many of them by \cite{Pae24} which makes it unrealistic to consider all potential links originating at $cA_2$-singularities.

\subsection{Rigidity of singular Fano varieties}

According to Noether--Fano inequality, existence of a birational map $X \dashrightarrow Y$ to another Mori fiber space implies existence of $n$ and a mobile linear subsystem $\mathcal M \subset \left| -nK_X \right|$ such that the pair $(X,\frac{1}{n}\mathcal M)$ has worse than canonical singularities at some subvariety $Z \subset X$.
The subvariety $Z$ may be a curve, a non-singular point, or a quotient singularity, or a $cA_k$-point.
Thanks to the previous results: \cite{CPR}, \cite{CP17}, \cite{oka14}, \cite{AZ16}, and \cite{KOPP24} we only need to consider $cA_k$-points.

For each $cA_k$-singular point $P$ we produce a linear system $\mathcal{H}$ such that $P$ is an isolated component of the base locus, we say that the linear equivalence class of $\mathcal{H}$ is a $P$\emph{-isolating class}.
Let $D_1, D_2$ be general divisors in $\mathcal M$ and let $H$ be a general divisor in $\mathcal{H}$, then
the intersection $D_1 \cdot D_2 \cdot H$ provides a natural upper bound on $\operatorname{mult}_P (D_1 \cap D_2)$.
In order to prove birational rigidity we present a $P$-isolating class of sufficiently low degree to get a contradiction with Theorem~\ref{MainThm} for every family in the tables~\ref{table:Fanohyp} and~\ref{table:FanoWCI}.

It is interesting to note that we proved for several number of families of index $1$ Fano complete intersections that $\mathbb Q$-factorial members with $cA_{11}$-points are birationally rigid, and there is even family \textnumero89 such that $\mathbb Q$-factorial members with $cA_{19}$-point is birationally rigid. But we are not sure if there exist $\mathbb Q$-factorial members with such singularities.
It was shown in (\cite[Theorem~A(e)]{PaemurruSDS}) that general sextic double solids with an isolated $cA_k$-point are $\mathbb Q$-factorial for $k=1,\dots, 6, 8$ and the space of sextic double solids with an isolated $cA_7$-point has four connected components, one of which has no $\mathbb Q$-factorial members.
Thus we expect that for most families general member with a $cA_k$-point is $\mathbb Q$-factorial for a small $k$, but we do not know about big $k$.
This motivates the following question.

\begin{question}
For Fano weighted complete intersections, what is the highest $k$ such that the general member of the family with a $cA_k$-point is $\mathbb Q$-factorial?
\end{question}

\subsection{Other works on birational rigidity of singular varieties}

Our inequality works very well for varieties of low degree, but birational rigidity results for singular varieties can be extended to higher degree Fano varieties as well.
For example the birational rigidity of quartic threefolds with one $cA_1$-point is proven in \cite{PukhQuarticA1}.
We expect that with a lot of extra effort similar results can be proven for the families that are not in tables \ref{table:Fanohyp} and~\ref{table:FanoWCI}.

Also note, that in higher dimensions one can obtain stronger inequalities, for example see \cite{Puk17}.
This approach has been applied to prove birational rigidity of many families of high dimensional varieties: hypersurfaces, cyclic covers, complete intersections (\cite{Puk19CI}, \cite{Puk19CO}, \cite{EP18}, \cite{EP19}).

\subsection*{Acknowledgments}
The first author was supported by IBS-R003-D1 grant.
The second author was supported by JAPS KAKENHI Grant Numbers 23K22389 and 24K00519.
The third author was supported by SFB 195 No.~286237555 of DFG, the Simons Foundation, grant SFI-MPS-T-Institutes-00007697, and the Ministry of Education and Science of the Republic of Bulgaria, grant DO1-239/10.12.2024.

\section{Preliminaries} \label{Preliminaries}

An \textit{algebraic $\Bbbk$-scheme}, where $\Bbbk$ is a field, is a scheme of finite type over $\Bbbk$.
A \emph{variety} is an integral separated scheme of finite type over the field of complex numbers~$\mathbb C$.
By the germ of a scheme $X$ at a point~$P$, we mean the direct limit of open subschemes containing~$P$.

\subsection{Weighted blowups}

In Definition~\ref{def:weighted blowup}, we define weighted blowups of arbitrary schemes $X$ (not necessarily of finite type over a field) along a regular system of parameters.
This enables us to state Theorem~\ref{thm:generalization of Fulton for a smooth variety} over an arbitrary field.
Definition~\ref{def:weighted blowup} coincides with the weighted blowup in \cite[Definition~2.2.11]{Kaw24} when $X$ is an algebraic variety (over~$\mathbb C$).
Definition~\ref{def:weighted blowup} coincides with the (usual) blowup along $P$ if all the weights are~$1$.

\begin{definition} \label{def:weighted blowup}
Let $d$ be a positive integer.
Let ${\bm w} = (w_1, \ldots, w_d)$ be a $d$-tuple of positive integers such that $\gcd {\bm w} := \gcd(w_1, \ldots, w_d) = 1$.
Let $X$ be a scheme and $P \in X$ a regular closed point.
We say that a morphism of schemes $\varphi\colon Y \to X$ is the \textit{weighted blowup with weights $w_1, \ldots, w_d$} (or \textit{${\bm w}$-blowup}) with respect to a regular system of parameters $x_1, \ldots, x_d$ at $P$ if $Y$ is isomorphic over $X$ to $\operatorname{Proj}_X R$,
where $R$ is the $\mathbb Z_{\geq0}$-graded $\mathcal O_X$-algebra with stalks
\[
R_Q = \mleft\{\begin{aligned}
    & \mathcal O_{X, P}\mleft[ \mleft\{ t^j x_i \;\middle|\; i \in \{1, \ldots, d\},\, j \in \{0, \ldots, w_i\} \mright\} \mright] && \text{if $Q = P$},\\
    & \mathcal O_{X, Q}[t] && \text{otherwise},
\end{aligned}\mright.
\]
where $t$ denotes the grading.
The \emph{exceptional divisor} of $\varphi$ is defined to be the subscheme of $Y$ given by the surjective $\mathcal O_X$-algebra homomorphism $R \to R / \bigoplus_{m \in \mathbb Z_{\geq0}} R_{m+1} t^m$, where $R_m$ denotes the quasi-coherent $\mathcal O_X$-ideal sheaf that is the $m$-th graded piece of~$R$.
\end{definition}

Note that the exceptional divisor of a weighted blowup need not be Cartier. Next, we define the weight and the least weight part of an element of~$\mathcal O_{X, P}$.

\begin{definition} \label{def:weight}
Given a positive integer $d$ and vector $\bm w = (w_1, \ldots, w_d)$ of positive integers, the weight of an element $g$ of the power series ring $\Bbbk[[x_1, \ldots, x_d]]$ is defined to be
\[
\bm w(g) := \inf \mleft\{ \sum w_i a_i \;\middle|\; \text{the monomial $x_1^{a_1} \cdot \ldots \cdot x_d^{a_d}$ has nonzero coefficient in $g$} \mright\},
\]
where the sum is over $i \in \{1, \ldots, d\}$ and the infimum is over all the monomials in $\Bbbk[x_1, \ldots, x_d]$.
The least weight part of $g \in \Bbbk[[x_1, \ldots, x_d]]$, denoted $g^{\bm w}$, is defined to be the sum of all the monomials of weight~$\bm w(g)$, together with their coefficients in~$g$.

In the setting of Definition~\ref{def:weighted blowup}, the inclusion $\iota\colon \Bbbk [x_1, \ldots, x_d] \to \mathcal O_{X, P}$ induces a $\Bbbk$-algebra isomorphism on the completions $\hat{\iota}\colon \Bbbk [[x_1, \ldots, x_d]] \to \hat{\mathcal O}_{X, P}$.
Define the local monomorphism $\chi \colon \mathcal{O}_{X,P} \to \Bbbk [[x_1,\dots,x_d]]$ to be the composition of the natural injection $\mathcal{O}_{X,P} \to \hat{\mathcal{O}}_{X,P}$ and~$\hat{\iota}^{-1}$.
For every $f \in \mathcal{O}_{X,P}$, we define the weight of $f$ by $\bm w(f) := \bm w(\chi(f))$ and the least weight part of $f$ by $f^{\bm w} := \iota(\chi(f)^{\bm w})$.
\end{definition}

\begin{lemma} \label{lem:weight}
In the setting of Definition~\ref{def:weighted blowup}, for every $f \in \mathcal O_{X, P}$, we have
\[
\bm w(f) = \sup \{ j \in \mathbb Z_{\geq0} \mid t^j f \in R_P \}.
\]
\end{lemma}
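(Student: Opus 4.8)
The plan is to describe the graded pieces of the $\mathcal O_{X,P}$-algebra $R_P$ explicitly and to identify them with the ``high-weight'' ideals of $\mathcal O_{X,P}$. The elements $t^j x_i$ (for $1 \le i \le d$ and $0 \le j \le w_i$) generating $R_P$ over $\mathcal O_{X,P}$ are homogeneous for the grading of $\mathcal O_{X,P}[t]$ by powers of $t$, and $\mathcal O_{X,P}$ itself sits in degree~$0$, so $R_P$ is a graded subalgebra of $\mathcal O_{X,P}[t]$. Write $R_P = \bigoplus_{m \ge 0} I_m\, t^m$, where $I_m := \{ c \in \mathcal O_{X,P} \mid c\, t^m \in R_P \}$ is an ideal of $\mathcal O_{X,P}$ (the stalk at $P$ of the sheaf $R_m$ of Definition~\ref{def:weighted blowup}). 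Since $t^m f$ is homogeneous of degree $m$, we have $t^m f \in R_P$ if and only if $f \in I_m$, so the right-hand side of the lemma is $\sup \{ m \in \mathbb Z_{\ge 0} \mid f \in I_m \}$. It thus suffices to prove that $I_m = \{ f \in \mathcal O_{X,P} \mid \bm w(f) \ge m \}$ for every $m \ge 0$, for then $\{ m \in \mathbb Z_{\ge 0} \mid f \in I_m \} = \{ m \in \mathbb Z_{\ge 0} \mid m \le \bm w(f) \}$, whose supremum is $\bm w(f)$.

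The first step is the combinatorial description: $I_m$ is the ideal of $\mathcal O_{X,P}$ generated by the monomials $x_1^{a_1} \cdots x_d^{a_d}$ with $\sum_i w_i a_i \ge m$. For ``$\subseteq$'', a homogeneous element of $R_P$ of degree $m$ is an $\mathcal O_{X,P}$-linear combination of products $\prod_s (t^{j_s} x_{i_s})$ with $0 \le j_s \le w_{i_s}$ and $\sum_s j_s = m$; writing $a_i := \#\{ s \mid i_s = i \}$, such a product equals $t^m x_1^{a_1}\cdots x_d^{a_d}$ and $\sum_i w_i a_i = \sum_s w_{i_s} \ge \sum_s j_s = m$, so its coefficient of $t^m$ lies in the stated ideal. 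For ``$\supseteq$'', given $a_1, \ldots, a_d \ge 0$ with $\sum_i w_i a_i \ge m$, split $m$ as $\sum_i j_i$ with $0 \le j_i \le w_i a_i$ (possible since the budget $\sum_i w_i a_i$ is at least $m$), then split each $j_i$ further into $a_i$ summands of size at most $w_i$; this exhibits $t^m x_1^{a_1}\cdots x_d^{a_d}$ as a product of generators of $R_P$. This step uses only the grading and is routine.

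The substantive step is to prove $I_m = \{ f \in \mathcal O_{X,P} \mid \bm w(f) \ge m \}$, and here one must pass to the completion $\hat{\mathcal O}_{X,P} \cong \Bbbk[[x_1, \ldots, x_d]]$. Let $\hat I_m \subseteq \Bbbk[[x_1, \ldots, x_d]]$ denote the set of power series all of whose monomials have weight at least $m$; by Definition~\ref{def:weight} this is an ideal, and $\bm w(f) \ge m$ holds exactly when $\chi(f) \in \hat I_m$. Now $\hat I_m$ is a monomial ideal, hence generated by finitely many monomials; these have weight $\ge m$, so they also generate $I_m$, and since $\hat\iota$ carries each of them to the corresponding product of the $x_i$ in $\mathcal O_{X,P}$, we get $\hat\iota(\hat I_m) = I_m\, \hat{\mathcal O}_{X,P}$. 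As $\chi$ is the composite of the completion map $\mathcal O_{X,P} \to \hat{\mathcal O}_{X,P}$ with $\hat\iota^{-1}$, it follows that $\bm w(f) \ge m$ if and only if the image of $f$ in $\hat{\mathcal O}_{X,P}$ lies in $I_m\, \hat{\mathcal O}_{X,P}$; by faithful flatness of the completion of the Noetherian local ring $\mathcal O_{X,P}$ — equivalently, by the Krull intersection theorem applied to $\mathcal O_{X,P}/I_m$ — this is equivalent to $f \in I_m$. This settles the identity, the case $f = 0$ being consistent since then $\chi(f) = 0 \in \hat I_m$ for all $m$ and both sides equal $+\infty$.

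I expect the only genuine obstacle to be this descent from $\hat{\mathcal O}_{X,P}$ to $\mathcal O_{X,P}$: one needs to know, at the level of the ideal $I_m$ rather than element by element, that the ``weight $\ge m$'' condition is already detected in $\mathcal O_{X,P}$, which is exactly what $I_m \hat{\mathcal O}_{X,P} \cap \mathcal O_{X,P} = I_m$ (faithful flatness) or Krull intersection for $\mathcal O_{X,P}/I_m$ supplies. The grading of $R_P$, the monomial combinatorics of the second step, and the unwinding of the definitions of $\chi$ and $\hat\iota$ are all bookkeeping.
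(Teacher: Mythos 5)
Your proof is correct. It takes a somewhat different route from the paper's, which is much terser: the paper observes that both conditions $\bm w(f) \geq k$ and $t^k f \in R_P$ are unchanged when $f$ is altered by an element of $(x_1, \ldots, x_d)^{k+1}$ (the former because every $w_i \geq 1$, the latter because $t^k \mathfrak m^{k+1} \subseteq R_P$), invokes the isomorphism $\mathcal O_{X,P}/(x_1,\ldots,x_d)^{k+1} \cong \Bbbk[[x_1,\ldots,x_d]]/(x_1,\ldots,x_d)^{k+1}$ to reduce to the case where $f$ is a polynomial of degree at most $k$, and declares that case obvious. You instead compute the graded pieces $I_m = (R_m)_P$ explicitly as the monomial ideals of weight at least $m$ and descend from the completion via $I_m \hat{\mathcal O}_{X,P} \cap \mathcal O_{X,P} = I_m$ (faithful flatness plus Dickson's lemma to get finite monomial generating sets). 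The two descent mechanisms — truncation isomorphisms versus faithful flatness of completion — are interchangeable here; your version is longer but fully self-contained, and in particular it spells out the combinatorial identification of the degree-$m$ part of $R_P$ that the paper's ``obvious'' polynomial case silently relies on (and which is reused implicitly elsewhere, e.g.\ in the proof of Lemma~\ref{thm:weighted blowup basic description}(\ref{itm:weighted blowup basic description - discrete valuation})).
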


\begin{proof}
It suffices to prove the equivalence
\begin{equation} \label{eqn:equivalence}
\bm w(f) \geq k \iff t^k f \in R_P
\end{equation}
for every $f \in \mathcal O_{X, P}$ and positive integer~$k$.
By \cite[Theorem~7.1]{Eis95}, $\chi$ induces a $\Bbbk$-algebra isomorphism
\[
\frac{\mathcal O_{X, P}}{(x_1, \ldots, x_d)^{k+1}} \to \frac{\Bbbk[[x_1, \ldots, x_d]]}{(x_1, \ldots, x_d)^{k+1}}.
\]
So, it suffices to prove equivalence~(\ref{eqn:equivalence}) in the case $f$ is a polynomial of degree at most~$k$, where the statement is obvious.
\end{proof}

Below, if $X$ is affine, then $V_X(I)$ denotes the subscheme of $X$ defined by the ideal $I \subseteq \mathcal O_X(X)$ and $V_Y(J)$ denotes the subscheme of $Y$ defined by the homogeneous ideal $J \in R(X)$.

\begin{lemma} \label{thm:weighted blowup basic description}
Let $d$ be a positive integer.
Let ${\bm w} = (w_1, \ldots, w_d)$ be positive integers such that $\gcd {\bm w} = 1$.
Let $X$ be a scheme and $P$ a regular closed point.
Let $\varphi\colon Y \to X$ be the ${\bm w}$-blowup with respect to a regular system of parameters $x_1, \ldots, x_d$ at $P$.
Then, all of the following hold:
\begin{enumerate}[label=\textup{(\alph*)}, ref=\alph*]
\item \label{itm:weighted blowup basic description - coincides with usual}
if $X$ is an algebraic variety (over the field of complex numbers~$\mathbb C$), then $\varphi$ coincides with the weighted blowup in \cite[Definition~2.2.11]{Kaw24} with $r = 1$,

\item \label{itm:weighted blowup basic description - usual blowup}
if $w_1 = \ldots = w_d = 1$, then $\varphi$ is the (usual) blowup along~$P$,

\item \label{itm:weighted blowup basic description - blowup}
$\varphi$ is the blowup along a quasi-coherent ideal sheaf,

\item \label{itm:weighted blowup basic description - prime divisor}
the exceptional divisor $E$ of $\varphi$ is isomorphic to the scheme $\mathbb P_{\kappa(P)}(w_1, \ldots, w_d) := \operatorname{Proj} \kappa(P)[t^{w_1} x_1, \ldots, t^{w_d} x_d]$ where $t$ denotes the grading and where $\kappa(P)$ is the residue field at~$P$,

\item \label{itm:weighted blowup basic description - exceptional locus}
if $d \geq 2$ and $X$ is Noetherian and integral, then $E$ coincides with the exceptional locus of $\varphi$ with the induced reduced structure,

\item \label{itm:weighted blowup basic description - discrete valuation}
the local ring of $Y$ at $E$ is a discrete valuation ring and the corresponding valuation $v_E$ satisfies $v_E(f) = \bm w(f)$ for all $f \in \mathcal O_{X, P}$,

\item \label{itm:weighted blowup basic description - normal}
if $X$ is normal and integral, then so is~$Y$, and

\item \label{itm:weighted blowup basic description - total transform}
if $X$ is affine, then for all $f \in \mathcal O_X(X) \setminus \{0\}$, the strict transform of $V_X(f)$ is $V_Y(t^{\bm w(f)} f)$.
\end{enumerate}
\end{lemma}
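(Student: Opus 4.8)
The plan is to work locally and reduce everything to an explicit description of the graded algebra $R$. Shrinking $X$, we may assume $X = \operatorname{Spec} A$ is affine with $x_1, \ldots, x_d \in A$ and $P$ corresponding to a maximal ideal $\mathfrak m \supseteq (x_1, \ldots, x_d)$, with $\mathcal O_{X, P}$ regular of dimension $d$. For $m \geq 0$ set $I_m := \{ f \in A \mid \bm w(f) \geq m \}$, an ideal of $A$ with $I_0 = A$ and $\mathfrak m^m \subseteq I_m$. Combining Lemma~\ref{lem:weight} with the stalk formula of Definition~\ref{def:weighted blowup}, and noting that $I_m$ localises to the unit ideal at every prime $Q \neq \mathfrak m$ (since $\mathfrak m \not\subseteq Q$, hence $\mathfrak m^m \not\subseteq Q$), one sees that $R$ is the quasi-coherent graded $\mathcal O_X$-algebra associated to $R(X) = \bigoplus_{m \geq 0} I_m t^m \subseteq A[t]$. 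In particular $R$ is a finitely generated $\mathcal O_X$-algebra (near $P$ it is generated by the finitely many $t^j x_i$, and it equals $\mathcal O_X[t]$ elsewhere), and $R(X)$ is an integral domain whenever $A$ is. From here part~(b) is immediate: if all $w_i = 1$ then $\bm w$ is the $\mathfrak m$-adic order, so the ideal sheaf associated to $I_m$ is the $m$-th power of the ideal sheaf of $P$ and $R$ is its Rees algebra. For part~(h), $f$ and $t^{\bm w(f)} f$ cut out the same subscheme of $Y$ away from $E$ (where $\varphi$ is an isomorphism), while $t^{\bm w(f)} f$ does not vanish along $E$ by the description of $v_E$ in part~(f) below, so $V_Y(t^{\bm w(f)} f)$ is the strict transform; this is checked on the charts $D_+(t^{w_i} x_i)$.

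For part~(c) I would pass to a Veronese subalgebra: since $R$ is a finitely generated graded $\mathcal O_X$-algebra, $R^{(e)}$ is generated in degree one over $\mathcal O_X$ for all sufficiently divisible $e$, so that $R^{(e)} = \mathcal O_X[I_e t^e] = \bigoplus_k (I_e)^k t^{ek}$ is the Rees algebra of $I_e$, and $Y = \operatorname{Proj} R = \operatorname{Proj} R^{(e)} = \operatorname{Bl}_{I_e} X$. Part~(a) is then a comparison of definitions: the weighted blowup of \cite[Definition~2.2.11]{Kaw24} of a $\mathbb C$-variety, taken with $r = 1$, is likewise the $\operatorname{Proj}$ of the graded algebra of weight-valuation ideals --- equivalently, the toric modification attached to the star subdivision of $\mathbb R_{\geq 0}^d$ along the ray through $(w_1, \ldots, w_d)$ --- so it suffices to match that algebra with $R(X)$, which the first paragraph does.

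The geometric content is in (d) and (f). By Definition~\ref{def:weighted blowup} the exceptional divisor is the closed subscheme cut out by the homogeneous ideal $\bigoplus_m I_{m+1} t^m$, so $E = \operatorname{Proj} \bigoplus_m (I_m / I_{m+1}) t^m$; this quotient sheaf is supported at $P$ with stalk the associated graded ring $\operatorname{gr}_{\bm w} \mathcal O_{X, P}$ of the weight filtration. Because $x_1, \ldots, x_d$ is a regular system of parameters, passing to the completion $\widehat{\mathcal O}_{X, P}$ (a power series ring over the residue field $\kappa(P)$) identifies $\operatorname{gr}_{\bm w} \mathcal O_{X, P}$ with the graded polynomial ring $\kappa(P)[x_1, \ldots, x_d]$ with $\deg x_i = w_i$; hence $E \cong \mathbb P_{\kappa(P)}(w_1, \ldots, w_d)$, which is part~(d). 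For part~(f), the function $\bm w$ extends to a valuation on $\operatorname{Frac} \mathcal O_{X, P} = K(Y)$ that surjects onto $\mathbb Z$ (here $\gcd \bm w = 1$ enters), so its valuation ring $\{ h \mid \bm w(h) \geq 0 \}$ is a discrete valuation ring; an explicit computation on the chart $D_+(t^{w_i} x_i)$ --- on which $E$ is cut out by $x_i$ --- identifies this ring with $\mathcal O_{Y, E}$ and shows $v_E(f) = \bm w(f)$ for every $f \in \mathcal O_{X, P}$. Part~(e) then follows: $\varphi$ is an isomorphism over $X \setminus P$ and its fibre over $P$ is set-theoretically $E$, which is a divisor; since $d \geq 2$ the point $P$ has codimension $\geq 2$ in $X$, so $E$ coincides with the exceptional locus of $\varphi$ endowed with the reduced structure, and $Y$ is integral because $R(X)$ is then a domain.

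It remains to prove part~(g). One checks that $Y$ is normal via the Serre criterion: $Y$ is regular in codimension one --- its codimension-one points being the generic point of $E$, where the local ring is the discrete valuation ring of part~(f), and the points of $Y \setminus E \cong X \setminus P$, where $X$ is normal --- and $Y$ satisfies $S_2$ because the ideals $I_m$ are valuation ideals, hence integrally closed, so that $R(X)$ is a normal ring. The step I expect to be the main obstacle is exactly this normality assertion: one must show that forming $\operatorname{Proj}$ of $R(X)$ introduces no non-normal points, i.e.\ that the affine charts of $Y$ over a neighbourhood of $P$ --- which are affine toric varieties with quotient singularities --- are normal, either by checking saturation of the defining monoids or by exploiting the integral closedness of the weight-valuation ideals directly. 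A secondary nuisance is making the Veronese argument of part~(c) and the completion arguments of parts~(d) and~(g) work over an arbitrary base field, using only that $P$ is a regular closed point equipped with a finite regular system of parameters.
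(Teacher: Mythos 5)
Your proposal is correct and follows essentially the same route as the paper: reduce to the affine case, identify the graded algebra $R$ with the algebra of weight-valuation ideals (the content of Lemma~\ref{lem:weight}), pass to a Veronese subalgebra generated in degree one for~(\ref{itm:weighted blowup basic description - blowup}), compute on the chart $D_+(t^{w_i}x_i)$ for~(\ref{itm:weighted blowup basic description - discrete valuation}), and exploit integral closedness of the valuation ideals for~(\ref{itm:weighted blowup basic description - normal}). The one step you flag as the main obstacle is exactly what the paper settles by citing \cite[Proposition~5.2.4]{HS06} (a normal ideal in a normal domain has a normal Rees algebra) together with the fact that $\operatorname{Proj}$ of a normal $\mathbb Z_{\geq0}$-graded domain is normal, so your second suggested route is the one that works.
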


\begin{proof}
We replace $X$ by an affine open neighborhood of $P$ where $x_1, \ldots, x_d$ are defined.
Denote $S := R(X)$ and $S_m := R_m(X)$.
Let $S^{(n)}$ denote the $n$-th Veronese $\mathcal O_X(X)$-subalgebra of~$S$, meaning $S^{(n)} := \bigoplus_{m \in \mathbb Z_{\geq0}} t^m S_{mn}$ where $t$ denotes the degree.

(\ref*{itm:weighted blowup basic description - coincides with usual})
Follows from the text below \cite[Definition~2.2.13]{Kaw24}.

(\ref*{itm:weighted blowup basic description - usual blowup})
Since $S$ is generated in degree~$1$, $\varphi$ coincides with $\operatorname{Proj} \bigoplus_{m \in \mathbb Z_{\geq0}} t^m S_1^m \to X$. The ideal $S_1$ is precisely the prime ideal defining~$P$. Therefore, $\varphi$ is the blowup along~$P$.

(\ref*{itm:weighted blowup basic description - blowup})
For a divisible enough positive integer~$n$, for example $n = d \cdot \operatorname{lcm}(w_1, \ldots, w_d)$, $S^{(n)}$ is generated in degree~$1$ (see the proof of \cite[Proposition~1.6.3]{QR22} for details).
So, $\varphi$ is the blowup along~$S_n$.

(\ref*{itm:weighted blowup basic description - prime divisor})
Follows from the fact that $E$ is given by $V_Y(\bigoplus_{m \in \mathbb Z_{\geq0}} t^m S_{m+1})$.

(\ref*{itm:weighted blowup basic description - exceptional locus})
By~(\ref{itm:weighted blowup basic description - blowup}), $\varphi$ is the blowup along~$S_n$.
For all positive integers $k < l$ and elements $f \in S_l$, we have $f^{l-k} \cdot (t^l f)^k = (t^k f)^l$.
Therefore, the radical of $S_n$ is $\bigoplus_{m \in \mathbb Z_{\geq0}} t^m S_{m+1}$.
So, the support of the exceptional Cartier divisor of the blowup along $S_n$ coincides with~$E$.
Therefore, $\varphi$ restricts to an isomorphism $Y \setminus E \to X \setminus \{P\}$.
By \cite[Corollary~13.97]{GW20}, $Y$ is integral and $\varphi$ is proper and birational.
Since $d \geq 2$, we can use \cite[Lemma~2.20]{Liu06} to conclude that the exceptional locus of $\varphi$ is~$E$.

(\ref*{itm:weighted blowup basic description - discrete valuation})
Taking the affine open $U := D_+(t^{w_1} x_1)$ of~$Y$, let $I \subseteq \mathcal O_Y(U)$ be the ideal defining $E \cap U$. Then, $\mathcal O_Y(U)$ consists of the elements $g / x_1^k$ where the nonnegative integer $k$ and element $g \in \mathcal O_X(X)$ are such that there exists a nonnegative integer $m$ satisfying $g \in S_m$ and $m \geq k w_1$.
By Lemma~\ref{lem:weight}, the coordinate ring $\mathcal O_Y(U)$, respectively the ideal~$I$, consists of the elements $g / x_1^k$ such that $\bm w(g) \geq k w_1$, respectively $\bm w(g) > k w_1$.
We see that $I$ is a prime ideal and the localization at the complement of $I$ is a discrete valuation ring. Since $\gcd(w_1, \ldots, w_d) = 1$, the claim follows.

(\ref*{itm:weighted blowup basic description - normal})
We show that $Y$ is normal.
By~(\ref{itm:weighted blowup basic description - discrete valuation}), for every nonnegative integer~$m$, the ideal $S_m$ consists precisely of the elements $f \in \mathcal O_X(X)$ such that $v_E(f) \geq m$.
It follows that $S_m$ is a normal ideal in $\mathcal O_X(X)$ for every nonnegative integer~$m$, meaning that all the positive powers of $S_m$ are integrally closed in $\mathcal O_X(X)$.
By (\ref{itm:weighted blowup basic description - blowup}), $\varphi$ is the blowup along~$S_n$.
Since $\mathcal O_X(X)$ is a normal domain, we find by \cite[Proposition~5.2.4]{HS06} that $\mathcal O_X(X)[t S_n]$ is a normal domain.
Now, suffices to use the result that if $A$ is a $\mathbb Z_{\geq0}$-graded normal domain, then $\operatorname{Proj} A$ is normal.

(\ref*{itm:weighted blowup basic description - total transform})
By \cite[Corollary~3.2.14]{QR22}, the strict transform $\overline{\varphi^{-1}(V_X(f)) \setminus E}$ of $V_X(f)$ is given by the surjective $\mathbb Z_{\geq0}$-graded $\mathcal O_X(X)$-algebra homomorphism
$S \to \operatorname{Proj}\mleft(\frac{\mathcal O_X(X)}{(f)}\mleft[ \mleft\{ t^j \bar{x}_i \mright\}_{i,j} \mright]\mright)$,
where $\bar{x}_i$ denotes the equivalence class $x_i + (f)$ and where we use the notation
\[
\mleft\{ t^j \bar{x}_i \mright\}_{i,j} := \mleft\{ t^j \bar{x}_i \mid i \in \{1, \ldots, d\},\, j \in \{0, \ldots, w_i\} \mright\}.
\]
Considering each homogeneous part separately, we find
\[
\frac{\mathcal O_X(X)}{(f)}\mleft[ \mleft\{ t^j \bar{x}_i \mright\}_{i,j} \mright]
= \frac{S}{(f, tf, \ldots, t^{\bm w(f)}f)}.
\]
Since the intersections $(f, tf, \ldots, t^{\bm w(f)}f) \cap S_{\geq\bm w(f)}$ and $(t^{\bm w(f)} f) \cap S_{\geq\bm w(f)}$ coincide, the ideals $(f, tf, \ldots, t^{\bm w(f)}f)$ and $(t^{\bm w(f)} f)$ define the same subscheme of~$Y$. The claim follows.
\end{proof}

\subsection{Intersection theory}

The degree of a $\mathbb Z$-cycle class is defined in \cite[Definition~1.4]{Ful98}, the intersection of Cartier divisors is defined in \cite[Definition~2.4.2]{Ful98} and the intersection multiplicity of Cartier divisors at a point is defined in \cite[Example~7.1.10(a)]{Ful98}.
In Definition~\ref{def:degree of Q-cycles} and Remark~\ref{def:intersection of Q-Cartier divisors}, we define the degree of a $\mathbb Q$-cycle class, the intersection of $\mathbb Q$-Cartier divisors and the intersection multiplicity of $\mathbb Q$-Cartier divisors at a point.
In this section, the ground field $\Bbbk$ is an arbitrary field.

\begin{definition} \label{def:degree of Q-cycles}
Let $X$ be an algebraic $\Bbbk$-scheme and let $i$ be a positive integer.
We denote by $Z_i (X)$ (resp.\ $A_i (X)$) the group of $i$-cycles on $X$ (resp.\ the group of $i$-cycles modulo rational equivalence on $X$).
We define
\[
\begin{split}
Z_i (X)_{\mathbb{Q}} &:= Z_i (X) \otimes_{\mathbb{Z}} \mathbb{Q}, \\
A_i (X)_{\mathbb{Q}} &:= A_i (X) \otimes_{\mathbb{Z}} \mathbb{Q}.
\end{split}
\]
If $X$ is proper, then the degree map $\deg \colon A_0 (X) \to \mathbb{Z}$ for $0$-cycles is extended to
\[
\deg \colon A_0 (X)_{\mathbb{Q}} \to \mathbb{Q}.
\]
\end{definition}

\begin{definition} \label{def:multiplicity}
Let $X$ be an algebraic $\Bbbk$-scheme and let $S \subset X$ be an integral subscheme.
The \textit{multiplicity of $X$ along $S$}, denoted by $\operatorname{mult}_S X$, is defined to be the Hilbert--Samuel multiplicity $e (\mathfrak{m}_{X,S}, \mathcal{O}_{X, S})$ of the local ring $\mathcal{O}_{X, S}$ at its maximal ideal $\mathfrak{m}_{X,S}$.
Let $Y$ be an algebraic $\Bbbk$-scheme, let $\alpha = \sum a_i Z_i$ be a rational cycle on~$Y$, where $Z_i$ is an integral subscheme for any $i$ and $a_i \in \mathbb{Q}$, and let $S \subset Y$ be an integral subscheme of $Y$.
Then the \textit{multiplicity of $\alpha$ along $S$} is defined by
\[
\operatorname{mult}_S (\alpha) := \sum a_i \operatorname{mult}_S (Z_i),
\]
where we set $\operatorname{mult}_S (Z_i) = 0$ if $S$ is not contained in $Z_i$.
\end{definition}

\begin{remark} \label{def:intersection of Q-Cartier divisors}
Let $X$ be a $d$-dimensional normal integral algebraic $\Bbbk$-scheme.
Let $D_1, \dots, D_s$ be $\mathbb{Q}$-Cartier Weil divisors on~$X$, where $2 \le s \le d$.
Let $b_i$ be positive integers such that $b_i D_i$ is a Cartier divisor.
Then there is a well-defined $(d-s)$-cycle class
\[
(b_1 D_1) \cdot \ldots \cdot (b_s D_s) \in A_{d-s} \bigl(\operatorname{Supp} (D_1) \cap \ldots \cap \operatorname{Supp} (D_s)\bigr).
\]
We define
\[
D_1 \cdot \ldots \cdot D_s := \frac{(b_1 D_1) \cdot \ldots \cdot (b_d D_d)}{b_1 \cdots b_d} \in A_{d-s} \bigl(\operatorname{Supp} (D_1) \cap \ldots \cap \operatorname{Supp} (D_s)\bigr)_{\mathbb{Q}}.
\]

Suppose in addition that $D_1, \dots, D_s$ are effective.
If $Z$ is a $(d-s)$-dimensional irreducible component of $\operatorname{Supp} (D_1) \cap \cdots \cap \operatorname{Supp} (D_s)$, then the coefficient of $[Z]$ in $D_1 \cdot \ldots \cdot D_s$ is called the \textit{intersection multiplicity of $D_1, \dots, D_s$ at $Z$} and it coincides with the multiplicity $\operatorname{mult}_Z (D_1 \cdot \ldots \cdot D_s)$ defined in Definition~\ref{def:multiplicity} (see \cite[Example 7.1.10]{Ful98}).
\end{remark}

We have the following well-known lemma.
Below, a general hyperplane on a germ of a regular closed point $P \in X$ is a Cartier divisor defined at $P$ by an equation $f \in \mathfrak{m}_{X,P}$ such that its image in $\mathfrak{m}_{X,P}/\mathfrak{m}_{X,P}^2$ is general.

\begin{lemma} \label{thm:intersect with general hyperplanes}
Let $P \in X$ be a germ of a regular closed point of an algebraic $\Bbbk$-scheme, let $Z \in Z_j (X)_{\mathbb{Q}}$ be an effective rational $j$-cycle on $X$ and let $H_1, \dots, H_i$ be general hyperplanes through~$P$, where $i \le j$ are nonnegative integers.
Then,
\[
\operatorname{mult}_P (Z) = \operatorname{mult}_P (Z \cdot H_1 \cdot \ldots \cdot H_i).
\]
\end{lemma}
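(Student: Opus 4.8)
The plan is to reduce the statement to a purely local assertion about Hilbert--Samuel multiplicities and then invoke the associativity formula together with the theory of superficial elements. Since both sides are $\mathbb Q$-linear in $Z$, and any integral component of $Z$ not passing through $P$ contributes $0$ to each side (its intersection with $H_1, \ldots, H_i$ is supported away from $P$), I would first assume $Z = [V]$ with $V \ni P$ an integral subscheme of dimension $j$; if $j = 0$ then $i = 0$ and nothing is to be proved, so assume $j \ge 1$. Using the associativity of the intersection of a cycle with Cartier divisors \cite{Ful98}, and the fact that a hyperplane chosen generally with respect to $V$ remains general with respect to the $(j-1)$-cycle $V \cdot H_1$, an induction on $i$ reduces everything to the case $i = 1$: it suffices to prove $\operatorname{mult}_P(V) = \operatorname{mult}_P(V \cdot H)$ for a general hyperplane $H$ through $P$.

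For the case $i=1$, set $B := \mathcal O_{V, P}$, a Noetherian local domain of dimension $j$ that is a quotient of the regular local ring $\mathcal O_{X, P}$, and let $f \in \mathfrak m_{X, P}$ be a local equation of $H$. For general $H$ the image $\bar f$ of $f$ in $B$ is nonzero, i.e.\ $V \not\subseteq H$ — otherwise the image of the ideal of $V$ would be all of $\mathfrak m_{X,P}/\mathfrak m_{X,P}^2$, forcing $\dim V = 0$ by Nakayama. Because $B$ is a local domain that is a homomorphic image of a Cohen--Macaulay ring, the dimension formula $\operatorname{ht}\mathfrak q + \dim B/\mathfrak q = \dim B$ holds for all primes $\mathfrak q$, so by Krull's principal ideal theorem every minimal prime $\mathfrak q$ of $\bar f B$ satisfies $\dim B/\mathfrak q = j-1$; thus $C := B/\bar f B$ is equidimensional of dimension $j-1$. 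By the definition of the intersection product of a Cartier divisor with a subvariety \cite{Ful98} and the computation of orders of vanishing as lengths, the cycle $V \cdot H$ is, near $P$, the effective Weil divisor $\sum_{\mathfrak q} \operatorname{length}_{B_\mathfrak q}(B_\mathfrak q / \bar f B_\mathfrak q)\, [\operatorname{Spec}(B/\mathfrak q)]$ over the minimal primes of $\bar f B$, so that
\[
\operatorname{mult}_P(V \cdot H) = \sum_{\mathfrak q \in \operatorname{Min}(\bar f B)} \operatorname{length}_{B_\mathfrak q}\!\bigl( B_\mathfrak q / \bar f B_\mathfrak q \bigr) \cdot e\bigl( \mathfrak m_B(B/\mathfrak q);\, B/\mathfrak q \bigr).
\]
The right-hand side is exactly the value produced by the associativity formula for Hilbert--Samuel multiplicities applied to $C$ with the $\mathfrak m_C$-primary ideal $\mathfrak m_C$ — the primes occurring there are precisely the minimal primes of $C$, since $C$ is equidimensional — so it equals $e(\mathfrak m_C;\, C) = e\bigl( \mathfrak m_B(B/\bar f B);\, B/\bar f B \bigr)$.

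It then remains to check $e\bigl( \mathfrak m_B(B/\bar f B);\, B/\bar f B \bigr) = e(\mathfrak m_B;\, B) = \operatorname{mult}_P(V)$, i.e.\ that a general hyperplane section of a local ring of positive dimension preserves the multiplicity. This is classical: a general element of $\mathfrak m_B \setminus \mathfrak m_B^2$, arising from a general element of $\mathfrak m_B/\mathfrak m_B^2$, is superficial for $\mathfrak m_B$; it is a nonzerodivisor since $B$ is a domain; and for such an element $\dim B/\bar f B = \dim B - 1$ and $e(\mathfrak m_B;\, B) = e(\mathfrak m_B;\, B/\bar f B)$. Chaining the displayed identities gives $\operatorname{mult}_P(V) = \operatorname{mult}_P(V \cdot H)$ and closes the induction. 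The main obstacle will be the careful matching of Fulton's intersection cycle $V\cdot H$ with the output of the associativity formula, together with the (standard but non-formal) superficiality of a general hyperplane; by contrast, the repeated use of the word ``general'' along the induction causes no trouble when $\kappa(P)$ is infinite — each condition is a nonempty open subset of a finite-dimensional parameter space over $\kappa(P)$ — and the general case follows by the faithfully flat base change along $\Bbbk \hookrightarrow \Bbbk(t)$, which preserves all the multiplicities, cycles, and intersection products involved.
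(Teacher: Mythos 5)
Your proof is correct, but it takes a genuinely different route from the paper's. The paper extends $H_1,\dots,H_i$ to a full complement $H_1,\dots,H_j$ of general hyperplanes cutting $\operatorname{Supp}(Z)$ down to $\{P\}$, observes that by genericity the strict transforms of the $H_m$ miss the strict transform of $\operatorname{Supp}(Z)$ on the blowup at $P$, so the excess term in \cite[Example~12.4.8]{Ful98} (equivalently Theorem~\ref{thm:generalization of Fulton for a smooth variety} with all weights $1$) vanishes and $\operatorname{mult}_P(Z)=\operatorname{mult}_P(Z\cdot H_1\cdot\ldots\cdot H_j)$; applying the same identity to the $(j-i)$-cycle $Z\cdot H_1\cdot\ldots\cdot H_i$ finishes the argument. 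You instead reduce by linearity and induction to a single hyperplane section of an integral $V\ni P$ and then work purely with local algebra: the identification of Fulton's cycle $V\cdot H$ with the output of the associativity formula for $e(\mathfrak m;B/\bar fB)$ (using that $B=\mathcal O_{V,P}$ is a catenary domain, so $B/\bar fB$ is equidimensional) plus the classical fact that a general $\bar f\in\mathfrak m_B\setminus\mathfrak m_B^2$ is a superficial nonzerodivisor preserving the Hilbert--Samuel multiplicity. Your route is self-contained modulo standard multiplicity theory, avoids the blowup and the excess-intersection formula entirely, and makes explicit exactly which genericity is needed (superficiality); the paper's route is shorter because it reuses machinery the paper must develop anyway. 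The only soft spot is your final sentence: base-changing along $\Bbbk\hookrightarrow\Bbbk(t)$ does not by itself upgrade a hyperplane that is general over $\Bbbk$ to one that is general (superficial) over $\Bbbk(t)$, so the finite-residue-field case is not quite disposed of as stated --- but in that case ``general hyperplanes through $P$'' may fail to exist at all and the statement must be read as holding for parameters in a dense open subset, an issue the paper's own proof silently shares and which is irrelevant for the applications over $\mathbb C$.
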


\begin{proof}
If suffices to prove the case where $P$ is in the support of~$Z$.
Let $H_1, \dots, H_i$ be general hyperplanes through $P$ so that $\operatorname{Supp} (Z) \cap H_1 \cap \cdots \cap H_i$ has pure dimension $j-i$ after possibly shrinking $P \in X$.
We take general hyperplanes $H_{i+1}, \dots, H_j$ so that $\operatorname{Supp} (Z) \cap H_1 \cap \cdots \cap H_j$ consists only of the point $P$ after possibly shrinking $P \in X$.
Since $H_1, \ldots, H_j$ are general, their strict transforms have empty intersection with the strict transform of $\operatorname{Supp} (Z)$ under the blowup at~$P$.
By \cite[Example~12.4.8]{Ful98} (or Theorem~\ref{thm:generalization of Fulton for a smooth variety} for the usual blowup), we have
\[
\operatorname{mult}_P (Z) = \operatorname{mult}_P (Z \cdot H_1 \cdot \ldots \cdot H_j).
\]
Applying this result for the effective $(j - i)$-cycle $Z \cdot H_1 \cdot \ldots \cdot H_i$, we obtain
\[
\operatorname{mult}_P (Z \cdot H_1 \cdot \ldots \cdot H_i) = \operatorname{mult}_P (Z \cdot H_1 \cdot \ldots \cdot H_j),
\]
and the lemma follows.
\end{proof}

In Theorem~\ref{thm:locineqwblcA}, we use local analytic coordinate changes. The lemma below guarantees that this does not alter intersection numbers.
In the lemma, we use $\hat{\mathcal O}_{X, P}$ to denote the completion of the local ring $\mathcal O_{X, P}$ and we use $f^{\leq k}$ to denote the truncation of a power series $f \in \Bbbk[[x_1, \ldots, x_d]]$ up to degree~$k$.

\begin{lemma} \label{lem:truncate}
Let $d$ be a positive integer.
Let $P \in X$ be a $\Bbbk$-rational point of a smooth integral $d$-dimensional algebraic $\Bbbk$-scheme.
Let $\Psi\colon \hat{\mathcal O}_{X, P} \to \Bbbk[[x_1, \ldots, x_d]]$ be a $\Bbbk$-algebra isomorphism.
Let $D_1, \ldots, D_d$ be effective Cartier divisors on $X$ such that $P$ is an isolated point of $\operatorname{Supp} D_1 \cap \ldots \cap \operatorname{Supp} D_d$.
Let $f_1, \ldots, f_d \in \Bbbk[[x_1, \ldots, x_d]]$ be the images under $\Psi$ of chosen local equations of respectively $D_1, \ldots, D_d$.
Then, for every sufficiently large positive integer~$N$, the origin $\bm 0$ is an isolated point of the intersection of the supports of the divisors $V(f_1^{\leq N}), \ldots, V(f_d^{\leq N})$ on $\mathbb A_{\Bbbk}^d$ and
\[
\operatorname{mult}_P(D_1 \cdot \ldots \cdot D_d) = \operatorname{mult}_{\bm 0}\mleft(V(f_1^{\leq N}) \cdot \ldots \cdot V(f_d^{\leq N})\mright).
\]
\end{lemma}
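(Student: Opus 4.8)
The plan is to reduce the assertion to the purely algebraic statement that, once $N$ is large, the ideal generated by the $f_i$ coincides with the ideal generated by their truncations $f_i^{\leq N}$ in the complete local ring $R := \Bbbk[[x_1, \dots, x_d]]$.

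For the set-up I would identify $\hat{\mathcal O}_{X, P}$ with $R$ via $\Psi$, write $\mathfrak m := (x_1, \dots, x_d)$, let $g_1, \dots, g_d \in \mathcal O_{X, P}$ be the chosen local equations of $D_1, \dots, D_d$ (so $f_i = \Psi(g_i)$), and set $I := (f_1, \dots, f_d) R$ and $I_N := (f_1^{\leq N}, \dots, f_d^{\leq N}) R$. Since $P$ is an isolated point of $\operatorname{Supp} D_1 \cap \dots \cap \operatorname{Supp} D_d$, the ideal $(g_1, \dots, g_d) \subseteq \mathcal O_{X, P}$ is $\mathfrak m_{X, P}$-primary, so $R / I$ is Artinian; put $m := \dim_{\Bbbk} R / I$. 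Because $\mathcal O_{X, P}$ is regular, the system of parameters $g_1, \dots, g_d$ is a regular sequence, so the higher Tor terms in Serre's multiplicity formula vanish and, using that $\kappa(P) = \Bbbk$ and that colengths are unchanged under completion, $\operatorname{mult}_P(D_1 \cdot \ldots \cdot D_d) = \dim_{\Bbbk}\!\bigl(\mathcal O_{X, P} / (g_1, \dots, g_d)\bigr) = \dim_{\Bbbk} R / I = m$ by \cite[Example~7.1.10]{Ful98}. The same reasoning applied to $\mathbb A^d_{\Bbbk}$ at the origin shows that, as soon as $I_N$ is $\mathfrak m$-primary with every $f_i^{\leq N}$ nonzero, the origin $\bm 0$ is an isolated point of $\operatorname{Supp} V(f_1^{\leq N}) \cap \dots \cap \operatorname{Supp} V(f_d^{\leq N})$ and $\operatorname{mult}_{\bm 0}(V(f_1^{\leq N}) \cdot \ldots \cdot V(f_d^{\leq N})) = \dim_{\Bbbk} R / I_N$. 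Hence it suffices to prove $I_N = I$ for every $N \geq m$.

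To prove $I_N = I$, I would first note that since $R / I$ is Artinian local of length $m$, the $m$-th power of its maximal ideal vanishes, i.e.\ $\mathfrak m^m \subseteq I$. Now fix $N \geq m$. Writing each monomial $x^{\alpha}$ of degree $m$ as $x^{\alpha} = \sum_{i=1}^d h_i f_i$ with $h_i \in R$ and substituting $f_i = f_i^{\leq N} + r_i$, where $r_i := f_i - f_i^{\leq N} \in \mathfrak m^{N+1}$, gives $x^{\alpha} \in I_N + \mathfrak m^{N+1}$, hence $\mathfrak m^m \subseteq I_N + \mathfrak m^{N+1}$. Passing to the Noetherian local ring $A := R / I_N$ with maximal ideal $\mathfrak n := \mathfrak m A$, this reads $\mathfrak n^m \subseteq \mathfrak n^{N+1}$; together with $\mathfrak n^{N+1} \subseteq \mathfrak n^{m+1} \subseteq \mathfrak n^m$ (valid since $N + 1 \geq m + 1$) it forces $\mathfrak n^m = \mathfrak n^{m+1}$, so Nakayama's lemma yields $\mathfrak n^m = 0$, that is $\mathfrak m^m \subseteq I_N$. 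Finally $r_i \in \mathfrak m^{N+1} \subseteq \mathfrak m^m$ lies in both $I$ and $I_N$, so $f_i^{\leq N} = f_i - r_i \in I$ and $f_i = f_i^{\leq N} + r_i \in I_N$; therefore $I_N \subseteq I$ and $I \subseteq I_N$, i.e.\ $I = I_N$.

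To conclude, for $N \geq m$ the ideal $I_N = I$ is $\mathfrak m$-primary, hence of height $d = \dim R$, so by Krull's height theorem it cannot be generated by fewer than $d$ elements; thus no $f_i^{\leq N}$ vanishes and the $V(f_i^{\leq N})$ are effective Cartier divisors on $\mathbb A^d_{\Bbbk}$. The local ring of their scheme-theoretic intersection at $\bm 0$ has completion $R / I_N$, which is Artinian, so that local ring is Artinian and $\bm 0$ is an isolated point of the intersection; moreover, by the first paragraph, $\operatorname{mult}_{\bm 0}(V(f_1^{\leq N}) \cdot \ldots \cdot V(f_d^{\leq N})) = \dim_{\Bbbk} R / I_N = \dim_{\Bbbk} R / I = \operatorname{mult}_P(D_1 \cdot \ldots \cdot D_d)$, as wanted. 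The content-bearing step is the equality $I = I_N$ for $N \gg 0$ — resting on the colength bound $\mathfrak m^m \subseteq I$ and the Nakayama argument upgrading $\mathfrak m^m \subseteq I_N + \mathfrak m^{N+1}$ to $\mathfrak m^m \subseteq I_N$ — while the step I expect to require the most care in a full write-up is the reduction in the first paragraph, where one must invoke Cohen–Macaulayness (of $\mathcal O_{X, P}$ and of $\Bbbk[x_1, \dots, x_d]_{(x_1, \dots, x_d)}$) to identify the two geometric intersection multiplicities with the colengths $\dim_{\Bbbk} R / I$ and $\dim_{\Bbbk} R / I_N$.
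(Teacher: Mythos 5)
Your proof is correct and follows essentially the same route as the paper: identify the intersection multiplicity with the colength $\dim_{\Bbbk}\Bbbk[[x_1,\ldots,x_d]]/(f_1,\ldots,f_d)$ via \cite[Example~7.1.10]{Ful98} and invariance of finite-length modules under completion, then observe that truncating at $N\gg 0$ does not change this colength. The only difference is that you supply the detail the paper leaves implicit — your Nakayama argument showing $(f_1,\ldots,f_d)=(f_1^{\leq N},\ldots,f_d^{\leq N})$ once $\mathfrak m^N$ is contained in the ideal — which is a valid (indeed slightly stronger) justification of that step.
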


\begin{proof}
Let $F_1, \ldots, F_d$ denote the chosen local equations of respectively $D_1, \ldots, D_d$.
Since $X$ is regular, by \cite[Example~7.1.10(a)]{Ful98}, the intersection multiplicity is computed by the length of a $\mathcal O_{X, P}$-module as below,
\[
\operatorname{mult}_P(D_1 \cdot \ldots \cdot D_d) = l_{\mathcal O_{X, P}}(\mathcal O_{X, P} / (F_1, \ldots, F_d)).
\]
As written in \cite[Example~7.1.10(b)]{Ful98}, modules of finite length are not altered by completion. We find
\[
\operatorname{mult}_P(D_1 \cdot \ldots \cdot D_d) = l_{\Bbbk[[x_1, \ldots, x_d]]}\mleft(\frac{\Bbbk[[x_1, \ldots, x_d]]}{(f_1, \ldots, f_d)}\mright).
\]
By \cite[Exercise~7.1.6(d)]{Liu06}, we have
\[
\operatorname{mult}_P(D_1 \cdot \ldots \cdot D_d) = \dim_{\Bbbk} \frac{\Bbbk[[x_1, \ldots, x_d]]}{(f_1, \ldots, f_d)}.
\]
Let $N_0$ be a positive integer such that $(x_1, \ldots, x_d)^{N_0} \subseteq (f_1, \ldots, f_d)$. Then, for every positive integer $N \geq N_0$, the origin $\bm 0$ is an isolated point of the intersection of the divisors $V(f_1^{\leq N}), \ldots, V(f_d^{\leq N})$ on~$\mathbb A_{\Bbbk}^d$ and
\[
\dim_{\Bbbk} \frac{\Bbbk[[x_1, \ldots, x_d]]}{(f_1, \ldots, f_d)} = \dim_{\Bbbk} \frac{\Bbbk[[x_1, \ldots, x_d]]}{(f_1^{\leq N}, \ldots, f_d^{\leq N})}.
\]
By \cite[Example~7.1.10(b)]{Ful98},
\[
\dim_{\Bbbk} \frac{\Bbbk[[x_1, \ldots, x_d]]}{(f_1^{\leq N}, \ldots, f_d^{\leq N})} = \operatorname{mult}_{\bm 0}\mleft(V(f_1^{\leq N}) \cdot \ldots \cdot V(f_d^{\leq N})\mright). \qedhere
\]
\end{proof}

\begin{definition}
Let $X$ be an algebraic $\Bbbk$-scheme and let $P \in X$ be a closed point.
For an effective Cartier divisor $D$ on $X$, we define
\[
\operatorname{ord}_P (D) := \sup \{\, j \ge 0 \mid f \in \mathfrak{m}_{X, P}^j \,\},
\]
where $f \in \mathcal{O}_{X, P}$ is the local equation of $D$.
In general, for a $\mathbb{Q}$-Cartier $\mathbb{Q}$-divisor $D = \sum a_i D_i$, where $a_i \in \mathbb{Q}$ and $D_i$ are effective Cartier divisors, we define
\[
\operatorname{ord}_P (D) := \sum a_i \operatorname{ord}_P (D_i),
\]
and call it the \textit{order} of $D$ at $P$.
\end{definition}

Note that $\operatorname{ord}_P (D) = \operatorname{mult}_P (D)$ if $X$ is regular at $P$.

The following result is useful when applying Theorem~\ref{body-MainThm}.%

\begin{lemma} \label{lem:intom}
Let $X$ be a normal $d$-dimensional integral scheme that is proper over an arbitrary field $\Bbbk$ with $d \ge 3$.
Let $D_1, D_2, S_1, \dots, S_{d-3}$ be effective $\mathbb{Q}$-Cartier $\mathbb{Q}$-divisors on $X$ that intersect properly, that is, any irreducible component of
\[
\Gamma := \operatorname{Supp} (S_1) \cap \cdots \cap \operatorname{Supp} (S_{d-3}) \cap \operatorname{Supp} (D_1) \cap \operatorname{Supp} (D_2)
\]
is a curve, and let $P$ be a $\Bbbk$-point of $X$ such that $P \in X$ is a local complete intersection singularity.
If $T$ is an effective $\mathbb{Q}$-Cartier $\mathbb{Q}$-divisor on $X$ such that it is nef and $\operatorname{Supp} (T)$ does not contain any irreducible component of $\Gamma$ passing through $P$, then
\[
T \cdot S_1 \cdot \ldots \cdot S_{d-3} \cdot D_1 \cdot D_2 \ge \mathrm{ord}_P (T) \operatorname{ord}_P (S_1) \cdots \operatorname{ord}_P (S_{d-3}) \operatorname{mult}_P (D_1 \cdot D_2).
\]
\end{lemma}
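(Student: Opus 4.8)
The plan is to split the argument into a global step, in which the nefness of $T$ lets us discard everything but the curves through $P$, and a purely local step at $P$, where the local complete intersection hypothesis makes the relevant multiplicities tractable through Hilbert--Samuel theory. First I would reduce to the case that $T,S_1,\dots,S_{d-3},D_1,D_2$ are all Cartier, which is legitimate since both sides are multilinear under replacing a divisor by a positive multiple. Next, if $\operatorname{ord}_P(T)=0$, or $\operatorname{ord}_P(S_i)=0$ for some $i$, or $\operatorname{mult}_P(D_1\cdot D_2)=0$, the right-hand side vanishes and the inequality is clear, because $T$ is nef and $Z:=S_1\cdots S_{d-3}\cdot D_1\cdot D_2$ is an effective $1$-cycle by the proper-intersection hypothesis, so $T\cdot Z\ge 0$. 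Hence we may assume $P\in\operatorname{Supp}(T)$, $P\in\operatorname{Supp}(S_i)$ for all $i$, and $P\in\operatorname{Supp}(D_1)\cap\operatorname{Supp}(D_2)$, so in particular $P\in\Gamma$.

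\emph{Global step.} Write $Z=\sum_j a_j C_j$ with $a_j\in\mathbb Z_{>0}$ and $C_j$ integral curves; since $Z$ is supported on the pure $1$-dimensional set $\Gamma$, each $C_j$ is an irreducible component of $\Gamma$. As $T$ is nef, $T\cdot C_j\ge 0$ for every $j$, so $T\cdot Z\ge\sum_{C_j\ni P}a_j(T\cdot C_j)$. For a $C_j$ through $P$ the hypothesis gives $\operatorname{Supp}(T)\not\supseteq C_j$, hence $T|_{C_j}$ is an effective Cartier divisor on the proper curve $C_j$ and, as $P$ is a $\Bbbk$-point, $T\cdot C_j=\deg(T|_{C_j})\ge\operatorname{length}_{\mathcal O_{C_j,P}}\bigl(\mathcal O_{C_j,P}/(\bar h)\bigr)$, where $h\in\mathcal O_{X,P}$ is a local equation of $T$ and $\bar h$ its image. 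The key input is the elementary fact that if $(C,\mathfrak m_C)$ is a Noetherian local ring of dimension $t\ge 1$ and $f\in\mathfrak m_C^{\,m}$ is a nonzerodivisor, then $e(\mathfrak m_C,C/fC)\ge m\,e(\mathfrak m_C,C)$; this follows by comparing the leading coefficients (in $n\gg 0$) of the two sides of $\operatorname{length}(C/(fC+\mathfrak m_C^n))\ge\operatorname{length}(C/\mathfrak m_C^n)-\operatorname{length}(C/\mathfrak m_C^{\,n-m})$, the inequality itself coming from $fC/(fC\cap\mathfrak m_C^n)\cong C/(\mathfrak m_C^n:f)$ together with $\mathfrak m_C^{\,n-m}\subseteq(\mathfrak m_C^n:f)$. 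Applying this with $C=\mathcal O_{C_j,P}$, $f=\bar h$ and $m=\operatorname{ord}_P(T)$ gives $T\cdot C_j\ge\operatorname{ord}_P(T)\operatorname{mult}_P(C_j)$, so that
\[
T\cdot Z\ \ge\ \operatorname{ord}_P(T)\sum_j a_j\operatorname{mult}_P(C_j)\ =\ \operatorname{ord}_P(T)\operatorname{mult}_P(Z),
\]
since $\operatorname{mult}_P(C_j)=0$ when $P\notin C_j$ (Definition~\ref{def:multiplicity}).

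\emph{Local step.} It remains to prove $\operatorname{mult}_P(Z)\ge\operatorname{ord}_P(S_1)\cdots\operatorname{ord}_P(S_{d-3})\operatorname{mult}_P(D_1\cdot D_2)$. Set $A:=\mathcal O_{X,P}$, which is Cohen--Macaulay of dimension $d$ because $P\in X$ is a local complete intersection, and let $f_1,\dots,f_{d-3},g_1,g_2$ be local equations at $P$ of $S_1,\dots,S_{d-3},D_1,D_2$. The proper-intersection hypothesis forces $\dim A/(f_1,\dots,f_{d-3},g_1,g_2)=1$, so these $d-1$ elements extend to a system of parameters and hence form a regular sequence in the Cohen--Macaulay ring $A$, as does any reordering of them. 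Therefore $B:=A/(f_1,\dots,f_{d-3},g_1,g_2)$ and $A/(g_1,g_2)$ are Cohen--Macaulay and equidimensional, of dimensions $1$ and $d-2$; since the relevant ideals are generated by regular sequences in every localization, $Z$ and $D_1\cdot D_2$ coincide near $P$ with the fundamental cycles of $B$ and of $A/(g_1,g_2)$ (cf.\ \cite[Example~7.1.10]{Ful98}), and the associativity formula for Hilbert--Samuel multiplicities gives $\operatorname{mult}_P(Z)=e(\mathfrak m_A,B)$ and $\operatorname{mult}_P(D_1\cdot D_2)=e(\mathfrak m_A,A/(g_1,g_2))$. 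Now I would strip off $f_{d-3},\dots,f_1$ one at a time: at the $i$-th step the image of $f_i$ is a nonzerodivisor of the Cohen--Macaulay local ring $A/(g_1,g_2,f_{d-3},\dots,f_{i+1})$ lying in the $\operatorname{ord}_P(S_i)$-th power of its maximal ideal, so the fact above yields
\[
e\bigl(\mathfrak m_A,A/(g_1,g_2,f_{d-3},\dots,f_i)\bigr)\ \ge\ \operatorname{ord}_P(S_i)\cdot e\bigl(\mathfrak m_A,A/(g_1,g_2,f_{d-3},\dots,f_{i+1})\bigr).
\]
Multiplying these $d-3$ inequalities and combining with the global step completes the proof.

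The delicate point is the local step. The Cohen--Macaulay (equivalently, local complete intersection) hypothesis is used essentially twice: first to turn the proper-intersection condition into an honest regular sequence, so that the refined intersection cycles agree near $P$ with the naive scheme-theoretic intersections, and then, through the associativity formula, to re-express $\operatorname{mult}_P(Z)$ and $\operatorname{mult}_P(D_1\cdot D_2)$ as Hilbert--Samuel multiplicities of quotient rings; the estimate for how $e$ behaves under cutting by a regular element of prescribed order is the technical core, and one has to check at each stage that the cutting element remains a nonzerodivisor of the correct order, which is exactly where permutability of regular sequences is needed. The global step is routine.
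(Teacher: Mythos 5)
Your proposal is correct, and it reaches the conclusion by a genuinely different route from the paper. The paper's proof splits off only the curves through $P$ and bounds $T\cdot C\ge\operatorname{mult}_P(T\cdot C)$, then does all the multiplicity bookkeeping \emph{extrinsically}: it realises $X$ near $P$ as a complete intersection $\mathcal X_1\cap\cdots\cap\mathcal X_c$ in a smooth ambient $\mathcal X$, extends $T,S_j,D_i$ to Cartier divisors there with $\operatorname{mult}_P(\mathcal S_j)=\operatorname{ord}_P(S_j)$ etc., and invokes Fulton's Corollary~12.4 at the smooth point $P\in\mathcal X$ to peel off all the factors $\operatorname{ord}_P(T)\operatorname{ord}_P(S_1)\cdots$ in one stroke. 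You instead work \emph{intrinsically} on $X$: the factor $\operatorname{ord}_P(T)$ is extracted already at the global stage via $\deg(T|_{C_j})\ge\operatorname{ord}_P(T)\operatorname{mult}_P(C_j)$ curve by curve, and the local step replaces Fulton's corollary by commutative algebra — regular sequences, the associativity formula, and the estimate $e(\mathfrak m,C/fC)\ge m\,e(\mathfrak m,C)$ for $f\in\mathfrak m^m$ a nonzerodivisor, applied iteratively. The details check out: the proper-intersection hypothesis on $\Gamma$ does force every component of $\operatorname{Supp}(D_1)\cap\operatorname{Supp}(D_2)$ through $P$ to have dimension $d-2$, so $g_1,g_2,f_{d-3},\dots,f_1$ is a permutable regular sequence and the identifications of $\operatorname{mult}_P(Z)$ and $\operatorname{mult}_P(D_1\cdot D_2)$ with Hilbert--Samuel multiplicities via \cite[Example~7.1.10(a)]{Ful98} are legitimate. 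What the paper's route buys is brevity, at the cost of choosing ambient extensions with controlled multiplicity; what yours buys is self-containedness and a visibly weaker hypothesis — your argument only ever uses that $\mathcal O_{X,P}$ is Cohen--Macaulay, not the full local-complete-intersection assumption, so it in fact proves a slightly more general statement.
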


\begin{proof}
Replacing $D_1, D_2, S_1, \dots, S_{d-3}$ and $T$ by their positive multiples, we may assume that they are Cartier divisors.
We decompose the effective $1$-cycle
\[
S_1 \cdot \ldots \cdot S_{d-3} \cdot D_1 \cdot D_2 = C + C',
\]
where $C$ and $C'$ are effective $1$-cycles on $X$ such that any curve in $\operatorname{Supp} (C)$ passes through $P$ and no curve in $\operatorname{Supp} (C')$ passes through~$P$.
We then obtain
\begin{equation} \label{eq:intom1}
\begin{split}
T \cdot S_1 \cdot \ldots \cdot S_{d-3} \cdot D_1 \cdot D_2 &\ge (T \cdot C) \\
&\ge \operatorname{mult}_P(T \cdot C) \\
&= \operatorname{mult}_P(T \cdot S_1 \cdot \ldots \cdot S_{d-3} \cdot D_1 \cdot D_2).
\end{split}
\end{equation}
where the first inequality holds since $T$ is nef and the second inequality holds since $T \cap C$ is a finite set of points.

From here on, we work locally around $P$.
In a neighborhood of $P$, $X$ can be identified with the complete intersection $\mathcal{X}_1 \cap \cdots \cap \mathcal{X}_c$ in a smooth $(d+c)$-dimensional integral scheme $\mathcal{X}$ of finite type over~$\Bbbk$, where $\mathcal{X}_i$ are hypersurfaces in $\mathcal{X}$.
Note that $P$ is necessarily a smooth point of $\mathcal{X}$ since it is a regular $\Bbbk$-point of~$\mathcal{X}$.
We can take Cartier divisors $\mathcal{D}_1, \mathcal{D}_2, \mathcal{S}_1, \dots, \mathcal{S}_{d-3}$ and $\mathcal{T}$ on $\mathcal{X}$ such that $\mathcal{D}_i|_X = D_i$ for $i = 1, 2$, $\mathcal{S}_j|_X = S_j$ for $i = 1, \dots, d-3$ and $\mathcal{T}|_X = T$.
We may assume that $\operatorname{ord}_P (S_j) = \operatorname{ord}_P (\mathcal{S}_j) = \operatorname{mult}_P (\mathcal{S}_j)$ for $j = 1, \dots, d-3$ and $\operatorname{ord}_P (T) = \operatorname{ord}_P (\mathcal{T}) = \operatorname{mult}_P (\mathcal{T})$.
Then we have
\begin{equation} \label{eq:intom2}
\begin{split}
& \operatorname{mult}_P(T \cdot S_1 \cdot \ldots \cdot S_{d-3} \cdot D_1 \cdot D_2) \\
&= \operatorname{mult}_P(\mathcal{T} \cdot \mathcal{S}_1 \cdot \ldots \cdot \mathcal{S}_{d-3} \cdot \mathcal{D}_1 \cdot \mathcal{D}_2 \cdot \mathcal{X}_1 \cdot \ldots \cdot \mathcal{X}_c) \\
&\ge \operatorname{mult}_P (\mathcal{T}) \operatorname{mult}_P (\mathcal{S}_1) \cdots \operatorname{mult}_P (\mathcal{S}_{d-3}) \operatorname{mult}_P (\mathcal{D}_1 \cdot \mathcal{D}_2 \cdot \mathcal{X}_1 \cdot \ldots \cdot \mathcal{X}_c) \\
&= \operatorname{ord}_P (T) \operatorname{ord}_P (S_1) \cdots \operatorname{ord}_P (S_{d-3}) \operatorname{mult}_P (D_1 \cdot D_2),
\end{split}
\end{equation}
where the inequality follows from \cite[Corollary 12.4]{Ful98} since $P \in \mathcal{X}$ is a smooth point.
The assertion follows from \eqref{eq:intom1} and \eqref{eq:intom2}.
\end{proof}

\subsection{The method of maximal singularities for Fano 3-folds}

In this section, the ground field is assumed to be $\mathbb{C}$.
We recall definitions concerning maximal singularities and explain the characterization of birational rigidity of Fano $3$-folds in terms of maximal singularities.
Throughout the paper, a \textit{Fano variety} is a normal projective $\mathbb{Q}$-factorial variety with only terminal singularities whose anticanonical divisor is ample.
By a \textit{divisorial contraction} $\varphi \colon Y \to X$, we mean a proper birational morphism between varieties with only terminal singularities such that $-K_Y$ is $\varphi$-ample and that the exceptional locus $E$ is a prime divisor on $Y$.

Let $X$ be a Fano $3$-fold of Picard number $1$.

\begin{definition}
A divisorial contraction $\varphi \colon Y \to X$ is called a \textit{maximal extraction} if there is a mobile linear system $\mathcal{M} \sim_{\mathbb{Q}} - n K_X$, where $n > 0$ is a rational number, such that the pair $(X, \frac{1}{n} \mathcal{M})$ is not canonical along $E$, that is, the inequality $\operatorname{mult}_E (\mathcal{M}) > n a_E(X)$ holds, where $a_E (X) := \operatorname{ord}_E K_{Y/X}$ is the discrepancy of $X$ along $E$.
The center of a maximal extraction is called a \textit{maximal center}.
\end{definition}

\begin{definition}
A birational automorphism $\sigma \colon X \dashrightarrow X$ is called an \textit{elementary self-link} or a \textit{Sarkisov self-link} if there is a divisorial contraction $\varphi \colon Y \to X$, a birational map $\tau \colon Y \dashrightarrow Y'$ which is a composite of inverse flips, a flop and flips between normal projective $\mathbb{Q}$-factorial $3$-folds with only terminal singularities and a divisorial contraction $\varphi' \colon Y' \to X$ such that the diagram
\[
\xymatrix{
Y \ar[d]_{\varphi} \ar@{-->}[r]^{\tau} & Y' \ar[d]^{\varphi'} \\
X \ar@{-->}[r]_{\sigma} & X}
\]
commutes.
In this case, we say that the elementary self-link $\sigma$ is \textit{initiated} by $\varphi$.
\end{definition}

The following is a characterization of birational rigidity which is well-known to experts.
We refer the reader to \cite[Lemma 2.34]{Oka18} for the proof (of a more general result).

\begin{theorem} \label{thm:chractBR}
Let $X$ be a Fano $3$-fold of Picard number $1$.
Then $X$ is birationally rigid if and only if, for each divisorial contraction $\varphi \colon Y \to X$,
\begin{itemize}
\item either $\varphi$ is not a maximal extraction,
\item or there exists an elementary self-link initiated by $\varphi$.
\end{itemize}
\end{theorem}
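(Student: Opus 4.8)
The plan is to derive this characterisation --- which is \cite[Lemma~2.34]{Oka18} specialised to Fano $3$-folds of Picard number one --- from the Sarkisov program for Mori fibre spaces in dimension three, together with the Noether--Fano inequality. I would prove the two implications as follows.

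For the ``if'' direction, I would assume that every maximal extraction initiates an elementary self-link and, towards a contradiction, that $X$ is not birationally rigid; among all birational maps $\chi\colon X\dashrightarrow W$ onto a Mori fibre space $W/B$ with $W\not\cong X$, I would fix one of minimal Sarkisov degree. Transporting a very ample complete linear system from $W$ back to $X$ and using that $\operatorname{Pic}(X)$ has rank one produces a mobile system $\mathcal{M}\sim_{\mathbb{Q}}-nK_X$ with $n>0$ rational; since $\chi$ is not an isomorphism and $K_X+\frac1n\mathcal{M}\sim_{\mathbb{Q}}0$ is nef, the Noether--Fano inequality forces $(X,\frac1n\mathcal{M})$ to be non-canonical, so there is a maximal extraction $\varphi\colon Y\to X$. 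By hypothesis $\varphi$ initiates an elementary self-link $\sigma\colon X\dashrightarrow X$, which is precisely the link by which the Sarkisov program untwists $\chi$ along $\varphi$, so $\chi\circ\sigma^{-1}\colon X\dashrightarrow W$ has strictly smaller Sarkisov degree. If $\chi\circ\sigma^{-1}$ were still not an isomorphism, this would contradict minimality; if it were an isomorphism, then $X\cong W$, against the choice of $W$. Hence $X$ is birationally rigid.

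For the ``only if'' direction, I would assume $X$ birationally rigid and take a maximal extraction $\varphi\colon Y\to X$ with exceptional prime divisor $E$ and witnessing mobile system $\mathcal{M}\sim_{\mathbb{Q}}-nK_X$, so that $Y$ is a $\mathbb{Q}$-factorial terminal $3$-fold with $\rho(Y)=2$ and $K_Y+\frac1n\mathcal{M}_Y\sim_{\mathbb{Q}}cE$, where $\mathcal{M}_Y$ is the strict transform of $\mathcal{M}$ and $c=a_E(X)-\frac1n\operatorname{mult}_E(\mathcal{M})<0$. Using the boundary $\frac1n\mathcal{M}_Y$, I would play the two-ray game on $Y$ starting from the extremal ray not contracted by $\varphi$: by termination of three-dimensional log MMP it is a finite sequence of flips, inverse flips and a flop $\tau\colon Y\dashrightarrow Y'$, never contracting $E$, followed by one further non-flipping extremal contraction. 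If that last contraction is of fibre type, it exhibits a Mori fibre space birational to $X$ but not isomorphic to it, contradicting rigidity; hence it is a divisorial contraction $\varphi'\colon Y'\to X'$ onto a $\mathbb{Q}$-factorial terminal $3$-fold $X'$ with $\rho(X')=1$, and since the image of $E$ on $Y'$ is an effective nonzero --- hence ample --- divisor, $-K_{X'}$ is ample, so $X'$ is a Fano $3$-fold of Picard number one birational to $X$. Birational rigidity gives $X'\cong X$, and then $\sigma:=\varphi'\circ\tau\circ\varphi^{-1}\colon X\dashrightarrow X$ is by construction an elementary self-link initiated by $\varphi$.

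The crux is the two-ray-game step in the ``only if'' direction: one must check that the game is well defined and terminating, that it stays within the moves allowed in the definition of an elementary self-link (inverse flips, a single flop, flips), that the divisor $E$ is never contracted, and that the final contraction is necessarily either a genuine Mori fibration or a divisorial contraction onto a Picard-number-one Fano --- which is precisely the content of the three-dimensional Sarkisov program. By comparison, the Noether--Fano reduction in the ``if'' direction and the realisation of a maximal singularity as the exceptional divisor of a divisorial contraction with $-K$ relatively ample --- obtained by a crepant extraction of the valuation achieving the canonical threshold --- are formal once that machinery is in place.
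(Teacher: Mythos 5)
Your argument is correct in outline and is essentially the expected one: the paper gives no proof of Theorem~\ref{thm:chractBR} but defers to \cite[Lemma~2.34]{Oka18}, and what that reference carries out is exactly the Noether--Fano reduction plus Sarkisov two-ray-game untwisting that you describe (with the uniqueness of the two-ray game on the rank-two variety $Y$ guaranteeing that the hypothesised self-link initiated by $\varphi$ is the untwisting link). The points you flag as needing verification --- well-definedness, termination and category-preservation of the game, and the dichotomy for the final contraction --- are precisely the content of the three-dimensional Sarkisov program, so there is no gap beyond the standard machinery you invoke.
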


\subsection{\texorpdfstring
    {Divisorial contractions to $cA_k$-points}
    {Divisorial contractions to cA\_k-points}
}

In this section, the ground field is assumed to be $\mathbb{C}$.
We recall the classification of divisorial contractions with center an isolated $cA_k$ point.

\begin{definition}
Let $k$ be a positive integer.
A $3$-fold germ $P \in X$ is called a $cA_k$-singularity (or a $cA_k$-point) if its general hyperplane section $P \in S$ is a Du Val singularity of type $A_k$.
\end{definition}

It is known that a $cA_k$ singularity is terminal if and only if it is isolated.

\begin{theorem}[{\cite[Theorem 1.13]{Kaw03}, \cite[Theorem 1.1]{Kaw02}, \cite[Theorem 2.6]{Yam18} and \cite[Theorem 6.10]{Pae24}}]
\label{thm:cldccA2}
Let $P \in X$ be an isolated $cA_k$-point, where $k \ge 1$, and let $\varphi$ be a divisorial contraction with center $P \in X$.
Then one of the following holds.
\begin{enumerate}
    \item After locally analytically identifying $P \in X$ with the germ
    \[
    o \in (x_1 x_2 + g (x_3, x_4) = 0) \subset \mathbb{A}^4,
    \]
    $\varphi$ is the weighted blow-up with weights $\operatorname{wt} (x_1, x_2, x_3, x_4) = (r_1, r_2, a, 1)$, where $r_1 + r_2 =  (k+1)a$ and $a$ is co-prime to $r_1$ and $r_2$, $g$ has weighted order $r_1 + r_2$ with weights $\operatorname{wt} (x_1, x_4) = (a, 1)$, and the monomial $x_3^{(r_1+r_2)/a} \in g$.
    \item The singularity $P \in X$ is of type $cA_1$ and, after locally analytically identifying $P \in X$ with the germ
    \[
    o \in (x_1 x_2 + x_3^2 + x_4^3 = 0) \subset \mathbb{A}^4,
    \]
    $\varphi$ is the weighted blow-up with weights $\operatorname{wt} (x_1, x_2, x_3, x_4) = (1, 5, 3, 2)$.
    \item The singularity $P \in X$ is of type $cA_2$ and, after locally analytically identifying $P \in X$ with the germ
    \[
    o \in (x_1^2 + x_2^2 + x_3^3 + x_1 x_4^2 = 0) \subset \mathbb{A}^4,
    \]
    $\varphi$ is the weighted blow-up with weights $\operatorname{wt} (x_1, x_2, x_3, x_4) = (4, 3, 2, 1)$.
\end{enumerate}
\end{theorem}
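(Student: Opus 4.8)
\noindent\emph{Plan of proof.}
The statement collects \cite[Theorem~1.13]{Kaw03}, \cite[Theorem~1.1]{Kaw02}, \cite[Theorem~2.6]{Yam18} and \cite[Theorem~6.10]{Pae24}, so in the body of the paper I would simply cite these; here I indicate how the argument runs. Since an isolated $cA_k$ point is Gorenstein and terminal, the exceptional prime divisor $E$ of $\varphi$ has positive \emph{integral} discrepancy $a := a_E(X) \ge 1$, and the classification divides into two tasks: first, showing that, analytically near $P$, the morphism $\varphi$ is the restriction to $X$ of a weighted blowup of $\mathbb{A}^4$ in coordinates adapted to the defining equation; second, enumerating the weight vectors that can occur.

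For the first task I would pass to the completion $\hat{\mathcal O}_{X,P} \cong \mathbb{C}[[x_1,x_2,x_3,x_4]]/(x_1 x_2 + g(x_3,x_4))$, with $g$ of multiplicity $k+1$ as in the normal form used in the statement. The valuation $v_E$ induces a filtration on $\mathcal O_{X,P}$, and the heart of Kawakita's method is to prove that, after a further analytic change of coordinates, this filtration is cut out by a monomial (weighted-order) valuation for some weights $\operatorname{wt}(x_1,x_2,x_3,x_4) = (r_1,r_2,r_3,r_4)$; then $Y$ is recovered, over the germ, as the weighted blowup of $\mathbb{A}^4$ with these weights restricted to the strict transform of $X$. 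This step is driven by the general hyperplane section $S$, which is Du Val of type $A_k$, by tracking its strict transform $\tilde S$ and the curve $E \cap \tilde S$ through $\varphi$, and by a careful study of the associated graded ring of $v_E$. I expect this to be the main obstacle: an a priori arbitrary divisorial contraction is only a proper birational morphism with irreducible exceptional divisor, and upgrading it to an analytic weighted blowup is a substantial undertaking — which is precisely why the paper borrows it wholesale.

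Granting that $\varphi$ is such a weighted blowup, what remains is numerical. Adjunction gives $a = (r_1 + r_2 + r_3 + r_4 - 1) - v_E(x_1 x_2 + g)$; for the strict transform of $X$ to be the expected weighted-homogeneous hypersurface one needs $v_E(x_1 x_2) = v_E(g) = r_1 + r_2$; and imposing that $Y$ have only terminal singularities, together with $\gcd(r_1,r_2,r_3,r_4) = 1$, forces in the generic case $r_4 = 1$, $r_3 = a$, $r_1 + r_2 = (k+1)a$ with $a$ coprime to both $r_1$ and $r_2$, and $g$ of weighted order exactly $r_1 + r_2$ containing the monomial $x_3^{(r_1+r_2)/a} = x_3^{k+1}$. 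This is conclusion~(1); note that the third weight equals the discrepancy $a$.

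Finally I would locate the two sporadic contractions. These can only appear when $a$ and $k$ are small enough that the equation of $X$ admits an analytic degeneration to a special normal form carrying a terminal weighted blowup not of the shape in~(1): the weights $(1,5,3,2)$ on the $cA_1$ germ $x_1 x_2 + x_3^2 + x_4^3$ (conclusion~(2)) and the weights $(4,3,2,1)$ on the ordinary $cA_2$ germ $x_1^2 + x_2^2 + x_3^3 + x_1 x_4^2$ (conclusion~(3)). Completeness of the list then follows from the discrepancy identity above, which bounds $r_1 + r_2 + r_3 + r_4$ in terms of $a$, so that for each pair $(a,k)$ only finitely many weight vectors must be examined: outside the family in~(1) and these two normal forms, every such vector either fails to realise the weighted order $v_E(x_1 x_2 + g)$ or produces a $Y$ with a non-terminal singularity, which one checks directly. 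This last finiteness-and-elimination step is what \cite{Yam18} and \cite{Pae24} supply.
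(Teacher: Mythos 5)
Your proposal is correct in the only sense that matters here: the paper offers no proof of this statement and simply imports it from \cite{Kaw03}, \cite{Kaw02}, \cite{Yam18} and \cite{Pae24}, exactly as you say you would do. Your sketch of how those references proceed (reduction of an arbitrary divisorial contraction to a weighted blowup via the general elephant and the valuation $v_E$, then the numerical elimination of weights using the discrepancy formula and terminality of $Y$) is a fair, if necessarily compressed, account of Kawakita's and Yamamoto's arguments, and nothing further is required on the paper's side.
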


\begin{remark}
    Let $\varphi$ be a divisorial contraction with center a $cA_k$ point $P \in X$.
    If $\varphi$ is the one described in (1) (resp.\ (2), resp.\ (3)) of Theorem~\ref{thm:cldccA2}, then its discrepancy is $a$ (resp.\ $4$, resp.\ $3$).
\end{remark}

We introduce a subclass of $cA_k$-singularity, called an \textit{ordinary} $cA_k$-\textit{singularity} that will be necessary in section~\ref{sc:SDS}.

\begin{definition}
    Let $P \in X$ be an isolated $cA_k$-point and $\varphi \colon Y \to X$ be a divisorial contraction with center $P$.
    The \textit{weights} of $\varphi$ is defined to be $(r_1, r_2, a, 1)$ (resp.\ $(1, 5, 3, 2)$, resp.\ $(4, 3, 2, 1)$) if $\varphi$ corresponds to (1) (resp.\ (2), resp.\ (3)) of Theorem~\ref{thm:cldccA2}.
    A divisorial contraction $\varphi$ is called \textit{exceptional} if it corresponds to either (2) or (3) of Theorem~\ref{thm:cldccA2}.
\end{definition}

\begin{definition} \label{def:ordinary-cA2}
    Let $P \in X$ be an isolated $cA_k$ point.
    We say that $P \in X$ is \textit{ordinary} if it does not admit a divisorial contraction of discrepancy greater than $1$ that is not exceptional.
\end{definition}

Divisorial contractions to an isolated ordinary $cA_k$ point is either a divisorial contraction described in (1) of Theorem~\ref{thm:cldccA2} with $a = 1$ or an exceptional divisorial contraction.
Most of the isolated $cA_2$ points are ordinary as the following remarks suggest.

\begin{remark}
    Let $P \in X$ be an isolated $cA_2$ point defined in $\mathbb{C}^4$ by the equation
    \[
    x y + f (z, t) = 0,
    \]
    where $f$ has order $3$.
    If the degree $3$ homogeneous part of $f$ is not the cube of a linear form in $z$ and $t$, then $P \in X$ is ordinary.
\end{remark}

\begin{remark}
    Let $P \in X$ be the isolated $cA_2$ point defined in $\mathbb{A}^4$ by the equation
    \[
    x y + z^3 + t^m = 0,
    \]
    where $m \ge 3$.
    Then it is ordinary if and only if $m \le 5$.
    If $m = 3$, then $P \in X$ does not admit a divisorial contraction of exceptional type, so that it admits 2 divisorial contractions of discrepancy $1$.
    If $m = 4$, then $P \in X$ is equivalent to
    \[
    x^2 + y^2 + z^3 + x t^2 = 0
    \]
    and, with this identification, the weighted blowup with weights $\operatorname{wt} (x, y, z, t) = (4, 3, 2, 1)$ is a divisorial contraction of exceptional type.
\end{remark}

\section{Local inequalities}

\subsection{Main inequality}

Theorem~\ref{thm:generalization of Fulton for a smooth variety} is a generalization of \cite[Theorem~3]{FL82} and \cite[Example~12.4.8]{Ful98} to weighted blowups with respect to a regular system of parameters.

\begin{theorem} \label{thm:generalization of Fulton for a smooth variety}
\GeneralizationOfFulton{\label{eqn:weighted mult equality}}{\label{eqn:weighted mult inequality}}
\end{theorem}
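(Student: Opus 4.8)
Every term in the statement is additive and $\mathbb Q$-homogeneous in each $D_i$, so after clearing denominators we may assume each $D_i$ is an effective Cartier divisor, defined near $P$ by $f_i \in \mathcal O_{X,P}$; shrinking $X$ to an affine neighbourhood of $P$, we may further assume $\operatorname{Supp} D_1 \cap \ldots \cap \operatorname{Supp} D_d = \{P\}$ and that each $D_i$ is principal. By Lemma~\ref{thm:weighted blowup basic description}, $Y$ is a normal integral algebraic $\Bbbk$-scheme, $v_E(f_i) = \bm w(f_i)$, the exceptional divisor is $E \cong \mathbb{P}_{\kappa(P)}(\bm w)$ with homogeneous coordinate ring the associated graded ring $\operatorname{gr}_{v_E}\mathcal O_{X,P} = \kappa(P)[x_1, \ldots, x_d]$ (with $\deg x_j = w_j$), and $\varphi^* D_i = \tilde D_i + v_E(D_i)E$ in $\operatorname{Div}(Y)$; put $m_i := v_E(D_i) = \bm w(f_i)$. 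Since $\varphi$ is an isomorphism over $X \setminus \{P\}$, we have $\operatorname{Supp} \tilde D_1 \cap \ldots \cap \operatorname{Supp} \tilde D_d \subseteq E$, so all the $0$-cycles occurring below are supported on closed subsets of $E$ and are proper over $\Bbbk$. Finally $\varphi^* \mathcal O_X(D_i)$ is trivial in a neighbourhood of $E$, so $\tilde D_i|_E$ and $-m_i E|_E$ have the same class in $\operatorname{Pic}(E) \otimes \mathbb Q$, and, by Lemma~\ref{thm:weighted blowup basic description}, $\tilde D_i \cap E = V_E(f_i^{\bm w})$ is the hypersurface of $\mathbb{P}_{\kappa(P)}(\bm w)$ cut out by the weighted leading form $f_i^{\bm w} \in \operatorname{gr}_{v_E}\mathcal O_{X,P}$.

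\textbf{The equality~\eqref{eqn:weighted mult equality}.}
By the projection formula for intersections with Cartier divisors and $\varphi_*[Y] = [X]$, one has $\varphi_*\bigl(\varphi^* D_1 \cdot \ldots \cdot \varphi^* D_d \cap [Y]\bigr) = D_1 \cdot \ldots \cdot D_d \cap [X]$, hence $\deg_{\Bbbk}(\varphi^* D_1 \cdot \ldots \cdot \varphi^* D_d) = [\kappa(P):\Bbbk]\operatorname{mult}_P(D_1 \cdot \ldots \cdot D_d)$. Expanding the left side multilinearly, which is legitimate because $Y$ is normal so that intersections of $\mathbb Q$-Cartier divisors make sense and are multilinear (Remark~\ref{def:intersection of Q-Cartier divisors}), and using $\varphi^* D_i = \tilde D_i + m_i E$, we get
\[
[\kappa(P):\Bbbk]\operatorname{mult}_P(D_1 \cdot \ldots \cdot D_d) = \sum_{S \subseteq \{1, \ldots, d\}} \Bigl(\prod_{i \in S} m_i\Bigr)\deg_{\Bbbk}\Bigl(\bigl(\textstyle\prod_{i \notin S} \tilde D_i\bigr) \cdot E^{|S|} \cap [Y]\Bigr).
\]
The summand for $S = \varnothing$ is $\deg_E(\tilde D_1 \cdot \ldots \cdot \tilde D_d)$. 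For $S \ne \varnothing$, peeling off one factor $E$ replaces $[Y]$ by $[E]$ and, by the projection formula again, lets the remaining $d-1$ divisor factors be restricted to $E$; replacing each $\tilde D_i|_E$ by $-m_i E|_E$ collapses the summand to $(-1)^{|S|-1}\bigl(\prod_{i=1}^d m_i\bigr)\delta$, where $\delta := \deg_E\bigl((-E|_E)^{d-1} \cap [E]\bigr)$. Since $\sum_{\varnothing \ne S}(-1)^{|S|-1} = 1$, we obtain $[\kappa(P):\Bbbk]\operatorname{mult}_P(D_1 \cdot \ldots \cdot D_d) = \deg_E(\tilde D_1 \cdot \ldots \cdot \tilde D_d) + \bigl(\prod_i m_i\bigr)\delta$. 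To compute $\delta$, apply this to the coordinate divisors $H_i := V_X(x_i)$: there $\operatorname{mult}_P(H_1 \cdot \ldots \cdot H_d) = \operatorname{length}_{\mathcal O_{X,P}}(\mathcal O_{X,P}/\mathfrak m_P) = 1$, each $m_i = \bm w(x_i) = w_i$, and $\operatorname{Supp} \tilde H_1 \cap \ldots \cap \operatorname{Supp} \tilde H_d = \varnothing$ because the $\tilde H_i \cap E = V_E(x_i)$ are the coordinate hyperplanes of $\mathbb{P}_{\kappa(P)}(\bm w)$; hence $\tilde H_1 \cdot \ldots \cdot \tilde H_d$ is a $0$-cycle class in $A_0(\varnothing) = 0$, so $\deg_E(\tilde H_1 \cdot \ldots \cdot \tilde H_d) = 0$ and therefore $\delta = [\kappa(P):\Bbbk]/(w_1 \cdots w_d)$. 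Substituting this back and dividing by $[\kappa(P):\Bbbk]$ yields~\eqref{eqn:weighted mult equality}.

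\textbf{The inequality~\eqref{eqn:weighted mult inequality} and the vanishing criterion.}
By~\eqref{eqn:weighted mult equality}, $\deg_E(\tilde D_1 \cdot \ldots \cdot \tilde D_d) = [\kappa(P):\Bbbk]\bigl(\operatorname{mult}_P(D_1 \cdot \ldots \cdot D_d) - \tfrac{m_1 \cdots m_d}{w_1 \cdots w_d}\bigr)$, and $\operatorname{mult}_P(D_1 \cdot \ldots \cdot D_d) = \operatorname{length}_{\mathcal O_{X,P}}\bigl(\mathcal O_{X,P}/(f_1, \ldots, f_d)\bigr)$ by \cite[Example~7.1.10]{Ful98} ($X$ being regular), so the whole assertion reduces to the local statement
\[
\operatorname{length}_{\mathcal O_{X,P}}\bigl(\mathcal O_{X,P}/(f_1, \ldots, f_d)\bigr) \;\ge\; \frac{\bm w(f_1) \cdots \bm w(f_d)}{w_1 \cdots w_d},
\]
together with the identification of its equality case. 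The ideal $(f_1, \ldots, f_d)$ is $\mathfrak m_P$-primary, so the left side is the Hilbert–Samuel multiplicity $e\bigl((f_1, \ldots, f_d)\bigr)$; each $f_i$ lies in the $v_E$-valuation ideal $I_i := \{\, g \in \mathcal O_{X,P} \mid v_E(g) \ge m_i \,\}$, which is $\mathfrak m_P$-primary. By the basic comparison of mixed multiplicities with multiplicities of ideals generated by elements of $\mathfrak m_P$-primary ideals (see \cite{HS06}), $e\bigl((f_1, \ldots, f_d)\bigr) \ge e(I_1, \ldots, I_d)$, with equality precisely when $(f_1, \ldots, f_d)$ is a joint reduction of $(I_1, \ldots, I_d)$. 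Since the $I_i$ are the valuation ideals of the monomial valuation $v_E$ with weights $\bm w$, one computes $e(I_1, \ldots, I_d) = m_1 \cdots m_d/(w_1 \cdots w_d)$ — for instance as $e\bigl((g_1, \ldots, g_d)\bigr)$ for general $g_i \in I_i$ (extending $\kappa(P)$ if it is finite), whose weighted leading forms $g_i^{\bm w}$ are general of weighted degree $m_i$ in $\operatorname{gr}_{v_E}\mathcal O_{X,P}$, hence a regular sequence, giving $\operatorname{length}_{\mathcal O_{X,P}}(\mathcal O_{X,P}/(g_i)) = \operatorname{length}\bigl(\operatorname{gr}_{v_E}\mathcal O_{X,P}/(g_i^{\bm w})\bigr) = m_1 \cdots m_d/(w_1 \cdots w_d)$ by the weighted Bézout formula (the Hilbert series $\prod_i(1 - s^{m_i})/(1 - s^{w_i})$ evaluated at $s = 1$). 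This gives the inequality, hence~\eqref{eqn:weighted mult inequality}. Finally, $(f_1, \ldots, f_d)$ is a joint reduction of $(I_1, \ldots, I_d)$ if and only if $f_1^{\bm w}, \ldots, f_d^{\bm w}$ form a system of parameters in $\operatorname{gr}_{v_E}\mathcal O_{X,P}$, equivalently have no common zero in $\mathbb A^d_{\kappa(P)}$ other than the origin; since $\tilde D_i \cap E = V_E(f_i^{\bm w})$, this is exactly the condition that $\operatorname{Supp} \tilde D_1 \cap \ldots \cap \operatorname{Supp} \tilde D_d \cap E$ be empty, which is the last assertion.

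\textbf{Main obstacle.}
The derivation of~\eqref{eqn:weighted mult equality} is essentially bookkeeping once Lemma~\ref{thm:weighted blowup basic description} is in hand. The substantive content is~\eqref{eqn:weighted mult inequality} together with the sharp vanishing criterion: in contrast with the ordinary blowup treated in \cite{FL82}, the variety $Y$ is only normal and the strict transforms $\tilde D_i$ may meet in a positive-dimensional subset of $E$, so the intersection $0$-cycle no longer visibly decomposes into nonnegative local contributions and the Fulton–Lazarsfeld-type argument does not transfer verbatim. The reduction to the commutative-algebra inequality $e\bigl((f_i)\bigr) \ge e(I_1, \ldots, I_d)$ and, above all, the identification of its equality case with emptiness of the common weighted tangent cone (via joint reductions in $\operatorname{gr}_{v_E}\mathcal O_{X,P}$) is the delicate step. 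A secondary point is the presence of a possibly nontrivial residue field: one passes to $\hat{\mathcal O}_{X,P} \cong \kappa(P)[[x_1, \ldots, x_d]]$, on which $v_E$ is the monomial valuation with weights $\bm w$, and carries the factor $[\kappa(P):\Bbbk]$ carefully through the degree maps.
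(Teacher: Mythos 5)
Your treatment of the equality~\eqref{eqn:weighted mult equality} is essentially the paper's argument: push forward along $\varphi$, expand $\varphi^*D_i=\tilde D_i+m_iE$ multilinearly, and pin down the self-intersection contribution of $E$ by running the same identity on the coordinate divisors $V_X(x_1),\ldots,V_X(x_d)$, whose strict transforms meet $E$ in the base-point-free coordinate hypersurfaces of $\mathbb P_{\kappa(P)}(\bm w)$. That part is correct.

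The inequality~\eqref{eqn:weighted mult inequality} and the vanishing criterion are where you diverge from the paper, and your route has a genuine error. The central claim $e(I_1,\ldots,I_d)=m_1\cdots m_d/(w_1\cdots w_d)$ for the valuation ideals $I_i=\{g: v_E(g)\ge m_i\}$ is false: mixed multiplicities of $\mathfrak m_P$-primary ideals are integers, whereas $m_1\cdots m_d/(w_1\cdots w_d)$ need not be. Concretely, take $d=2$, $\bm w=(1,2)$, $f_1=x_1$, $f_2=x_1+x_2$, so $m_1=m_2=1$ and $I_1=I_2=\mathfrak m_P$; then $e(I_1,I_2)=e(\mathfrak m_P)=1\ne 1/2$. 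The step that breaks is ``general $g_i\in I_i$ have weighted leading forms that are a regular sequence'': the graded piece of $\kappa(P)[x_1,\ldots,x_d]$ in weighted degree $m_i$ can have a base locus (here every form of weighted degree $1$ is a multiple of $x_1$), so general leading forms need not be a regular sequence, and the ``weighted B\'ezout'' series $\prod_i(1-s^{m_i})/(1-s^{w_i})$ need not even be a polynomial. This also invalidates your equality criterion: in the example above, $(f_1,f_2)=\mathfrak m_P$ is trivially a joint reduction of $(I_1,I_2)$, yet $\deg_E(\tilde D_1\cdot\tilde D_2)=1-1/2=1/2>0$ and $f_1^{\bm w}=f_2^{\bm w}=x_1$ do have a common zero on $E$; so ``joint reduction'' is not equivalent to ``the $f_i^{\bm w}$ form a system of parameters,'' and the chain of equivalences you assert is internally inconsistent. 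One could salvage the bare inequality via $e((f_1,\ldots,f_d))\ge e(I_1,\ldots,I_d)\ge m_1\cdots m_d/(w_1\cdots w_d)$ (the second inequality by comparing Newton polyhedra with the weighted simplex), but the sharp ``zero iff empty intersection'' statement does not come out of this. The paper instead proves both at once by the Fulton--Lazarsfeld excess-intersection mechanism on $Y$ itself: near $E$ one has $\mathcal O(b\tilde D_i)\cong\mathcal O(-bm_iE)$, the line bundle $L=\mathcal O_Y(-bE)|_E$ is ample and globally generated, so the twisted normal bundle $\bigl(\bigoplus_i L^{\otimes m_i}\bigr)\otimes L^{\vee}$ is globally generated, and \cite[Theorem~12.1(b)]{Ful98} bounds $\deg_E(\tilde D_1\cdot\ldots\cdot\tilde D_d)$ below by a sum of positive $L$-degrees of the distinguished components, which is the input you are missing.
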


\begin{proof}
We follow the proof of \cite[Theorem~3]{FL82}.
By replacing $D_i$ with their positive multiples, we may assume that $D_i$ are Cartier divisors.
Let $b$ be the least common multiple of $w_1, \ldots, w_d$.
Let $x_1, \ldots, x_d$ be the regular system of parameters at $P$ along which the weighted blowup was taken.
By \cite[Example~2.4.2]{Ful98}, we can replace $X$ with a suitable open neighborhood of~$P$ such that $D_1 \cap \ldots \cap D_d = \{P\}$ and such that $x_1, \ldots, x_d$ are defined on~$X$.
For every Weil divisor $D$ on~$Y$, $bD$ is Cartier.
Let $\eta\colon E \to P$ be the restriction of~$\varphi$.
Using the projection formula \cite[Proposition~2.3]{Ful98},
both of the following hold:
\begin{enumerate}[label=(\roman*), ref=\roman*]
\item $\eta_*(\varphi^* D_1 \cdot \ldots \cdot \varphi^* D_d) = D_1 \cdot \ldots \cdot D_d$,
\item for all $j \in \{1, \ldots, d-1\}$, $\eta_*(\varphi^* D_1 \cdot \ldots \cdot \varphi^* D_j \cdot E^{d-j}) = 0$.
\end{enumerate}
Denote $m_i := v_E(D_i)$. Equivalently,
\begin{enumerate}[label=(\roman*), ref=\roman*, resume]
\item \label{itm:multiplicity} for all $i \in \{1, \ldots, d\}$, $\varphi^*(D_i) = \tilde{D_i} + m_i E$.
\end{enumerate}
Let $H_i$ be the Weil divisor $V(x_i)$ on $X$ and let $\tilde{H}_i$ denote the strict transform of the divisor $H_i$ on~$Y$.
For all $i$, we have
\[
\varphi^*(H_i) = \tilde{H}_i + w_i E.
\]
By (i)--(iii) and bilinearity of intersection products,
\[
\begin{aligned}
    0 & = \eta_*\bigl(b \tilde{H}_1 \cdot \ldots \cdot b \tilde{H}_d \bigr)\\
    & = \eta_*\Bigl( b \bigl(\varphi^*(H_1) - w_1 E\bigr) \cdot \ldots \cdot b \bigl(\varphi^*(H_d) - w_d E\bigr)\Bigr)\\*
    & = b^d (H_1 \cdot \ldots \cdot H_d) + (-1)^d \cdot w_1 \cdot \ldots \cdot w_d \cdot \eta_*\bigl((bE)^d\bigr).
\end{aligned}
\]
Therefore,
\[
\eta_*\bigl((bE)^d\bigr) = \frac{(-1)^{d-1} b^d}{w_1 \cdot \ldots \cdot w_d} (H_1 \cdot \ldots \cdot H_d).
\]
After again replacing $X$ with a suitable open neighborhood of~$P$, we have $H_1 \cdot \ldots \cdot H_d = [P]$ and we find
\begin{enumerate}[label=(\roman*), ref=\roman*, resume]
\item $\eta_*\bigl((bE)^d\bigr) = \dfrac{(-1)^{d-1} b^d}{w_1 \cdot \ldots \cdot w_d}[P]$.
\end{enumerate}
By (i)--(iv) and bilinearity of intersection products,
\[
\begin{aligned}
    \eta_*(b \tilde D_1 \cdot \ldots \cdot b \tilde D_d)
    & = \eta_*\Bigl( b\bigl(\varphi^*(D_1) - m_1 E\bigr) \cdot \ldots \cdot b\bigl(\varphi^*(D_d) - m_d E\bigr) \Bigr)\\*
    & = b^d \mleft(D_1 \cdot \ldots \cdot D_d - \frac{m_1 \cdot \ldots \cdot m_d}{w_1 \cdot \ldots \cdot w_d} [P]\mright).
\end{aligned}
\]
The intersection class $b \tilde D_1 \cdot \ldots \cdot b \tilde D_d$ is a zero-cycle up to rational equivalence on $\tilde D_1 \cap \ldots \cap \tilde D_d \subseteq E$. By \cite[Theorem~1.4 and Definition~1.4]{Ful98}, all rationally equivalent zero-cycles $\alpha$ on $E$ have the same degree, which coincides with the degree of $\eta_*(\alpha)$.
This proves equation~\ref{eqn:weighted mult equality}.

Now we prove inequality~\ref{eqn:weighted mult inequality}.
Let $L$ be the ample line bundle $\mleft.\mathcal O_Y(-bE)\mright|_E$.
We shrink $X$ if necessary so that the divisors $D_1, \ldots, D_d$ are principal.
By (\ref{itm:multiplicity}), $\mathcal O(\tilde D_i) = \mathcal O(-m_i E)$.
Since every $b \tilde D_i$ is an effective Cartier divisor of~$Y$, the closed embedding
\[
\iota\colon b \tilde D_1 \times \ldots \times b \tilde D_d \to Y \times \ldots \times Y
\]
is a regular embedding of codimension~$d$. By \cite[Appendix B.6.2 or B.7.1]{Ful98} or \cite[Corollary~6.3.8]{Liu06}, the normal sheaf to $b \tilde D_1 \times \ldots \times b \tilde D_d$ in $Y \times \ldots \times Y$ is a vector bundle $\mathcal N$ of rank~$d$ on $b \tilde D_1 \times \ldots \times b \tilde D_d$.
Consider the commutative diagram
\[
\begin{tikzcd}
b \tilde D_1 \cap \ldots \cap b \tilde D_d \arrow[r, "", hook] \arrow[d, "g", hook] & Y \arrow[d, "", hook]\\
b \tilde D_1 \times \ldots \times b \tilde D_d \arrow[r, "\iota", hook] & Y \times \ldots \times Y
\end{tikzcd}
\]
where $Y \to Y \times \ldots \times Y$ is the diagonal embedding and $b \tilde D_1 \cap \ldots \cap b \tilde D_d$ is the scheme-theoretic intersection.
By \cite[Example~12.4.8]{Ful98}, the intersection class $b\tilde D_1 \cdot \ldots \cdot b\tilde D_d$ coincides with $s^*[C_{b \tilde D_1 \cap \ldots \cap b \tilde D_d} Y]$, where
\[
s\colon b \tilde D_1 \cap \ldots \cap b \tilde D_d \to g^* \mathcal N
\]
is the zero-section and $C_{b \tilde D_1 \cap \ldots \cap b \tilde D_d} Y$ is the normal cone to $b\tilde D_1 \cap \ldots \cap b\tilde D_d$ in~$Y$.
We have
\[
g^* \mathcal N = \bigoplus_{i \in \{1, \ldots, d\}} \mleft.L\mright|_{b \tilde D_1 \cap \ldots \cap b \tilde D_d}^{\otimes m_i}.
\]
Note that $L$ is a globally generated line bundle.
Therefore, $L^{\otimes m_i} \otimes L^{\vee}$ is generated by global sections. Since a finite direct sum of globally generated sheaves is globally generated,
we find that
\[
\mleft( \bigoplus_{i \in \{1, \ldots, d\}} L^{\otimes m_i} \mright) \otimes L^{\vee}
= \bigoplus_{i \in \{1, \ldots, d\}} \mleft( L^{\otimes m_i} \otimes L^{\vee} \mright)
\]
is globally generated.
By \cite[Theorem~12.1(b)]{Ful98}, we have
\[
\deg(s^*[C_{b \tilde D_1 \cap \ldots \cap b \tilde D_d} Y]) \geq \sum_{i \in \{1, \ldots, q\}} \deg_L(W_i) \geq 0,
\]
where $W_1, \ldots, W_q$ are the distinct irreducible (possibly nonreduced) components of $s^{-1}(C_{b \tilde D_1 \cap \ldots \cap b \tilde D_d} Y)$, where $q$ is a nonnegative integer.

Lastly, note that if $\tilde D_1 \cap \ldots \cap \tilde D_d$ is empty, then $(b \tilde D_1) \cdot \ldots \cdot (b \tilde D_d)$ is a cycle class on the empty scheme, therefore its degree is zero. Conversely, if $(b \tilde D_1) \cdot \ldots \cdot (b \tilde D_d)$ is zero, then $s^{-1}(C_{b \tilde D_1 \cap \ldots \cap b \tilde D_d} Y)$ is empty, therefore $\tilde D_1 \cap \ldots \cap \tilde D_d$ is empty.
\end{proof}

\begin{remark}
If $\bm w = (1, \ldots, 1)$ in Theorem~\ref{thm:generalization of Fulton for a smooth variety}, then we recover the classical intersection theory result. There is an error in the formula in \cite[Corollary in Example~12.4.8]{Ful98}, namely the factor $1 / {[\kappa(P) : \Bbbk]}$ is missing in front of $\sum_{i=1}^t m_i \deg(\alpha_i)$ and $\sum_{i=1}^t m_i \deg(Z_i)$.
The point $P$ is $\Bbbk$-rational in \cite[Corollary~12.4]{Ful98} and \cite[Theorem~3]{FL82}, so that $[\kappa(P) : \Bbbk] = 1$.
\end{remark}

\subsection{Intersection cycles of mobile linear systems around LCI singularities} \label{sec:LCI singularities}

Let $c \ge 0$, $d \ge 2$ be integers and let $\Bbbk$ be a field.
Let $P \in \mathcal{X}$ be the germ of a regular point on an algebraic $\Bbbk$-scheme of dimension $d+c$ with local coordinates $x_1, \dots, x_{d+c}$ at $P$, and let
\[
P \in X := (f_1 = \cdots = f_{c} = 0) \subset \mathcal{X}
\]
be a local complete intersection singularity of dimension $d$, where $f_1, \dots, f_c \in \mathcal{O}_{\mathcal{X},P}$ is a regular sequence.
We assume that $X$ is normal, that is, it is regular in codimension~$1$.
We choose a tuple ${\bm w} := (w_1, \dots, w_{d+c})$ of positive integers such that $\gcd {\bm w} := \gcd (w_1, \dots, w_{d+c}) = 1$.
We consider the ${\bm w}$-weights on the coordinates $x_1, \dots, x_{d+c}$, that is,
\[
\operatorname{wt} (x_1, \dots, x_{d+c}) = {\bm w} = (w_1, \dots, w_{d+c}).
\]

Let $\Phi \colon \mathcal{Y} \to \mathcal{X}$ be the weighted blow-up of $\mathcal{X}$ at $P \in \mathcal{X}$ with weights $\operatorname{wt} (x_1, \dots, x_{d+c}) = {\bm w}$ and let $\mathcal{E}$ be its exceptional divisor.
Let $Y$ be the strict transform of $X$ via $\Phi$ and let $\varphi$ be the restriction $\Phi|_Y$.
Let $E$ be the exceptional divisor of $\varphi$.
We consider the following condition.

\begin{condition} \label{cond:normalwbl}
The closed subscheme
\begin{equation} \label{eq:normalwbl}
(f^{\bm w}_1 = \cdots = f^{\bm w}_c = 0) \subset \mathbb{P} (\bm w)
\end{equation}
is an integral scheme of dimension $d-1$, where $\mathbb{P} ({\bm w}) = \mathbb{P} (w_1, \dots, w_{d+c})$ is the weighted projective space with homogeneous coordinates $x_1, \dots, x_{d+c}$ of weights $w_1, \dots, w_{d+c}$, respectively.
\end{condition}

\begin{lemma} \label{lem:lciexceptional}
Under the above setting, we assume that \emph{Condition \ref{cond:normalwbl}} is satisfied.
Then the following assertions hold.
\begin{enumerate}
\item $\mathcal{E}|_Y = E$ and it is a prime divisor.
\item The variety $Y$ is normal.
\item For any Cartier divisor $D$ on $X$, there is a Cartier divisor $\mathcal{D}$ on $\mathcal{X}$ such that $\mathcal{D}|_X = D$ and $v_{\mathcal{E}} (\mathcal{D}) = v_E (D)$.
\end{enumerate}
\end{lemma}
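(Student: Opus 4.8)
The plan is to prove the three assertions in turn, extracting everything from Condition~\ref{cond:normalwbl} together with the properties of weighted blowups collected in Lemma~\ref{thm:weighted blowup basic description}. After shrinking to an affine neighbourhood of $P$ on which $x_1, \dots, x_{d+c}$ are defined, set $g_i := t^{\bm w(f_i)} f_i$, so that by Lemma~\ref{thm:weighted blowup basic description}(\ref{itm:weighted blowup basic description - total transform}) the strict transform of $V_{\mathcal X}(f_i)$ is $V_{\mathcal Y}(g_i)$, and note that the restriction of $g_i$ to $\mathcal E \cong \mathbb P(\bm w)$ is the least weight part $f_i^{\bm w}$. First I would identify $Y = V_{\mathcal Y}(g_1, \dots, g_c)$ scheme-theoretically. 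Away from $\mathcal E$ the coordinate $t$ is a unit, so $V_{\mathcal Y}(g_1, \dots, g_c) \setminus \mathcal E \cong X \setminus \{P\}$, which is integral of dimension $d$ and dense; on the other hand $V_{\mathcal Y}(g_1, \dots, g_c) \cap \mathcal E \subseteq V_{\mathbb P(\bm w)}(f_1^{\bm w}, \dots, f_c^{\bm w})$ has dimension $d-1$ by Condition~\ref{cond:normalwbl}, while every irreducible component of $V_{\mathcal Y}(g_1, \dots, g_c)$ has dimension $\ge d$ by Krull's theorem; hence no component lies in $\mathcal E$, every component meets $X \setminus \{P\}$, and since $X$ is locally irreducible (being normal) the scheme $V_{\mathcal Y}(g_1, \dots, g_c)$ is irreducible with reduction $Y$. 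As $\mathcal Y$ is normal and integral by Lemma~\ref{thm:weighted blowup basic description}(\ref{itm:weighted blowup basic description - normal}), hence catenary and equidimensional, the $g_i$ cut the local dimension down by exactly $c$ at every point of $V_{\mathcal Y}(g_1,\dots,g_c)$; since $\mathcal Y$ is Cohen--Macaulay (its charts being normal affine toric schemes over $\kappa(P)$), the $g_i$ form a regular sequence there, $V_{\mathcal Y}(g_1, \dots, g_c)$ is Cohen--Macaulay, and being generically reduced it is reduced. Thus $Y = V_{\mathcal Y}(g_1, \dots, g_c)$ and $E = \mathcal E|_Y = V_{\mathbb P(\bm w)}(f_1^{\bm w}, \dots, f_c^{\bm w})$, integral of dimension $d-1$ by Condition~\ref{cond:normalwbl}, hence a prime divisor on $Y$; it is the exceptional locus of $\varphi$ because $\Phi$ is an isomorphism over $\mathcal X \setminus \{P\}$ and $\Phi^{-1}(P) = \mathcal E$ by Lemma~\ref{thm:weighted blowup basic description}(\ref{itm:weighted blowup basic description - exceptional locus}). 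This proves (1), and the computation also shows $Y$ is integral and Cohen--Macaulay, hence $S_2$.

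For (2) it then remains to check that $Y$ is regular in codimension one. Over $X \setminus \{P\}$ the morphism $\varphi$ is an isomorphism onto $X\setminus\{P\}$, which is $R_1$ since $X$ is normal, so the only candidate bad point is the generic point $\eta$ of $E$. Since $E$ is integral it is generically reduced, so $\mathcal O_{E, \eta}$ is a field; hence the $c$ elements $f_1^{\bm w}, \dots, f_c^{\bm w}$ generate the maximal ideal of the $c$-dimensional ring $\mathcal O_{\mathbb P(\bm w), \eta}$, forcing it to be regular with $f_1^{\bm w}, \dots, f_c^{\bm w}$ a regular system of parameters, so $\mathbb P(\bm w)$ is regular at $\eta$. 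Examining the cyclic quotient charts of $\Phi$ then leaves two possibilities. Either $\mathcal O_{\mathcal Y, \eta}$ is also regular: then the ideal of $\mathcal E$ at $\eta$ is principal, together with the $g_i$ it generates the maximal ideal of the $(c+1)$-dimensional regular ring $\mathcal O_{\mathcal Y, \eta}$, so $g_1, \dots, g_c$ is part of a regular system of parameters and $\mathcal O_{Y, \eta} = \mathcal O_{\mathcal Y, \eta}/(g_1, \dots, g_c)$ is a discrete valuation ring. Or else $c = 1$ and $f_1^{\bm w}$ is, using reducedness of $V_{\mathbb P(\bm w)}(f_1^{\bm w})$, a scalar multiple of a coordinate, which forces $X$ to be smooth at $P$; in that case $Y$ is the weighted blowup of the regular scheme $X$ with respect to the induced regular system of parameters, hence normal by Lemma~\ref{thm:weighted blowup basic description}(\ref{itm:weighted blowup basic description - normal}). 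Either way $Y$ is $R_1$, proving (2).

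For (3), let $g \in \mathcal O_{X,P}$ be a local equation of $D$ and put $m := v_E(D)$. Since $\mathcal X$ is regular I pick a lift $\tilde g_0 \in \mathcal O_{\mathcal X,P}$ of $g$, defining a Cartier divisor $\mathcal D_0$ on $\mathcal X$ with $\mathcal D_0|_X = D$; by Lemma~\ref{thm:weighted blowup basic description}(\ref{itm:weighted blowup basic description - discrete valuation}), $v_{\mathcal E}(\mathcal D_0) = \bm w(\tilde g_0) =: m_0$, and $m_0 \le m$ because $\mathcal E|_Y = E$ is reduced, so restriction to $Y$ cannot decrease the vanishing order. If $m_0 < m$, then comparing $\varphi^* D = \widetilde D + mE$ with the restriction of $\Phi^*\mathcal D_0 = \widetilde{\mathcal D}_0 + m_0 \mathcal E$ to $Y$ and using $\mathcal E|_Y = E$ shows $\widetilde{\mathcal D}_0 \supseteq E$, i.e.\ the weighted-homogeneous form $\tilde g_0^{\bm w}$ of degree $m_0$ vanishes on $E = V_{\mathbb P(\bm w)}(f_1^{\bm w}, \dots, f_c^{\bm w})$. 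Since the ideal $(f_1^{\bm w}, \dots, f_c^{\bm w})$ is prime — it is generated by the regular sequence found above and its $\operatorname{Proj}$ is integral — we may write $\tilde g_0^{\bm w} = \sum_i a_i f_i^{\bm w}$ with $a_i$ weighted-homogeneous, and then $\tilde g_1 := \tilde g_0 - \sum_i \hat a_i f_i$, for lifts $\hat a_i$ with $\bm w(\hat a_i) = \bm w(a_i)$, is again a lift of $g$ but of strictly larger weight. Iterating (the weights are integers bounded above by $m$) produces after finitely many steps a lift $\tilde g$ of $g$ with $\bm w(\tilde g) = m$; the Cartier divisor $\mathcal D$ it defines satisfies $\mathcal D|_X = D$ and $v_{\mathcal E}(\mathcal D) = m = v_E(D)$, which is (3).

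The main obstacle is the codimension-one regularity of $Y$ at $\eta$ in step~(2): one must carry out the local analysis of the quotient charts of $\Phi$ carefully enough to conclude that whenever $\mathbb P(\bm w)$ is regular along $E$ but $\mathcal Y$ is not, Condition~\ref{cond:normalwbl} forces $X$ to be smooth at $P$, so that the assertion reduces to the already-known smooth case. The remaining steps are bookkeeping with the graded algebras of Definition~\ref{def:weighted blowup}, the regular-sequence condition extracted from Condition~\ref{cond:normalwbl}, and the successive-approximation argument of step~(3).
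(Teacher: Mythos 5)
Your parts (1) and (3) are essentially correct and, in fact, supply details that the paper's proof leaves implicit: your identification of $Y$ with $V_{\mathcal Y}(g_1,\dots,g_c)$ via the dimension count and Condition~\ref{cond:normalwbl} is exactly what is needed to justify the paper's opening assertion that $\mathcal E\cap Y$ is the weighted complete intersection \eqref{eq:normalwbl}, and your successive-approximation argument in (3) is precisely what the paper compresses into ``possibly replacing $g$, we may assume $g^{\bm w}\notin(f_1^{\bm w},\dots,f_c^{\bm w})$'' before concluding, as you do, that the strict transform of $\operatorname{div}(g)$ does not contain $E$. (One small caveat in (1): the charts of $\mathcal Y$ are not toric schemes over $\kappa(P)$, since $\mathcal X$ is an arbitrary regular scheme; the Cohen--Macaulayness you need is more safely obtained by passing to the smooth $\boldsymbol{\mu}_{w_i}$-covers of the charts, where $V(\tilde g_1,\dots,\tilde g_c)$ is a genuine complete intersection in a regular scheme.)

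The genuine gap is in (2), exactly where you flag it, and the dichotomy you assert there is false, not merely unproven. Take $d=3$, $c=2$, $\mathcal X=\mathbb A^5$, $f_1=x_1$, $f_2=x_2x_3-x_4x_5$, $\bm w=(1,2,2,2,2)$. Then $X\cong(x_2x_3=x_4x_5)\subset\mathbb A^4$ is singular at $P$ and $c=2$; Condition~\ref{cond:normalwbl} holds, since $(x_1=x_2x_3-x_4x_5=0)$ is a smooth quadric surface inside $V(x_1)\cong\mathbb P^3\subset\mathbb P(1,2,2,2,2)$. In the chart $x_2\neq0$ of $\mathcal Y$ the $\boldsymbol{\mu}_2$-action has type $\frac12(1,1,0,0,0)$, so $\mathcal Y\cong V(AC-B^2)\times\mathbb A^3$ there, with $3$-dimensional singular locus $V(A,B,C)\times\mathbb A^3$ containing the $2$-dimensional $E$; hence $\mathcal O_{\mathcal Y,\eta}$ is \emph{not} regular even though $c=2$ and $X$ is not smooth at $P$. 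Neither horn of your dichotomy applies, so your proof of $R_1$ at $\eta$ collapses in this case (the conclusion is still true --- here $Y\cap U_2\cong\mathbb A^3$ --- but your argument cannot see it). The paper's route avoids the problem entirely: it covers $Y$ by the charts $U_i$, realizes each as the quotient by $\boldsymbol{\mu}_{w_i}$ of a complete intersection in a smooth cover of the chart of $\mathcal Y$, checks that this cover is normal (it is $S_2$ because it is a local complete intersection, and $R_1$ is verified upstairs where the ambient space is regular), and then uses that finite quotients of normal varieties are normal. If you wish to keep your route, you must carry out the local analysis in the case where $\mathcal Y$ is singular along $E$, which is precisely the work you defer.
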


\begin{proof}
The scheme-theoretic intersection $\mathcal{E} \cap Y$ is isomorphic to the closed subscheme defined in \eqref{eq:normalwbl} and $E$ coincides with them set-theoretically.
The scheme $\mathcal{E} \cap Y$ is integral by Condition \ref{cond:normalwbl}.
It follows that we have $\mathcal{E}|_Y = E$ as divisors on $Y$.
This proves (1).

We prove (2).
The variety $Y$ is covered by Zariski open subsets $U_i$ for $i = 1, \dots, d+c$, where $U_i$ is the quotient of a local complete intersection variety which is regular in codimension $1$ by an action of the cyclic group $\boldsymbol{\mu}_{w_i}$.
In particular, $Y$ is normal since the quotient of a normal variety is normal.

We prove (3).
We may assume that $D$ is effective. Let $\phi \in \mathcal{O}_{X,P}$ be the local equation of $D$ and let $g \in \mathcal{O}_{\mathcal{X},P}$ be such that its residue class in $\mathcal{O}_{X,P} \cong \mathcal{O}_{\mathcal{X},P}/(f_1,\dots,f_c)$ is $\phi$.
Possibly replacing $g$, we may assume that $g^{\bm w} \notin (f_1^{\bm w}, \dots, f_c^{\bm w})$.
We set $\mathcal{D} := \mathrm{div} (g)$, which is a Cartier divisor on $\mathcal{X}$.
We have
\begin{equation} \label{eq:ineqliclem}
\Phi^*\mathcal{D} = \Phi_*^{-1}\mathcal{D} + v_{\mathcal{E}} (\mathcal{D}) \mathcal{E}.
\end{equation}
The restriction $(\Phi_*^{-1}\mathcal{D})|_{\mathcal{E}}$ is isomorphic to the closed subscheme $(g^{\bm w} = 0)$ in $\mathcal{E}$, where we identify $\mathcal{E}$ with the weighted projective space $\mathbb{P} ({\bm w})$ with homogeneous coordinates $x_1, \dots, x_{d+c}$ of weights $w_1, \dots, w_{d+c}$, respectively.
It follows that $\Phi_*^{-1}\mathcal{D}$ does not contain $E$, and hence $(\Phi_*^{-1}\mathcal{D})|_Y = \varphi_*^{-1} D$.
By restricting the equality \eqref{eq:ineqliclem} to $Y$, we obtain
\[
\varphi^*D = \varphi_*^{-1} D + v_{\mathcal{E}} (\mathcal{D}) E
\]
since $\mathcal{E}|_Y = E$.
This shows $v_{\mathcal{E}} (\mathcal{D}) = v_E (D)$.
\end{proof}

\begin{theorem} \label{thm:ineqmobLCI}
Under the above setting, we assume that $w_1 \le w_2 \le \cdots \le w_{d+c}$ and that \emph{Condition \ref{cond:normalwbl}} is satisfied.
Suppose that there is a mobile linear system $\mathcal{M}$ of Cartier divisors on $X$ and a rational number $n > 0$ such that the pair $(X, \frac{1}{n} \mathcal{M})$ is not canonical at $E$.
Then, for general members $D_1, D_2 \in \mathcal{M}$, we have
\[
\operatorname{mult}_P (D_1 \cdot D_2) > \frac{w (f_1) \cdots w (f_c) a_E(X)^2}{w_{d-1} \cdots w_{d+c}} n^2
\]
\end{theorem}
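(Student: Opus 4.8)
The strategy is to combine the weighted Fulton–Lazarsfeld inequality (Theorem~\ref{thm:generalization of Fulton for a smooth variety}) applied on the ambient scheme $\mathcal{X}$ with the non-canonicity hypothesis, which controls $v_E(\mathcal{M})$ from below. First I would set up the extension data: by Lemma~\ref{lem:lciexceptional}(3), for general members $D_1,D_2\in\mathcal{M}$ choose Cartier divisors $\mathcal{D}_1,\mathcal{D}_2$ on $\mathcal{X}$ with $\mathcal{D}_i|_X=D_i$ and $v_{\mathcal{E}}(\mathcal{D}_i)=v_E(D_i)=:m$. Also take general very ample hyperplanes $\mathcal{H}_{c+1},\dots,\mathcal{H}_{d-2}$ through $P$ on $\mathcal{X}$ (there are $d-c-2$ of them), each with $v_{\mathcal{E}}(\mathcal{H}_j)=w_{j}$ — more precisely, a general hyperplane has weighted order equal to the smallest available weight, so I need to be a little careful: I want hyperplanes whose strict transforms meet $E$ transversally and whose $\mathcal{E}$-multiplicities, when multiplied together with $w(f_1)\cdots w(f_c)$ and divided into the product of all the $w_i$, leave exactly $w_{d-1}w_d w_{d+1}\cdots w_{d+c}$ in the denominator. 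The cleanest route is to apply the $d+c$-dimensional version of Theorem~\ref{thm:generalization of Fulton for a smooth variety} on $\mathcal{X}$ to the $d+c$ divisors $\mathcal{X}_1,\dots,\mathcal{X}_c$ (the hypersurfaces $f_i=0$), $\mathcal{D}_1$, $\mathcal{D}_2$, and $d+c-2-c=d-2$ general hyperplanes. Wait — that is $c+2+(d-2)=d+c$ divisors, good. Their weighted orders are $w(f_1),\dots,w(f_c)$, $m$, $m$, and the weighted orders of $d-2$ general hyperplanes, which I would choose to be $w_1,\dots,w_{d-2}$ by picking the hyperplanes appropriately (a general member of the linear system spanned by $x_1,\dots,x_{d+c}$ restricted suitably; or just take the $d-2$ coordinate hyperplanes $x_1=0,\dots,x_{d-2}=0$ after a generic coordinate change, which have weights $w_1,\dots,w_{d-2}$ since $w_1\le\cdots\le w_{d+c}$).

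Second, I would invoke Theorem~\ref{thm:generalization of Fulton for a smooth variety} on $\mathcal{X}$: since $P\in\mathcal{X}$ is a $\Bbbk$-rational regular point and the $d+c$ divisors meet only at $P$ (after shrinking — using Condition~\ref{cond:normalwbl} to ensure the strict transforms, cut out by the least-weight parts, intersect $\mathcal{E}$ properly so the intersection is isolated at $P$), the equality gives
\[
\operatorname{mult}_P(\mathcal{X}_1\cdots\mathcal{X}_c\cdot\mathcal{D}_1\cdot\mathcal{D}_2\cdot\mathcal{H}_1\cdots\mathcal{H}_{d-2})
= \frac{w(f_1)\cdots w(f_c)\, m^2\, w_1\cdots w_{d-2}}{w_1\cdots w_{d+c}} + \frac{\deg_{\mathcal{E}}(\cdots)}{1},
\]
and dropping the nonnegative $\deg_{\mathcal{E}}$ term yields a lower bound. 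On the left, $\mathcal{X}_1\cdots\mathcal{X}_c\cdot\mathcal{H}_1\cdots\mathcal{H}_{d-2}$ cuts $X$ down to a surface germ, and by Lemma~\ref{thm:intersect with general hyperplanes} together with the fact that cutting $X$ by the $\mathcal{X}_i$ recovers $X$ itself, the left-hand multiplicity equals $\operatorname{mult}_P(D_1\cdot D_2)$ — I should spell this out via the projection/restriction behaviour of intersection multiplicities for complete intersections (as in the proof of Lemma~\ref{lem:intom}, using \cite[Corollary~12.4]{Ful98}), being careful that the general hyperplanes on $\mathcal{X}$ restrict to general hyperplanes on $X$ at the smooth-of-$X$-as-a-scheme locus, i.e.\ at $P$ itself inside the ambient smooth $\mathcal{X}$, so that no extra multiplicity is lost. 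This gives
\[
\operatorname{mult}_P(D_1\cdot D_2) \ge \frac{w(f_1)\cdots w(f_c)}{w_{d-1}w_d\cdots w_{d+c}}\, m^2 .
\]

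Third, I translate the non-canonicity of $(X,\frac1n\mathcal{M})$ at $E$ into $m=v_E(D_i) > n\,a_E(X)$. By definition of discrepancy and the non-canonical hypothesis, $\operatorname{mult}_E(\mathcal{M}) > n\,a_E(X)$, and since $D_i$ is a general member, $v_E(D_i)=\operatorname{mult}_E(\mathcal{M})$; here I use Lemma~\ref{thm:weighted blowup basic description}(\ref{itm:weighted blowup basic description - discrete valuation}) identifying $v_E$ on $\mathcal{O}_{X,P}$ (lifted via Lemma~\ref{lem:lciexceptional}(3)) with the weighted order. Substituting $m^2 > n^2 a_E(X)^2$ into the displayed inequality and noting the inequality is strict because $m$ is a positive integer strictly exceeding $n\,a_E(X)$ — actually strictness already follows from $m > n\,a_E(X) \ge 0$, since squaring preserves strict inequality for nonnegative reals — produces
\[
\operatorname{mult}_P(D_1\cdot D_2) > \frac{w(f_1)\cdots w(f_c)\, a_E(X)^2}{w_{d-1}w_d\cdots w_{d+c}}\, n^2,
\]
which is the claim.

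The main obstacle I anticipate is the bookkeeping in the second step: making sure that restricting the ambient intersection on $\mathcal{X}$ down to $X$ neither loses nor gains multiplicity, and that the $d-2$ "extra" divisors can genuinely be taken with weighted orders $w_1,\dots,w_{d-2}$ (the smallest weights) while still having strict transforms that meet $E$ in the right dimension — this is exactly where Condition~\ref{cond:normalwbl} and the ordering $w_1\le\cdots\le w_{d+c}$ are used, and where one must check that the relevant intersection on $\mathcal{X}$ is isolated at $P$ after shrinking. A secondary subtlety is that $\mathcal{D}_1,\mathcal{D}_2$ need not meet properly on $\mathcal{X}$ a priori; but since $D_1,D_2$ are general members of a mobile system on $X$ and $P$ is isolated in $\operatorname{Supp}D_1\cap\operatorname{Supp}D_2$ (which must be checked, or arranged, using mobility and $\operatorname{mult}_P(D_1\cdot D_2)<\infty$), after intersecting with the $\mathcal{X}_i$ and the general hyperplanes the total intersection on $\mathcal{X}$ is indeed isolated at $P$, as required to apply Theorem~\ref{thm:generalization of Fulton for a smooth variety}.
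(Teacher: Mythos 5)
Your proposal is correct and follows essentially the same route as the paper: lift $D_1,D_2$ to $\mathcal{D}_1,\mathcal{D}_2$ on $\mathcal{X}$ via Lemma~\ref{lem:lciexceptional}(3), cut by $d-2$ general hyperplanes through $P$ chosen with $v_{\mathcal{E}}(\mathcal{H}_i)\ge w_i$ (using the ordering of the weights), identify $\operatorname{mult}_P(D_1\cdot D_2)$ with the ambient $(d+c)$-fold intersection multiplicity via Lemma~\ref{thm:intersect with general hyperplanes}, and apply Theorem~\ref{thm:generalization of Fulton for a smooth variety} on $\mathcal{X}$ together with $v_E(D_i)>na_E(X)$. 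The only cosmetic slip is the initial miscount of the hyperplanes (``$\mathcal{H}_{c+1},\dots,\mathcal{H}_{d-2}$''), which you immediately correct to the right number $d-2$.
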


\begin{proof}
By Lemma~\ref{lem:lciexceptional}, we can take divisors $\mathcal{D}_i$ on $\mathcal{X}$ such that $\mathcal{D}_i|_X = D_i$ and $v_{\mathcal{E}} (\mathcal{D}_i) = v_E (D_i)$ for $i = 1, 2$.
As a $(d-2)$-cycle on $\mathcal{X}$, we have
\[
D_1 \cdot D_2 = \mathcal{D}_1 \cdot \mathcal{D}_2 \cdot X = \mathcal{D}_1 \cdot \mathcal{D}_2 \cdot \mathcal{X}_1 \cdot \ldots \cdot \mathcal{X}_c,
\]
where $\mathcal{X}_i := (f_i = 0) \subset \mathcal{X}$.
Let $\mathcal{H}_1, \dots, \mathcal{H}_{d-2}$ be general hyperplanes on $\mathcal{X}$ passing through $P$ so that $P$ is an isolated component of
\[
\mathcal{D}_1 \cap \mathcal{D}_2 \cap X \cap \mathcal{H}_1 \cap \cdots \cap \mathcal{H}_{d-2}.
\]
We may assume that $v_{\mathcal{E}} (\mathcal{H}_i) \ge w_i$ for $i = 1, \dots, d-2$.
By the assumption that $(X, \frac{1}{n} \mathcal{M})$ is not canonical at $E$, we obtain
\[
v_{\mathcal{E}} (\mathcal{D}_i) = v_E (D_i) > n a_E(X).
\]
Thus, we have
\[
\begin{split}
\operatorname{mult}_P (D_1 \cdot D_2) &= \operatorname{mult}_P (\mathcal{D}_1 \cdot \mathcal{D}_2 \cdot \mathcal{X}_1 \cdot \ldots \cdot \mathcal{X}_c) \\
&= \operatorname{mult}_P (\mathcal{D}_1 \cdot \mathcal{D}_2 \cdot \mathcal{X}_1 \cdot \ldots \cdot \mathcal{X}_c \cdot \mathcal{H}_1 \cdot \ldots \cdot \mathcal{H}_{d-2}) \\
&> \frac{(n a_E (X))^2 w (f_1) \cdots w (f_c) \cdot w_1 \cdots w_{d-2}}{w_1 \cdots w_{d+c}} \\
&= \frac{w (f_1) \cdots w (f_c) a_E(X)^2}{w_{d-1} \cdots w_{d+c}} n^2,
\end{split}
\]
where the inequality follows from Theorem~\ref{thm:generalization of Fulton for a smooth variety}.
\end{proof}

We have a formula for computing the discrepancy $a_E (X) := \operatorname{ord}_E K_{Y/X}$ under some additional assumptions.

\begin{lemma}
\label{lem:discrepancywbl}
Under the above setting, we assume that \emph{Condition \ref{cond:normalwbl}} is satisfied.
Assume in addition that either $c = 0$ or $c \ge 1$ and the weighted complete intersection \eqref{eq:normalwbl} is well-formed (See \S \ref{sec:WPV} for the definition of well-formedness).
Then we have
\[
a_E (X) = \sum\limits_{i=1}^{d+c} w_i - \sum\limits_{i=1}^c w (f_i) - 1.
\]
\end{lemma}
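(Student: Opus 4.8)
The plan is to compute $a_E(X) := \operatorname{ord}_E K_{Y/X}$ by a purely local calculation on one affine chart of the weighted blow-up $\Phi$ and on its cyclic cover, where the Poincaré residue is available. Put $N := d+c$ and $m_j := w(f_j) = {\bm w}(f_j)$, and fix $i$ with $E$ meeting the $i$-th coordinate chart $U_i \subset \mathcal Y$. Write $\pi_i \colon \mathbb A^{N}_y \to U_i = \mathbb A^{N}_y / \boldsymbol{\mu}_{w_i}$ for the standard presentation, so that $\Phi|_{U_i}$ lifts to the monomial map $\widehat\Phi_i \colon \mathbb A^{N}_y \to \mathcal X$ with $x_i \mapsto y_i^{w_i}$ and $x_k \mapsto y_i^{w_k} y_k$ for $k \ne i$, and $\pi_i^{*}\mathcal E = (y_i = 0)$ with reduced structure. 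Because $\gcd {\bm w} = 1$, a point of $\mathbb A^{N}_y$ has nontrivial $\boldsymbol{\mu}_{w_i}$-stabilizer only if $y_i = 0$ and some further coordinate also vanishes, so the non-free locus has codimension $\ge 2$ and $\pi_i$ is étale in codimension one.

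I then record the two pieces of data on the cover. Direct differentiation yields
\[
\widehat\Phi_i^{*}(dx_1 \wedge \cdots \wedge dx_N) = w_i\, y_i^{\, w_1 + \cdots + w_N - 1}\, dy_1 \wedge \cdots \wedge dy_N,
\]
and, since a monomial of ${\bm w}$-weight $w$ is sent to $y_i^{\,w}$ times a monomial in the $y_k$ $(k \ne i)$, we get $\widehat\Phi_i^{*} f_j = y_i^{\,m_j}\, \widetilde f_j$ with $\widetilde f_j|_{y_i = 0}$ the dehomogenized least-weight part $f_j^{\bm w}$. Setting $\widehat Y_i := V(\widetilde f_1, \dots, \widetilde f_c)$, a complete-intersection/Cohen--Macaulayness argument together with Condition~\ref{cond:normalwbl} shows that $\widehat Y_i$ is a reduced, normal complete intersection of the Cartier divisors $V(\widetilde f_j)$, that $\widehat E_i := \widehat Y_i \cap (y_i = 0)$ is reduced with $y_i$ a uniformizer along it, and that $\widehat Y_i / \boldsymbol{\mu}_{w_i} = Y \cap U_i$ with $\pi_i^{*}E = \widehat E_i$. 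This is the step that uses the extra hypothesis: when $c \ge 1$, well-formedness of $(f_1^{\bm w} = \cdots = f_c^{\bm w} = 0) \subset \mathbb P({\bm w})$ ensures that $E$ — hence $\widehat E_i$, hence the non-free locus of $\boldsymbol{\mu}_{w_i}$ restricted to $\widehat Y_i$ — meets the singular (non-free) locus in codimension $\ge 2$, so that $\pi_i|_{\widehat Y_i}$ is étale in codimension one; for $c = 0$ there is nothing to check.

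Now the computation. By adjunction in the smooth ambient $\mathbb A^N_y$, a generator of $\omega_{\widehat Y_i}$ is $\operatorname{Res}_{\widehat Y_i}\big(dy_1 \wedge \cdots \wedge dy_N / (\widetilde f_1 \cdots \widetilde f_c)\big)$, and $\omega := \operatorname{Res}_X\big(dx_1 \wedge \cdots \wedge dx_N / (f_1 \cdots f_c)\big)$ generates $\omega_X$ near $P$. Away from the exceptional divisor $\widehat\varphi_i := \widehat\Phi_i|_{\widehat Y_i} \colon \widehat Y_i \to X$ factors as $\varphi$ composed with $\pi_i$, which is étale in codimension one, so $\widehat\varphi_i^{*}\omega$ equals the residue along $\widehat Y_i$ of $\widehat\Phi_i^{*}\big(dx_1 \wedge \cdots \wedge dx_N / (f_1 \cdots f_c)\big)$; hence
\[
\widehat\varphi_i^{*}\omega = w_i\, y_i^{\,(\sum_k w_k) - (\sum_j m_j) - 1}\big|_{\widehat Y_i}\cdot \operatorname{Res}_{\widehat Y_i}\!\Big(\frac{dy_1 \wedge \cdots \wedge dy_N}{\widetilde f_1 \cdots \widetilde f_c}\Big),
\]
so $\widehat\varphi_i^{*}\omega$ vanishes along $\widehat E_i$ to order exactly $\sum_k w_k - \sum_j m_j - 1$ relative to a generator of $\omega_{\widehat Y_i}$. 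Since $\pi_i|_{\widehat Y_i}$ is étale in codimension one, $K_{\widehat Y_i} = \pi_i^{*}K_{Y \cap U_i}$ and $\widehat E_i = \pi_i^{*}E$, so this order equals $\operatorname{ord}_E(K_Y - \varphi^{*}K_X) = a_E(X)$ ($E$ being prime by Lemma~\ref{lem:lciexceptional}), which gives $a_E(X) = \sum_{i=1}^{d+c} w_i - \sum_{j=1}^{c} w(f_j) - 1$.

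The only place the hypotheses do genuine work is the second paragraph: Condition~\ref{cond:normalwbl} is what makes $\widehat Y_i$ a reduced complete intersection with $\widehat E_i$ reduced, and well-formedness of the weighted complete intersection is exactly the statement that $Y$ (equivalently $E$) avoids the quotient-singular locus of $\mathcal Y$ in codimension one, which is what makes the descent along $\pi_i$ valid. Everything else — the differentiation, the residue bookkeeping, and the descent itself — is routine.
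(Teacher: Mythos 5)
Your proof is correct, but it takes a genuinely different route from the paper's. You compute $a_E(X)$ by a direct local calculation: on the $i$-th chart of the weighted blowup, presented as $\mathbb{A}^{d+c}/\boldsymbol{\mu}_{w_i}$, you pull everything back to the smooth cyclic cover, read off the Jacobian factor $w_i\,y_i^{\sum_k w_k - 1}$ and the factors $y_i^{\bm w(f_j)}$ extracted from the $f_j$, compare Poincar\'e residues to get the order of vanishing of $\varphi^*\omega_X$ along the cover of $E$, and then descend via the observation that the quotient map restricted to the strict transform is \'etale in codimension one. The paper instead argues globally and chart-free by induction on the codimension: starting from $K_{\mathcal Y} = \Phi^*K_{\mathcal X} + (\sum_i w_i - 1)\mathcal E$ for the ambient weighted blowup, it applies adjunction successively along the towers $\mathcal X \supset X_1 \supset \cdots \supset X_c = X$ and $\mathcal Y \supset Y_1 \supset \cdots \supset Y_c = Y$, each step subtracting ${\bm w}(f_{i+1})$ from the coefficient of the exceptional divisor via $Y_{i+1} = \varphi_i^*X_{i+1} - {\bm w}(f_{i+1})E_i$, and it reuses Lemma~\ref{lem:lciexceptional} at each intermediate level to know the exceptional divisors are prime. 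Both arguments invoke well-formedness of the weighted complete intersection \eqref{eq:normalwbl} to control how $E$ meets $\operatorname{Sing}\bigl(\mathbb P({\bm w})\bigr)$, but in slightly different roles: in the paper it guarantees that $E_i$ is Cartier on $Y_i$ outside a set of codimension at least $3$, so the adjunction steps for Weil divisors are legitimate; in your argument it guarantees that the $\boldsymbol{\mu}_{w_i}$-action is free in codimension one along the cover of $E$, so that $K$ and $E$ pull back without correction terms. The trade-off is that the paper's induction avoids charts and quotient bookkeeping, while your computation is more self-contained and makes the provenance of every term in the formula (the $\sum w_i$, the $-\sum {\bm w}(f_i)$, and the $-1$) completely explicit.
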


\begin{proof}
We set $\mathcal{X}_i := (f_i = 0) \subset \mathcal{X}$ and $X_i := \mathcal{X}_1 \cap \cdots \cap \mathcal{X}_i$ for $i = 1, \dots, c$.
We also set $\varphi_i = \Phi|_{Y_i} \colon Y_i \to X_i$, where $Y_i$ be the proper transform of $X_i$ on $\mathcal{Y}$.
Let $E_i$ be the $\varphi_i$-exceptional divisor.
We set $X_0 := \mathcal{X}$, $Y_0 := \mathcal{Y}$, $\varphi_0 := \Phi \colon Y_0 \to X_0$ and $E_0 := \mathcal{E}$.
We have the formula
\begin{equation} \label{eq:discrepancywbl}
K_{Y_0} = \Phi^*K_{X_0} + \left(\sum_{i=1}^{d+c} w_i - 1\right) E_0,
\end{equation}
which in particular proves the assertion when $c = 0$.
We proceed the proof by assuming $c \ge 1$.

We choose and fix any $i \in \{0, 1, \dots, c\}$.
We see that the closed subscheme
\begin{equation} \label{eq:wciE_i}
(f_1^{{\bm w}} = \cdots = f_i^{{\bm w}} = 0) \subset \mathbb{P} ({\bm w})
\end{equation}
is integral because otherwise the closed subscheme \eqref{eq:normalwbl} cannot be integral and this is impossible by Condition \ref{cond:normalwbl}.
We can apply Lemma~\ref{lem:lciexceptional} to the local complete intersection singularity $P \in X_i$ and we see that $E_i = \mathcal{E}|_{Y_i}$ is a prime divisor that coincides with the weighted complete intersection \eqref{eq:wciE_i} and that $Y_i$ is a normal variety.
The divisor $X_{i+1}$ is a Cartier divisor on $X_i$ for any $i$.
The exceptional divisor $\mathcal{E}$ is a Cartier divisor on $\mathcal{Y} \setminus \operatorname{Sing} (\mathcal{E})$, and hence $E_i = \mathcal{E} \cap Y_i$ is a Cartier divisor on $Y_i \setminus \operatorname{Sing} (\mathcal{E}) \cap E_i$.
The closed set $\operatorname{Sing} (\mathcal{E}) \cap E_i$ is of codimension at least $3$ in $Y_i$ since $E_i \subset \mathcal{E} \cong \mathbb{P} ({\bm w})$ is well-formed.
It follows that $Y_{i+1} = \varphi_i^*X_i - {\bm w} (f_i) E_i$ is a Cartier divisor away from a closed subset of codimension at least $3$ in $Y_i$.
By the adjunction formula, we have $(K_{X_i} + X_{i+1})|_{X_{i+1}} = K_{X_{i+1}}$ and $(K_{Y_i} + Y_{i+1})|_{Y_{i+1}} = K_{Y_{i+1}}$.
By applying adjunction formulae successively for $X = X_c \subset \cdots \subset X_0$ and $Y = Y_c \subset \cdots \subset Y_0$ starting with \eqref{eq:discrepancywbl}, we obtain the desired formula.
\end{proof}

\subsection{\texorpdfstring
    {Local inequality for $cA_k$-singularities}
    {Local inequality for cA\_k-singularities}
}

In this section, the ground field is assumed to be $\mathbb{C}$.

\begin{theorem} \label{thm:locineqwblcA}
Let $P \in X$ be an isolated $cA_k$ singularity, where $k \ge 1$, let $\mathcal M$ be a mobile linear system of Cartier divisors on $X$ and let $n > 0$ be a rational number.
Let $\varphi \colon Y \to X$ be a divisorial contraction with center $P$ and let $D_1, D_2 \in \mathcal{M}$ be general members.
Suppose that the pair $(X, \frac{1}{n} \mathcal{M})$ is not canonical at the exceptional divisor $E$.
\begin{enumerate}
\item If $\varphi$ is of non-exceptional type and its weight is $(r_1, r_2, a, 1)$, where $\gcd(a, r_1) = \gcd(a, r_2) = 1$ and $r_1 + r_2 = (k+1) a$, then
    \[
    \operatorname{mult}_P (D_1 \cdot D_2) > \frac{(r_1 + r_2)^2}{(k+1) r_1 r_2} n^2.
    \]
\item If $k = 1$ and $\varphi$ is of exceptional type, then
\[
\operatorname{mult}_P (D_1 \cdot D_2) > \frac{16}{5} n^2.
\]
\item If $k = 2$ and $\varphi$ is of exceptional type, then
\[
\operatorname{mult}_P (D_1 \cdot D_2) > \frac{9}{4} n^2.
\]
\end{enumerate}
\end{theorem}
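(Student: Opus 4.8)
The plan is to obtain all three inequalities by reducing to Theorem~\ref{thm:ineqmobLCI}, which is tailored to precisely this situation with $d = 3$ and $c = 1$. \textbf{Step 1 (passing to an algebraic local complete intersection model).} By Theorem~\ref{thm:cldccA2} we fix a local analytic isomorphism identifying the germ $P \in X$ with the hypersurface germ $o \in (f = 0) \subset \mathbb{A}^4$ of the corresponding item, under which $\varphi$ becomes the weighted blowup with weights $\bm w$, which we reorder so that $w_1 \le w_2 \le w_3 \le w_4$. Since this identification is only analytic while the framework of \S\ref{sec:LCI singularities} is algebraic, I would replace the analytic equations of $X$ and of (extensions to $\mathbb{A}^4$ of) general members $D_1, D_2 \in \mathcal M$ by their degree-$N$ truncations for $N \gg 0$. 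Applying Lemma~\ref{lem:truncate} to the smooth $\mathbb{A}^4$ and the Cartier divisors $X, D_1, D_2$ on it, together with a general hyperplane through $P$ (so that Lemma~\ref{thm:intersect with general hyperplanes} applies), shows that $\operatorname{mult}_P(D_1 \cdot D_2)$ is unaffected; and since the least-weight part $f^{\bm w}$, the weighted orders, hence the valuations $v_E(D_i)$ and the discrepancy $a_E(X)$, depend only on finitely many low-weight terms, these too are unaffected. Thus we may assume we are genuinely in the setting of \S\ref{sec:LCI singularities}.

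\textbf{Step 2 (Condition~\ref{cond:normalwbl}).} I must check that $(f^{\bm w} = 0) \subset \mathbb{P}(\bm w)$ is an integral scheme of dimension $2$. Here $f^{\bm w}$ equals $x_1 x_2 + g^{\bm w}(x_3, x_4)$ in case~(1), where $g^{\bm w} \ne 0$ since the monomial $x_3^{(r_1 + r_2)/a}$ occurs in $g$; it equals $x_1 x_2 + x_3^2 + x_4^3$ in case~(2); and it equals $x_2^2 + x_3^3 + x_1 x_4^2$ in case~(3), the weight-$8$ term $x_1^2$ being dropped. In each case $f^{\bm w}$ has degree $1$ in $x_1$, say $f^{\bm w} = \alpha\, x_1 + \beta$ with $\alpha, \beta \in \mathbb{C}[x_2, x_3, x_4]$ and $\alpha \ne 0$, where $\alpha = x_2$ in cases~(1),~(2) and $\alpha = x_4^2$ in case~(3), while $\beta$ (namely $g^{\bm w}$, $x_3^2 + x_4^3$, $x_2^2 + x_3^3$ respectively) involves no $x_1$ and shares no irreducible factor with $\alpha$; hence $\gcd(\alpha, \beta) = 1$, so $f^{\bm w}$ is irreducible and $(f^{\bm w} = 0)$ is an integral hypersurface of dimension $d - 1 = 2$ in $\mathbb{P}(\bm w)$. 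Thus Condition~\ref{cond:normalwbl} holds.

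\textbf{Step 3 (the numerics and conclusion).} Since $\varphi$ is the weighted blowup above, its exceptional divisor is $E$, and the remaining hypotheses of Theorem~\ref{thm:ineqmobLCI} — that $\mathcal M$ is a mobile linear system of Cartier divisors and that $(X, \frac{1}{n}\mathcal M)$ is not canonical at $E$ — are among our assumptions. By the remark following Theorem~\ref{thm:cldccA2} the discrepancy is $a_E(X) = a$ in case~(1) and $a_E(X) = 4$, $3$ in cases~(2),~(3). The weighted order is $\bm w(f) = r_1 + r_2$ in case~(1) (both $x_1 x_2$ and $g$ have that weight) and $\bm w(f) = 6$ in cases~(2),~(3). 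In each case one weight equals $1$, so $w_1 = 1$ and the product $w_2 w_3 w_4$ of the three largest weights equals $w_1 w_2 w_3 w_4$, namely $r_1 r_2 a$ in case~(1), $1 \cdot 2 \cdot 3 \cdot 5 = 30$ in case~(2) and $1 \cdot 2 \cdot 3 \cdot 4 = 24$ in case~(3). Feeding this into Theorem~\ref{thm:ineqmobLCI} yields
\[
\operatorname{mult}_P(D_1 \cdot D_2) > \frac{\bm w(f)\, a_E(X)^2}{w_2 w_3 w_4}\, n^2 =
\begin{cases}
\dfrac{(r_1 + r_2)\,a^2}{r_1 r_2 a}\,n^2 = \dfrac{(r_1 + r_2)^2}{(k+1)\,r_1 r_2}\,n^2 & \text{in case (1)},\\[1.2em]
\dfrac{6 \cdot 16}{30}\,n^2 = \dfrac{16}{5}\,n^2 & \text{in case (2)},\\[1.2em]
\dfrac{6 \cdot 9}{24}\,n^2 = \dfrac{9}{4}\,n^2 & \text{in case (3)},
\end{cases}
\]
where in case~(1) I used the relation $r_1 + r_2 = (k+1) a$.

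I expect the main obstacle to be Step~1: confirming that the purely analytic normal form of Theorem~\ref{thm:cldccA2} can be replaced by an algebraic local complete intersection without disturbing any quantity entering Theorem~\ref{thm:ineqmobLCI} — in particular that the two weighted blowups match and that $v_E(D_i)$ is read off correctly from the truncated ambient equations, which is exactly where Lemma~\ref{lem:lciexceptional} (hence Condition~\ref{cond:normalwbl}) is used. Steps~2 and~3 are short, and the only way case~(1)'s incomplete description of $g$ enters the argument is through the harmless observation $g^{\bm w} \ne 0$.
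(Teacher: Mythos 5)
Your proposal is correct and follows essentially the same route as the paper's proof: reduce to the algebraic setting of \S\ref{sec:LCI singularities} via Lemma~\ref{lem:truncate}, verify Condition~\ref{cond:normalwbl} for the normal forms of Theorem~\ref{thm:cldccA2}, and apply Theorem~\ref{thm:ineqmobLCI} with the weights, weighted orders and discrepancies you compute (all of which check out). You supply more detail than the paper does — in particular the explicit irreducibility check of $f^{\bm w}$ in each case, which the paper only asserts.
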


\begin{proof}
We use Lemma~\ref{lem:truncate} to move to a local analytic coordinate system.
The weights of $\varphi$ is $(1, 5, 3, 2)$ (resp.\ $(4, 3, 2, 1)$) if we are in case (2) (resp.\ (3)).
In any case, Condition~\ref{cond:normalwbl} is satisfied and the assertion follows from Theorem~\ref{thm:ineqmobLCI}.
\end{proof}

\begin{theorem}[= Theorem~\ref{MainThm}] \label{body-MainThm}
\maintheorem{}
\end{theorem}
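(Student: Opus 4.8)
The plan is to reduce Theorem~\ref{body-MainThm} to Theorem~\ref{thm:locineqwblcA} by a purely numerical argument. Since $P$ is a non-canonical center of the pair $(X, \frac{1}{n}\mathcal M)$, there is a valuation $v$ with center $P$ realising the non-canonicity, and by Theorem~\ref{thm:cldccA2} (together with the fact that every terminal $cA_k$ point admits a divisorial contraction, and that non-canonical centers can be extracted by divisorial contractions of the MMP) we may choose a divisorial contraction $\varphi\colon Y\to X$ with center $P$ and exceptional divisor $E$ such that $(X,\frac{1}{n}\mathcal M)$ is not canonical at $E$. Thus $\varphi$ is one of the three types listed in Theorem~\ref{thm:cldccA2}, and Theorem~\ref{thm:locineqwblcA} applies in all three cases. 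It remains to check that in each case the coefficient produced by Theorem~\ref{thm:locineqwblcA} is at least $\tfrac{4}{k+1}$.

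The first step is the non-exceptional case. Here the weight is $(r_1,r_2,a,1)$ with $\gcd(a,r_1)=\gcd(a,r_2)=1$ and $r_1+r_2=(k+1)a$, and Theorem~\ref{thm:locineqwblcA}(1) gives
\[
\operatorname{mult}_P(D_1\cdot D_2) > \frac{(r_1+r_2)^2}{(k+1)\,r_1 r_2}\,n^2.
\]
So it suffices to observe $(r_1+r_2)^2 \ge 4 r_1 r_2$, which is just the AM--GM inequality $(r_1-r_2)^2\ge 0$; hence the coefficient is $\ge \tfrac{4}{k+1}$, and it is strict unless $r_1=r_2$, but the strict inequality $\operatorname{mult}_P(D_1\cdot D_2) > \tfrac{(r_1+r_2)^2}{(k+1)r_1r_2}n^2 \ge \tfrac{4}{k+1}n^2$ already closes this case regardless.

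The second step is the two exceptional cases, which only arise for small $k$ and are handled by direct numerical comparison. For $k=1$, case~(2) of Theorem~\ref{thm:locineqwblcA} gives $\operatorname{mult}_P(D_1\cdot D_2) > \tfrac{16}{5}n^2$, and $\tfrac{16}{5} > 2 = \tfrac{4}{k+1}$. For $k=2$, case~(3) gives $\operatorname{mult}_P(D_1\cdot D_2) > \tfrac{9}{4}n^2$, and $\tfrac{9}{4} > \tfrac{4}{3} = \tfrac{4}{k+1}$. In every case $\operatorname{mult}_P(D_1\cdot D_2) > \tfrac{4}{k+1}n^2$, as required.

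The only genuinely non-formal point is the first step above: justifying that one may pick a \emph{divisorial contraction} $\varphi$ (as opposed to an arbitrary valuation) whose exceptional divisor witnesses the non-canonicity. This is standard — a non-canonical center can be extracted by a single divisorial extraction over a terminal threefold, and on the $cA_k$ germ such an extraction is one of the maps classified in Theorem~\ref{thm:cldccA2} — but it is the step that carries all the geometric content; once it is in place the theorem is a two-line arithmetic consequence of Theorem~\ref{thm:locineqwblcA}. (One should also note that in Theorem~\ref{thm:locineqwblcA} the members $D_1,D_2$ are general in $\mathcal M$ and the statement is invariant under this choice, so the conclusion transfers verbatim to Theorem~\ref{body-MainThm}.)
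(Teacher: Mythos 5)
Your proposal is correct and follows essentially the same route as the paper: the paper's proof of Theorem~\ref{body-MainThm} is exactly the reduction to Theorem~\ref{thm:locineqwblcA} via the AM--GM inequality $(r_1+r_2)^2 \ge 4 r_1 r_2$ together with the two numerical checks $16/5 > 2$ and $9/4 > 4/3$. The only difference is that you make explicit the (standard, and in the paper implicit) step of extracting the non-canonical center by a divisorial contraction classified in Theorem~\ref{thm:cldccA2}, which is a reasonable thing to spell out.
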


\begin{proof}
For any positive integers $r_1$ and $r_2$, we have
\[
\frac{(r_1+r_2)^2}{(k+1)r_1r_2} n^2 \ge \frac{4}{k+1} n^2.
\]
If $k = 1$ (resp.\ $2$), then $16/5n^2 > 2 n^2$ (resp.\ $9/4n^2 > 4/3 n^2$).
Thus, the assertion follows from Theorem~\ref{thm:locineqwblcA}.
\end{proof}

\subsection{Algebraic corollary}

Corollary~\ref{thm:algebraic inequality} is an algebraic corollary of Theorem~\ref{thm:generalization of Fulton for a smooth variety}, similar to \cite[Example~12.4.9]{Ful98}.

\begin{corollary} \label{thm:algebraic inequality}
Let $d$ be a positive integer.
Let $\Bbbk$ be an arbitrary field and let ${\bm w} = (w_1, \ldots, w_d)$ be a $d$-tuple of positive integers such that $\gcd {\bm w} = 1$.
We consider the weights $\operatorname{wt} (x_1, \dots, x_d) = {\bm w}$ and let $f_1, \ldots, f_d \in \Bbbk [[x_1, \ldots, x_d]]$.
Then
\[
\dim_{\Bbbk} \frac{\Bbbk [[x_1, \ldots, x_d]]}{(f_1, \ldots, f_d)}
\geq \frac{{\bm w} (f_1) \cdot \ldots \cdot {\bm w} (f_d)}{w_1 \cdot \ldots \cdot w_d}.
\]
Moreover, if $f_1, \ldots, f_d$ are polynomials and
\[
\dim_{\Bbbk} \frac{\Bbbk [[x_1, \ldots, x_d]]}{(f_1, \ldots, f_d)}
\]
is finite, then
\[
\dim_{\Bbbk} \frac{\Bbbk [[x_1, \ldots, x_d]]}{(f_1, \ldots, f_d)}
\leq \frac{\deg_{\bm w}(f_1) \cdot \ldots \cdot \deg_{\bm w}(f_d)}{w_1 \cdot \ldots \cdot w_d},
\]
where $\deg_{\bm w}(f)$ denotes the ${\bm w}$-degree of a nonzero polynomial $f \in \Bbbk [x_1, \ldots, x_d]$, meaning the greatest integer $k$ such that the quasihomogeneous weighted-degree $k$ part of $f$ is nonzero.
\end{corollary}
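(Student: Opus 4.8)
The plan is to deduce both inequalities from Theorem~\ref{thm:generalization of Fulton for a smooth variety} (and, for the upper bound, from the positivity argument inside its proof), working on $\mathbb{A}^d_\Bbbk$ for the first inequality and on the weighted projective space $\mathbb{P}(w_1, \ldots, w_d, 1)$ for the second. Throughout we may assume every $f_i$ is nonzero, since otherwise the left-hand side of each displayed inequality is infinite.

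\emph{Lower bound.} If $\dim_\Bbbk \Bbbk[[x_1, \ldots, x_d]]/(f_1, \ldots, f_d)$ is infinite there is nothing to prove, so assume $(f_1, \ldots, f_d)$ is $\mathfrak{m}$-primary. Exactly as in the proof of Lemma~\ref{lem:truncate}, I would replace each $f_i$ by its truncation $f_i^{\leq N}$ for $N$ large: this preserves the colength, keeps the origin $\bm 0$ an isolated point of $V(f_1^{\leq N}) \cap \ldots \cap V(f_d^{\leq N})$ on $\mathbb{A}^d_\Bbbk$, and for $N \ge \bm w(f_i)$ it leaves $\bm w(f_i)$ unchanged (truncation only deletes monomials, and every monomial of weight $\bm w(f_i)$ has ordinary degree at most $\bm w(f_i)$). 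So we may assume $f_1, \ldots, f_d \in \Bbbk[x_1, \ldots, x_d]$, and then apply Theorem~\ref{thm:generalization of Fulton for a smooth variety} with $X = \mathbb{A}^d_\Bbbk$, $P = \bm 0$ (a $\Bbbk$-rational regular point, regular system of parameters $x_1, \ldots, x_d$), $D_i := \operatorname{div}(f_i)$, and the given weight vector $\bm w$. By \cite[Example~7.1.10]{Ful98} the colength equals $\operatorname{mult}_P(D_1 \cdot \ldots \cdot D_d)$, by Lemma~\ref{thm:weighted blowup basic description}(\ref{itm:weighted blowup basic description - discrete valuation}) we have $v_E(D_i) = \bm w(f_i)$, and $[\kappa(P):\Bbbk] = 1$; so equation~(\ref{eqn:weighted mult equality}), together with $\deg_E(\tilde D_1 \cdot \ldots \cdot \tilde D_d) \ge 0$ from~(\ref{eqn:weighted mult inequality}), gives the lower bound.

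\emph{Upper bound.} Now the $f_i$ are polynomials with $\ell := \dim_\Bbbk \Bbbk[[x]]/(f_1, \ldots, f_d)$ finite; set $d_i := \deg_{\bm w}(f_i)$, which we may assume positive (if some $d_i = 0$ then $\ell = 0$). Identify $\mathbb{A}^d_\Bbbk$ with the chart $\{x_0 \neq 0\}$ of $\mathbb{P} := \mathbb{P}(w_1, \ldots, w_d, 1)$; this chart really is $\mathbb{A}^d$, since the extra coordinate $x_0$ has weight $1$. Let $\bar D_i$ be the effective Weil divisor of degree $d_i$ on $\mathbb{P}$ cut out by the $\bm w$-homogenization $x_0^{d_i} f_i(x_1/x_0^{w_1}, \ldots, x_d/x_0^{w_d})$, so $\bar D_i|_{\{x_0 \neq 0\}} = \operatorname{div}(f_i)$. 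As in Remark~\ref{def:intersection of Q-Cartier divisors}, the $\mathbb{Q}$-Cartier intersection product $\bar D_1 \cdot \ldots \cdot \bar D_d$ is a class in $A_0(Z)_\mathbb{Q}$ with $Z := \operatorname{Supp}\bar D_1 \cap \ldots \cap \operatorname{Supp}\bar D_d$, and its degree equals $d_1 \cdots d_d \cdot \mathcal{O}_{\mathbb P}(1)^d = d_1 \cdots d_d/(w_1 \cdots w_d)$ by the standard top self-intersection number of $\mathcal{O}(1)$ on weighted projective space.

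The origin $o$ is an isolated point of $Z$: it is isolated in $Z \cap \{x_0 \neq 0\} = V(f_1, \ldots, f_d)$ by the finite-colength hypothesis, and $o \notin Z \cap \{x_0 = 0\}$. Hence $Z = \{o\} \sqcup Z'$ with $Z'$ closed and $A_0(Z)_\mathbb{Q} = \mathbb{Q}[o] \oplus A_0(Z')_\mathbb{Q}$. The $o$-component of $\bar D_1 \cdot \ldots \cdot \bar D_d$ is the local product at $o$, which equals $\ell\,[o]$ because $o$ is a $\Bbbk$-rational smooth point of $\mathbb{P}$ and there $\operatorname{mult}_o(\operatorname{div}(f_1) \cdot \ldots \cdot \operatorname{div}(f_d)) = \dim_\Bbbk \Bbbk[[x]]/(f_1, \ldots, f_d) = \ell$ by \cite[Example~7.1.10]{Ful98}. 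It remains to see that the $Z'$-component has non-negative degree. Pick a common multiple $b$ of $w_1, \ldots, w_d$, so that $\mathcal{O}_{\mathbb P}(b)$ is invertible and each $b\bar D_i$ is a Cartier divisor in the class of $\mathcal{O}_{\mathbb P}(b)^{\otimes d_i}$; by Remark~\ref{def:intersection of Q-Cartier divisors}, $(b\bar D_1) \cdot \ldots \cdot (b\bar D_d) = b^d(\bar D_1 \cdot \ldots \cdot \bar D_d)$, so it suffices to show $\deg\bigl((b\bar D_1) \cdot \ldots \cdot (b\bar D_d)|_{Z'}\bigr) \ge 0$. This is exactly the configuration handled at the end of the proof of Theorem~\ref{thm:generalization of Fulton for a smooth variety}: the product of the Cartier divisors $b\bar D_i$ realizes $Z'$ as a regular embedding of codimension $d$ in $\mathbb{P}$ (through the diagonal of $\mathbb{P}^{\times d}$), the pertinent normal bundle is $\bigoplus_i L^{\otimes d_i}$ with $L := \mathcal{O}_{\mathbb P}(b)|_{Z'}$ globally generated, each $L^{\otimes d_i} \otimes L^\vee \cong \mathcal{O}_{\mathbb P}(b(d_i - 1))|_{Z'}$ is globally generated, and \cite[Theorem~12.1(b)]{Ful98} then forces the degree to be $\ge 0$. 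Comparing the two components of $\bar D_1 \cdot \ldots \cdot \bar D_d$ against its total degree yields $\ell \le d_1 \cdots d_d/(w_1 \cdots w_d)$.

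The one genuinely delicate point is this last positivity: when $\bar D_1 \cap \ldots \cap \bar D_d$ has positive-dimensional components away from $o$, one cannot reduce to a finite intersection by perturbing the $f_i$, since moving the $f_i$ only \emph{decreases} the colength at $o$ and hence points the wrong way for an upper bound; instead one imports verbatim the Segre-class and globally-generated-bundle estimate from the proof of Theorem~\ref{thm:generalization of Fulton for a smooth variety}, so that both inequalities ultimately rest on the same mechanism. The remaining ingredients — the truncation bookkeeping, the identity $\mathcal{O}_{\mathbb P}(1)^d = 1/(w_1 \cdots w_d)$ on $\mathbb{P}(w_1, \ldots, w_d, 1)$, and the fact that $\mathcal{O}_{\mathbb P}(b)$ and $\mathcal{O}_{\mathbb P}(b(d_i-1))$ are globally generated line bundles whenever $b$ is a common multiple of the weights — are routine.
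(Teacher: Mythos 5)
Your proof is correct and follows essentially the same route as the paper: truncation plus Theorem~\ref{thm:generalization of Fulton for a smooth variety} applied on $\mathbb{A}^d_{\Bbbk}$ for the lower bound, and homogenization into the weighted projective space $\mathbb{P}(1,w_1,\ldots,w_d)$, the total degree computation $\deg_{\bm w}(f_1)\cdots\deg_{\bm w}(f_d)/(w_1\cdots w_d)$, the local contribution $\ell\,[o]$ at the origin, and nonnegativity of the residual component for the upper bound. The only difference is cosmetic: you spell out the globally-generated-bundle positivity for the component away from the origin, where the paper simply refers back to the corresponding step in the proof of Theorem~\ref{thm:generalization of Fulton for a smooth variety} and to \cite[Theorem~2(B)]{FL82}.
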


\begin{proof}
First, we prove the lower bound.
If the left-hand side is infinite, then the inequality holds trivially.
Otherwise, there exists a positive integer $N$ such that $\mathfrak{m}^N \subseteq (f_1, \ldots, f_d)$, where $\mathfrak{m}$ is the maximal ideal $(x_1, \ldots, x_d)$.
Therefore $\mathfrak{m}^N \subseteq (g_1, \ldots, g_d)$, where the $g_i$ are the truncations $(f_i)_{\deg \leq N} \in \Bbbk [x_1, \ldots, x_k]$ up to degree~$N$.
We find ${\bm w} (g_i) \geq {\bm w} (f_i)$ and
\[
\dim_{\Bbbk} \frac{\Bbbk [[x_1, \ldots, x_d]]}{(f_1, \ldots, f_d)} = \dim_{\Bbbk} \frac{\Bbbk [[x_1, \ldots, x_d]]}{(g_1, \ldots, g_d)}.
\]
Let $P$ be the origin in $X := \mathbb A_{\Bbbk}^d$ and let $D_i := V(g_i)$. By \cite[Example~7.1.10(b)]{Ful98},
\[
\dim_{\Bbbk} \frac{\Bbbk [[x_1, \ldots, x_d]]}{(g_1, \ldots, g_d)} = \operatorname{mult}_P(D_1 \cdot \ldots \cdot D_d).
\]
The first inequality in the statement of Corollary~\ref{thm:algebraic inequality} now follows from Theorem~\ref{thm:generalization of Fulton for a smooth variety}.

Next, we prove the upper bound.
Assign weights $1, w_1, \ldots, w_d$ respectively to the variables $x_0, \ldots, x_d$.
Let $F_1, \ldots, F_d \in \Bbbk [x_0, \ldots, x_d]$ be the quasihomogeneous polynomials such that for every $i \in \{1, \ldots, d\}$, $x_0$ does not divide $F_i$ and $F_i(1, x_1, \ldots, x_d) = f_i$. Let $V(F_i)$ denote the subscheme in the weighted projective space $\mathbb P(1, w_1, \ldots, w_d)$ defined by~$F_i$. Let $P$ be the point $[1, 0, \ldots, 0]$ in $\mathbb P(1, w_1, \ldots, w_d)$. Note that either $P$ is not in $V(F_1, \ldots, F_d)$ or $P$ is an isolated point of $V(F_1, \ldots, F_d)$. By \cite[Example~7.1.10(b)]{Ful98},
\[
\dim_{\Bbbk} \frac{\Bbbk [[x_1, \ldots, x_d]]}{(f_1, \ldots, f_d)} = \operatorname{mult}_P\bigl(V(F_1) \cdot \ldots \cdot V(F_d)\bigr).
\]
Since each $V(F_i)$ is $\mathbb Q$-linearly equivalent to $\frac{\operatorname{deg}_{\bm w}(f_i)}{w_i} \cdot V(x_i)$, we find
\[
V(F_1) \cdot \ldots \cdot V(F_d) = \frac{\deg_{\bm w}(f_1) \cdot \ldots \cdot \deg_{\bm w}(f_d)}{w_1 \cdot \ldots \cdot w_d}.
\]
It is left to prove that on the open subset $U := \mathbb P(1, w_1, \ldots, w_d) \setminus \{P\}$, the degree of the intersection class $(V(F_1) \cap U) \cdot \ldots \cdot (V(F_d) \cap U)$ is nonnegative. As in the proof of inequality~\ref{eqn:weighted mult inequality} in Theorem~\ref{thm:generalization of Fulton for a smooth variety}, this follows from the arguments in the proof of \cite[Theorem~3]{FL82} by applying \cite[Theorem~2(B)]{FL82}.
\end{proof}

\section{Generalizations to (LCI-)quotient singularities}

Throughout this section, we work over the field $\mathbb{C}$ of complex numbers.

\subsection{Main inequality for cyclic quotient singularities} \label{sec:Mainineqquotient}

We consider a generalization of Theorem~\ref{thm:generalization of Fulton for a smooth variety} to cyclic quotient singularities.
Suppose that $d \ge 2$.
Let $r > 0$ and $a_1, \dots, a_d \ge 0$ be integers.

We say that an action of $\boldsymbol{\mu}_r$ on a germ $P \in V$ of a smooth $d$-dimensional variety is \textit{of type} $\frac{1}{r} (a_1, \dots, a_d)$ if there exist a regular system of parameters $x_1, \dots, x_d$ of $V$ at $P$ such that the $\mathbb C$-algebra automorphism of $\mathcal{O}_{V,P}$ induced by $\zeta \in \boldsymbol{\mu}_r$ sends $x_i$ to $\zeta^{a_i} x_i$ for $i = 1, \dots, d$ and for any $r$th root of unity $\zeta \in \boldsymbol{\mu}_r$, so that the $\mathbb C$-algebra isomorphism $\mathbb{C} [[x_1, \dots, x_d]] \to \hat{\mathcal{O}}_{V,P}$ induced by the inclusion $\mathbb C [x_1, \dots, x_n] \to \mathcal{O}_{V,P}$ is $\boldsymbol{\mu}_r$-equivariant, where the $\boldsymbol{\mu}_r$-action on $\mathbb{C} [[x_1, \dots, x_n]]$ is defined by $x_i \mapsto \zeta^{a_i} x_i$ for $i = 1, \dots, d$ and for any $r$th root of unity $\zeta \in \mathbb{C}$.

Let $P \in X$ be a germ of a cyclic quotient singular point of type $\frac{1}{r} (a_1, \dots, a_d)$, that is, the germ $P \in X$ is isomorphic to the germ
\[
P' \in \hat{X}/\boldsymbol{\mu}_r,
\]
where $\hat{P} \in \hat{X}$ is a germ of a smooth $d$-dimensional variety admitting a $\boldsymbol{\mu}_r$-action of type $\frac{1}{r} (a_1, \dots, a_d)$ and $P'$ is the image of $\hat{P}$ by the quotient morphism $\hat{X} \to \hat{X}/\boldsymbol{\mu}_r$.
We identify $P \in X$ with $P' \in \hat{X}/\boldsymbol{\mu}_r$ and denote by $q_X \colon \hat{X} \to X$ the quotient morphism.

We assume that
\[
\gcd(r, a_1, \ldots, a_{j-1}, a_{j+1}, \ldots, a_d) = 1
\]
for every $j \in \{1, \ldots, d\}$, that is, $q_X \colon \hat{X} \to X$ is \'{e}tale in codimension $1$.
By convention, we allow $r = 1$, corresponding to a smooth point $P \in X$.

Let ${\bm w} = (w_1, \dots, w_d)$ be a tuple of positive integers such that $w_i \equiv a_i \pmod{r}$ for $i = 1, \dots, d$ and let $x_1, \dots, x_d$ be a regular system of parameters of $\hat{X}$ at $\hat{P}$ with respect to which the action of $\boldsymbol{\mu}_r$ on $\hat{X}$ is linear of type $\frac{1}{r} (a_1, \dots, a_d)$.
We give a description of weighted blowup $\varphi \colon Y \to X$ of $P \in X$ with weights $w_1/r, \dots, w_d/r$.
Let $\hat{\varphi} \colon \hat{Y} \to \hat{X}$ be the ${\bm w}$-blowup of $\hat{P} \in \hat{X}$ with respect to the regular system of parameters $x_1, \dots, x_d$ and let $\hat{E} \cong \mathbb{P} (w_1, \dots, w_d)$ be its exceptional divisor.
The $\boldsymbol{\mu}_r$-action on $\hat{X}$ lifts to an action on $\hat{Y}$ and we denote by $Y$ its quotient $\hat{Y}/\boldsymbol{\mu}_r$.
Then the restriction $\varphi := \hat{\varphi}|_Y \colon Y \to X$ is called a \textit{weighted blowup} of $P \in X$ with weights $w_1/r, \dots, w_d/r$ with respect to $x_1, \dots, x_d$.
Let $q_Y \colon \hat{Y} \to Y$ be the quotient morphism, which is a finite morphism of degree $r$ ramified along the $\varphi$-exceptional divisor $E$, and we have $q_Y^*E = r \hat{E}$.
We obtain the diagram:
\[
\xymatrix{
\hat{Y} \ar[d]_{\hat{\varphi}} \ar[r]^{q_Y} & Y \ar[d]^{\varphi} \\
\hat{X} \ar[r]_{q_X} & X}
\]

\begin{corollary} \label{thm:generalization of Fulton for a quotient singularity}
Let the notation and assumptions be as above.
Let $D_1, \ldots, D_d$ be effective $\mathbb{Q}$-divisors on $X$ such that $P$ is an isolated point of $D_1 \cap \ldots \cap D_d$.
For every~$i$, let $\tilde D_i$ be the strict transform of $D_i$ on $Y$.
Then, both of the following hold:
\begin{align*}
    \operatorname{mult}_P(D_1 \cdot \ldots \cdot D_d) & = \frac{v_E(D_1) \cdot \ldots \cdot v_E(D_d) \cdot r^{d-1}}{w_1 \cdot \ldots \cdot w_d} + \deg_E(\tilde D_1 \cdot \ldots \cdot \tilde D_d),\\
    \deg_E(\tilde D_1 \cdot \ldots \cdot \tilde D_d) & \geq 0,
\end{align*}
where $\deg_E(\tilde D_1 \cdot \ldots \cdot \tilde D_d)$ denotes the degree of the $0$-cycle class $\tilde D_1 \cdot \ldots \cdot \tilde D_d$ on a small neighborhood of $E$. Moreover, $\deg_E(\tilde D_1 \cdot \ldots \cdot \tilde D_d)$ is zero if and only if $\tilde D_1 \cap \ldots \cap \tilde D_d \cap E$ is empty.
\end{corollary}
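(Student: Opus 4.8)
The plan is to pull the statement back to the smooth cover $\hat X$, apply Theorem~\ref{thm:generalization of Fulton for a smooth variety} there, and then descend along the finite quotient maps $q_X$ and $q_Y$, exploiting the identity $q_Y^*E=r\hat E$ and the fact that $q_X$ and $q_Y$ have degree $r$.

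First I would reduce to the case where the $D_i$ are effective Cartier divisors. All three quantities $\operatorname{mult}_P(D_1\cdots D_d)$, $v_E(D_i)$ and $\deg_E(\tilde D_1\cdots\tilde D_d)$ are multilinear (respectively linear) in the $D_i$, so replacing each $D_i$ by a positive Cartier multiple multiplies the asserted equality by a common positive rational and leaves the supports — hence the inequality and its equality case — unchanged. After shrinking the affine $X$ so that $\operatorname{Supp}D_1\cap\dots\cap\operatorname{Supp}D_d=\{P\}$, I set $\hat D_i:=q_X^*D_i$; these are effective Cartier divisors on $\hat X$ with $\bigcap_i\operatorname{Supp}\hat D_i=q_X^{-1}(P)=\{\hat P\}$, and $\hat P$ is a $\mathbb C$-rational point. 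Theorem~\ref{thm:generalization of Fulton for a smooth variety}, applied to $\hat P\in\hat X$, the ${\bm w}$-blowup $\hat\varphi\colon\hat Y\to\hat X$ with exceptional divisor $\hat E$, and the $\hat D_i$ (noting $[\kappa(\hat P):\mathbb C]=1$), then gives
\[
\operatorname{mult}_{\hat P}(\hat D_1\cdots\hat D_d)=\frac{v_{\hat E}(\hat D_1)\cdots v_{\hat E}(\hat D_d)}{w_1\cdots w_d}+\deg_{\hat E}(\tilde{\hat D}_1\cdots\tilde{\hat D}_d),
\]
with $\deg_{\hat E}(\tilde{\hat D}_1\cdots\tilde{\hat D}_d)\ge0$, equal to zero exactly when $\bigcap_i\operatorname{Supp}\tilde{\hat D}_i\cap\hat E=\emptyset$.

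Next I would translate each of the three terms downstairs, using that the square $\varphi\circ q_Y=q_X\circ\hat\varphi$ commutes, that pullback of Cartier divisors is functorial, and that $q_Y^*E=r\hat E$. From $\hat\varphi^*\hat D_i=q_Y^*\varphi^*D_i=q_Y^*\tilde D_i+v_E(D_i)\,q_Y^*E=q_Y^*\tilde D_i+r\,v_E(D_i)\hat E$, together with the observation that $q_Y^*\tilde D_i$ has no component along $\hat E$ (since $q_Y(\hat E)=E\not\subseteq\operatorname{Supp}\tilde D_i$), uniqueness of the decomposition of the fixed effective divisor $\hat\varphi^*\hat D_i$ into its $\hat E$-part and the rest yields simultaneously $q_Y^*\tilde D_i=\tilde{\hat D}_i$ and $v_{\hat E}(\hat D_i)=r\,v_E(D_i)$. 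For the multiplicity, the projection formula for the finite morphism $q_X$ gives $q_{X*}\bigl((q_X^*D_1)\cdots(q_X^*D_d)\cap[\hat X]\bigr)=(D_1\cdots D_d)\cap q_{X*}[\hat X]=r\,(D_1\cdots D_d)$ as $0$-cycle classes supported at $\hat P$, resp.\ at $P$; since $q_{X*}[\hat P]=[P]$, comparing degrees yields $\operatorname{mult}_{\hat P}(\hat D_1\cdots\hat D_d)=r\,\operatorname{mult}_P(D_1\cdots D_d)$. For the residual term, choosing $b$ so that $b\tilde D_i$ is Cartier on $Y$, the projection formula for the finite morphism $q_Y$ gives $q_{Y*}\bigl((q_Y^*(b\tilde D_1))\cdots(q_Y^*(b\tilde D_d))\cap[\hat Y]\bigr)=(b\tilde D_1)\cdots(b\tilde D_d)\cap q_{Y*}[\hat Y]=r\,(b\tilde D_1)\cdots(b\tilde D_d)$; since $q_Y^*(b\tilde D_i)=b\tilde{\hat D}_i$ and both sides are $0$-cycle classes on the proper schemes $\bigcap_i\operatorname{Supp}\tilde{\hat D}_i\cap\hat E$, resp.\ $\bigcap_i\operatorname{Supp}\tilde D_i\cap E$ (these are contained in $\hat E$, resp.\ $E$, because $P$ is isolated in $\bigcap_i\operatorname{Supp}D_i$), comparing degrees gives $\deg_{\hat E}(\tilde{\hat D}_1\cdots\tilde{\hat D}_d)=r\,\deg_E(\tilde D_1\cdots\tilde D_d)$. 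Substituting these three identities into the displayed equality and dividing through by $r$ produces the claimed formula; nonnegativity of $\deg_E$ and the vanishing criterion then follow from the corresponding statements upstairs using that $q_Y$ maps $\hat E$ onto $E$.

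I expect the main difficulty to lie in the bookkeeping of this translation rather than in any single hard idea. The subtlety is that $q_Y$ is ramified along $\hat E$ — and, in general, at further loci of $\hat Y$ where the $\boldsymbol{\mu}_r$-action is not free — so $q_Y$ is neither étale in codimension one nor flat. This is exactly why I would route the multiplicity and residual-degree comparisons through the projection formula for proper (here, finite) morphisms, which needs no flatness, rather than through a naive flat pullback of cycles; and it forces one to check at each stage that the relevant $0$-cycles are supported on proper subschemes so that the notion of degree applies. The only other point requiring genuine care is verifying that $q_Y^*\tilde D_i$ has no $\hat E$-component (equivalently $q_Y^*\tilde D_i=\tilde{\hat D}_i$), which is where the hypothesis that $P$ is isolated in $\bigcap_i\operatorname{Supp}D_i$ is used.
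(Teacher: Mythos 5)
Your proposal is correct and follows essentially the same route as the paper: pull the divisors back to the smooth $\boldsymbol{\mu}_r$-cover, apply Theorem~\ref{thm:generalization of Fulton for a smooth variety} there, and descend using $q_Y^*E = r\hat E$ together with the degree-$r$ pushforward identities (which the paper attributes to \cite[section~1.4]{Ful98} and you derive via the projection formula for finite morphisms). Your extra care about the non-flatness of $q_Y$ and the identification $q_Y^*\tilde D_i = \tilde{\hat D}_i$ only makes explicit what the paper leaves implicit.
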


\begin{proof}
For every $i$, let $\hat{D}_i$ be the pullback of $D_i$ under $q_X$.
By Theorem~\ref{thm:generalization of Fulton for a smooth variety},
\begin{align*}
    \operatorname{mult}_{\hat{P}}(\hat{D}_1 \cdot \ldots \cdot \hat{D}_d) & = \frac{{v_{\hat{E}}}(\hat{D}_1) \cdot \ldots \cdot v_{\hat{E}}(\hat{D}_d)}{w_1 \cdot \ldots \cdot w_d} + \deg_{\hat{E}}(\hat{\varphi}_*^{-1} \hat{D}_1 \cdot \ldots \cdot \hat{\varphi}_*^{-1} \hat{D}_d),\\
    \deg_{\hat{E}}(\hat{\varphi}_*^{-1} \hat{D}_1 \cdot \ldots \cdot \hat{\varphi}_*^{-1} \hat{D}_d) & \geq 0,
\end{align*}
with equality if and only if $\hat{\varphi}_*^{-1} \hat{D}_1 \cap \ldots \cap \hat{\varphi}_*^{-1} \hat{D}_n \cap \hat{E}$ is empty. By \cite[section~1.4]{Ful98},
\[
\begin{aligned}
  \operatorname{mult}_{\hat{P}} (\hat{D}_1 \cdot \ldots \cdot \hat{D}_d) & = r \cdot \operatorname{mult}_P(D_1 \cdot \ldots \cdot D_d),\\
  \deg_{\hat{E}}(\hat{\varphi}_*^{-1} \hat{D}_1 \cdot \ldots \cdot \hat{\varphi}_*^{-1} \hat{D}_d) & = r \cdot \deg_{E}(\tilde D_1 \cdot \ldots \cdot \tilde D_d).
\end{aligned}
\]
We have $q_Y^*E = r \hat{E}$ as explained above.
This implies
\[
v_{\hat{E}}(\hat{D}_i) = r \cdot v_E(D_i).
\]
The corollary follows.
\end{proof}

\subsection{Intersection cycles of mobile linear systems around LCI-quotient singularities}

We consider a generalization of Theorem~\ref{thm:ineqmobLCI} to cyclic quotients of local complete intersection singularities.

Let $c \ge 0$, $d \ge 2$, $r > 0$ and $a_1, \dots, a_{d+c} \ge 0$ be integers.
Let $\hat{P} \in \hat{\mathcal{X}}$ be the germ of a smooth $(d+c)$-dimensional variety with local coordinates $x_1, \dots, x_{d+c}$ at $\hat{P}$ and let
\[
\hat{P} \in \hat{X} := (f_1 = \cdots = f_c = 0) \subset \hat{\mathcal{X}}
\]
be a local complete intersection singularity of dimension $d$, where $f_1, \dots, f_c \in \mathcal{O}_{\hat{\mathcal{X}}, \hat{P}}$ is a regular sequence.
We consider the action of $\boldsymbol{\mu}_r$ of type $\frac{1}{r} (a_1, \dots, a_{d+c})$ on $\hat{\mathcal{X}}$ with respect to the local coordinates $x_1, \dots, x_{d+c}$.
We assume that $f_i$ is semi-invariant with respect to the $\boldsymbol{\mu}_r$-action for $i = 1, \dots, c$, so that $\boldsymbol{\mu}_r$ acts on $\hat{X}$.
Let $X$ and $\mathcal{X}$ be the quotients of $\hat{X}$ and $\hat{\mathcal{X}}$ by the $\boldsymbol{\mu}_r$-action.
We assume that $\hat{X}$ is normal, which implies that $X$ is also normal.
We denote by $q_X \colon \hat{X} \to X$ and $q_{\mathcal{X}} \colon \hat{\mathcal{X}} \to \mathcal{X}$ be the quotient morphisms.
We set $P := q_X (\hat{P}) = q_{\mathcal{X}} (\hat{P}) \in X \subset \mathcal{X}$.
We assume that
\[
\gcd(r, a_1, \ldots, a_{j-1}, a_{j+1}, \ldots, a_{d+c}) = 1
\]
for every $j \in \{1, \ldots, d+c\}$.
If $c = 0$, then $\hat{X} = \hat{\mathcal{X}}$ and the singularity $P \in X$ is simply a quotient singularity of type $\frac{1}{r} (a_1, \dots, a_d)$.
We allow $r = 1$, corresponding to the case of a local complete intersection singularity.

Let $w_1, \dots, w_{d+c}$ be positive integers such that $\gcd (w_1, \dots, w_{d+c}) = 1$ and $w_i \equiv a_i \pmod{r}$ for $i = 1, \dots, d+c$. We set ${\bm w} := (w_1, \dots, w_{d+c})$ and ${\bm w}_r := \frac{1}{r} {\bm w} = \frac{1}{r} (w_1, \dots, w_{d+c})$.
Let $\varphi \colon Y \to X$ be a weighted blow-up of $P \in X$ with weights $\operatorname{wt} (x_1, \dots, x_{d+c}) = {\bm w}_r$.
Let $\hat{\Phi} \colon \hat{\mathcal{Y}} \to \hat{\mathcal{X}}$ be the weighted blow-up of $\hat{\mathcal{X}}$ at the origin $\hat{P}$ with weights $\operatorname{wt} (x_1, \dots, x_{d+c}) = {\bm w}$.
The $\boldsymbol{\mu}_r$-action on $\hat{\mathcal{X}}$ lifts to $\hat{\mathcal{Y}}$, and we obtain the induced morphism $\Phi \colon \mathcal{Y} \to \mathcal{X}$, where $\mathcal{Y}$ and $\mathcal{X}$ are the quotients of $\hat{\mathcal{Y}}$ and $\hat{\mathcal{X}}$, respectively, by the $\boldsymbol{\mu}_r$-action.
We have the quotient morphism $q_{\mathcal{Y}} \colon \hat{\mathcal{Y}} \to \mathcal{Y}$, and the commutative diagram:
\begin{equation} \label{eq:wbldiagamb}
\xymatrix{
\hat{\mathcal{Y}} \ar[d]_{\hat{\Phi}} \ar[r]^{q_{\mathcal{Y}}} & \mathcal{Y} \ar[d]^{\Phi} \\
\hat{\mathcal{X}} \ar[r]_{q_{\mathcal{X}}} & \mathcal{X}
}
\end{equation}
We can identify $\hat{Y}$ and $Y$ with the proper transforms $\hat{\Phi}_*^{-1} \hat{X} \subset \hat{\mathcal{Y}}$ and $\Phi_*^{-1} X \subset \mathcal{Y}$, respectively.
Under the above identifications, $\hat{\varphi}$ and $\varphi$ coincide with the restrictions $\hat{\Phi}|_{\hat{Y}}$ and $\Phi|_Y$.
The diagram \eqref{eq:wbldiagamb} restricts to the diagram:
\begin{equation} \label{eq:wbldiag}
\xymatrix{
\hat{Y} \ar[d]_{\hat{\varphi}} \ar[r]^{q_Y} & Y \ar[d]^{\varphi} \\
\hat{X} \ar[r]_{q_X} & X,
}
\end{equation}
where $q_Y := q_{\mathcal{Y}}|_{\hat{Y}} \colon \hat{Y} \to Y$ is the quotient morphism under the induced $\boldsymbol{\mu}_r$-action.
Let $\hat{\mathcal{E}}$ and $\mathcal{E}$ be the exceptional divisors of $\hat{\Phi}$ and $\Phi$, respectively, both of which are isomorphic to $\mathbb{P} ({\bm w})$.
Note here that, by a slight abuse of notation, $\mathbb{P} ({\bm w}) = \mathbb{P} (w_1, \dots, w_{d+c})$ is thought of as the weighted projective space with homogeneous coordinates $x_1, \dots, x_{d+c}$ of weights $w_1, \dots, w_{d+c}$, respectively.
The quotient morphism $q_{\mathcal{Y}} \colon \hat{\mathcal{Y}} \to \mathcal{Y}$ is a finite morphism ramified along $\mathcal{E}$ and we have $q_{\mathcal{Y}}^*\mathcal{E} = r \hat{\mathcal{E}}$.
We denote by $\hat{E}$ and $E$ the exceptional divisors of $\hat{\varphi}$ and $\varphi$, respectively.
For an element $f \in \mathcal{O}_{\mathcal{X},P}$, we denote by ${\bm w} (f)$ (resp.\ ${\bm w}_r (f)$) the weighted order of $f$ with respect to the ${\bm w}$-weights (resp.\ ${\bm w}_r$-weights) and by $f^{\bm w}$ the least weight part of $f$ with respect to the ${\bm w}$-weights in the sense of Definition \ref{def:weight}.

\begin{lemma}
Under the above setting, we assume that \emph{Condition~\ref{cond:normalwbl}} is satisfied.
Then the following assertions hold.
\begin{enumerate}
\item $\hat{\mathcal{E}}|_{\hat{Y}} = \hat{E}$ and $\mathcal{E}|_Y = E$, and they are both prime divisors.
\item $Y$ is a normal variety.
\item $\hat{E}$ is isomorphic to $E$ and we have $q_Y^*E = r \hat{E}$.
\item If in addition either $c = 0$ or $c \ge 1$ and the weighted complete intersection \eqref{eq:normalwbl} is well-formed, then
\[
a_E(X) = \dfrac{1}{r} \left(\sum\limits_{i=1}^{d+c} w_i - \sum\limits_{i=1}^c {\bm w} (f_i)\right) - 1.
\]
\end{enumerate}
\end{lemma}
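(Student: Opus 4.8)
The plan is to deduce the whole statement from its non-quotient analogues, Lemma~\ref{lem:lciexceptional} and Lemma~\ref{lem:discrepancywbl}, applied upstairs to the $\boldsymbol{\mu}_r$-cover $\hat{P} \in \hat{X} \subset \hat{\mathcal{X}}$, and then to descend along the two quotient morphisms in diagram~\eqref{eq:wbldiag}. First I would observe that Condition~\ref{cond:normalwbl} for the ${\bm w}$-blowup $\hat{\Phi}$ of $\hat{\mathcal{X}}$ is literally the condition we assume, since the least weight parts $f_i^{\bm w}$ and the ambient weighted projective space $\mathbb{P}({\bm w})$ do not involve the $\boldsymbol{\mu}_r$-action; hence Lemma~\ref{lem:lciexceptional} applies verbatim to $\hat{P} \in \hat{X}$ and gives that $\hat{\mathcal{E}}|_{\hat{Y}} = \hat{E}$ is a prime divisor and that $\hat{Y}$ is normal, while under the extra well-formedness hypothesis Lemma~\ref{lem:discrepancywbl} gives $a_{\hat{E}}(\hat{X}) = \sum_{i=1}^{d+c} w_i - \sum_{i=1}^{c} {\bm w}(f_i) - 1$.

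For assertions~(1) and~(2) I would descend these facts. Since $Y = \hat{Y}/\boldsymbol{\mu}_r$ and the quotient of a normal variety by a finite group is normal, $Y$ is normal. The same computation with Condition~\ref{cond:normalwbl} as in the proof of Lemma~\ref{lem:lciexceptional} — or, equivalently, restricting the scheme isomorphism $\hat{\mathcal{E}} \cong \mathbb{P}({\bm w})$ and passing to the quotient — shows that $\mathcal{E}|_Y = E$ is a prime divisor. For assertion~(3), the congruences $w_i \equiv a_i \pmod{r}$ say precisely that the $\boldsymbol{\mu}_r$-action on $\hat{\mathcal{E}} \cong \mathbb{P}({\bm w})$ is the restriction of the $\mathbb{G}_m$-scaling that defines $\mathbb{P}({\bm w})$, hence trivial as an action on the scheme $\hat{\mathcal{E}}$; therefore $\hat{\mathcal{E}} \to \mathcal{E}$, and its restriction $\hat{E} \to E$, are isomorphisms. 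Restricting the relation $q_{\mathcal{Y}}^* \mathcal{E} = r\hat{\mathcal{E}}$ to $\hat{Y}$ and using $\mathcal{E}|_Y = E$ and $\hat{\mathcal{E}}|_{\hat{Y}} = \hat{E}$ then yields $q_Y^* E = r\hat{E}$.

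For the discrepancy formula~(4) I would compare two expressions for $K_{\hat{Y}}$. On one side, $K_{\hat{Y}} = \hat{\varphi}^* K_{\hat{X}} + a_{\hat{E}}(\hat{X})\,\hat{E}$. On the other side, the $\gcd$ hypothesis makes $q_X$ étale in codimension~$1$, so $q_X^* K_X = K_{\hat{X}}$, while $q_Y$ is a degree-$r$ cover ramified only along $\hat{E}$ with index $r$, so by Hurwitz $K_{\hat{Y}} = q_Y^* K_Y + (r-1)\hat{E}$; substituting $K_Y = \varphi^* K_X + a_E(X) E$, using the commutativity $\varphi \circ q_Y = q_X \circ \hat{\varphi}$ and $q_Y^* E = r\hat{E}$ from~(3), this rewrites as $K_{\hat{Y}} = \hat{\varphi}^* K_{\hat{X}} + \bigl(r\,a_E(X) + r - 1\bigr)\hat{E}$. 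Canceling $\hat{\varphi}^* K_{\hat{X}}$ and comparing coefficients of the prime divisor $\hat{E}$ gives $a_{\hat{E}}(\hat{X}) = r\,a_E(X) + r - 1$, and solving, $a_E(X) = \tfrac{1}{r}\bigl(a_{\hat{E}}(\hat{X}) + 1\bigr) - 1 = \tfrac{1}{r}\bigl(\sum_{i=1}^{d+c} w_i - \sum_{i=1}^{c} {\bm w}(f_i)\bigr) - 1$, as required.

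The main obstacle is not any single hard argument but the scheme-theoretic bookkeeping in~(1) and~(3): one must match the exceptional divisor $E$ of $\varphi$, defined downstairs, with the quotient of $\hat{E}$, and pin down the ramification order of $q_Y$ along $\hat{E}$. The clean route, as indicated above, is to exploit that the $\boldsymbol{\mu}_r$-action on the exceptional $\mathbb{P}({\bm w})$ is trivial, so that $\hat{E} \to E$ is an isomorphism and all the ramification of $q_Y$ is concentrated, with index $r$, along $\hat{E}$; once this is in place, parts~(1)–(3) are immediate and part~(4) is the short discrepancy computation above.
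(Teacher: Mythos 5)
Your proposal is correct and follows essentially the same route as the paper: apply Lemma~\ref{lem:lciexceptional} and Lemma~\ref{lem:discrepancywbl} to the cover $\hat{P} \in \hat{X}$, descend normality and the identity $q_{\mathcal{Y}}^*\mathcal{E} = r\hat{\mathcal{E}}$ along the quotient, and combine $K_{\hat{X}} = q_X^*K_X$ with $K_{\hat{Y}} = q_Y^*K_Y + (r-1)\hat{E}$ to solve for $a_E(X)$. Your observation that the congruences $w_i \equiv a_i \pmod{r}$ force the $\boldsymbol{\mu}_r$-action on $\hat{\mathcal{E}} \cong \mathbb{P}({\bm w})$ to be trivial, so that $\hat{E} \to E$ is an isomorphism, is a correct elaboration of a point the paper leaves implicit in its setup.
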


\begin{proof}
By Lemma~\ref{lem:lciexceptional} applied to the local complete intersection singularity $\hat{P} \in \hat{X}$, we see that  $\hat{\mathcal{E}}|_{\hat{Y}} = \hat{E}$ is a prime divisor and $\hat{Y}$ is normal.
These imply that $\mathcal{E}|_Y = E$ and $Y$ is normal.
Taking restriction of $q_{\mathcal{Y}}^* \mathcal{E} = r \hat{\mathcal{E}}$ to $Y$, we obtain $q_Y^*E = r \hat{E}$.

By Lemma~\ref{lem:discrepancywbl} applied to $\hat{P} \in \hat{X}$, we have
\[
K_{\hat{Y}} = \hat{\varphi}^*K_{\hat{X}} + \left(\sum_{i=1}^{d+c} w_i - \sum_{i=1}^c {\bm w} (f_i) - 1\right) \hat{E}.
\]
We see that $K_{\hat{X}} = q_X^*K_X$ and
\[
K_{\hat{Y}} = q_Y^*K_Y + \frac{r-1}{r} q_Y^*E = q_Y^*K_Y + (r-1)\hat{E}.
\]
Combining these equalities, we obtain (4).
\end{proof}

\begin{theorem} \label{thm:wbllocintnumineq}
Under the above setting, we assume that $w_1 \le w_2 \le \cdots \le w_{d+c}$ and \emph{Condition \ref{cond:normalwbl}} is satisfied.
Suppose that there is a mobile linear system $\mathcal{M}$ on $X$ and a rational number $n > 0$ such that $q_X^*\mathcal{M}$ is a linear system of Cartier divisors on $\hat{X}$ and the pair $(X, \frac{1}{n} \mathcal{M})$ is not canonical at the $\varphi$-exceptional divisor $E$.
Then, for general members $D_1, D_2$ of $\mathcal{M}$, we have
\[
\operatorname{mult}_P (D_1 \cdot D_2) > \frac{r {\bm w} (f_1) \cdots {\bm w} (f_c) a_E(X)^2}{w_{d-1} w_d \cdots w_{d+c}} n^2.
\]
\end{theorem}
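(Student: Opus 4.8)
The plan is to reduce to the local complete intersection case by passing to the degree-$r$ cyclic cover $q_X\colon\hat X\to X$, re-running the computation from the proof of Theorem~\ref{thm:ineqmobLCI} on the smooth ambient germ $\hat P\in\hat{\mathcal X}$, and then descending back to $X$. I want to stress at the outset that one cannot simply invoke Theorem~\ref{thm:ineqmobLCI} upstairs as a black box: combining Lemma~\ref{lem:discrepancywbl} for $\hat P\in\hat X\subset\hat{\mathcal X}$ with part~(4) of the preceding lemma gives $a_{\hat E}(\hat X)=r\,a_E(X)+r-1$, whereas the pulled-back system will only satisfy $v_{\hat E}=r\,v_E>rn\,a_E(X)$, which is $\le n\,a_{\hat E}(\hat X)$ as soon as $r>1$. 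So the argument has to be carried out with the valuative lower bound $v_{\hat E}(\hat D_i)>rn\,a_E(X)$ in place of an appeal to non-canonicity of the upstairs pair.

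First I would take general members $D_1,D_2\in\mathcal M$ and set $\hat D_i:=q_X^*D_i$; by hypothesis these are general members of the linear system $q_X^*\mathcal M$ of Cartier divisors on $\hat X$. Using $q_Y^*E=r\hat E$ from the preceding lemma together with the identity $\hat\varphi^*\hat D_i=q_Y^*\varphi^*D_i$ from the diagram \eqref{eq:wbldiag} and the fact that the $q_Y$-pullback of the proper transform of $D_i$ contains no component of $\hat E$, I obtain $v_{\hat E}(\hat D_i)=r\,v_E(D_i)$; since $(X,\frac1n\mathcal M)$ is not canonical at $E$ this gives $v_{\hat E}(\hat D_i)=r\,v_E(\mathcal M)>rn\,a_E(X)$.

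Next I would repeat the argument of Theorem~\ref{thm:ineqmobLCI} on the normal local complete intersection germ $\hat P\in\hat X=(f_1=\cdots=f_c=0)\subset\hat{\mathcal X}$, with $\hat{\mathcal X}$ the smooth $(d+c)$-dimensional ambient, $\hat\Phi$ the $\bm w$-blowup, and $\hat{\mathcal E}$ its exceptional divisor. Concretely: using Lemma~\ref{lem:lciexceptional}~(3) pick Cartier extensions $\hat{\mathcal D}_i$ on $\hat{\mathcal X}$ with $\hat{\mathcal D}_i|_{\hat X}=\hat D_i$ and $v_{\hat{\mathcal E}}(\hat{\mathcal D}_i)=v_{\hat E}(\hat D_i)$; write $\hat D_1\cdot\hat D_2=\hat{\mathcal D}_1\cdot\hat{\mathcal D}_2\cdot V(f_1)\cdots V(f_c)$ as a $(d-2)$-cycle on $\hat{\mathcal X}$, where $v_{\hat{\mathcal E}}(V(f_i))=\bm w(f_i)$; then choose hyperplanes $\hat{\mathcal H}_1,\dots,\hat{\mathcal H}_{d-2}$ on $\hat{\mathcal X}$ through $\hat P$ with $v_{\hat{\mathcal E}}(\hat{\mathcal H}_j)\ge w_j$ and $\hat P$ an isolated point of the intersection of all the supports — exactly as in the proof of Theorem~\ref{thm:ineqmobLCI} — and apply Lemma~\ref{thm:intersect with general hyperplanes} and the weighted Fulton–Lazarsfeld inequality (Theorem~\ref{thm:generalization of Fulton for a smooth variety}) to the $d+c$ divisors $\hat{\mathcal D}_1,\hat{\mathcal D}_2,V(f_1),\dots,V(f_c),\hat{\mathcal H}_1,\dots,\hat{\mathcal H}_{d-2}$ on the regular $(d+c)$-fold $\hat{\mathcal X}$. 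This should give
\[
\operatorname{mult}_{\hat P}(\hat D_1\cdot\hat D_2)=\operatorname{mult}_{\hat P}\bigl(\hat{\mathcal D}_1\cdot\hat{\mathcal D}_2\cdot V(f_1)\cdots V(f_c)\cdot\hat{\mathcal H}_1\cdots\hat{\mathcal H}_{d-2}\bigr)>\frac{(rn\,a_E(X))^2\,\bm w(f_1)\cdots\bm w(f_c)\,w_1\cdots w_{d-2}}{w_1\cdots w_{d+c}}=\frac{r^2\,\bm w(f_1)\cdots\bm w(f_c)\,a_E(X)^2}{w_{d-1}w_d\cdots w_{d+c}}\,n^2.
\]

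Finally I would descend: as in the proof of Corollary~\ref{thm:generalization of Fulton for a quotient singularity} (using \cite[section~1.4]{Ful98} and that $q_X$ has degree $r$ with $\hat P$ the only point over $P$), one has $\operatorname{mult}_{\hat P}(\hat D_1\cdot\hat D_2)=r\operatorname{mult}_P(D_1\cdot D_2)$, and dividing by $r$ yields the claimed inequality. The step I expect to require the most care is precisely this bookkeeping of the powers of $r$: the two identities $v_{\hat E}=r\,v_E$ and $\operatorname{mult}_{\hat P}=r\operatorname{mult}_P$, together with the observation that it is the valuative bound and not non-canonicity of the pulled-back pair that survives, combine to $r^2/r=r$ and account for the extra factor $r$ relative to Theorem~\ref{thm:ineqmobLCI}. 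The remaining technical input — producing auxiliary hyperplanes of prescribed weighted order $v_{\hat{\mathcal E}}(\hat{\mathcal H}_j)\ge w_j$ cutting out an isolated point — is inherited verbatim from the proof of Theorem~\ref{thm:ineqmobLCI}.
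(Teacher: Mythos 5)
Your proposal is correct and follows essentially the same route as the paper's proof: pull back via $q_X$, get the valuative bound $v_{\hat{\mathcal E}}(\hat{\mathcal D}_i)=v_{\hat E}(\hat D_i)=r\,v_E(D_i)>rn\,a_E(X)$, rerun the Theorem~\ref{thm:ineqmobLCI} computation on the smooth ambient $\hat{\mathcal X}$ with auxiliary hyperplanes of weights $\ge w_1,\dots,w_{d-2}$, and descend via $\operatorname{mult}_{\hat P}=r\operatorname{mult}_P$. Your observation that one must carry the valuative inequality rather than non-canonicity of the upstairs pair (since $a_{\hat E}(\hat X)=r\,a_E(X)+r-1$) is exactly the point implicit in the paper's argument.
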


\begin{proof}
Let $D_1, D_2 \in \mathcal{M}$ be general members.
By assumption, $\hat{D}_i := q_X^*D_i$ is a Cartier divisor on $\hat{X}$, and hence we can take divisors $\hat{\mathcal{D}}_i$ on $\hat{\mathcal{X}}$ such that $\hat{\mathcal{D}}_i|_X = \hat{D}_i$ for $i = 1, 2$.
By Lemma~\ref{lem:lciexceptional}, we can choose $\hat{\mathcal{D}}_i$ so that $v_{\hat{E}} (\hat{D}_i) = v_{\hat{\mathcal{E}}} (\hat{\mathcal{D}}_i)$.
As a $(d-2)$-cycle on the germ $\hat{\mathcal{X}}$, we have
\[
\hat{D}_1 \cdot \hat{D}_2 = \hat{\mathcal{D}}_1 \cdot \hat{\mathcal{D}}_2 \cdot \hat{X} = \hat{\mathcal{D}}_1 \cdot \hat{\mathcal{D}}_2 \cdot \hat{\mathcal{X}}_1 \cdots \hat{\mathcal{X}}_c,
\]
where $\hat{\mathcal{X}}_i := (f_i = 0) \subset \hat{\mathcal{X}}$ is a hypersurface for $i = 1, \dots, c$.
Let $\hat{\mathcal{H}}_1, \dots, \hat{\mathcal{H}}_{d-2}$ be general hyperplanes of $\hat{\mathcal{X}}$ passing through $\hat{P}$ such that $\hat{P}$ is an isolated component of
\[
\hat{\mathcal{D}}_1 \cap \hat{\mathcal{D}}_2 \cap \hat{X} \cap \hat{\mathcal{H}}_1 \cap \cdots \cap \hat{\mathcal{H}}_{d-2}.
\]
We may assume that $v_{\hat{\mathcal{E}}} (\hat{\mathcal{H}}_i) \ge w_i$ for $i = 1, 2, \dots, d-2$.
Moreover, we have
\[
v_{\hat{\mathcal{E}}} (\hat{\mathcal{D}}_i) = v_{\hat{E}} (\hat{D}_i) = r v_E (D_i) > r n a_E(X),
\]
where the last inequality follows from the assumption that $(X, \frac{1}{n} \mathcal{M})$ is not canonical at $E$.
By Theorem~\ref{thm:generalization of Fulton for a smooth variety}, we have
\[
\begin{split}
\operatorname{mult}_{\hat{P}} (\hat{D}_1 \cdot \hat{D}_2)
&= \operatorname{mult}_{\hat{P}} (\hat{\mathcal{D}}_1 \cdot \hat{\mathcal{D}}_2 \cdot \hat{\mathcal{X}}_1 \cdots \hat{\mathcal{X}}_c \cdot \hat{\mathcal{H}}_1 \cdots \hat{\mathcal{H}}_{d-2}) \\
&> \frac{(r n a_E(X))^2 \cdot {\bm w} (f_1) \cdots {\bm w} (f_c) \cdot w_1 \cdots w_{d-2}}{w_1 \cdots w_{d+c}} \\
&= \frac{r^2 {\bm w} (f_1) \cdots {\bm w} (f_c) a_E(X)^2}{w_{d-1} w_d \cdots w_{d+c}} n^2.
\end{split}
\]
The desired inequality follows since $\operatorname{mult}_P (D_1 \cdot D_2) = \frac{1}{r} \operatorname{mult}_{\hat{P}} (\hat{D}_1 \cdot \hat{D}_2)$.
\end{proof}

\section{Sextic double solids}
\label{sc:SDS}

The aim of this section is to prove the birational rigidity of factorial sextic double solids with at most $cA_1$-points and ordinary $cA_2$-points.

\subsection{Birational involutions}

This section is entirely devoted to the proof of the following result.

\begin{proposition} \label{prop:DSbirinv}
Let $X$ be a factorial sextic double solid admitting an isolated $cA_2$ point $P \in X$ and let $\varphi \colon Y \to X$ be a divisorial contraction of discrepancy $1$ with center $P$.
Then, either $\varphi$ is not a maximal extraction or there is an elementary self-link $\sigma \colon X \dashrightarrow X$ initiated by $\varphi$.
\end{proposition}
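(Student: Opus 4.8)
The plan is to verify, for the given $\varphi$, the criterion of Theorem~\ref{thm:chractBR}, so we may assume that $\varphi$ is a maximal extraction — otherwise its first alternative, and hence the first alternative of the proposition, already holds. The first step is then to pin down $\varphi$. Since $P \in X$ is an isolated $cA_2$ point with $a_E(X) = 1$, Theorem~\ref{thm:cldccA2} together with the ensuing remark forces $\varphi$ to be the weighted blowup of type~(1) with $a = 1$: there are local analytic coordinates identifying $P \in X$ with $(x_1 x_2 + g(x_3, x_4) = 0) \subset \mathbb{A}^4$, where $g$ has order $3$ and contains the monomial $x_3^3$, with respect to which $\varphi$ is the weighted blowup with weights $(r_1, r_2, 1, 1)$, $r_1 + r_2 = 3$; in particular $\varphi$ is of non-exceptional (``regular'') type.

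The second step is to realise the Sarkisov link initiated by $\varphi$ and to extract from it the self-link demanded by the second alternative. Being factorial, $X$ is a Fano $3$-fold of Picard number~$1$; identify it with its anticanonical model $X_6 \subset \mathbb{P}(1,1,1,1,3)$, so that $H := -K_X = \mathcal{O}_X(1)$ generates $\operatorname{Pic}(X)$ and $|H|$ realises the double cover $\pi\colon X \to \mathbb{P}^3$; set $Q := \pi(P)$. Since $\varphi$ contracts a divisor to a point, the cone $\overline{NE}(Y)$ is two-dimensional with one extremal ray contracted by $\varphi$, and I would play the two-ray game on $Y$: determine the other extremal ray, check that the intermediate operations are inverse flips, a flop and flips that terminate, and end with a divisorial contraction $\varphi'\colon Y' \to X'$ onto a Mori fiber space, fitting into a commutative Sarkisov diagram. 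The link is governed by the double projection from $Q$: the relevant linear system on $Y$ is the anticanonical system $|{-K_Y}| = |\varphi^* H - E|$, whose members are the proper transforms of the members of $|H|$ passing through $P$ — equivalently, the preimages under $\pi$ of the planes of $\mathbb{P}^3$ through $Q$. In the generic situation — carried out in Lemma~\ref{lem:nonrigidlink} — one finds $X' \cong X$, and the resulting birational automorphism $\sigma\colon X \dashrightarrow X$ is the involution attached to the double projection; this is the desired elementary self-link initiated by $\varphi$. The remaining special configurations of $g$, equivalently of the branch sextic near $Q$, produce a ``bad link'' instead, and are handled in Lemma~\ref{lem:SDSexcldivcont}; there one argues separately that the dichotomy of the proposition still holds — that is, that either the link closes up to a self-link of $X$, or else its failure to do so already shows $\varphi$ is not a maximal extraction.

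The step I expect to be the main obstacle is the two-ray game itself: locating the non-$\varphi$ extremal ray of $\overline{NE}(Y)$ — whose geometry is dictated by the weighted blowup of the $cA_2$ singularity and by how the lines of $\mathbb{P}^3$ through $Q$ meet the branch sextic — establishing existence and termination of the intermediate flips, and identifying the target Mori fiber space $X'$; together with the accompanying case analysis separating the generic ``involution'' case (Lemma~\ref{lem:nonrigidlink}) from the special ``bad link'' cases (Lemma~\ref{lem:SDSexcldivcont}).
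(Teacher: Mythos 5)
Your proposal correctly reduces to the case where $\varphi$ is the non-exceptional contraction with weights $(1,2,1,1)$ (up to swapping $r_1,r_2$), and it correctly anticipates the overall shape of the argument: a link governed by the projection from $P$, with a dichotomy between a generic case yielding an involution and special cases yielding a bad link. But beyond this outline the substantive content is missing, and you acknowledge as much (``the step I expect to be the main obstacle is the two-ray game itself''). Three things are asserted but not established. First, the construction of the involution: the paper rewrites $X$ as a codimension-$2$ complete intersection in $\mathbb{P}(1,1,1,1,3,3)$ via $\xi = w + x^2t$, $\zeta = -w+x^2t$, realises $Y$ inside the $(1,1,1,1,2)$-blowup of that ambient space, observes that $Y$ carries a $\frac{1}{2}(1,1,1)$ point $Q$ (a point of $Y$, not of $\mathbb{P}^3$ as in your notation) at which $\rho = \pi\circ\varphi\colon Y\dashrightarrow\mathbb{P}^2$ is undefined, and takes the Kawamata blowup $\psi\colon\hat Y\to Y$ of $Q$ to obtain an elliptic fibration $\hat\rho\colon\hat Y\to\mathbb{P}^2$ with section $F=\operatorname{Exc}(\psi)$. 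The involution $\chi$ is the Mordell--Weil reflection with respect to $F$; it is not produced by terminating a two-ray game in a second divisorial contraction onto some $X'$, and locating the second extremal ray is never needed in this form. Second, one must show $\chi$ is not biregular, or $\sigma$ is not a link at all; the paper does this in Lemma~\ref{lem:nonrigidlink} by a rank count in $\operatorname{Pic}(\hat Y)_{\mathbb{Q}}\cong\mathbb{Q}^3$, showing $\hat E - F$ cannot be $\mathbb{Q}$-linearly equivalent to a combination of $\hat\rho$-vertical divisors.

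Third, and most seriously, your treatment of the special configurations is circular: ``either the link closes up to a self-link of $X$, or else its failure to do so already shows $\varphi$ is not a maximal extraction'' merely restates the proposition. The paper's actual argument (Lemma~\ref{lem:SDSexcldivcont}) is that when $\hat\rho$ has infinitely many reducible fibres, each such fibre contributes an irreducible curve $\Theta_\lambda$ on $Y$ with $(-K_Y\cdot\Theta_\lambda)=0$ and $(E\cdot\Theta_\lambda)>0$, and the existence of infinitely many such curves excludes $\varphi$ as a maximal extraction by \cite[Lemma 2.20]{Oka18}; this requires the nefness of $-K_Y$ and the integrality bookkeeping carried out there. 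Without these three ingredients the proposal is a plan rather than a proof.
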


Note that the singularity $P \in X$ is not necessarily ordinary in Proposition~\ref{prop:DSbirinv}.
Let $X$ be a factorial sextic double solid admitting a singular point $P \in X$ of type $cA_2$.
Then, by \cite[Theorem A]{PaemurruSDS}, we may assume that $X \subset \mathbb{P} (1_x, 1_y, 1_z, 1_t, 3_w)$ is defined by an equation of the form
\[
f := -w^2 + x^4 t^2 + x^3 g_3 + x^2 g_4 + x g_5 + g_6 = 0,
\]
where $g_i = g_i (y, z, t)$ is a homogeneous polynomial of degree $i$ such that $g_3 (y, z, 0) \ne 0$, and $P = (1\!:\!0\!:\!0\!:\!0\!:\!0)$.
We note that the expression $\mathbb{P} (1_x, 1_y, 1_z, 1_t, 3_w)$ means that it is the weighted projective space $\mathbb{P} (1, 1, 1, 1, 3)$ with homogeneous coordinates $x, y, z, t, w$ of weights $1, 1, 1, 1, 3$, respectively.
We introduce new homogeneous coordinates $\xi$ and $\zeta$ of weights $3$, and consider the relations:
\[
\xi = w + x^2 t, \quad
\zeta = - w + x^2 t.
\]
Eliminating $w = (\xi - \zeta)/2$, we identify $X$ with the complete intersection of codimension $2$ in $\mathbb{P} := \mathbb{P} (1_x, 1_y, 1_z, 1_t, 3_{\xi}, 3_{\zeta})$ defined by the equations
\begin{equation} \label{eq:nonrigidX}
\begin{cases}
\xi \zeta + x^3 g_3 + x^2 g_4 + x g_5 + g_6 = 0, \\
\xi + \zeta - 2 x^2 t = 0.
\end{cases}
\end{equation}
Let $\Phi \colon \mathcal{Y} \to \mathbb{P}$ be the weighted blow-up of $\mathbb{P}$ at $P = (1\!:\!0\!:\!\cdots\!:\!0)$ with weight $\operatorname{wt} (y, z, t, \xi, \zeta) = (1, 1, 1, 1, 2)$.
We set $Y := \Phi_*^{-1} X$ and $\varphi := \Phi|_Y \colon Y \to X$.
Let $\mathcal{E} \cong \mathbb{P} (1, 1, 1, 1, 2)$ be the exceptional divisor of $\Phi$ and let $E := \mathcal{E}|_Y$ be the $\varphi$-exceptional divisor.
Then we have isomorphisms
\begin{equation} \label{eq:nonrigidE}
E \cong (\xi \zeta - g_3 = \xi - 2 t = 0) \subset \mathbb{P} (1_y, 1_z, 1_t, 1_{\xi}, 2_{\zeta}) \cong \mathcal{E}.
\end{equation}
We have $K_Y = \varphi^*K_X + E$ and $\varphi$ is a divisorial contraction of discrepancy $1$.

Note that $E^3 = 3/2$.
We set $Q := (0\!:\!\cdots\!:\!0\!:\!1) \in E$, which is a $\frac{1}{2} (1, 1, 1)$ point of $Y$.
Let $\pi \colon X \dashrightarrow \mathbb{P}^2 = \mathbb{P} (1_y, 1_z, 1_t)$ be the projection to the coordinates $y, z, t$, and set $\rho := \pi \circ \varphi \colon Y \dashrightarrow \mathbb{P}^2$.

\begin{lemma} \label{lem:nonrigidnef}
The following assertions hold.
\begin{enumerate}
\item The rational map $\rho$ is defined outside the point $Q$.
\item The divisor $-K_Y$ is nef.
\end{enumerate}
\end{lemma}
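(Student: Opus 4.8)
The plan is to realise $\rho$ as the rational map attached to an explicit net of divisors in $|{-K_Y}|$ whose base locus is precisely the point $Q$; both assertions then follow formally.

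Write $A := -K_X = \mathcal O_X(1)$. The projection $\pi$ is defined by the net $\mathcal L_X := \langle y, z, t \rangle \subseteq |A|$, so its indeterminacy locus is $V(y) \cap V(z) \cap V(t) \cap X$. Since each $g_i = g_i(y,z,t)$ is homogeneous of positive degree it vanishes at the origin, so on $\{y = z = t = 0\}$ the equations \eqref{eq:nonrigidX} of $X$ reduce to $\xi \zeta = 0$ and $\xi + \zeta = 0$; hence $\xi = \zeta = 0$, and $\operatorname{Bs}\mathcal L_X = \{P\}$. Thus $\pi$ is a morphism on $X \setminus \{P\}$, and $\rho = \pi \circ \varphi$ is a morphism on $Y \setminus E$.

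The weights $\operatorname{wt}(y,z,t,\xi,\zeta) = (1,1,1,1,2)$ of $\Phi$ give $v_E(y) = v_E(z) = v_E(t) = 1$, so the strict transforms $\tilde y, \tilde z, \tilde t$ of $V(y), V(z), V(t)$ satisfy $\varphi^* V(y) = \tilde y + E$ and similarly for $z$ and $t$. Since $K_Y = \varphi^* K_X + E$, each of $\tilde y, \tilde z, \tilde t$ is linearly equivalent to $\varphi^* A - E = -K_Y$, and $\rho$ is defined by the net $\mathcal L_Y := \langle \tilde y, \tilde z, \tilde t \rangle \subseteq |{-K_Y}|$, which is the mobile part of $\varphi^* \mathcal L_X = E + \mathcal L_Y$. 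The key computation is $\operatorname{Bs}\mathcal L_Y = V(\tilde y) \cap V(\tilde z) \cap V(\tilde t)$. Away from $E$ this is the strict transform of $\operatorname{Bs}\mathcal L_X = \{P\}$, hence empty. On $E$, using the description \eqref{eq:nonrigidE} of $E$ inside $\mathbb{P}(1_y, 1_z, 1_t, 1_\xi, 2_\zeta)$, the conditions $y = z = t = 0$ force $\xi = 2t = 0$ while $\xi\zeta - g_3(0,0,0) = 0$ is automatic, leaving only the point $(0\!:\!\cdots\!:\!0\!:\!1) = Q$. Therefore $\operatorname{Bs}\mathcal L_Y = \{Q\}$.

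Assertion (1) is then immediate: $\rho$, being the map attached to $\mathcal L_Y$, is a morphism on $Y \setminus \operatorname{Bs}\mathcal L_Y = Y \setminus \{Q\}$. For assertion (2), let $C \subset Y$ be any irreducible curve. Since $\operatorname{Bs}\mathcal L_Y$ is the single point $Q$, the curve $C$ is not contained in it, so a general member $T \in \mathcal L_Y$ --- an effective divisor linearly equivalent to $-K_Y$ --- does not contain $C$; as $Y$ is normal this gives $-K_Y \cdot C = T \cdot C \ge 0$. Hence $-K_Y$ is nef. The step I expect to be most delicate is the base-locus computation on $E$: one must check that under the isomorphism \eqref{eq:nonrigidE} the sections $\tilde y, \tilde z, \tilde t$ restrict to the coordinate sections $y, z, t$ on $\mathbb{P}(1_y, 1_z, 1_t, 1_\xi, 2_\zeta)$, so that no unexpected positive-dimensional component of $\operatorname{Bs}\mathcal L_Y$ hides inside $E$ and $Q$ is genuinely the only base point. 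Once that is pinned down, everything else is formal.
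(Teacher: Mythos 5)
Your proposal is correct and follows essentially the same route as the paper: both reduce to the computation that $\{y=z=t=0\}$ meets $X$ only in $P$ and meets $E$ (via the identification \eqref{eq:nonrigidE}, where $\xi=2t$ is forced) only in $Q$, and both deduce nefness from the fact that $y,z,t$ lift to sections of $-K_Y=-\varphi^*K_X-E$ whose common zero locus is the single point $Q$. The only difference is cosmetic: you spell out the final step (a curve is never contained in a zero-dimensional base locus, so it meets a general member of the net nonnegatively), which the paper leaves implicit.
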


\begin{proof}
We have
\[
\begin{split}
(y = z = t = 0) \cap X &= (y = z = t = \alpha \beta = \alpha + \beta = 0) \subset \mathbb{P} \\
&= \{P\},
\end{split}
\]
which shows that $\pi$ is defined outside $P$.
Hence the indeterminacy locus of $\rho$ is contained in $E$ and, under the isomorphism \eqref{eq:nonrigidE}, it coincides with the set
\[
\begin{split}
E \cap (y = z = t = 0) &= (y = z = t = \xi = 0) \subset \mathbb{P} (1_y, 1_z, 1_t, 1_{\xi}, 2_{\zeta}) \\
&= \{Q\}.
\end{split}
\]
This proves (1).

The sections $y, z, t$ lift to sections of $-K_Y \sim -\varphi^*K_X - E$, and hence $\left|-K_Y\right|$ is base point free away from $Q$.
This proves (2).
\end{proof}

Let $\psi \colon \hat{Y} \to Y$ be the Kawamata blow-up of $Y$ at $Q$.
Then $\psi$ resolves the indeterminacy of $\rho$ and $\hat{\rho} := \rho \circ \psi \colon \hat{Y} \to \mathbb{P}^2$ is everywhere defined.
Note that $\hat{Y}$ is a smooth projective variety.
The sections $y, z, t$ lift to sections of $-K_{\hat{Y}}$, and hence $\hat{\rho}$ is defined by the complete linear system $\left|-K_{\hat{Y}}\right|$.
It is clear that general fibers of $\pi$ are irreducible and reduced.
It follows that $\hat{\rho}$ has connected fibers, and hence it is an elliptic fibration.
Let $F \cong \mathbb{P}^2$ be the $\psi$-exceptional divisor and set $\hat{E} := \psi_*^{-1} E$.
We have $\psi^*E = \hat{E} + \frac{1}{2} F$.

\begin{lemma} \label{lem:nonrigidhat}
The following assertions hold.
\begin{enumerate}
\item The divisor $F$ is a section of $\hat{\rho}$ and $\hat{E}$ is a rational section of $\hat{\rho}$.
\item There are at most $3$ irreducible curves on $\hat{E}$ which are contracted by $\hat{\rho}$.
\item There is no prime divisor on $\hat{Y}$ that is mapped to a point by $\hat{\rho}$.
\end{enumerate}
\end{lemma}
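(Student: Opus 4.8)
The plan is to read all three assertions off explicit local models at the two blown-up points $P$ and $Q$, using the presentation \eqref{eq:nonrigidX} of $X$ and the presentation \eqref{eq:nonrigidE} of $E$. For part (1), I first treat $F$. Working in the chart of $\mathcal{Y}$ where $\zeta/x^3 \ne 0$ and eliminating the coordinates $\xi, \zeta$ by means of the two equations \eqref{eq:nonrigidX} (their linear parts at $Q$ have invertible Jacobian in $\xi,\zeta$), the germ $Q \in Y$ becomes $\mathbb{C}^3_{(y_1,z_1,t_1)}/\boldsymbol{\mu}_2$, and in these coordinates $\rho$ is the projection $(y_1:z_1:t_1)$ from the origin. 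Since $\psi$ is the Kawamata blowup of the $\tfrac{1}{2}(1,1,1)$ point $Q$, upstairs it is the ordinary blowup of $0\in\mathbb{C}^3$, which resolves this projection and restricts on the exceptional $\mathbb{P}^2 = F$ to the identity; hence $\hat\rho|_F\colon F \to \mathbb{P}^2$ is an isomorphism and $F$ is a section. (As a consistency check, $K_{\hat Y} = \psi^*K_Y + \tfrac12 F$ together with $\mathcal{O}_{\hat Y}(F)|_F \cong \mathcal{O}_{\mathbb{P}^2}(-2)$ gives $-K_{\hat Y}|_F = \mathcal{O}_{\mathbb{P}^2}(1)$, compatibly with $\hat\rho$ being given by $|-K_{\hat Y}|$.) For $\hat E$, substituting $\xi = 2t$ into \eqref{eq:nonrigidE} identifies $E$ with the weighted hypersurface $(2t\zeta - g_3(y,z,t) = 0)\subset \mathbb{P}(1_y,1_z,1_t,2_\zeta)$, on which $\rho|_E$ is the projection $(y:z:t:\zeta)\mapsto(y:z:t)$ from $Q = (0:0:0:1)$; this is birational with inverse $(y:z:t)\mapsto (y:z:t:g_3(y,z,t)/(2t))$. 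Since $\psi|_{\hat E}\colon \hat E \to E$ is birational, $\hat\rho|_{\hat E}$ is birational, i.e.\ $\hat E$ is a rational section.

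For part (2), let $C\subset\hat E$ be an irreducible curve with $\hat\rho(C)$ a point. As $\psi$ is an isomorphism away from $Q$, either $\psi(C)=Q$ or $\psi(C)$ is a curve. In the first case $C\subseteq\psi^{-1}(Q)\cap\hat E = F\cap\hat E$, which by the local model above is the line $(t_1=0)\subset F=\mathbb{P}^2$; but $\hat\rho|_F$ (the identity) carries this line isomorphically onto the line $(t=0)\subset\mathbb{P}^2$, so $F\cap\hat E$ is not contracted — a contradiction. Hence $\psi(C)$ is a curve on $E$ contracted by $\rho|_E$. Any curve contracted by the projection $\rho|_E$ from $Q$ is a ``line through $Q$'' lying on $E$, i.e.\ of the form $\{(y_0:z_0:0:\zeta)\}\cup\{Q\}$ with $g_3(y_0,z_0,0)=0$; since $g_3(y,z,0)$ is a nonzero binary cubic, there are at most three such lines, hence at most three possibilities for $C$ (namely their strict transforms on $\hat E$).

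For part (3), suppose $D\subset\hat Y$ is a prime divisor with $\hat\rho(D)$ a point. By (1), $D\ne F$ and $D\ne\hat E$, since both of those map onto $\mathbb{P}^2$; therefore $D$ is the strict transform of a prime divisor $D_X$ on $X$, and $\pi(D_X)=\hat\rho(D)$ is a point. But $\pi$ is the restriction to $X$ of the linear projection of $\mathbb{P}(1_x,1_y,1_z,1_t,3_w)$ from the line $(y=z=t=0)$, so for each $b=(y_0:z_0:t_0)\in\mathbb{P}^2$ the closure of $\pi^{-1}(b)$ in $X$ is the intersection of $X$ with the $2$-plane $\mathbb{P}(1,1,3)$ spanned by that line and $b$; this intersection is cut out on the plane by one equation which, because it retains the monomial $-w^2$, does not vanish identically on the plane, so $\overline{\pi^{-1}(b)}$ is a curve. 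Then $D_X\subseteq\overline{\pi^{-1}(b)}$ forces $\dim D_X\le 1$, contradicting $\dim D_X = 2$. Hence no prime divisor of $\hat Y$ is contracted by $\hat\rho$.

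The step I expect to be the main obstacle is the local analysis near $Q$ underlying (1) and (2): one must simultaneously keep track of $Q$ as a $\tfrac12(1,1,1)$ quotient singularity of $Y$ and as a Du Val ($A_1$) singular point of the surface $E$, and verify that the Kawamata blowup $\psi$ introduces no new contracted curves on $\hat E$ — which is exactly why pinning down $F\cap\hat E$ and its image under $\hat\rho|_F$ is the crux of (2). Once that local picture is fixed, everything else is bookkeeping with the equations \eqref{eq:nonrigidX} and \eqref{eq:nonrigidE}.
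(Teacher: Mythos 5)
Your proposal is correct, and for parts (2) and (3) it follows essentially the same route as the paper: the same identification $E \cong (2t\zeta - g_3 = 0) \subset \mathbb{P}(1_y,1_z,1_t,2_\zeta)$ with $\rho|_E$ the projection from $Q$, the same count of at most $3$ fibral lines coming from the roots of the binary cubic $g_3(y,z,0)$, and the same reduction of (3) to the fact that the closures of the fibres of $\pi$ are curves (you justify this via the plane $\mathbb{P}(1,1,3)$ and the surviving monomial $-w^2$; the paper says the fibre is a complete intersection of two members of $|\mathfrak{m}_P(A)|$ — same content). The one genuine difference is in (1): the paper proves both section statements numerically, by computing $((-K_{\hat Y})^2\cdot F)=1$ and $((-K_{\hat Y})^2\cdot\hat E)=1$ against the fibre class $(-K_{\hat Y})^2$ using $\psi^*E=\hat E+\tfrac12 F$ and $E^3=3/2$, whereas you argue by explicit local coordinates at $Q$ (the index-one cover of the Kawamata blowup is the ordinary blowup of $0\in\mathbb{C}^3$, on whose exceptional $\mathbb{P}^2$ the resolved projection is the identity) and by exhibiting a birational inverse $(y:z:t)\mapsto(y:z:t:g_3/(2t))$ of $\rho|_E$. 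The paper's computation is shorter and also feeds directly into the intersection-theoretic bookkeeping of Lemma~\ref{lem:SDSexcldivcont}; your local model is more self-contained and has the side benefit of making (2) airtight: you explicitly identify $F\cap\hat E$ as the line $(t_1=0)$ in $F\cong\mathbb{P}^2$ and check it maps isomorphically to a line, ruling out the $\psi$-exceptional curve as a contracted curve — a point the paper leaves implicit in its phrase ``are exactly the proper transforms of the curves in $(t=g_3=0)$''. The only cosmetic inaccuracy is your reference to ``invertible Jacobian in $\xi,\zeta$'': the variables actually eliminated in the $\zeta$-chart are the chart coordinates $\xi_1=\xi/u$ and $u$ (with $\zeta=u^2$), but the resulting identification of $Q\in Y$ with $\mathbb{C}^3_{(y_1,z_1,t_1)}/\boldsymbol{\mu}_2$ is exactly right.
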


\begin{proof}
A $\hat{\rho}$-fiber is rationally equivalent to $(-K_Y)^2$ and we have
\[
\begin{split}
((-K_Y)^2 \cdot F) &= ((- \psi^*K_Y - \frac{1}{2} F)^2 \cdot F) = \frac{1}{2^2} (E^3) = 1, \\
((-K_{\hat{Y}})^2 \cdot \hat{E}) &= ((-K_Y)^2 \cdot E) + \frac{1}{2^2} (F^2 \cdot \hat{E}) = (E^3) - \frac{1}{2^3} (F^3) = 1.
\end{split}
\]
Hence $\hat{E}$ is a rational $\hat{\rho}$-section and $F \cong \mathbb{P}^2$ is a $\hat{\rho}$-section.

Under the identification \eqref{eq:nonrigidE} of $E$, we further eliminate the variable $\xi = 2 t$, we have an isomorphism
\[
E \cong (2 t \zeta - g_3 = 0) \subset \mathbb{P} (1_y, 1_z, 1_t, 2_{\zeta}).
\]
Then the map $\rho|_E \colon E \dashrightarrow \mathbb{P}^2$ is the projection to the coordinates $y, z, t$, and the morphism $\psi|_{\hat{E}} \colon \hat{E} \to E$ is the weighted blow-up of $E$ at the point $Q = (0\!:\!\cdots\!:\!0\!:\!1)$ with weight $\operatorname{wt} (y, z, t) = \frac{1}{2} (1, 1, 1)$.
The irreducible curves on $\hat{E}$ that are contracted by $\hat{\rho}$  are exactly the proper transforms of the curves in $(t = g_3 = 0) \subset \mathbb{P} (1_y, 1_z, 1_t, 2_{\zeta})$.
There are at most $3$ irreducible curves in the above set since $g (y, z, 0) \ne 0$.
This proves (2).

Suppose that there is a prime divisor $G$ on $\hat{Y}$ that is mapped to a point by $\hat{\rho}$.
Then, $G \ne \hat{E}, F$ by (1), and we see that $D := \varphi_*\psi_*G$ is a prime divisor on $X$ that is mapped to a point by $\pi$.
This is clearly impossible since a fiber of $\pi$ is a complete intersection of two distinct divisors in $|\mathfrak{m}_P (A)|$ and cannot be a divisor on $X$.
This proves (3).
\end{proof}

Let $\hat{\chi} \colon \hat{Y} \dashrightarrow \hat{Y}$ be the reflection of the elliptic fibration with respect to the section $F$.
The birational involution $\hat{\chi}$ is an isomorphism in codimension $1$ since $K_Y$ is $\hat{\rho}$-nef, and it descends to a birational involution $\chi \colon Y \dashrightarrow Y$ which is an isomorphism in codimension $1$ since $\hat{\chi}_* F = F$.
\[
\xymatrix{
\ar|(.39)\hole@/^13pt/[rrddd]_{\hat{\rho}} \hat{Y} \ar[d]_{\psi} \ar@{-->}[rrrr]^{\hat{\chi}} & & & & \hat{Y} \ar[d]^{\psi} \ar|(.39)\hole@/_13pt/[llddd]^{\hat{\rho}}\\
Y \ar[d]_{\varphi} \ar@{-->}[rrrr]^{\chi} \ar@{-->}[rrdd]_{\rho} & & & & \ar@{-->}[lldd]^{\rho} Y \ar[d]^{\varphi} \\
X \ar@/_7pt/@{-->}[rrd]_{\pi} & & & & \ar@/^7pt/@{-->}[lld]^{\pi} X \\
& & \mathbb{P}^2 & &}
\]

\begin{lemma}
\label{lem:SDSexcldivcont}
If there are infinitely many reducible $\hat{\rho}$-fibers, then $\varphi$ is not a maximal extraction.
\end{lemma}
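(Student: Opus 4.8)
The plan is to argue by contradiction. Suppose $\varphi$ is a maximal extraction. Then there are a positive rational number $n$ and a mobile linear system $\mathcal M\sim_{\mathbb Q}-nK_X$ such that $(X,\frac1n\mathcal M)$ is not canonical along $E$; since $a_E(X)=1$ this says $m:=\operatorname{ord}_E(\mathcal M)>n$. Let $\mathcal M_Y$ and $\hat{\mathcal M}$ be the strict transforms of $\mathcal M$ on $Y$ and on $\hat Y$, and put $\mu:=\operatorname{ord}_F(\mathcal M_Y)\ge 0$. Using $K_Y=\varphi^*K_X+E$, $K_{\hat Y}=\psi^*K_Y+\tfrac12 F$ and $\psi^*E=\hat E+\tfrac12 F$ one records the numerical class
\[
\hat{\mathcal M}\ \sim_{\mathbb Q}\ -nK_{\hat Y}-(m-n)\hat E+\Bigl(n-\tfrac m2-\mu\Bigr)F .
\]

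Next I would extract the purely numerical constraints. Since $\hat{\mathcal M}$ is mobile it has nonnegative degree on every irreducible curve outside its (at most one‑dimensional) base locus, and it restricts to an effective divisor on every prime divisor of $\hat Y$. Intersecting with a general fibre $C$ of $\hat\rho$ — recall $-K_{\hat Y}=\hat\rho^*\mathcal O_{\mathbb P^2}(1)$, so $-K_{\hat Y}\cdot C=0$, while $\hat E\cdot C=F\cdot C=1$ because $\hat E$ and $F$ are rational sections — gives $n-\tfrac m2-\mu\ge m-n$. Restricting to the section $F\cong\mathbb P^2$ and using that $\hat{\mathcal M}|_F$ is an effective divisor of degree $n-(\hat E\cdot F\cdot\hat\rho^*\ell)(m-n)-2\bigl(n-\tfrac m2-\mu\bigr)$ (for $\ell\subset\mathbb P^2$ a general line) gives a second inequality. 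Since $X$ has Picard rank $1$ with $-K_X$ a primitive Cartier divisor, $m$ and $n$ are integers, so $m-n\ge 1$, and the two inequalities already confine $m$ to the narrow range $n<m\le\tfrac43 n$.

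Now I would bring in the hypothesis. Infinitely many reducible $\hat\rho$‑fibres means there is an irreducible curve $B\subset\mathbb P^2$ over whose general point the fibre is reducible; for general $p\in B$ choose a component $\Gamma_p$ of $\hat\rho^{-1}(p)$ not containing the point $F(p)$. The $\Gamma_p$ form an infinite family of irreducible curves with $-K_{\hat Y}\cdot\Gamma_p=F\cdot\Gamma_p=0$. If the rational section $\hat E$ met $\Gamma_p$ then $\hat{\mathcal M}\cdot\Gamma_p=-(m-n)(\hat E\cdot\Gamma_p)<0$, forcing $\Gamma_p\subseteq\operatorname{Bs}(\hat{\mathcal M})$; this can hold for only finitely many $p$ (the base locus is a curve while the $\Gamma_p$ sweep a surface), so $\hat E$ meets almost every reducible fibre on the component through $F(p)$. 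Transporting this to a general elliptic $K3$ surface $S_\ell:=\hat\rho^{-1}(\ell)$, which now carries $\deg B>0$ reducible fibres all with $\hat E$ on the identity component, the Shioda--Tate/height computation on $S_\ell$ pins down $(\hat E|_{S_\ell})^2$ and $\hat E|_{S_\ell}\cdot F|_{S_\ell}$, hence the class of the reflected section $\hat\chi_*\hat E$ in $N^1(\hat Y)_{\mathbb Q}$. Pushing $\hat\chi_*\hat{\mathcal M}$ down through $\psi$ and then $\varphi$ expresses $\sigma_*\mathcal M\sim_{\mathbb Q}-n'K_X$ with $\sigma:=\varphi\circ\chi\circ\varphi^{-1}$ and $n'$ a decreasing affine function of $m-n$; effectivity and mobility of $\sigma_*\mathcal M$ force $n'>0$ and $\operatorname{ord}_E(\sigma_*\mathcal M)\ge 0$, which together with the sharpened numerics and $m-n\ge 1$ forces $m\le n$, contradicting $m>n$. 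Equivalently, one can run the two‑ray game starting from $Y$: the infinitely many reducible fibres prevent it from terminating in a divisorial contraction back to $X$, so by the Noether--Fano theorem $\varphi$ cannot be maximal. Hence $\varphi$ is not a maximal extraction.

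The step I expect to be the main obstacle is exactly this last piece of book‑keeping: verifying that the contribution of the $\deg B$ reducible fibres of $S_\ell$ to the height pairing of $\hat E$ — equivalently, to the class of $\hat\chi_*\hat E$ on $\hat Y$, or to the degree of the effective divisor $\hat{\mathcal M}|_F$ — enters with the sign that upgrades the a priori merely consistent system of inequalities for $m$ into a genuine contradiction. Everything else (the class of $\hat{\mathcal M}$, the fibre and section intersections, and the base‑locus‑dimension argument placing $\hat E$ on the identity component) is routine once the diagram following Lemma~\ref{lem:nonrigidhat} is available.
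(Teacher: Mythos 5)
Your proposal contains the right mechanism but resolves the key dichotomy in the wrong direction and then wanders into an argument you yourself admit you cannot complete. You correctly observe that for a component $\Gamma_p$ of a reducible fibre with $F \cdot \Gamma_p = 0$ and $-K_{\hat Y}\cdot\Gamma_p = 0$, the inequality $\hat E\cdot\Gamma_p > 0$ forces $\hat{\mathcal M}\cdot\Gamma_p < 0$, hence $\Gamma_p\subseteq\operatorname{Bs}(\hat{\mathcal M})$, and that this can happen for only finitely many $p$. At that point the entire proof reduces to showing that $\hat E\cdot\Gamma_p > 0$ \emph{does} hold for all but finitely many reducible fibres --- which is exactly what the paper proves, and which immediately gives the contradiction. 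Instead you conclude the opposite (that $\hat E$ avoids the off-$F$ components for almost all $p$) and try to exploit that via a Shioda--Tate/height computation on $\hat\rho^{-1}(\ell)$ and a push-forward of $\hat\chi_*\hat{\mathcal M}$; none of that is carried out, the sign you flag as ``the main obstacle'' is precisely the content of the lemma, and the closing appeal to a two-ray game is not an argument. Note also that the conclusion to be proved is an exclusion (no maximal $\mathcal M$ exists), so an untwisting-style estimate $m\le n$ via $\sigma_*\mathcal M$ is the wrong shape of statement to aim for here.

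The missing positivity is obtained in the paper as follows. Discard the finitely many reducible fibres containing one of the (at most $3$) curves of $\hat E$ contracted by $\hat\rho$, and write a remaining reducible fibre as $\Gamma_\lambda = \Delta_\lambda + \Xi_\lambda$ with $F\cdot\Delta_\lambda = 1$ and $\operatorname{Supp}(\Xi_\lambda)\cap F=\emptyset$. From $-K_{\hat Y} = -\psi^*K_Y - \tfrac12 F$ and $(-K_{\hat Y}\cdot\Gamma_\lambda)=0$ one gets $(-K_Y\cdot\psi_*\Delta_\lambda)+(-K_Y\cdot\psi_*\Xi_\lambda)=\tfrac12$; since $\psi_*\Xi_\lambda$ avoids the $\tfrac12(1,1,1)$ point $Q$ its $-K_Y$-degree is an integer, and $-K_Y$ is nef, so $(-K_Y\cdot\psi_*\Xi_\lambda)=0$. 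Then
\[
0=(-K_Y\cdot\psi_*\Xi_\lambda)=(-K_X\cdot\varphi_*\psi_*\Xi_\lambda)-(E\cdot\psi_*\Xi_\lambda),
\]
and since no component of $\psi_*\Xi_\lambda$ lies in $E$, the cycle $\varphi_*\psi_*\Xi_\lambda$ is nonzero and effective, so ampleness of $-K_X$ gives $(E\cdot\psi_*\Xi_\lambda)>0$. Hence some component $\Theta_\lambda$ satisfies $(-K_Y\cdot\Theta_\lambda)=0$ and $(E\cdot\Theta_\lambda)>0$; infinitely many such curves would all lie in the base locus of the mobile system $\mathcal M_Y$ (this is \cite[Lemma 2.20]{Oka18}), a contradiction. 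Your preliminary numerics ($m\le\tfrac43 n$, integrality of $m-n$) are correct but play no role; without the positivity step above the proof does not close.
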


\begin{proof}
Let $\Gamma_{\lambda}$, $\lambda \in \Lambda$, be reducible $\hat{\rho}$-fibers.
Possibly removing finitely many elements from $\Lambda$, we may assume that, for any $\lambda \in \Lambda$, $\Gamma_{\lambda}$ does not contain any irreducible curve on $\hat{E}$ contracted by $\hat{\rho}$ (cf. (2) of Lemma~\ref{lem:nonrigidhat}).

We think of $\Gamma_{\lambda}$ as the $1$-cycle $\hat{\rho}^*\ell_1 \cdot \hat{\rho}^* \ell_2$, where $\ell_1$ and $\ell_2$ are lines on $\mathbb{P}^2$ such that $\ell_1 \cap \ell_2 = \{\hat{\rho} (\Gamma)\}$.
Note that $(-K_Y \cdot \Gamma_{\lambda}) = 0$ and $\Gamma_{\lambda}$ is an effective $1$-cycle with integral coefficients.
Then we can write $\Gamma_{\lambda} = \Delta_{\lambda} + \Xi_{\lambda}$, where $\Delta_{\lambda}$ is an irreducible curve such that $(F \cdot \Delta_{\lambda}) = 1$ and $\Xi$ is an effective $1$-cycle with integral coefficients such that $\operatorname{Supp} (\Xi_{\lambda}) \cap F = \emptyset$.
We have $(-K_{\hat{Y}} \cdot (\Delta_{\lambda} + \Xi_{\lambda})) = 0$ and $-K_{\hat{Y}} = - \psi^*K_Y - \frac{1}{2} E$.
It follows that
\[
(-K_Y \cdot \psi_*\Delta_{\lambda}) + (-K_Y \cdot \psi_*\Xi_{\lambda}) = \frac{1}{2}.
\]
The effective $1$-cycle $\psi_*\Xi_{\lambda}$ is integral and its support does not pass through the unique non-Gorenstein singular point $Q$ of $Y$, and hence $(-K_Y \cdot \psi_*\Xi_{\lambda}) \in \mathbb{Z}$.
It follows that $(-K_Y \cdot \psi_*\Delta) = 1/2$ and $(-K_Y \cdot \psi_*\Xi) = 0$ since $-K_Y$ is nef by Lemma~\ref{lem:nonrigidnef}.
By our choice of $\Gamma_{\lambda}$, no component of $\psi_*\Xi_{\lambda} \ne 0$ is contained in $E$.
Hence $(E \cdot \psi_*\Xi) > 0$ since $\varphi_*\psi_*\Xi$ is a nonzero effective $1$-cycle on $X$ and we have
\[
0 = (-K_Y \cdot \psi_*\Xi) = (-K_X \cdot \varphi_*\psi_*\Xi) - (E \cdot \psi_*\Xi).
\]
The intersection number of $-K_Y$ and any irreducible component of $\psi_*\Xi$ is $0$ since $(-K_Y \cdot \psi_*\Xi_{\lambda}) = 0$ and $\psi_*\Xi_{\lambda}$ is effective.
Thus, there is an irreducible component of $\psi_*\Xi_{\lambda}$, denoted by $\Theta_{\lambda}$, such that $(-K_Y \cdot \Theta_{\lambda}) = 0$ and $(E \cdot \Theta_{\lambda}) > 0$.

It follows that there are infinitely many irreducible curves $\{\Theta_{\lambda}\}$ on $Y$ that intersect $K_Y$ trivially and $E$ positively.
By \cite[Lemma 2.20]{Oka18}, $\varphi$ is not a maximal extraction.
\end{proof}

\begin{lemma} \label{lem:nonrigidlink}
If there are at most finitely many reducible $\hat{\rho}$-fibers, then $\chi$ is not an isomorphism.
In particular, the induced birational map $\sigma := \varphi^{-1} \chi \circ \varphi \colon X \dashrightarrow X$ is an elementary self-link.
\end{lemma}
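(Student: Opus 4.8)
The plan is to show that $\chi$ cannot be biregular, and then to read off the self-link structure from the fact that $\chi$ is a non-trivial pseudo-isomorphism realising the reflection of the elliptic fibration. The first part is the heart of the matter.

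Suppose for contradiction that $\chi\colon Y\dashrightarrow Y$ is an isomorphism. I would first argue, using the geometry of $\operatorname{Pic}(Y)$, that $\chi$ must fix $E$. Since $X$ is a factorial Fano $3$-fold of Picard number one and $\varphi$ is a divisorial contraction, $\operatorname{Pic}(Y)$ has rank two; the class $[E]$ spans a rigid extremal ray of the effective cone (indeed $\varphi_*\mathcal O_Y(E)=\mathcal O_X$, so $h^0(\mathcal O_Y(E))=1$), while the movable cone $\overline{\operatorname{Mov}}(Y)$ is two-dimensional, bounded on one side by $\varphi$ and on the other by the anticanonical fibration $\rho=\pi\circ\varphi$ (Lemma~\ref{lem:nonrigidnef}). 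Being an involutive isomorphism in codimension one, $\chi_*$ is an automorphism of $\operatorname{Pic}(Y)$ of order at most two that fixes $K_Y$ and preserves $\overline{\operatorname{Mov}}(Y)$; a short analysis of such maps (the ray $\mathbb R_{\geq0}[-K_Y]$ being fixed while $E$ is rigid) forces $\chi_*=\operatorname{id}$, hence $\chi(E)=E$ by rigidity of $E$, and therefore $\hat\chi(\hat E)=\hat E$ on $\hat Y$.

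To reach a contradiction I would restrict to the generic fibre of $\hat\rho$: the equality $\hat\chi(\hat E)=\hat E$ says that the fibrewise inversion with origin $F$ fixes the point cut out by $\hat E$, so $\hat E$ is a $2$-torsion section of $\hat\rho$. A torsion section of a smooth (relatively minimal) elliptic threefold fibration is disjoint from the zero section and meets every fibre in a smooth point of its identity component, hence is a \emph{regular} section — whence $\hat\rho|_{\hat E}\colon\hat E\to\mathbb P^2$ would be an isomorphism. But by Lemma~\ref{lem:nonrigidhat}(2), under the hypothesis $g_3(y,z,0)\neq 0$ (a nonzero cubic form in $y,z$, hence with a zero on $\mathbb P^1$), the morphism $\hat\rho|_{\hat E}$ contracts at least one curve, a contradiction. (Alternatively, completing the square in \eqref{eq:nonrigidX} presents the generic fibre as a double cover $\eta^2=t^2x^4+x^3g_3+x^2g_4+xg_5+g_6$ of the $x$-line whose two points over $x=\infty$ are $F$ and $\hat E$; these are not branch points, so $\hat E$ is not $2$-torsion for the origin $F$.) Therefore $\chi$ is not an isomorphism. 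For the ``in particular'' clause: $\chi$ is a pseudo-isomorphism of the $\mathbb Q$-factorial terminal $3$-fold $Y$ to itself which is not biregular, so its indeterminacy locus is a non-empty finite union of curves contained in the (finitely many, by hypothesis) reducible $\hat\rho$-fibres, and $\chi$ factors as a finite chain of flips and flops over $\mathbb P^2$ realising the reflection $\hat\chi$ by the minimal model program for threefolds; taking $Y':=Y$, $\varphi':=\varphi$, $\tau:=\chi$ in the definition of an elementary self-link, the defining square commutes, so the induced map $\sigma$ is an elementary self-link initiated by $\varphi$.

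The main obstacle is the non-triviality of $\chi$, and within that, showing that the reflection genuinely moves the rational section $\hat E$; the whole argument is steered by the hypothesis $g_3(y,z,0)\neq 0$, which produces a reducible fibre over a zero of that cubic form and forces $\hat E$ to contract a curve there, so that $\hat E$ cannot be a regular section, let alone a torsion one. A secondary technical point is the description of $\overline{\operatorname{Mov}}(Y)$ and the rigidity of $E$; this is delicate only because of the $\tfrac12(1,1,1)$-point $Q\in E$, and can be handled directly or by pulling back to the smooth model $\hat Y$. The finiteness assumption enters only to guarantee that the middle map $\chi$ has one-dimensional flopping locus, so that $\sigma$ is a genuine Sarkisov self-link — which is precisely why this lemma is complementary to Lemma~\ref{lem:SDSexcldivcont}.
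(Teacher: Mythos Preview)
Your argument has a genuine gap at the step you label as the heart of the matter. After deducing that $\hat E$ is $2$-torsion on the generic fibre, you assert that ``a torsion section of a smooth (relatively minimal) elliptic threefold fibration is disjoint from the zero section and meets every fibre in a smooth point of its identity component, hence is a regular section.'' This is a theorem about elliptic \emph{surfaces}, proved via the N\'eron model over a Dedekind base; it does not transfer automatically to fibrations over a surface, because the generic-fibre data says nothing about what $\hat E$ does over finitely many points of $\mathbb P^2$. In particular, knowing that $\hat E_\eta$ is $2$-torsion does not prevent $\hat E$ from containing a curve in a single bad fibre, so your contradiction with Lemma~\ref{lem:nonrigidhat}(2) does not fire. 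Your parenthetical alternative is also incorrect as stated: for a curve $w^2=(\text{quartic in }x)$ with two points $O_1,O_2$ over $x=\infty$, the fact that $O_1,O_2$ are not branch points of the hyperelliptic map does \emph{not} imply $O_2-O_1$ is not $2$-torsion. For instance, on $w^2=x^4-1$ the function $w+x^2$ has divisor $2O_2-2O_1$, so $O_2$ \emph{is} $2$-torsion with origin $O_1$; the branch points there are $4$-torsion, not $2$-torsion. Since the coefficients $g_i$ in our family are not generic transcendentals but fixed by $X$, you cannot simply appeal to genericity either.

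A related symptom is that your main argument never uses the hypothesis that there are only finitely many reducible fibres. The paper's proof uses it in an essential way: from $\hat E-F$ being $2$-torsion on the generic fibre one gets that $\hat E-F$ is $\mathbb Q$-linearly equivalent to a sum of $\hat\rho$-vertical divisors; the finiteness hypothesis then forces every vertical prime divisor to be the pullback of a curve on $\mathbb P^2$, hence a multiple of $-K_{\hat Y}$; so $\hat E-F\sim_{\mathbb Q} mK_{\hat Y}$, contradicting that $\hat E,F,K_{\hat Y}$ are $\mathbb Q$-linearly independent in $\operatorname{Pic}(\hat Y)_{\mathbb Q}\cong\mathbb Q^3$. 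If you want to salvage your approach without the hypothesis, one clean route is: once $\hat\chi$ is biregular and fixes both $F$ and $\hat E$ pointwise, the fixed locus $\operatorname{Fix}(\hat\chi)\subset\hat Y$ is smooth (as $\hat Y$ is smooth), yet $\hat E\cup F\subset\operatorname{Fix}(\hat\chi)$ and $\hat E\cap F$ is a line in $F\cong\mathbb P^2$ (from $\psi^*E=\hat E+\tfrac12F$), which is singular along that line --- a contradiction. This is close to what you wrote but is not the argument you actually gave.
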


\begin{proof}
Let $G$ be a prime divisor on $\hat{Y}$ which is $\hat{\rho}$-vertical, that is, $\hat{\rho} (G)$ is a proper closed subvariety of $\mathbb{P}^2$.
By Lemma~\ref{lem:nonrigidhat}, $C := \hat{\rho} (G)$ is an irreducible curve.
If $\hat{\rho}^*C$ is reducible, then it contains a prime divisor $G' \ne G$ in its support and we also have $\hat{\rho} (G') = C$.
This implies that the $\hat{\rho}$-fibers over any point of $C$ are reducible.
This is impossible by assumption.
It follows that any prime vertical divisor on $\hat{Y}$ is the pullback of an irreducible curve on $\mathbb{P}^2$.
In particular, the sum of vertical divisors on $\hat{Y}$ is linearly equivalent to a multiple of $-K_{\hat{Y}}$.

Now suppose that $\chi$ is an isomorphism.
Then $\chi_* E = E$ and, on the generic fiber $\hat{Y}_{\eta}$ of $\hat{\rho}$, we see that $(\hat{E} - F)|_{\hat{Y}_{\eta}} \in \operatorname{Pic}^0 (\hat{Y}_{\eta})$ is a $2$-torsion.
It follows that $\hat{E} - F$ is $\mathbb{Q}$-linearly equivalent to a sum of $\hat{\rho}$-vertical divisors.
By what we have proved, we have $\hat{E} - F \sim_{\mathbb{Q}} m K_{\hat{Y}}$ for some $m \in \mathbb{Q}$.
This is impossible since $\hat{E}, F$ and $K_{\hat{Y}}$ generate $\operatorname{Pic} (\hat{Y})_{\mathbb{Q}} \cong \mathbb{Q}^3$.
Therefore, $\chi$ is not an isomorphism.
\end{proof}

\begin{proof}[Proof of Proposition~\ref{prop:DSbirinv}]
Let $X$ be a factorial sextic double solid and let $P \in X$ be an isolated $cA_2$ point.
We identify $X$ with the complete intersection in $\mathbb{P}$ defined by the equations \eqref{eq:nonrigidX}.

Let $\varphi \colon Y \to X$ be the divisorial contraction of discrepancy $1$ constructed as above.
Interchanging the role of $\zeta$ and $\xi$, we obtain another divisorial contraction $\varphi' \colon Y' \to X$ of discrepancy $1$ centered at $P$.
These are all the divisorial contractions of discrepancy $1$ centered at $P$.
Thus, the assertion follows from Lemmas \ref{lem:SDSexcldivcont} and \ref{lem:nonrigidlink}.
\end{proof}

\subsection{Birational rigidity of sextic double solids}

\begin{proposition}
\label{prop:SDSexclexc}
    Let $X$ be a factorial sextic double solid
    admitting an ordinary $cA_2$ point $P$.
    Then a divisorial contraction $\varphi \colon Y \to X$ of exceptional type with center $P$ is not a maximal extraction.
\end{proposition}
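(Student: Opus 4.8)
The plan is to argue by contradiction, combining the exceptional case of the local inequality (Theorem~\ref{thm:locineqwblcA}(3)) with a $P$-isolating class of anticanonical degree~$1$.

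Suppose $\varphi$ is a maximal extraction. Then there are a rational number $n>0$ and a mobile linear system $\mathcal M\sim_{\mathbb Q}-nK_X$ of Cartier divisors (recall $X$ is factorial) such that $(X,\tfrac1n\mathcal M)$ is not canonical along the $\varphi$-exceptional divisor $E$. Since $P$ is a $cA_2$ point, the only divisorial contraction of exceptional type centred at $P$ is the one in case~(3) of Theorem~\ref{thm:cldccA2} (weights $(4,3,2,1)$, discrepancy~$3$); in particular $\varphi$ is of this form, so Theorem~\ref{thm:locineqwblcA}(3) gives $\operatorname{mult}_P(D_1\cdot D_2)>\tfrac94 n^2$ for general $D_1,D_2\in\mathcal M$.

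To contradict this I would produce an effective nef $\mathbb Q$-Cartier divisor $T$ through $P$ of small degree and apply Lemma~\ref{lem:intom}. Write $A:=-K_X$; by adjunction $A=\mathcal O_X(1)$ and $A^3=2$. Using the presentation \eqref{eq:nonrigidX} (equivalently, the equation $-w^2+x^4t^2+x^3g_3+x^2g_4+xg_5+g_6=0$), the sections of $A$ vanishing at $P=(1\!:\!0\!:\!\cdots\!:\!0)$ are spanned by $y,z,t$, and setting $y=z=t=0$ in the defining equation leaves $w^2=0$, because each $g_i$ and the term $x^4t^2$ vanish on this locus; hence the base locus of this linear subsystem is set-theoretically the single point $P$. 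Thus a general member $T=(by+cz+dt=0)$ is ample (in particular nef) and effective, satisfies $\operatorname{ord}_P(T)=1$, and, being general while the base locus is $0$-dimensional and $\Gamma:=\operatorname{Supp}(D_1)\cap\operatorname{Supp}(D_2)$ is a curve (as $\mathcal M$ is mobile), its support contains none of the finitely many irreducible components of $\Gamma$ through $P$. Since $P\in X$ is a hypersurface, hence local complete intersection, singularity and $\dim X=3$, Lemma~\ref{lem:intom} applies and gives $T\cdot D_1\cdot D_2\ge\operatorname{ord}_P(T)\operatorname{mult}_P(D_1\cdot D_2)>\tfrac94 n^2$. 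On the other hand $D_1,D_2\sim_{\mathbb Q}nA$, so $T\cdot D_1\cdot D_2=n^2 A^3=2n^2$, and $2>\tfrac94$ is absurd. Hence $\varphi$ is not a maximal extraction.

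Essentially every step is a direct application of results already established; the one genuinely geometric input is the computation that the anticanonical subsystem through a $cA_2$ point already isolates that point, which is what keeps the isolating class at degree~$1$. I expect the only thing to be careful about is this base-locus computation together with the borderline numerical inequality $2>\tfrac94$, which holds with just enough room to spare (the same bound would fail for a divisorial contraction of larger discrepancy, e.g.\ one with weights $(3,3,2,1)$, which is why sextic double solids with non-ordinary $cA_2$ points cannot be treated this way).
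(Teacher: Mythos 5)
Your proof is correct and follows essentially the same route as the paper: assume $\varphi$ is a maximal extraction, apply Theorem~\ref{thm:locineqwblcA}(3) to get $\operatorname{mult}_P(D_1\cdot D_2)>\tfrac94 n^2$, and contradict this via Lemma~\ref{lem:intom} with a general hyperplane section $H\in|\mathcal O_X(1)|$ through $P$, using $(-K_X)^3=2$. The only difference is that you explicitly verify that the anticanonical sections through $P$ isolate it, a detail the paper leaves implicit when choosing $H$ general.
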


\begin{proof}
Let $\varphi \colon Y \to X$ be a divisorial contraction of exceptional type with center a $P$ which is the weighted blow-up with weights $4, 3, 2, 1$ after an appropriate local analytic identification of the germ $P \in X$ by Theorem~\ref{thm:cldccA2}.
Suppose that it is a maximal extraction.
Then there is a mobile linear system $\mathcal{M} \sim_{\mathbb{Q}} - n K_X$ such that $(X, \frac{1}{n} \mathcal{M})$ is not canonical at the exceptional divisor $E$ of $\varphi$.
Let $D_1, D_2 \in \mathcal M$ be general members.
We can choose a general member $H \in |\mathcal O_X (1)|$ passing through $P$ such that $H$ does not contain any component of the curve $D_1 \cap D_2$.
By Theorem~\ref{thm:locineqwblcA} and Lemma~\ref{lem:intom}, we have
\[
2 n^2 = (H \cdot D_1 \cdot D_2) \ge \operatorname{mult}_P (D_1 \cdot D_2) > \frac{9}{4} n^2.
\]
This is a contradiction and $\varphi$ is not a maximal extraction.
\end{proof}

\begin{theorem} \label{thm:sextic-double-solids}
    Let $X$ be a factorial sextic double solid with only isolated $cA_1$ points and ordinary $cA_2$ points.
    Then $X$ is birationally rigid.
\end{theorem}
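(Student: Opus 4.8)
The plan is to verify the criterion of Theorem~\ref{thm:chractBR}. First I would record that a factorial sextic double solid $X$ with only isolated $cA_1$ and ordinary $cA_2$ points is a Fano $3$-fold of Picard number~$1$: these singularities are isolated, hence terminal and Gorenstein, so $K_X$ is Cartier, and factoriality together with the description of $X$ in \S\ref{sc:SDS} gives $\operatorname{Pic}(X) = \operatorname{Cl}(X) = \mathbb Z \cdot \mathcal O_X(1)$ with $-K_X = \mathcal O_X(1)$ ample and $(-K_X)^3 = 2$. By Theorem~\ref{thm:chractBR} it then suffices to show that every divisorial contraction $\varphi\colon Y\to X$ is either not a maximal extraction or initiates an elementary self-link, and I would organise the argument according to the center $Z$ of $\varphi$. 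Since all singularities of $X$ are $cA_1$ or ordinary $cA_2$ points, $X$ has no terminal quotient singular points, so $Z$ is a curve, a smooth point, a $cA_1$ point, or an ordinary $cA_2$ point.

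For $Z$ a curve I would invoke the classical birational rigidity theory of sextic double solids, by which no curve is a maximal center (see \cite{PukBook}). For $Z = P$ a smooth point, suppose $\varphi$ were a maximal extraction; then $P$ is a non-canonical center of some $(X,\tfrac1n\mathcal M)$ with $\mathcal M \sim_{\mathbb Q} -nK_X$. Take a general $H \in |\mathcal O_X(1)|$ through $P$; it is nef, satisfies $\operatorname{ord}_P H = 1$ (a general hyperplane section through $P$), and for general $D_1,D_2 \in \mathcal M$ it contains no component of $\operatorname{Supp}(D_1)\cap\operatorname{Supp}(D_2)$ through $P$. Then Lemma~\ref{lem:intom} and Theorem~\ref{thm:4nineq} give
\[
2n^2 = (H\cdot D_1\cdot D_2) \;\ge\; \operatorname{mult}_P(D_1\cdot D_2) \;>\; 4n^2,
\]
a contradiction, so no smooth point is a maximal center.

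For $Z = P$ a $cA_1$ point the same $H$ works: $P\in X$ is a local complete intersection singularity, so Lemma~\ref{lem:intom} applies, and combining it with Theorem~\ref{MainThm} for $k=1$ yields
\[
2n^2 = (H\cdot D_1\cdot D_2) \;\ge\; \operatorname{mult}_P(D_1\cdot D_2) \;>\; \frac{4}{1+1}\,n^2 = 2n^2,
\]
again impossible, so no $cA_1$ point is a maximal center. Finally, for $Z = P$ an ordinary $cA_2$ point, Theorem~\ref{thm:cldccA2} (together with the definition of ordinary) shows that every divisorial contraction with center $P$ is either of exceptional type, with weights $(4,3,2,1)$, or of non-exceptional type with $a = 1$, i.e.\ of discrepancy~$1$. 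The exceptional case is excluded by Proposition~\ref{prop:SDSexclexc}, and in the discrepancy-$1$ case Proposition~\ref{prop:DSbirinv} shows that $\varphi$ is either not a maximal extraction or initiates an elementary self-link. Assembling the four cases and applying Theorem~\ref{thm:chractBR} gives the birational rigidity of $X$.

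I expect the ordinary $cA_2$ case to be the main obstacle. For $k = 2$ the lower bound $\tfrac{4}{k+1}n^2 = \tfrac{4}{3}n^2$ coming from Theorem~\ref{MainThm} is strictly smaller than $2n^2 = (-K_X)^3 n^2$, so the crude intersection estimate against $|\mathcal O_X(1)|$ cannot by itself rule out a discrepancy-$1$ contraction; one genuinely has to carry out the birational geometry behind Proposition~\ref{prop:DSbirinv} — pass to the elliptic fibration obtained after the Kawamata blow-up of $Y$ at its unique non-Gorenstein point and reflect in the section given by the exceptional $\mathbb P^2$, and otherwise exhibit infinitely many $(-K_Y)$-trivial curves meeting $E$ — to produce the self-link or force non-maximality. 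By contrast, the curve, smooth-point, $cA_1$-point and exceptional-$cA_2$-point cases are immediate from the local inequalities of Theorems~\ref{thm:4nineq}, \ref{MainThm} and~\ref{thm:locineqwblcA} together with the low degree $(-K_X)^3 = 2$.
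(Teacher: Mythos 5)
Your proposal is correct and follows essentially the same route as the paper: reduce via Theorem~\ref{thm:chractBR} to excluding curves, smooth points and $cA_1$ points as maximal centers, and handle ordinary $cA_2$ points by splitting into the exceptional contraction (Proposition~\ref{prop:SDSexclexc}) and the discrepancy-one contractions (Proposition~\ref{prop:DSbirinv}). The only cosmetic difference is that the paper simply cites \cite[Lemma 3.1]{CP10} and \cite[Propositions 4.8 and 4.9]{KOPP24} for the curve, smooth-point and $cA_1$ cases, whereas you re-derive the latter two from Lemma~\ref{lem:intom} together with Theorems~\ref{thm:4nineq} and~\ref{MainThm} --- which is exactly the argument behind those cited results.
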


\begin{proof}
By \cite[Lemma 3.1]{CP10} and \cite[Propositions 4.8 and 4.9]{KOPP24}, no smooth point, no curve, and no $cA_1$ point on $X$ are maximal centers.
Therefore, by Theorem~\ref{thm:chractBR}, the assertion follows from Propositions \ref{prop:DSbirinv} and \ref{prop:SDSexclexc}.
\end{proof}

\section{Fano 3-fold WCIs of index 1}

We investigate the birational rigidity of prime Fano 3-folds embedded as a complete intersection in a weighted projective space that admit $cA$ points.
By a \textit{prime Fano $3$-fold}, we mean a normal projective $\mathbb{Q}$-factorial $3$-fold with only terminal singularities such that the anticanonical divisor is ample and the class group is isomorphic to $\mathbb{Z}$.
The \textit{index} of a prime Fano $3$-fold $X$ is defined to be the positive integer $\iota_X$ such that $-K_X \sim \iota_X A$, where $A \in \operatorname{Cl} (X)$ is the ample generator of $\operatorname{Cl} (X) \cong \mathbb{Z}$.

\begin{setting} \label{set:brWCI}
Let $X$ be a prime Fano $3$-fold which is either a weighted hypersurface of index $1$ listed in Table \ref{table:Fanohyp} or a weighted complete intersection of codimension $2$ and index $1$ listed in Table \ref{table:FanoWCI}.
Let $\mathbb{P}$ be the ambient weighted projective space of $X$.
We assume that $X$ is quasismooth along $X \cap \operatorname{Sing} (\mathbb{P})$, where $\operatorname{Sing} (\mathbb{P})$ is the singular locus of $\mathbb{P}$.
\end{setting}

\begin{theorem} \label{thm:brWCI}
Let $X$ be as in Setting \ref{set:brWCI} and let $k_{\mathrm{cA}}$ be the positive integer given in the 5th and 4th columns of Tables \ref{table:Fanohyp} and \ref{table:FanoWCI}, respectively.
Suppose that $X$ has only $cA_k$ points, where $k \le k_{\mathrm{cA}}$, in addition to terminal cyclic quotient singularities.
Then $X$ is birationally rigid.
\end{theorem}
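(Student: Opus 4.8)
The plan is to invoke the characterization of birational rigidity in Theorem~\ref{thm:chractBR}: it suffices to show that every divisorial contraction $\varphi\colon Y\to X$ is either not a maximal extraction or initiates an elementary self-link. The center of such a $\varphi$ is a curve or a closed point of $X$, and by the hypothesis on the singularities of $X$ a point center is smooth, a terminal cyclic quotient singularity, or a $cA_k$-point with $k\le k_{\mathrm{cA}}$. Moreover, quasismoothness of $X$ along $X\cap\operatorname{Sing}(\mathbb P)$ forces the singular points of $X$ lying on $\operatorname{Sing}(\mathbb P)$ to be among the listed terminal cyclic quotient points, so every $cA_k$-point of $X$ lies in the smooth locus of $\mathbb P$, where $A=\mathcal O_X(1)$ is Cartier. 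By the results of \cite{CPR}, \cite{CP17}, \cite{oka14}, \cite{AZ16} and \cite{KOPP24}, no curve, no smooth point and no terminal cyclic quotient singular point of $X$ is a maximal center, so the whole problem reduces to ruling out $cA_k$-points with $k\le k_{\mathrm{cA}}$ as maximal centers.

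So, suppose for contradiction that a divisorial contraction $\varphi\colon Y\to X$ with center a $cA_k$-point $P$ is a maximal extraction: there is a mobile linear system $\mathcal M\sim_{\mathbb Q}-nK_X\sim_{\mathbb Q}nA$ with $n>0$ rational such that $(X,\tfrac1n\mathcal M)$ is not canonical along the $\varphi$-exceptional divisor $E$; in particular $P$ is a non-canonical center of $(X,\tfrac1n\mathcal M)$. Since $A$ is Cartier near $P$, the system $\mathcal M$ consists of Cartier divisors in a neighbourhood of $P$, so Theorem~\ref{body-MainThm} applies and gives $\operatorname{mult}_P(D_1\cdot D_2)>\tfrac{4}{k+1}n^2$ for general $D_1,D_2\in\mathcal M$, which moreover meet properly since $\mathcal M$ is mobile. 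On the other hand, the plan is to exhibit a $P$-isolating class of small degree: a linear system $\mathcal H\subseteq|\mathcal O_X(c)|$, for a suitable positive integer $c$, such that $P$ is an isolated point of $\operatorname{Bs}(\mathcal H)$. A general member $H\in\mathcal H$ is then ample (hence nef), satisfies $\operatorname{ord}_P(H)\ge1$, and its support contains no irreducible component of $\operatorname{Supp}(D_1)\cap\operatorname{Supp}(D_2)$ passing through $P$, since such a component would lie in $\operatorname{Bs}(\mathcal H)$, contradicting the isolation of $P$. Lemma~\ref{lem:intom} (with $d=3$) then gives
\[
\operatorname{mult}_P(D_1\cdot D_2)\le\operatorname{ord}_P(H)\,\operatorname{mult}_P(D_1\cdot D_2)\le H\cdot D_1\cdot D_2=c\,n^2(A^3).
\]
Comparing the two bounds yields $\tfrac{4}{k+1}<c\,(A^3)$, i.e.\ $k>\tfrac{4}{c\,(A^3)}-1$; hence if the $P$-isolating class can be chosen with $c\le\tfrac{4}{(k_{\mathrm{cA}}+1)(A^3)}$ we contradict $k\le k_{\mathrm{cA}}$, and $\varphi$ is not a maximal extraction.

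The remaining --- and principal --- part of the argument is the explicit construction of $P$-isolating classes of the required degree, carried out family by family using Tables~\ref{table:Fanohyp} and~\ref{table:FanoWCI}. For a fixed family I would stratify the possible positions of a $cA_k$-point $P$ according to which coordinate subspaces of the ambient $\mathbb P$ it lies on, and, working on the affine chart where the nonvanishing coordinate is a unit, build $\mathcal H$ out of those monomial sections of $\mathcal O_X(c)$ that vanish at $P$ and whose common zero locus near $P$ is $\{P\}$. The least admissible $c$ is governed by the weights of $\mathbb P$ and the defining equations of $X$, while quasismoothness of $X$ along $X\cap\operatorname{Sing}(\mathbb P)$ and $\mathbb Q$-factoriality are used to exclude the worst positions of $P$ (for instance at high-weight coordinate vertices) at which no isolating class of that degree exists; the resulting degree bound is exactly what fixes the value $k_{\mathrm{cA}}$ recorded in the tables. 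Making this degree bound hold uniformly over all families and all admissible positions of $P$ is where the real work lies and is the main obstacle; once a suitable isolating class is in hand, Theorem~\ref{body-MainThm} together with Lemma~\ref{lem:intom} reduces the matter to the elementary numerical inequality displayed above.
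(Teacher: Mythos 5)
Your reduction is exactly the paper's: Theorem~\ref{thm:chractBR} plus the earlier exclusion results leave only $cA_k$-points, and for those the combination of Theorem~\ref{body-MainThm}, a $P$-isolating class $cA$ with $c(-K_X^3)\le 4/(k_{\mathrm{cA}}+1)$, and Lemma~\ref{lem:intom} yields the contradiction; your numerical bookkeeping matches the definition $k_{\mathrm{cA}}=\lfloor 4/(l_{\mathrm{ic}}(-K_X)^3)-1\rfloor$ used in the tables. However, there are two issues. First, a genuine gap: you explicitly defer ``the principal part,'' namely the verification that for every family and every admissible position of $P$ an isolating class of the required degree exists. That verification is not routine and is where essentially all of the content of the theorem lives --- it occupies Lemmas~\ref{lem:isoldivs}, \ref{lem:hyplidcclub}, \ref{lem:hyplidcheart}, \ref{lem:hyplidcspade}, \ref{lem:wcilidcclub} and \ref{lem:wcilidcheart}, requires controlling the non-quasismooth locus via the explicit shapes of $F(0,0,z,t,w)$ recorded in the tables, and in the $\spadesuit$ families produces an exceptional non-quasismooth $cA_1$ point $P'$ on $(x=y=0)$ that needs a separate isolating class $a_2a_3A$ and the sharper bound $a_2a_3(-K_X^3)\le 2$. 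A proof that stops at ``this is the main obstacle'' has not proved the theorem; the values $k_{\mathrm{cA}}$ in the tables are precisely the output of that unperformed analysis. Note also that for the families marked $\bigstar$ the paper does not run this argument at all but invokes \cite[Proposition 4.9]{KOPP24} directly.

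Second, a misstatement you should fix: it is not true that no terminal cyclic quotient singular point of $X$ is a maximal center. The correct input (\cite[Proposition 5.15]{KOPP24}, going back to \cite{CP17}) is that each such point either is not a maximal center or the Kawamata blow-up at it initiates an elementary self-link; several of these families do admit quadratic or elliptic involutions centred at quotient points, which is exactly why the conclusion is rigidity rather than superrigidity. Your appeal to Theorem~\ref{thm:chractBR} still goes through with the corrected either/or statement, so this does not damage the architecture, but as written the claim is false.
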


\subsection{Basics on weighted projective varieties}
\label{sec:WPV}

We give basic definitions concerning weighted projective spaces and its closed subvarieties.
Let $\mathbb{P} = \mathbb{P} (a_0, \dots, a_N)$ be a weighted projective space and let $x_0, \dots, x_N$ be its homogeneous coordinates of weights $a_0, \dots, a_N$, respectively.
We say that $\mathbb{P}$ is \textit{well-formed} if the greatest common divisor of any $N$ of the weights $a_0, \dots, a_N$ is $1$.
Let $X$ be a closed subvariety of $\mathbb{P} = \mathbb{P} (a_0, \dots, a_N)$.
We say that $X$ is \textit{well-formed} if $\mathbb{P}$ is well-formed and
\[
\operatorname{codim}_X (X \cap \operatorname{Sing} (\mathbb{P})) \ge 2.
\]
If $X \subset \mathbb{P}$ is normal and well-formed, then the adjunction holds, that is, we have $(K_{\mathbb{P}} + X)|_X = K_X$.
We define
\[
C_X := \operatorname{Spec} (\mathbb{C} [x_0, \dots, x_N]/I) \subset \mathbb{A}^{N+1},
\]
where $I$ is the defining homogeneous ideal of $X$ in $\mathbb{P}$, and let $\pi \colon C_X^* \to X$ be the natural projection, where $C_X^* := C_X \setminus \{o\}$.
We say that $X$ is \textit{quasismooth} if $C_X^*$ is smooth.
For a subset $S \subset X$, we say that $X$ is \textit{quasismooth along $S$} if $C_X^*$ is smooth along $\pi^{-1} (S)$.

For a homogeneous coordinate $s \in \{x_0, \dots, x_N\}$, we denote by $P_s$ the point $(0\!:\!\cdots\!:\!1\!:\!\cdots\!:\!0) \in \mathbb{P}$ at which only the coordinate $s$ does not vanish.
For homogeneous polynomials $f_1, \dots, f_m \in \mathbb{C} [x_0, \dots, x_N]$, we define
\[
(f_1 = \cdots = f_m = 0)
\]
to be the closed subscheme of $\mathbb{P}$ defined by the homogeneous ideal $(f_1, \dots, f_m)$ of the graded ring $\mathbb{C} [x_0, \ldots, x_N]$.

\begin{definition}
Let $P \in X$ be a point.
We say that a set $\{g_1, \dots, g_m\}$ of homogeneous polynomials $g_1, \dots, g_m \in \mathbb{C} [x_0, \dots, x_N]$ \textit{isolates} $P$ or is a $P$-\textit{isolating set} if $P$ is an isolated component of the set
\[
(g_1 = \cdots = g_m = 0) \cap X.
\]
\end{definition}

\begin{definition}
Let $P \in X$ be a point that is contained in the smooth locus of $\mathbb{P}$ and let $L$ be a Cartier divisor class on $X$.
For positive integers $k$ and $l$, we define $|\mathfrak{m}^k (lL)|$ to be the sublinear system of $|lL|$ consisting of divisors vanishing at $P$ with order at least $k$.
We say that $L$ \textit{isolates} $P$ or is a $P$-\textit{isolating class} if $P$ is an isolated component of the base locus of $|\mathfrak{m}_P^k (k L)|$ for some positive integer $k$.
\end{definition}

\begin{lemma} \label{lem:isol}
Let $P \in X$ be a point that is contained in the smooth locus of $\mathbb{P}$.
If a set $\{g_1, \dots, g_m\}$ of homogeneous polynomials $g_1, \dots, g_m \in \mathbb{C} [x_0, \dots, x_N]$ isolates $P \in X$, then $l A$ isolates $P$, where
\[
l = \max \{\, \deg g_i \mid 1 \le i \le m \,\}.
\]
\end{lemma}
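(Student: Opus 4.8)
The plan is to turn the set-theoretic isolating condition into the ``isolating class'' condition by raising each $g_i$ to an appropriate power. Write $d_i := \deg g_i$, so that $d_i \le l$ for all $i$. First I would discard those indices $i$ for which $g_i$ vanishes identically on $X$ (if no $g_i$ survives, then $(g_1 = \cdots = g_m = 0) \cap X = X$, so $P$ is an isolated component of $X$ and the statement is trivial). Discarding such $g_i$ does not alter the set $(g_1 = \cdots = g_m = 0) \cap X$, so after relabelling we may assume that every $g_i$ is nonzero on $X$, that $P$ is still an isolated component of $(g_1 = \cdots = g_m = 0) \cap X$, and that still $d_i \le l$.

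Next I would fix a positive integer $k$ divisible by $\operatorname{lcm}(d_1, \dots, d_m)$, so that $kl$ is divisible by each $d_i$, and set $h_i := g_i^{kl/d_i}$, a homogeneous polynomial of degree $kl$. Since $P \in (g_1 = \cdots = g_m = 0) \cap X$, each $g_i$ lies in $\mathfrak{m}_{\mathbb{P}, P}$, hence $h_i \in \mathfrak{m}_{\mathbb{P}, P}^{kl/d_i} \subseteq \mathfrak{m}_{\mathbb{P}, P}^{k}$; restricting to $X$ and using that $P$ lies in the smooth locus of $\mathbb{P}$, the divisor $(h_i = 0) \cap X$ belongs to $|klA|$ and vanishes at $P$ to order at least $k$. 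Thus each $h_i$ determines a member of the sublinear system $|\mathfrak{m}_P^k (klA)|$ of $|klA|$, which is therefore nonempty.

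Finally, the base locus of $|\mathfrak{m}_P^k (klA)|$ is contained in $(h_1 = \cdots = h_m = 0) \cap X$, which as a set coincides with $(g_1 = \cdots = g_m = 0) \cap X$; and $P$ belongs to this base locus because every member of the linear system vanishes at $P$. Since $P$ is an isolated component of the larger set $(g_1 = \cdots = g_m = 0) \cap X$, it is a fortiori an isolated component of the base locus of $|\mathfrak{m}_P^k (klA)|$. Therefore $lA$ isolates $P$, as claimed.

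I do not expect a genuine obstacle here: the argument is elementary. The only two points deserving care are that the exponents $kl/d_i$ be integers --- which is why $k$ is chosen divisible by $\operatorname{lcm}(d_1, \dots, d_m)$ --- and the passage between degree-$kl$ homogeneous polynomials and members of $|klA|$ in a neighbourhood of $P$ (together with the fact that vanishing order does not drop upon restriction to $X$), which is legitimate precisely because $P$ is contained in the smooth locus of $\mathbb{P}$.
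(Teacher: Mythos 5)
Your proof is correct and follows essentially the same route as the paper: both arguments produce members of $|\mathfrak{m}_P^k(klA)|$ by taking suitable multiples of the divisors $(g_i=0)\cap X$ (you realize the multiple $\tfrac{kl}{d_i}D_i$ as the divisor of $g_i^{kl/d_i}$, the paper writes it as $k_iD_i$ with $k_i=n/d_i$ for $n=\operatorname{lcm}(d_i)$), and then observe that the base locus of this sublinear system sits inside $(g_1=\cdots=g_m=0)\cap X$, of which $P$ is an isolated component. The only difference is cosmetic bookkeeping of the exponents, plus your (harmless and slightly more careful) preliminary step discarding the $g_i$ that vanish identically on $X$.
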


\begin{proof}
We set $L := l A \in \operatorname{Cl} (X)$.
We define $n := \operatorname{lcm} \{\, \deg g_i \mid 1 \le i \le m \,\}$, $k_i := n/\deg g_i$ for $1 \le i \le m$ and $k := n/l$.
Then we have $k L = n A$ and $k_i \ge k$ for $1 \le i \le m$.
We have $\operatorname{ord}_P (D_i) \ge 1$ for the effective divisor
\[
D_i := (g_i = 0) \cap X \sim (\deg g_i)A = \frac{n}{k_i} A.
\]
It follows that $k_i D_i \in |\mathfrak{m}_P^k (kL)|$ since $\operatorname{ord}_P (k_i D_i) \ge k_i \ge k$.
The base locus of $|\mathfrak{m}_P^k (kL)|$ is contained in the set $D_1 \cap \cdots \cap D_m$, hence $P$ is its isolated component.
This shows that $L = l A$ isolates $P$.
\end{proof}

\begin{lemma} \label{lem:isoldivs}
Let $V$ be a normal complete intersection of codimension $c \in \{1, 2\}$ in $\mathbb{P} (a_0, \dots, a_N)$ and let $A$ be the Weil divisor class of $X$ corresponding to $\mathcal{O}_V (1)$.
\begin{enumerate}
\item Let $P \in V$ be a point.
\begin{enumerate}
\item There exists a $P$-isolating class $lA$ for some
\[
l \le \max \{\, \operatorname{lcm} (a_i, a_j) \mid i, j \in \{0, 1, \dots, N\} \, \}.
\]
\item If there is $m$ such that a power of $x_m$ appears in one of the defining polynomials of $V$ with nonzero coefficient, then there exist a $P$-isolating class $lA$ for some
\[
l \le \max \{\, \operatorname{lcm} (a_i, a_j) \mid i, j \in \{0, 1, \dots, N\} \setminus \{m\} \, \}.
\]
\item If $c = 2$ and there are distinct $m_1, m_2$ such that
\begin{equation} \label{eq:isoldivsassump}
\bigcap_{i \in \{0, \dots, N\} \setminus \{m_1, m_2\}} (x_i = 0) \cap V = \emptyset,
\end{equation}
then there exists a $P$-isolating class $l A$ for some
\[
l \le \max \{\, \operatorname{lcm} (a_i, a_j) \mid i, j \in \{0, 1, \dots, N\} \setminus \{m_1, m_2\} \,\}.
\]
\end{enumerate}
\item Let $P \in U_{x_r} = (x_n \ne 0) \cap V$ be a point, where $r \in \{0, \dots, N\}$.
\begin{enumerate}
\item There exists a $P$-isolating class $l A$ for some
\[
l \le \max \{\, \operatorname{lcm} (a_r, a_j) \mid j \in \{0, 1, \dots, N\} \,\}.
\]
\item If there is $m \ne r$ such that a power of $x_m$ appears in one of the defining polynomials of $V$ with nonzero coefficient, then there exists a $P$-isolating class $l A$ for some
\[
l \le \max \{\, \operatorname{lcm} (a_r, a_j) \mid j \in \{0, 1, \dots, N\} \setminus \{m\} \, \}.
\]
\item If $c = 2$ and there are distinct $m_1, m_2$ such that $m_i \ne r$ for $i = 1, 2$ and
\[
\bigcap_{i \in \{0, \dots, N\} \setminus \{m_1, m_2\}} (x_i = 0) \cap V = \emptyset,
\]
then there exists a $P$-isolating class $lA$ for some
\[
l \le \max \{\, \operatorname{lcm} (a_r, a_j) \mid j \in \{0, 1, \dots, N\} \setminus \{m_1, m_2\} \,\}.
\]
\end{enumerate}
\end{enumerate}
\end{lemma}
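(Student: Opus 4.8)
The plan is, in each of the three cases, to exhibit an explicit $P$-isolating \emph{set} of homogeneous polynomials whose degrees respect the stated bound, and then to feed it into Lemma~\ref{lem:isol} to obtain a $P$-isolating \emph{class} $lA$. It will be enough to prove part~(2), with part~(1) reduced to it as follows: given $P \in V$, set $T := \emptyset$ in case~(a), $T := \{m\}$ in case~(b), and $T := \{m_1, m_2\}$ in case~(c), and observe that $P$ cannot lie in $Z := \bigcap_{i \notin T} (x_i = 0)$. Indeed, $Z = \emptyset$ in case~(a); in case~(b) the hypothesis that a pure power $x_m^e$ occurs with nonzero coefficient in a defining polynomial $F_i$ of $V$ forces $F_i(P_{x_m}) \ne 0$, hence $P_{x_m} \notin V$; and in case~(c) one has $Z \cap V = \emptyset$ by assumption. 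Therefore there is an index $r \notin T$ with $x_r(P) \ne 0$, i.e.\ $P \in U_{x_r}$, and part~(2) applies to that $r$; the bound $\max\{\operatorname{lcm}(a_r, a_j) : j \notin T\}$ it yields is at most the bound asserted in part~(1) because $r \notin T$.

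To prove part~(2), write $P = (p_0 : \dots : p_N)$ with $p_r \ne 0$, and for each $j \ne r$ put $d_j := \gcd(a_r, a_j)$ and
\[
g_j := p_j^{a_r/d_j}\, x_r^{a_j/d_j} - p_r^{a_j/d_j}\, x_j^{a_r/d_j},
\]
a nonzero homogeneous polynomial of degree $\operatorname{lcm}(a_r, a_j)$ that involves only $x_r$ and $x_j$ and vanishes at $P$. With $T$ as above, set $S := \{0, \dots, N\} \setminus (\{r\} \cup T)$ and $B := \bigcap_{j \in S} (g_j = 0) \subseteq \mathbb{P}$. All the $g_j$ with $j \in S$ have degree at most $\max\{\operatorname{lcm}(a_r, a_j) : j \notin T\}$, so by Lemma~\ref{lem:isol} it suffices to prove that $P$ is an isolated point of $B \cap V$.

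The heart of the matter is that $B \cap V$ is a finite set. Restricting $g_j$ to $(x_r = 0)$ gives $-p_r^{a_j/d_j} x_j^{a_r/d_j}$, so $B \cap (x_r = 0)$ is the locus where $x_r = 0$ and $x_j = 0$ for every $j \in S$; since $\{r\} \cup S = \{0, \dots, N\} \setminus T$, this locus is exactly $Z$. Now let $W$ be an irreducible component of $B$. Then $W \cap V$ is closed in $\mathbb{P}$, hence complete; on the other hand $W \cap V \cap (x_r = 0) \subseteq B \cap V \cap (x_r = 0) \subseteq Z \cap V = \emptyset$, so $W \cap V$ is contained in the affine chart $U_{x_r}$ and is therefore affine. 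A scheme that is both complete and affine is finite, so $W \cap V$ is finite; since $B$ has only finitely many irreducible components, $B \cap V$ is finite, and as $P \in B \cap V$ it is an isolated point of it. This proves part~(2), and hence the lemma.

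The step I expect to be the main obstacle is precisely this finiteness of $W \cap V$ in the $c = 2$ cases (case~(c), and case~(b) with $c = 2$): there the components $W$ of $B$ may be positive-dimensional — surface-like when $|T| = 2$ — and since $V$ has codimension $2$ a naive dimension count on $V \cap W$ fails, so one really must use the geometric input that $V$ avoids the ``boundary'' locus $Z$, which confines $W \cap V$ to the affine chart $U_{x_r}$ where completeness forces finiteness; this is exactly the role played by the excluded-coordinate hypotheses in (b) and (c). The remaining verifications — that $\deg g_j = \operatorname{lcm}(a_r, a_j)$ and $g_j \not\equiv 0$ (using $p_r \ne 0$), the identification $B \cap (x_r = 0) = Z$, and the choice of $r$ in part~(1) — are routine bookkeeping in the weighted projective space.
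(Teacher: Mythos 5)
Your proof is correct, and it reaches the conclusion by a genuinely different route at the key step. The isolating polynomials you use are the same binomials $g_{ij}$ as in the paper (you keep only the pairs involving the index $r$, which is harmless), and both arguments funnel through Lemma~\ref{lem:isol}; the difference lies in how one verifies that their common zero locus meets $V$ in a finite set. The paper treats the cases separately via coordinate projections: in case (b) it projects away from $x_m$ and observes that the fibres over points of $V$ are finite because a pure power of $x_m$ occurs in a defining equation, and in case (c) it uses that the projection away from $x_{m_1},x_{m_2}$ restricts to a morphism on $V$ with $\pi^*A' = A$, hence contracts no curve and so has finite fibres. You instead give one uniform argument: the base locus $B$ of your binomials meets $(x_r = 0)$ exactly in the coordinate subspace $Z = \bigcap_{i \notin T}(x_i = 0)$, each of the hypotheses in (a), (b), (c) forces $Z \cap V = \emptyset$, so $B \cap V$ is a complete scheme lying inside the affine chart $U_{x_r}$ and is therefore finite. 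The two mechanisms are closely related --- $V$ avoiding $Z$ is exactly what makes the paper's projection a finite morphism on $V$ --- but your version is more elementary (no appeal to $\pi^*A' = A$ and non-contraction of curves) and handles all three cases at one stroke; the trade-off is that the paper's projection picture makes it more transparent \emph{why} the excluded indices $m$, $m_1$, $m_2$ may be dropped from the degree bound. Your reduction of part (1) to part (2), by choosing $r \notin T$ with $x_r(P) \ne 0$ (possible precisely because $P \notin Z$), is also sound and in fact yields a slightly sharper bound than stated, which is harmless.
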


\begin{proof}
We write $P = (\alpha_0\!:\!\cdots\!:\!\alpha_N)$, where $\alpha_i \in \mathbb{C}$.
For $i \ne j$, we set $b_{ij} = \operatorname{lcm} (a_i, a_j)$ and
\[
g_{ij} := \alpha_j^{b_{ij}/a_j} x_i^{b_{ij}/a_i} - \alpha_i^{b_{ij}/a_i} x_j^{b_{ij}/a_j},
\]
which is a homogeneous polynomial of degree $b_{ij} = \operatorname{lcm} (a_i, a_j)$.
The set of polynomials
\[
\{\, g_{ij} \mid i, j \in \{0, 1, \dots, N\} \}
\]
clearly isolates $P$, which shows (1-a).
If $P \in U_r = (x_r \ne 0) \cap V$, then the set
\[
\{\, g_{rj} \mid j \in \{0, 1, \dots, N\} \}
\]
isolates $P$.

Suppose that a power of $x_m$ appears in one of the defining polynomials of $V$.
We consider the projection
\[
\pi \colon V \dashrightarrow \mathbb{P} (a_0, \dots, a_{m-1},a_{m+1},\dots, a_N).
\]
Any fiber of $\pi$ is a finite set of points.
Then the set of polynomials
\[
\{\, g_{i j} \mid i, j \in \{0, \dots, N\} \setminus \{m\}\}
\]
isolates the point $\pi (P)$.
This shows that the above set is a $P$-isolating class as well since $\pi^{-1} (\pi (P))$ is a finite set.
This proves (1-b).
If in addition $P \in U_r$ and $m \ne r$, then $\pi (P) \in (x_r \ne 0) \cap \pi (X)$.
Since the set
\[
\{\, g_{rj} \mid j \in \{0, 1, \dots, N\} \setminus \{m\} \}
\]
isolates $\pi (P)$, the same set isolates $P$ as well.
This shows (2-b).

Suppose that the assumption of (1-c) is satisfied.
We consider the projection
\[
\pi \colon V \to \mathbb{P} (a_0, \dots, a_{m_1-1},a_{m_1+1},\dots,a_{m_2-1},a_{m_2+1},\dots,a_N) =: \mathbb{P}',
\]
which is indeed everywhere defined by the assumption \eqref{eq:isoldivsassump}.
Let $A' \in \operatorname{Cl} (\mathbb{P}')$ be the Weil divisor class corresponding to $\mathcal{O}_{\mathbb{P}'} (1)$.
We have $\pi^*A' = A$ and this implies that $\pi$ does not contract any curve.
The set
\[
\{\, g_{ij} \mid i, j \in \{0, \dots, N\} \setminus \{m_1, m_2\}\}
\]
isolates the point $\pi (P) \in \mathbb{P}'$ and hence the same set isolates $P$ since $\pi^{-1} (\pi (P))$ is a finite set.
This shows (1-c).
If in addition $P \in U_r$ for some $r \ne m_1, m_2$, then $\pi (P) \in (x_r \ne 0) \cap \pi (V)$.
In this case the set
\[
\{\, g_{rj} \mid \{j \in \{0, \dots, N\} \setminus \{m_1, m_2\}\}
\]
isolates the point $\pi (P) \in \mathbb{P}'$ and the same isolates $P$.
This shows (2-c) and the proof is complete.
\end{proof}

\subsection{Proof of Theorem~\ref{thm:brWCI}}

\subsubsection{Weighted hypersurfaces}

Let $X = X_d \subset \mathbb{P} (a_0, \dots, a_4) =: \mathbb{P}$ be a member of family listed in Table \ref{table:Fanohyp} satisfying the assumption of Setting \ref{set:brWCI}.
We assume that $a_0 \le \cdots \le a_4$ and set $A := -K_X \in \operatorname{Cl} (X)$ which is the Weil divisor class corresponding to $\mathcal{O}_X (1)$.
Let $x, y, z, t, w$ be the homogeneous coordinates of $\mathbb{P}$ of weights $a_0 = 1, a_1, a_2, a_3, a_4$, respectively, and we denote by $F = F (x, y, z, t, w)$ the homogeneous polynomial of degree $d$ that defines $X$ in $\mathbb{P}$.

\begingroup
\renewcommand{\arraystretch}{1.1}
\begin{longtable}{clccccl}
\caption{Fano $3$-fold weighted hypersurfaces of index $1$}
\label{table:Fanohyp}
\\
\hline\hline
\textnumero & $X_d \subset \mathbb{P} (a_0,\dots,a_4)$ & $-K_X^3$ & $l_{\mathrm{ic}}$ & $k_{\mathrm{cA}}$ & Case & $F (0,0,z,t,w)$ \\
\hline\hline
\endfirsthead
\hline\hline
\textnumero & $X_d \subset \mathbb{P} (a_0,\dots,a_4)$ & $-K_X^3$ & $l_{\mathrm{ic}}$ & $k_{\mathrm{cA}}$ & Case & $F (0,0,z,t,w)$ \\
\hline\hline
\endhead
\rowcolor{lightgray}
6 & $X_{8} \subset \mathbb{P} (1,1,1,2,4)$ & $1$ &  & $1$ & $\bigstar$ & \\
10 & $X_{10} \subset \mathbb{P} (1,1,1,3,5)$ & $2/3$ &  & $1$ & $\bigstar$ & \\
\rowcolor{lightgray}
11 & $X_{10} \subset \mathbb{P} (1,1,2,2,5)$ & $1/2$ & $2$ & $3$ & $\clubsuit$ & \\
14 & $X_{12} \subset \mathbb{P} (1,1,1,4,6)$ & $1/2$ &  & $1$ & $\bigstar$ & \\
\rowcolor{lightgray}
15 & $X_{12} \subset \mathbb{P} (1,1,2,3,6)$ & $1/3$ &  & $1$ & $\bigstar$ & \\
17 & $X_{12} \subset \mathbb{P} (1,1,3,4,4)$ & $1/4$ & $4$ & $3$ & $\heartsuit$ & $z^4 + \phi_{(17)} (t, w)$ \\
\rowcolor{lightgray}
18 & $X_{12} \subset \mathbb{P} (1,2,2,3,5)$ & $1/5$ &  & $1$ & $\bigstar$ & \\
19 & $X_{12} \subset \mathbb{P} (1,2,3,3,4)$ & $1/6$ & $6$ & $3$ & $\heartsuit$ & $w^3 + \phi_{(19)} (z, t)$ \\
\rowcolor{lightgray}
21 & $X_{14} \subset \mathbb{P} (1,1,2,4,7)$ & $1/4$ & $4$ & $3$ & $\heartsuit$ & $w^2 + z \phi_{(21)} (z, t)$ \\
22 & $X_{14} \subset \mathbb{P} (1,2,2,3,7)$ & $1/6$ & $6$ & $3$ & $\clubsuit$ & \\
\rowcolor{lightgray}
25 & $X_{15} \subset \mathbb{P} (1,1,3,4,7)$ & $5/28$ & $7$ & $2$ & $\heartsuit$ & $\underline{w t^2} + \underline{t^3 z} + z^5$ \\
26 & $X_{15} \subset \mathbb{P} (1,1,3,5,6)$ & $1/6$ & $6$ & $3$ & $\heartsuit$ & $w^2 z + t^3 + z^5$ \\
\rowcolor{lightgray}
27 & $X_{15} \subset \mathbb{P} (1,2,3,5,5)$ & $1/10$ & $10$ & $3$ & $\heartsuit$ & $\phi_{(27)} (t, w) + z^5$ \\
28 & $X_{15} \subset \mathbb{P} (1,3,3,4,5)$ & $1/12$ & $12$ & $3$ & $\clubsuit$ & \\
\rowcolor{lightgray}
29 & $X_{16} \subset \mathbb{P} (1,1,2,5,8)$ & $1/5$ & $5$ & $3$ & $\heartsuit$ & $w^2 + (t^2 z^3) + z^8$ \\
30 & $X_{16} \subset \mathbb{P} (1,1,3,4,8)$ & $1/6$ & $4$ & $5$ & $\heartsuit$ & $w^2 + t^4 + (t z^4)$ \\
\rowcolor{lightgray}
31 & $X_{16} \subset \mathbb{P} (1,1,4,5,6)$ & $2/15$ & $6$ & $4$ & $\heartsuit$ & $w^2 z + (w t^2) + z^4$ \\
32 & $X_{16} \subset \mathbb{P} (1,2,3,4,7)$ & $2/21$ & $14$ & $2$ & $\heartsuit$ & $(w z^3) + t^4 + (t z^4)$ \\
\rowcolor{lightgray}
34 & $X_{18} \subset \mathbb{P} (1,1,2,6,9)$ & $1/6$ & $6$ & $3$ & $\heartsuit$ & $w^2 + c (z, t)$ \\
35 & $X_{18} \subset \mathbb{P} (1,1,3,5,9)$ & $2/15$ & $5$ & $5$ & $\heartsuit$ & $w^2 + t^3 z + z^6$ \\
\rowcolor{lightgray}
36 & $X_{18} \subset \mathbb{P} (1,1,4,6,7)$ & $3/28$ & $7$ & $4$ & $\heartsuit$ & $w^2 z + t^3 + t z^3$ \\
37 & $X_{18} \subset \mathbb{P} (1,2,3,4,9)$ & $1/12$ & $6$ & $7$ & $\heartsuit$ & $w^2 + (t^3 z^2) + z^6$ \\
\rowcolor{lightgray}
38 & $X_{18} \subset \mathbb{P} (1,2,3,5,8)$ & $3/40$ & $10$ & $4$ & $\heartsuit$ & $\underline{w t^2} + \underline{t^3 z} + z^6$ \\
41 & $X_{20} \subset \mathbb{P} (1,1,4,5,10)$ & $1/10$ & $5$ & $7$ & $\heartsuit$ & $w^2 + t^4 + z^5$ \\
\rowcolor{lightgray}
42 & $X_{20} \subset \mathbb{P} (1,2,3,5,10)$ & $1/15$ & $10$ & $5$ & $\heartsuit$ & $w^2 + t^4 + (t z^5)$ \\
43 & $X_{20} \subset \mathbb{P} (1,2,4,5,9)$ & $1/18$ & $18$ & $3$ & $\heartsuit$ & $t^4 + z^5$ \\
\rowcolor{lightgray}
44 & $X_{20} \subset \mathbb{P} (1,2,5,6,7)$ & $1/21$ & $14$ & $5$ & $\heartsuit$ & $w^2 t + z^4$ \\
45 & $X_{20} \subset \mathbb{P} (1,3,4,5,8)$ & $1/24$ & $24$ & $3$ & $\heartsuit$ & $w^2 z + t^4 + z^5$ \\
\rowcolor{lightgray}
46 & $X_{21} \subset \mathbb{P} (1,1,3,7,10)$ & $1/10$ & $10$ & $3$ & $\heartsuit$ & $t^3 + z^7$ \\
47 & $X_{21} \subset \mathbb{P} (1,1,5,7,8)$ & $3/40$ & $8$ & $5$ & $\heartsuit$ & $w^2 z + t^3$ \\
\rowcolor{lightgray}
48 & $X_{21} \subset \mathbb{P} (1,2,3,7,9)$ & $1/18$ & $18$ & $3$ & $\heartsuit$ & $w^2 z + t^3 + z^7$ \\
49 & $X_{21} \subset \mathbb{P} (1,3,5,6,7)$ & $1/30$ & $15$ & $7$ & $\heartsuit$ & $w^3 + (t z^3)$ \\
\rowcolor{lightgray}
51 & $X_{22} \subset \mathbb{P} (1,1,4,6,11)$ & $1/12$ & $6$ & $7$ & $\heartsuit$ & $w^2 + t^3 z + z^4 t$ \\
52 & $X_{22} \subset \mathbb{P} (1,2,4,5,11)$ & $1/20$ & $10$ & $7$ & $\spadesuit$ & $w^2 + (t^2 z^3)$ \\
\rowcolor{lightgray}
53 & $X_{24} \subset \mathbb{P} (1,1,3,8,12)$ & $1/12$ & $8$ & $5$ & $\heartsuit$ & $w^2 + t^3 + z^8$ \\
54 & $X_{24} \subset \mathbb{P} (1,1,6,8,9)$ & $1/18$ & $9$ & $7$ & $\heartsuit$ & $w^2 z + t^3 + z^4$ \\
\rowcolor{lightgray}
55 & $X_{24} \subset \mathbb{P} (1,2,3,7,12)$ & $1/21$ & $14$ & $5$ & $\heartsuit$ & $w^2 + t^3 z + z^8$ \\
56 & $X_{24} \subset \mathbb{P} (1,2,3,8,11)$ & $1/22$ & $22$ & $3$ & $\heartsuit$ & $t^3 + z^8$ \\
\rowcolor{lightgray}
57 & $X_{24} \subset \mathbb{P} (1,3,4,5,12)$ & $1/30$ & $15$ & $7$ & $\heartsuit$ & $w^2 + t^4 z + z^6$ \\
58 & $X_{24} \subset \mathbb{P} (1,3,4,7,10)$ & $1/35$ & $30$ & $3$ & $\heartsuit$ & $w^2 z + (w t^2) + z^6$ \\
\rowcolor{lightgray}
59 & $X_{24} \subset \mathbb{P} (1,3,6,7,8)$ & $1/42$ & $21$ & $7$ & $\heartsuit$ & $w^3 + z^4$ \\
60 & $X_{24} \subset \mathbb{P} (1,4,5,6,9)$ & $1/45$ & $36$ & $4$ & $\heartsuit$ & $w^2 t + (w z^3) + t^4$ \\
\rowcolor{lightgray}
61 & $X_{25} \subset \mathbb{P} (1,4,5,7,9)$ & $5/252$ & $36$ & $4$ & $\heartsuit$ & $w^2 t + z^5$ \\
62 & $X_{26} \subset \mathbb{P} (1,1,5,7,13)$ & $2/35$ & $7$ & $9$ & $\heartsuit$ & $w^2 + t^3 z$ \\
\rowcolor{lightgray}
63 & $X_{26} \subset \mathbb{P} (1,2,3,8,13)$ & $1/24$ & $8$ & $11$ & $\spadesuit$ & $w^2 + (t z^6)$ \\
64 & $X_{26} \subset \mathbb{P} (1,2,5,6,13)$ & $1/30$ & $10$ & $11$ & $\heartsuit$ & $w^2 + (t z^4)$ \\
\rowcolor{lightgray}
65 & $X_{27} \subset \mathbb{P} (1,2,5,9,11)$ & $3/110$ & $22$ & $5$ & $\heartsuit$ & $w^2 z + t^3$ \\
66 & $X_{27} \subset \mathbb{P} (1,5,6,7,9)$ & $1/70$ & $35$ & $7$ & $\heartsuit$ & $w^3 + w z^3 + t^3 z$ \\
\rowcolor{lightgray}
67 & $X_{28} \subset \mathbb{P} (1,1,4,9,14)$ & $1/18$ & $9$ & $7$ & $\heartsuit$ & $w^2 + z^7$ \\
68 & $X_{28} \subset \mathbb{P} (1,3,4,7,14)$ & $1/42$ & $21$ & $7$ & $\heartsuit$ & $w^2 + t^4 + z^7$ \\
\rowcolor{lightgray}
69 & $X_{28} \subset \mathbb{P} (1,4,6,7,11)$ & $1/66$ & $44$ & $5$ & $\heartsuit$ & $w^2 z + t^4$ \\
70 & $X_{30} \subset \mathbb{P} (1,1,4,10,15)$ & $1/20$ & $10$ & $7$ & $\heartsuit$ & $w^2 + t^3 + t z^5$ \\
\rowcolor{lightgray}
71 & $X_{30} \subset \mathbb{P} (1,1,6,8,15)$ & $1/24$ & $8$ & $11$ & $\heartsuit$ & $w^2 + t^3 z + z^5$ \\
72 & $X_{30} \subset \mathbb{P} (1,2,3,10,15)$ & $1/30$ & $10$ & $11$ & $\heartsuit$ & $w^2 + t^3 + z^{10}$ \\
\rowcolor{lightgray}
73 & $X_{30} \subset \mathbb{P} (1,2,6,7,15)$ & $1/42$ & $14$ & $11$ & $\heartsuit$ & $w^2 + z^5$ \\
74 & $X_{30} \subset \mathbb{P} (1,3,4,10,13)$ & $1/52$ & $39$ & $4$ & $\heartsuit$ & $w^2 z + t^5 + t z^5$ \\
\rowcolor{lightgray}
75 & $X_{30} \subset \mathbb{P} (1,4,5,6,15)$ & $1/60$ & $20$ & $11$ & $\heartsuit$ & $w^2 + t^5 + z^6$ \\
76 & $X_{30} \subset \mathbb{P} (1,5,6,8,11)$ & $1/88$ & $55$ & $5$ & $\heartsuit$ & $w^2 t + t^3 z + z^5$ \\
\rowcolor{lightgray}
77 & $X_{32} \subset \mathbb{P} (1,2,5,9,16)$ & $1/45$ & $18$ & $9$ & $\heartsuit$ & $w^2 + t^3 z$ \\
78 & $X_{32} \subset \mathbb{P} (1,4,5,7,16)$ & $1/70$ & $28$ & $9$ & $\heartsuit$ & $w^2 + z^5 t$ \\
\rowcolor{lightgray}
79 & $X_{33} \subset \mathbb{P} (1,3,5,11,14)$ & $1/70$ & $42$ & $5$ & $\heartsuit$ & $w^2 z + t^3$ \\
80 & $X_{34} \subset \mathbb{P} (1,3,4,10,17)$ & $1/60$ & $30$ & $7$ & $\heartsuit$ & $w^2 + t^3 z + z^6 t$ \\
\rowcolor{lightgray}
81 & $X_{34} \subset \mathbb{P} (1,4,6,7,17)$ & $1/84$ & $28$ & $11$ & $\heartsuit$ & $w^2 + t^4 z$ \\
82 & $X_{36} \subset \mathbb{P} (1,1,5,12,18)$ & $1/30$ & $12$ & $9$ & $\heartsuit$ & $w^2 + t^3$ \\
\rowcolor{lightgray}
83 & $X_{36} \subset \mathbb{P} (1,3,4,11,18)$ & $1/66$ & $33$ & $7$ & $\heartsuit$ & $w^2 + z^9$ \\
84 & $X_{36} \subset \mathbb{P} (1,7,8,9,12)$ & $1/168$ & $63$ & $9$ & $\heartsuit$ & $w^3 + w z^3 + t^4$ \\
\rowcolor{lightgray}
85 & $X_{38} \subset \mathbb{P} (1,3,5,11,19)$ & $2/165$ & $33$ & $9$ & $\heartsuit$ & $w^2 + t^3 z$ \\
86 & $X_{38} \subset \mathbb{P} (1,5,6,8,19)$ & $1/120$ & $40$ & $11$ & $\heartsuit$ & $w^2 + t^4 z + t z^5$ \\
\rowcolor{lightgray}
87 & $X_{40} \subset \mathbb{P} (1,5,7,8,20)$ & $1/140$ & $40$ & $13$ & $\heartsuit$ & $t^5 + w^2$ \\
88 & $X_{42} \subset \mathbb{P} (1,1,6,14,21)$ & $1/42$ & $14$ & $11$ & $\heartsuit$ & $z^7 + t^3 + w^2$ \\
\rowcolor{lightgray}
89 & $X_{42} \subset \mathbb{P} (1,2,5,14,21)$ & $1/70$ & $14$ & $19$ & $\heartsuit$ & $t^3 + w^2$ \\
90 & $X_{42} \subset \mathbb{P} (1,3,4,14,21)$ & $1/84$ & $42$ & $7$ & $\heartsuit$ & $z^7 t + t^3 + w^2$ \\
\rowcolor{lightgray}
91 & $X_{44} \subset \mathbb{P} (1,4,5,13,22)$ & $1/130$ & $52$ & $9$ & $\heartsuit$ & $z t^3 + w^2$ \\
92 & $X_{48} \subset \mathbb{P} (1,3,5,16,24)$ & $1/120$ & $48$ & $9$ & $\heartsuit$ & $t^3 + w^2$ \\
\rowcolor{lightgray}
93 & $X_{50} \subset \mathbb{P} (1,7,8,10,25)$ & $1/280$ & $70$ & $15$ & $\heartsuit$ & $z^5 t + t^5 + w^2$ \\
94 & $X_{54} \subset \mathbb{P} (1,4,5,18,27)$ & $1/180$ & $36$ & $19$ & $\heartsuit$ & $w^2 + t^3$ \\
\rowcolor{lightgray}
95 & $X_{66} \subset \mathbb{P} (1,5,6,22,33)$ & $1/330$ & $110$ & $11$ & $\heartsuit$ & $w^2 + t^3 + z^{11}$
\end{longtable}
\endgroup

\begin{remark} \label{rem:monomWH table 1}
We explain the $7$th column ``$F (0, 0, z, t, w)$'' of Table \ref{table:Fanohyp}.
The polynomial defining $X$ is denoted by $F = F (x, y, z, t, w)$ and in the $7$th column we give an explicit description of $F (0, 0, z, t, w)$ in the following manner:
\begin{itemize}
\item If $d = 2 a_4$, then may and do assume $F (0, 0, z, t, w) = w^2 + g (z, t)$ for some homogeneous polynomial $g (z, t)$ of degree $d$.
This is possible since $w^2 \in F$ and we can kill the terms divisible by $w$ by replacing $w \mapsto w - h (z, t)$ for a suitable $h$.
\item A monomial with coefficient $1$ means that it appears in $F (0, 0, z, t, w)$ with a nonzero coefficient.
If we rescale variables $z, t$ and $w$, then we may assume that the coefficients of these monomials in $F (0, 0, z, t, w)$ are all $1$.
\item A monomial with parentheses means that its coefficient in $F (0, 0, z, t, w)$ can be $0$.
\item A pair of monomials with underline means that at least one of them appears in $F$ with nonzero coefficient.
\item For families \textnumero 17, 19, 21 and 27, the polynomial $\phi_{(i)}$ is given as follows:
\begin{itemize}
\item $\phi_{(17)} (t, w) = (w - \alpha_1 t)(w - \alpha_2 t)(w - \alpha_3 t)$.
\item $\phi_{(19)} (z, t) = (t - \alpha_1 z)(t - \alpha_2 z)(t - \alpha_3 z)(t - \alpha_4)$.
\item $\phi_{(21)} (z, t) = (t - \alpha_1 z^2)(t - \alpha_2 z^2)(t - \alpha_3 z^2)$.
\item $\phi_{(27)} (t, w) = (w - \alpha_1 t)(w - \alpha_2 t)(w - \alpha_3 t)$.
\end{itemize}
In each of the above equations, the $\alpha_i \in \mathbb{C}$ are mutually distinct.
\end{itemize}
\end{remark}

\begin{lemma} \label{lem:hyplidcclub}
Let $X = X_d \subset \mathbb P (a_0, a_1, \dots, a_4)$ be a member of a family marked $\clubsuit$ in the $6$th column of Table \ref{table:Fanohyp} and we define
\[
l_{\mathrm{ic}} := \max \{\operatorname{lcm} (a_i, a_j) \mid 0 \le i < j \le 3\},
\]
which is the positive integer given in the $4$th column of Table \ref{table:Fanohyp}.
Then, for any point $P \in \operatorname{NQsm} (X)$, there is a $P$-isolating class $l A$ for some $l \le l_{\mathrm{ic}}$.
\end{lemma}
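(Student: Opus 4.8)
The plan is to deduce the statement directly from Lemma~\ref{lem:isoldivs}(1-b), applied with $m$ the index of the highest-weight coordinate $w$ (of weight $a_4$). First I note that every $P\in\operatorname{NQsm}(X)$ lies in the smooth locus of $\mathbb P$: by Setting~\ref{set:brWCI} the variety $X$ is quasismooth along $X\cap\operatorname{Sing}(\mathbb P)$, so $\operatorname{NQsm}(X)\cap\operatorname{Sing}(\mathbb P)=\varnothing$. Thus Lemma~\ref{lem:isoldivs}(1) applies at $P$, and --- granted that some pure power of $w$ occurs in the defining polynomial $F$ with nonzero coefficient --- part~(1-b) yields a $P$-isolating class $lA$ with
\[
l\le\max\bigl\{\operatorname{lcm}(a_i,a_j)\;\bigm|\;i,j\in\{0,1,2,3\}\bigr\}.
\]
Since a diagonal value $\operatorname{lcm}(a_i,a_i)=a_i$ never exceeds some off-diagonal one, this maximum is exactly $l_{\mathrm{ic}}$, as required. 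So the lemma reduces to the single claim that $w^{d/a_4}\in F$.

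To establish this claim I would run through the three families marked $\clubsuit$ in Table~\ref{table:Fanohyp}, namely $X_{10}\subset\mathbb P(1,1,2,2,5)$, $X_{14}\subset\mathbb P(1,2,2,3,7)$ and $X_{15}\subset\mathbb P(1,3,3,4,5)$. In each case $a_4\mid d$ and $a_4>a_3\ge\cdots\ge a_0$, so $w$ is the unique coordinate of weight $a_4$, of index $4$; let $P_w=(0\!:\!\cdots\!:\!0\!:\!1)$ be the associated vertex. A weight count shows there is no monomial $x_iw^m$ with $i\in\{0,1,2,3\}$, $m\ge 1$ and $a_i+ma_4=d$: otherwise $a_i=d-ma_4$ would be a positive multiple of $a_4$, contradicting $a_i<a_4$. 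Consequently the only monomial of $F$ that contributes to any partial derivative of $F$ at $P_w$, or to $F(P_w)$, is $w^{d/a_4}$. If its coefficient were zero, then $F$ and all its partials would vanish at $P_w$, so $X$ would fail to be quasismooth at the point $P_w\in X\cap\operatorname{Sing}(\mathbb P)$ (note $a_4>1$) --- contrary to Setting~\ref{set:brWCI}. Hence $w^{d/a_4}\in F$.

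I do not anticipate a real obstacle: the substance is merely that quasismoothness along $\operatorname{Sing}(\mathbb P)$ forces a pure power of the top-weight variable into $F$, and then Lemma~\ref{lem:isoldivs} takes over. The only point requiring a little extra care is $X_{15}\subset\mathbb P(1,3,3,4,5)$, where $d/a_4=3$ is odd, so the normal form $w^2+g(z,t)$ recorded in Remark~\ref{rem:monomWH table 1} is not available; the weight-count argument above dispatches it uniformly with the other two families.
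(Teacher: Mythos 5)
Your proof is correct and follows essentially the same route as the paper: quasismoothness along $\operatorname{Sing}(\mathbb P)$ forces the pure power $w^{d/a_4}$ into $F$, and Lemma~\ref{lem:isoldivs}(1-b) with $m=4$ then yields the isolating class of degree at most $l_{\mathrm{ic}}$. Your version is in fact slightly more careful than the paper's one-line argument, which asserts $d = 2a_4$ and $w^2 \in F$ --- literally false for family \textnumero 28, $X_{15} \subset \mathbb P(1,3,3,4,5)$, where $d = 3a_4$ and the relevant monomial is $w^3$; your uniform weight-count argument handles that case correctly.
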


\begin{proof}
We have $d = 2 a_4$ and $a_3 < a_4$.
This implies $w^2 \in F$.
Then the assertion follows from Lemma~\ref{lem:isoldivs}.
\end{proof}

\begin{lemma} \label{lem:hyplidcheart}
Let $X$ be a member of a family marked $\heartsuit$ in the $6$th column of Table \ref{table:Fanohyp} and we define
\[
l_{\mathrm{ic}} :=
\begin{cases}
\max \{\operatorname{lcm} (a_1, a_2), \operatorname{lcm} (a_1, a_3)\}, & \text{if $a_4 \mid d$}, \\
\max\{ \operatorname{lcm} (a_1, a_2), \operatorname{lcm} (a_1, a_3), \operatorname{lcm} (a_1, a_4)\}, & \text{if $a_4 \nmid d$}.
\end{cases}
\]
Then the non-quasismooth locus $\operatorname{NQsm} (X)$ of $X$ is contained in $U_x \cup U_y$.
For any point $P \in U_x \cup U_y$, there exists a $P$-isolating class $l A$ for some $l \le l_{\mathrm{ic}}$.
\end{lemma}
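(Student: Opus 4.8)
\noindent\emph{The inclusion $\operatorname{NQsm}(X)\subseteq U_x\cup U_y$.} It suffices to prove that $X$ is quasismooth at every point $p$ of $(x=y=0)\cap X$. Writing $g:=F(0,0,z,t,w)$, this scheme is identified with $(g=0)\subset\mathbb{P}(a_2,a_3,a_4)$, and for $p$ with $x(p)=y(p)=0$ the partial derivatives $\partial_z F,\partial_t F,\partial_w F$ at $p$ coincide with the partials of $g$ at $p$; hence if $(g=0)$ is quasismooth at $p$ inside $\mathbb{P}(a_2,a_3,a_4)$, then $\nabla F(p)\neq 0$ and $X$ is quasismooth at $p$. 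On the other hand, a point of $\{x=y=0\}$ that is singular in $\mathbb{P}(a_2,a_3,a_4)$ has the same nontrivial local isotropy group inside $\mathbb{P}$, and that isotropy contains no quasireflection of $T_p\mathbb{P}$ because $a_0=1$, so $\operatorname{Sing}\bigl(\mathbb{P}(a_2,a_3,a_4)\bigr)\subseteq\operatorname{Sing}(\mathbb{P})$. I would then split into cases: if $p\notin\operatorname{Sing}(\mathbb{P})$, then $p$ is a smooth point of $\mathbb{P}(a_2,a_3,a_4)$, and a case-by-case inspection of the $7$th column of Table~\ref{table:Fanohyp} shows $(g=0)$ is quasismooth at $p$ — in the typical case $g=w^2+(\text{terms in }z,t)$ one uses that $\partial_w g=2w$ is nonzero off $\{w=0\}$ and checks the listed monomials along $\{w=0\}$ — hence $X$ is quasismooth at $p$; and if $p\in\operatorname{Sing}(\mathbb{P})$, then $p\in X\cap\operatorname{Sing}(\mathbb{P})$ and $X$ is quasismooth at $p$ by Setting~\ref{set:brWCI}.

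\noindent\emph{Existence of a low-degree isolating class.} The input I would extract from the $7$th column of Table~\ref{table:Fanohyp}, together with Remark~\ref{rem:monomWH table 1}, is that $F$ contains a pure power $x_m^e$ of one of the coordinates $z,t,w$, so $m\in\{2,3,4\}$, and moreover that $m=4$ may be taken whenever $a_4\mid d$: in that case $d\in\{2a_4,3a_4\}$ — the option $d=4a_4$ being excluded by well-formedness — and for $d=2a_4$ the normalization $g=w^2+(\ldots)$ of Remark~\ref{rem:monomWH table 1} applies, while for $d=3a_4$ the monomial $w^3$ occurs by inspection. Given $P\in U_x\cup U_y$, write $P\in U_{x_r}$ with $r\in\{0,1\}$, so $a_r\in\{1,a_1\}$ and $m\neq r$; then part (2-b) of Lemma~\ref{lem:isoldivs}, applied with this $m$, yields a $P$-isolating class $lA$ with
\[
l\le\max\{\operatorname{lcm}(a_r,a_j)\mid j\in\{0,1,\dots,4\}\setminus\{m\}\}.
\]

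\noindent\emph{Conclusion.} It remains to bound this maximum by $l_{\mathrm{ic}}$, which is elementary using $1=a_0\le a_1\le a_2\le a_3\le a_4$, $\operatorname{lcm}(1,a_j)=a_j$ and $\operatorname{lcm}(a_1,a_0)=a_1$. When $m=4$ the bound reduces to $a_3\le\operatorname{lcm}(a_1,a_3)$ for $r=0$ and to $\max\{\operatorname{lcm}(a_1,a_2),\operatorname{lcm}(a_1,a_3)\}$ for $r=1$, both $\le l_{\mathrm{ic}}$ (in either regime for $l_{\mathrm{ic}}$); and when $a_4\nmid d$, so $m\in\{2,3\}$, the bound is $\le\max\{\operatorname{lcm}(a_1,a_2),\operatorname{lcm}(a_1,a_3),\operatorname{lcm}(a_1,a_4)\}=l_{\mathrm{ic}}$. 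I expect the main obstacle to be not conceptual but bookkeeping: the two inputs — presence of the pure power with the prescribed index, and quasismoothness of $(g=0)$ away from $\operatorname{Sing}(\mathbb{P}(a_2,a_3,a_4))$ — must be verified uniformly across all $\heartsuit$ families, a finite but lengthy inspection of the $7$th column in which the hypothesis of Setting~\ref{set:brWCI} is used to discard degenerate choices of the parenthesised coefficients.
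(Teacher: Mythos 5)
The second half of your argument is correct and matches the paper's: for every $\heartsuit$ family the seventh column does supply a pure power of $w$ when $a_4 \mid d$ and a pure power of $z$ or $t$ otherwise, and feeding this into Lemma~\ref{lem:isoldivs}(2-b) with $r\in\{0,1\}$ gives exactly the stated bound $l\le l_{\mathrm{ic}}$.

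The gap is in the first assertion. Your reduction only ever uses $g:=F(0,0,z,t,w)$ and its partials in $z,t,w$: quasismoothness of $(g=0)\subset\mathbb{P}(a_2,a_3,a_4)$ at $p$ is \emph{sufficient} for quasismoothness of $X$ at $p$, but not necessary, and the claim that ``a case-by-case inspection of the 7th column shows $(g=0)$ is quasismooth at $p$'' for $p\notin\operatorname{Sing}(\mathbb{P})$ is false for three families. For \textnumero 32 with $wz^3,tz^4\notin F$, for \textnumero 49 with $tz^3\notin F$, and for \textnumero 64 with $tz^4\notin F$, one has $g=t^4$, $g=w^3$ and $g=w^2$ respectively, so all three partials of $g$ vanish along an entire curve $(x=y=t=0)$ resp.\ $(x=y=w=0)$ contained in $X$, whose generic point is \emph{not} in $\operatorname{Sing}(\mathbb{P})$ (e.g.\ $\gcd(3,7)=1$ for \textnumero 32). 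Your closing suggestion that Setting~\ref{set:brWCI} ``discards the degenerate choices of the parenthesised coefficients'' is a misconception: quasismoothness along $X\cap\operatorname{Sing}(\mathbb{P})$ does not force those monomials to appear, because at the cone point over $P_z$ (say) the gradient can instead be made nonzero by the monomial $xz^{(d-1)/a_2}$. The paper closes exactly these cases by invoking the partials you discarded: writing $F=\alpha xz^{\bullet}+\beta y(\cdots)+g'$ with $g'$ in the square of the relevant ideal, quasismoothness at the coordinate points (which \emph{are} in $\operatorname{Sing}(\mathbb{P})$, so Setting~\ref{set:brWCI} applies) forces $\alpha\beta\neq0$, and then $\partial F/\partial x$ or $\partial F/\partial y$ is nonvanishing along the offending curve. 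Without an argument of this kind the inclusion $\operatorname{NQsm}(X)\subseteq U_x\cup U_y$ remains unproven for those three families.
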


\begin{proof}
We set $f := F (0, 0, z, t, w)$ and
\begin{equation} \label{eq:WHsingxy}
\Xi := (x = y = 0) \cap \left(\frac{\partial f}{\partial z} = \frac{\partial f}{\partial t} = \frac{\partial f}{\partial w} = 0 \right) \subset X.
\end{equation}
Then $(x = y = 0) \cap \operatorname{NQsm} (X) \subset \Xi$.
The polynomial $f = f (z, t, w)$ is explicitly described in Table \ref{table:Fanohyp}.

Suppose that we are not in one of the following cases:
\begin{itemize}
\item[(i)] $X$ belongs to family \textnumero $32$ and $w z^3, t z^4 \notin F$.
\item[(ii)] $X$ belongs to family \textnumero $49$ and $t z^3 \notin F$.
\item[(iii)] $X$ belongs to family \textnumero $64$ and $t z^4 \notin F$.
\end{itemize}
In this case, it is straightforward to check that $\Xi \subset \{P_z, P_t, P_w\}$.
By assumption, $X$ is quasismooth at $P_z, P_t, P_w$ if $X$ contains $P_z, P_t, P_w$, respectively.
This shows that $X$ is quasismooth along $(x = y = 0) \cap X$ and the first assertion is proved in this case.

Suppose that we are in one of the cases (i), (ii) and (iii).
Then $X$ belongs to family $64$ and $t z^4 \notin f$.
In this case we have $\Xi = (x = y = t = 0) \subset X$ ($\Xi = (x = y = w = 0) \subset X$) if we are in the case (i) (resp.\ (ii) and (iii)), and we can write
\[
F =
\begin{cases}
\alpha x z^5 + \beta y w^2 + g, & \text{in the case (i)}, \\
\alpha x z^4 + \beta y t^3 + g, & \text{in the case (ii)}, \\
\alpha x z^5 + \beta y t^4 + g, & \text{in the case (iii)},
\end{cases}
\]
where $\alpha, \beta \in \mathbb{C}$ and $g = g (x, y, z, t, w)$ is a homogeneous polynomial that is contained in the ideal $(x, y, t)^2$ (resp.\ $(x, y, w)^2$) if we are in the case (i) (resp.\ (ii) and (iii)).
By the quasismoothness of $X$, we have $\alpha \beta \ne 0$.
This implies $(x = y = 0) \cap \operatorname{NQsm} (X) = \emptyset$ and thus $X$ is quasismooth along $(x = y = 0) \cap X$.
This proves the first assertion.

The latter assertion follows from Lemma~\ref{lem:isoldivs}.
\end{proof}

\begin{lemma} \label{lem:hyplidcspade}
Let $X$ be a member of a family marked $\spadesuit$ in the $6$th column of Table \ref{table:Fanohyp} and we define
\[
l_{\mathrm{ic}} :=
\begin{cases}
\max \{\operatorname{lcm} (a_1, a_2), \operatorname{lcm} (a_1, a_3)\}, & \text{if $a_4 \mid d$}, \\
\max\{ \operatorname{lcm} (a_1, a_2), \operatorname{lcm} (a_1, a_3), \operatorname{lcm} (a_1, a_4)\}, & \text{if $a_4 \nmid d$}.
\end{cases}
\]
Then one of the following holds.
\begin{enumerate}
\item The non-quasismooth locus $\operatorname{NQsm} (X)$ of $X$ is contained in $U_x \cup U_y$.
\item There is a point $P' \in (x = y = 0) \cap X$ such that $\operatorname{NQsm} (X) \subset U_x \cup U_y \cup \{P'\}$ and $P' \in X$ is an isolated $cA_1$ point.
\end{enumerate}
For any point $P \in U_x \cup U_y$, there exists a $P$-isolating class $l A$ for some $l \le l_{\mathrm{ic}}$.
Moreover, in the case $(2)$, the divisor class $a_2 a_3 A$ isolates $P'$.
\end{lemma}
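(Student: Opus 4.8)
The plan is to adapt the argument of Lemma~\ref{lem:hyplidcheart}, localizing the failure of quasismoothness to the coordinate locus $(x=y=0)$ and then analysing the single point that can survive there. Since the families marked $\spadesuit$ satisfy $d=2a_4$, completing the square in $w$ (legitimate because no monomial $z^c t^{d'} w$ has degree $d$, so the $w$-linear part of $F$ is divisible by $x$ or $y$) we may assume $F = w^2 + \Phi(x,y,z,t)$; thus $f := F(0,0,z,t,w) = w^2 + g(z,t)$, where $g(z,t) = c\,z^p t^q$ is a scalar multiple of the \emph{unique} monomial of degree $d$ in $z,t$, and one checks from the weights that $a_2\nmid d$ and $a_3\nmid d$, hence $p,q\ge 1$. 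As in Lemma~\ref{lem:hyplidcheart}, put $\Xi := (x=y=0)\cap(\partial_z f = \partial_t f = \partial_w f = 0)\cap X$; then $\operatorname{NQsm}(X)\cap(x=y=0)\subseteq\Xi$, and $\partial_w f = 2w$ forces $\Xi\subseteq(x=y=w=0)$. If $g\ne 0$, then the vanishing of $\partial_z f$ and $\partial_t f$ (nonzero multiples of $z^{p-1}t^q$ and $z^p t^{q-1}$) on $(x=y=w=0)$ forces $z=0$ or $t=0$, so $\Xi\subseteq\{P_z,P_t\}$; both $P_z$ and $P_t$ lie on $X\cap\operatorname{Sing}(\mathbb P)$ (they lie in $X$ as $a_2,a_3\nmid d$, and $\mathbb P$ is singular there as $a_2,a_3\ge 2$), so $X$ is quasismooth at them by Setting~\ref{set:brWCI}, and alternative (1) holds.

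Now suppose $g=0$. Then $(x=y=w=0)$ is the rational curve $\mathbb P(a_2,a_3)\subseteq X$, and along it all partials of $F$ vanish except possibly $\partial_x F$ and $\partial_y F$. From the weights, $h_1:=\partial_x F|_{(x=y=w=0)}$ is a scalar multiple $\alpha\, z^{p'}t^{q'}$ of a single monomial, while $h_2:=\partial_y F|_{(x=y=w=0)} = \beta\, z^{(d-a_1)/a_2} + \gamma\, t^{(d-a_1)/a_3}$ is a binomial; hence $\operatorname{NQsm}(X)\cap(x=y=w=0) = (h_1=h_2=0)$ inside $\mathbb P(a_2,a_3)$. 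Quasismoothness of $X$ at $P_z$ and $P_t$ forces the single surviving partial of $F$ at each of them to be nonzero, i.e.\ $\beta\ne 0$ and $\gamma\ne 0$. If $\alpha\ne 0$ then $(h_1=0)=\{P_z,P_t\}$ is disjoint from $(h_2=0)$, so $(h_1=h_2=0)=\emptyset$ and again alternative (1) holds. If $\alpha=0$ then, since $\gcd(a_2,a_3)=1$ and $\beta,\gamma\ne 0$, the binomial $h_2$ cuts out a single reduced point $P'$ of $\mathbb P(a_2,a_3)$, distinct from $P_z$ and $P_t$ and therefore a smooth point of $\mathbb P$; so $\operatorname{NQsm}(X)\subseteq U_x\cup U_y\cup\{P'\}$, which is alternative (2).

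To identify $P'\in X$, work near $P'$, where $X$ is a hypersurface germ $F = w^2 + \Phi = 0$ in a smooth fourfold. From $g=0$ and $\alpha=0$ we get $\Phi\in(x^2,y)$, say $\Phi = x^2 A(x,z,t) + y B(x,y,z,t)$ with $B(0,0,z,t)=h_2(z,t)$, which vanishes at $P'$ to order exactly $1$ since $\beta\ne 0$. In local analytic coordinates $x,y,w,\tilde z$ at $P'$ the quadratic part of $F$ is then $w^2 + a'x^2 + B'_x\,xy + B'_y\,y^2 + B'_z\,y\tilde z$ for scalars $a',B'_x,B'_y,B'_z$ with $B'_z\ne 0$; restricted to $\langle w,y,\tilde z\rangle$ this is the nondegenerate rank-$3$ form $w^2 + B'_y y^2 + B'_z y\tilde z$, so the quadratic part of $F$ at $P'$ has rank $\ge 3$. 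Since $X$ is terminal (Setting~\ref{set:brWCI}), $P'$ is a compound Du Val point; a general hyperplane section of $X$ through $P'$ has quadratic part of rank $3$ in three variables, hence is analytically $u_1^2+u_2^2+u_3^2=0$ by the Morse lemma, a Du Val singularity of type $A_1$. Therefore $P'$ is a $cA_1$ point, and it is isolated because three-dimensional terminal singularities are isolated.

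Finally, the isolating classes follow from Lemmas~\ref{lem:isol} and~\ref{lem:isoldivs} using $w^2\in F$: for $P\in U_x$, Lemma~\ref{lem:isoldivs}(2b) with $m$ the index of $w$ gives a $P$-isolating class $lA$ with $l\le\max\{a_1,a_2,a_3\}\le l_{\mathrm{ic}}$; for $P\in U_y$ it gives $l\le\max\{\operatorname{lcm}(a_1,a_2),\operatorname{lcm}(a_1,a_3)\}=l_{\mathrm{ic}}$; and for $P'\in U_z$ the same argument produces the $P'$-isolating set $\{g_{2,0},g_{2,1},g_{2,3}\}$ of maximal degree $\operatorname{lcm}(a_2,a_3)=a_2a_3$ (using $a_1\le a_3$ and $\gcd(a_2,a_3)=1$), whence $a_2a_3 A$ isolates $P'$. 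The main obstacle is the local analysis at $P'$: one must carefully track which monomials of $F$ contribute to the quadratic part at $P'$ — this rests on the explicit weight bookkeeping of the two $\spadesuit$ families — and must treat the borderline case $a'=0$ (quadratic part of rank exactly $3$) so as to be sure $P'$ is still $cA_1$ and, crucially, isolated rather than a non-isolated compound Du Val point; invoking terminality of $X$ (equivalently, that its singularities are $cA_k$ points and terminal cyclic quotient singularities) is the cleanest way to settle this.
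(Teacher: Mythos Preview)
Your proof is correct and follows the same overall strategy as the paper's: reduce to the locus $(x=y=0)$, split according to whether the optional monomial $z^p t^q$ appears in $F$, and in the bad case show that the single non-quasismooth point $P'$ on the curve $(x=y=w=0)$ is a $cA_1$ point by checking that the quadratic part of the local equation has rank at least~$3$.

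Two differences are worth noting. First, your case analysis is finer: when the optional monomial is absent you further split on whether the coefficient $\alpha$ of the restriction $\partial_x F|_{x=y=w=0}$ vanishes. The paper's proof jumps straight to the normal form $F = y(z^{a_3}-t^{a_2}) + w^2 + g$ with $g\in(x,y)^2$, which tacitly assumes $\alpha=0$; your observation that $\alpha\ne 0$ makes $X$ quasismooth along the whole curve, landing us back in case~(1), is the clean way to close this small gap. Second, for the local analysis at $P'$ the paper carries out an explicit affine chart computation (introducing the invariant $u=z^{a_3}/t^{a_2}$ and translating to $\tilde u=u-1$), whereas you read off the Hessian directly from $\Phi\in(x^2,y)$; both reach rank $\ge 3$, but the paper's chart makes the local equation fully explicit while your argument is shorter. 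For the $P'$-isolating class, the paper exhibits the set $\{x,\,y,\,z^{a_3}-t^{a_2},\,w\}$ directly, while you invoke Lemma~\ref{lem:isoldivs}(2b) with $P'\in U_z$; either route yields $a_2 a_3 A$.
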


\begin{proof}
The polynomial $f := F (0, 0, z, t, w)$ is explicitly described in the $7$th column of Table \ref{table:Fanohyp} (see also Remark \ref{rem:monomWH table 1}).

Suppose that at least one of the monomials with parentheses appear in $F$ with a nonzero coefficient.
Then we have $\Xi \subset \{P_z, P_t, P_w\}$, where $\Xi$ is as in \eqref{eq:WHsingxy}.
By the same argument as in the proof of Lemma~\ref{lem:hyplidcheart}, the assertion (1) holds in this case.

Suppose that no monomial with parentheses appears in $f$.
Then
\[
\Xi = (x = y = w = 0) \subset X.
\]
In each instance, we can write
\begin{equation} \label{eq:WHeqFXi}
F = y (z^{a_3} - t^{a_2}) + w^2 + g,
\end{equation}
where and $g = g (x, y, z, t) \in (x, y)^2$.
We see that $X$ is quasismooth along $\Xi \setminus \{P'\}$, where $P' := (0\!:\!0\!:\!1\!:\!1\!:\!0) \in \Xi$, and $X$ is not quasismooth at $P'$.
Let $\mathcal{U}$ be the open subset $(z \ne 0) \cap (t \ne 0)$ of $\mathbb{P}$ and set $U := X \cap \mathcal{U}$.
We give an explicit description of $U$.
We take positive integers $m$ and $n$ such that $m a_3 - n a_2 = 1$ and set $M = t^m/z^n$.
We set $u := z^{a_3}/t^{a_2}$, $\tilde{x} = x/M, \tilde{y} = y/M^{a_1}$ and $\tilde{w} = w/M^{a_4}$.
Note that $z/M^{a_2} = u^m$ and $t/M^{a_3} = u^n$.
Then we have
\[
\mathcal{U} = \operatorname{Spec} \mathbb{C} [\tilde{x}, \tilde{y}, \tilde{w}, u, u^{-1}] \cong \mathbb{A}^3 \times (\mathbb{A}^1 \setminus \{o\}),
\]
and $U$ is the hypersurface defined in $\mathcal{U}$ by the equation
\[
\tilde{F} (\tilde{x}, \tilde{y}, \tilde{w}, u) := F (\tilde{x}, \tilde{y}, u^m, u^n, \tilde{w}) = \tilde{y} u^{n a_2}(u - 1) + \tilde{w}^2 + \tilde{g} = 0,
\]
where $\tilde{g} = g (\tilde{x}, \tilde{y}, u^m, u^n) \in (\tilde{x}, \tilde{y})^2$.
The ponit $P' \in U$ corresponds to the point $(\tilde{x}, \tilde{y}, \tilde{w}, u) = (0, 0, 0, 1)$.
We set $\tilde{u} := u - 1$ and we make the coordinate change $u \mapsto \tilde{u}$.
Then
\[
\tilde{F}' := \tilde{F} (\tilde{x}, \tilde{y}, \tilde{w}, \tilde{u}) = \tilde{y} (\tilde{u} + 1)^{n a_2} \tilde{u} + \tilde{w}^2 + \tilde{g}',
\]
where $\tilde{g}' = \tilde{g} (\tilde{x}, \tilde{y}, \tilde{u}) \in (\tilde{x}, \tilde{y})^2$.
The point $P'$ corresponds to the origin.
The quadratic part of $\tilde{F}'$ contains the monomials $\tilde{y} \tilde{u}$ and $\tilde{w}^2$ and it is of rank at least $3$.
A Gorenstein terminal singularity is a hypersurface singularity, and it is of type $cA_k$ for some $k \ge 1$ (resp.\ $cA_1$) if and only if the quadratic part of its defining equation is of rank at least $2$ (resp.\ at least $3$).
It follows that $P' \in X$ is an isolated $cA_1$ point.
Thus we are in case (2).

The existence of a $P$-isolating divisor for $P \in U_x \cup U_y$ follows from Lemma~\ref{lem:isoldivs}.
In the case (2), the set of polynomials $\{x, y, z^{a_3} - t^{a_2}, w\}$ clearly isolates the point $P' \in X$, and hence $a_2 a_3 A$ isolates $P'$ since $a_2 a_3 > a_4$.
\end{proof}

For a family listed in Table \ref{table:Fanohyp}, the positive integer $k_{\mathrm{cA}}$ in the $5$th column is defined by
\begin{equation} \label{eq:defkcA}
k_{\mathrm{cA}} :=
\begin{dcases}
1, & \text{if $\bigstar$ is marked in the $6$th column}, \\
\lfloor \frac{4}{l_{\mathrm{ic}} (-K_X)^3} - 1 \rfloor, & \text{otherwise},
\end{dcases}
\end{equation}
where, for a real number $r$, $\lfloor r \rfloor$ is the greatest integer which is less than or equal to $r$.

\begin{proposition} \label{prop:HypexclcA}
Let $X$ be a member of a family listed in Table \ref{table:Fanohyp} satisfying the assumption of Setting \ref{set:brWCI} and let $k_{\mathrm{cA}}$ be the positive integer in the $5$th column of Table \ref{table:Fanohyp}.
Then no $cA_k$ point is a maximal center for $k \le k_{\mathrm{cA}}$.
\end{proposition}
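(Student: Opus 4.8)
The plan is to play the lower bound for $\operatorname{mult}_P(D_1 \cdot D_2)$ supplied by Theorem~\ref{MainThm} against the upper bound coming from a $P$-isolating class of low degree; the definition~\eqref{eq:defkcA} of $k_{\mathrm{cA}}$ is arranged precisely so that the two bounds are incompatible.

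Suppose, for contradiction, that $P \in X$ is a $cA_k$ point with $k \le k_{\mathrm{cA}}$ and that $\varphi \colon Y \to X$ is a maximal extraction with center $P$; then there are a rational number $n > 0$ and a mobile linear system $\mathcal{M} \sim_{\mathbb{Q}} -n K_X$ such that $(X, \frac{1}{n}\mathcal{M})$ is not canonical at the exceptional divisor $E$ of $\varphi$. Write $A := -K_X$ for the ample generator of $\operatorname{Cl}(X) \cong \mathbb{Z}$, so that $\mathcal{M} \sim_{\mathbb{Q}} nA$. Since $X$ is quasismooth along $X \cap \operatorname{Sing}(\mathbb{P})$, every point of $X$ lying on $\operatorname{Sing}(\mathbb{P})$ is a nontrivial terminal cyclic quotient singularity, hence not Gorenstein and so not a $cA_k$ point; therefore $P \notin \operatorname{Sing}(\mathbb{P})$, and, being a singular point, $P \in \operatorname{NQsm}(X)$. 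If $X$ is marked $\bigstar$, then $k_{\mathrm{cA}} = 1$, so $P$ is a $cA_1$ point, and such points are not maximal centers by \cite{KOPP24}; I therefore assume $X$ is marked $\clubsuit$, $\heartsuit$ or $\spadesuit$. By Lemmas~\ref{lem:hyplidcclub}, \ref{lem:hyplidcheart} and~\ref{lem:hyplidcspade}, either $P \in U_x \cup U_y$, in which case there is a $P$-isolating class $\ell A$ with $\ell \le l_{\mathrm{ic}}$, or $X$ is marked $\spadesuit$ and $P$ is the distinguished isolated $cA_1$ point $P'$, isolated by the class $\ell A$ with $\ell = a_2 a_3$.

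For the upper bound, fix a positive integer $k_0$ such that $P$ is an isolated component of the base locus of $|\mathfrak{m}_P^{k_0}(k_0 \ell A)|$ and let $H$ be a general member of this sublinear system. Then $H \sim k_0 \ell A$ is ample, hence nef; $\operatorname{ord}_P(H) \ge k_0$; and, since $P$ is an isolated component of the base locus, $\operatorname{Supp}(H)$ contains no irreducible component of $\operatorname{Supp}(D_1) \cap \operatorname{Supp}(D_2)$ passing through $P$. As $P \in X$ is a local complete intersection singularity (a $cA_k$ point is a hypersurface singularity) and $D_1, D_2$ intersect properly, Lemma~\ref{lem:intom} applied with $d = 3$ (so that no divisors $S_i$ occur) and $T = H$ gives
\[
k_0\,\ell\, n^2 (-K_X)^3 \;=\; H \cdot D_1 \cdot D_2 \;\ge\; \operatorname{ord}_P(H)\,\operatorname{mult}_P(D_1 \cdot D_2) \;\ge\; k_0\,\operatorname{mult}_P(D_1 \cdot D_2),
\]
hence $\operatorname{mult}_P(D_1 \cdot D_2) \le \ell\, n^2 (-K_X)^3$. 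Combining this with $\operatorname{mult}_P(D_1 \cdot D_2) > \frac{4}{k+1} n^2$ from Theorem~\ref{MainThm} yields $\frac{4}{k+1} < \ell\,(-K_X)^3$.

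It remains to see that this is absurd. If $X$ is marked $\spadesuit$ and $P = P'$, then $k = 1$ and $\ell\,(-K_X)^3 = a_2 a_3 (-K_X)^3 = 1$ (read off Table~\ref{table:Fanohyp} for families \textnumero$52$ and \textnumero$63$), so $2 = \frac{4}{1+1} < 1$, which is false. Otherwise $P \in U_x \cup U_y$ and $\ell \le l_{\mathrm{ic}}$; since $k \le k_{\mathrm{cA}} = \lfloor \frac{4}{l_{\mathrm{ic}}(-K_X)^3} - 1\rfloor$, we get $k + 1 \le \lfloor \frac{4}{l_{\mathrm{ic}}(-K_X)^3}\rfloor \le \frac{4}{l_{\mathrm{ic}}(-K_X)^3}$, i.e.\ $\frac{4}{k+1} \ge l_{\mathrm{ic}}(-K_X)^3 \ge \ell\,(-K_X)^3$, contradicting $\frac{4}{k+1} < \ell\,(-K_X)^3$. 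Hence $\varphi$ is not a maximal extraction, so no $cA_k$ point with $k \le k_{\mathrm{cA}}$ is a maximal center. The substantive ingredients are Theorem~\ref{MainThm}, Lemma~\ref{lem:intom} and the isolating-class lemmas; what remains is purely the numerology — that $k_{\mathrm{cA}} + 1 \le \frac{4}{l_{\mathrm{ic}}(-K_X)^3}$ (immediate from~\eqref{eq:defkcA}) and that $a_2 a_3 (-K_X)^3 \le 2$ for the two $\spadesuit$ families, both read directly from Table~\ref{table:Fanohyp} — together with confirming that for the $\bigstar$ families the only relevant case is $cA_1$, already settled in \cite{KOPP24}, which I expect to be the main point needing care.
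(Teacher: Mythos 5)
Your proposal is correct and follows essentially the same route as the paper: contradiction via the lower bound from Theorem~\ref{MainThm} against the upper bound obtained by intersecting with a nef divisor from the $P$-isolating classes of Lemmas~\ref{lem:hyplidcclub}, \ref{lem:hyplidcheart} and~\ref{lem:hyplidcspade}, applied through Lemma~\ref{lem:intom}, with the $\bigstar$ families deferred to \cite{KOPP24} and the $\spadesuit$ point $P'$ handled separately using $a_2a_3(-K_X)^3\le 2$. The only cosmetic difference is that you build the divisor $H$ explicitly from $|\mathfrak m_P^{k_0}(k_0\ell A)|$ and normalize by $k_0$, where the paper simply invokes the existence of a suitable $T\sim_{\mathbb Q}-lK_X$ with $\operatorname{ord}_P(T)\ge 1$.
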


\begin{proof}
If $X$ belongs to a family marked $\bigstar$, then the assertion follows from \cite[Proposotion 4.9]{KOPP24} and we do not treat these families.

Suppose that $P \in X$ is a $cA_k$ point that is a maximal center, where $k \le k_{\mathrm{cA}}$.
Then there exist a positive integer $n$ and a mobile linear system $\mathcal{M} \subset \left|-nK_X\right|$ such that $P$ is a center of non-canonical singularities of the pair $(X, \frac{1}{n} \mathcal{M})$.
Let $D_1, D_2 \in \mathcal{M}$ be general members.
If $X$ belong to a family marked $\spadesuit$ and $P$ is contained in $(x = y = 0) \cap X$, then $k = 1$ and we have
\begin{equation} \label{eq:HypexclcA1}
\operatorname{mult}_P (D_1 \cdot D_2) > 2 n^2
\end{equation}
by Theorem~\ref{body-MainThm}.
Otherwise we have
\begin{equation} \label{eq:HypexclcA2}
\operatorname{mult}_P (D_1 \cdot D_2) > \frac{4}{k+1} n^2 \ge \frac{4}{k_{\mathrm{cA}} + 1} n^2.
\end{equation}

By Lemmas \ref{lem:hyplidcclub}, \ref{lem:hyplidcheart} and \ref{lem:hyplidcspade}, there is a $P$-isolating class $a_2 a_3 A$ (resp.\ $l A$ for some $l \le l_{\mathrm{ic}}$) if $X$ belongs to a family marked $\spadesuit$ and $P \in (x = y = 0) \cap X$ (resp.\ otherwise).
It follows that there is an effective $\mathbb{Q}$-divisor $T \sim_{\mathbb{Q}} - a_2 a_3 K_X$ (resp.\ $T \sim_{\mathbb{Q}} - l K_X$) such that $\operatorname{ord}_P (T) \ge 1$ and that $\operatorname{Supp} (T)$ does not contain any component of $D_1 \cdot D_2$ passing through $P$ if $X$ belongs to a family marked $\spadesuit$ and $P \in (x = y = 0) \cap X$ (resp.\ otherwise).
If $X$ belongs to a family marked $\spadesuit$ and $P \in (x = y = 0) \cap X$, then by Lemma~\ref{lem:intom} and \eqref{eq:HypexclcA1} we have
\[
a_2 a_3 n^2 (-K_X^3) = (T \cdot D_1 \cdot D_2) > \operatorname{ord}_P (T) \operatorname{mult}_P (D_1 \cdot D_2) > 2 n^2,
\]
which is impossible since $a_2 a_3 (-K_X^3) \le 2$ in each instance.
Otherwise, by Lemma~\ref{lem:intom} and \eqref{eq:HypexclcA2}, we have
\[
l_{\mathrm{ic}} n^2 (-K_X^3) \ge (T \cdot D_1 \cdot D_2) \ge \operatorname{ord}_P (T) \operatorname{mult}_P (D_1 \cdot D_2) > \frac{4}{k_{\mathrm{cA}} + 1} n^2,
\]
which is impossible by \eqref{eq:defkcA}.
Thus $P \in X$ cannot be a maximal center.
\end{proof}

\subsubsection{Weighted complete intersections of codimension 2}

Let $X = X_{d_1, d_2} \subset \mathbb{P} (a_0, \dots, a_5) =: \mathbb{P}$ be a member of family listed in Table \ref{table:FanoWCI} satisfying the assumption of Setting \ref{set:brWCI}.
We assume that $a_0 \le \cdots \le a_5$ and set $A := -K_X \in \operatorname{Cl} (X)$ which is the Weil divisor class corresponding to $\mathcal{O}_X (1)$.
Let $x, y, z, t, v, w$ be the homogeneous coordinates of $\mathbb{P}$ of weights $a_0, a_1, a_2, a_3, a_4, a_5$, respectively, and we denote by $F_1 = F_1 (x, y, z, t, v, w)$ and $F_2 = F_2 (x, y, z, t, v, w)$ the homogeneous polynomials of degree $d_1$ and $d_2$, respectively, that define $X$ in $\mathbb{P}$.

\begingroup
\begin{table}[h]
\renewcommand{\arraystretch}{1.15}
\begin{center}
\caption{Fano $3$-fold WCIs of codimension $2$ and index $1$}
\label{table:FanoWCI}
\begin{tabular}{cccccc}
\hline
\textnumero & $X_{d_1, d_2} \subset \mathbb{P} (a_0, \dots, a_5)$ & $-K_X^3$ & $l_{\mathrm{ic}}$ & $k_{\mathrm{cA}}$ & Case \\
\hline
\rowcolor{lightgray}
8 & $X_{4, 6} \subset \mathbb{P} (1, 1, 2, 2, 2, 3)$ & $1$ & & $1$ & $\bigstar$ \\
14 & $X_{6, 6} \subset \mathbb{P} (1, 2, 2, 2, 3, 3)$ & $1/2$ & $2$ & $3$ & $\clubsuit_{4,5}$ \\
\rowcolor{lightgray}
20 & $X_{6, 8} \subset \mathbb{P} (1, 2, 2, 3, 3, 4)$ & $1/3$ & & $1$ & $\bigstar$ \\
24 & $X_{6, 10} \subset \mathbb{P} (1, 2, 2, 3, 4, 5)$ & $1/4$ & & $1$ & $\bigstar$ \\
\rowcolor{lightgray}
31 & $X_{8, 10} \subset \mathbb{P} (1, 2, 3, 4, 4, 5)$ & $1/6$ & & $1$ & $\bigstar$ \\
37 & $X_{8, 12} \subset \mathbb{P} (1, 2, 3, 4, 5, 6)$ & $2/15$ & & $1$ & $\bigstar$ \\
\rowcolor{lightgray}
45 & $X_{10, 12} \subset \mathbb{P} (1, 2, 4, 5, 5, 6)$ & $1/10$ & & $1$ & $\bigstar$ \\
47 & $X_{10, 12} \subset \mathbb{P} (1, 3, 4, 4, 5, 6)$ & $1/12$ & $12$ & $3$ & $\clubsuit_{4,5}$ \\
\rowcolor{lightgray}
51 & $X_{10, 14} \subset \mathbb{P} (1, 2, 4, 5, 6, 7)$ & $1/12$ & $12$ & $3$ & $\clubsuit_{3,5}$ \\
59 & $X_{12, 14} \subset \mathbb{P} (1, 4, 4, 5, 6, 7)$ & $1/20$ & $20$ & $3$ & $\clubsuit_{4,5}$ \\
\rowcolor{lightgray}
60 & $X_{12, 14} \subset \mathbb{P} (2, 3, 4, 5, 6, 7)$ & $1/30$ & $20$ & $5$ & $\clubsuit_{4,5}$ \\
64 & $X_{12, 16} \subset \mathbb{P} (1, 2, 5, 6, 7, 8)$ & $2/35$ & & $1$ & $\bigstar$ \\
\rowcolor{lightgray}
71 & $X_{14, 16} \subset \mathbb{P} (1, 4, 5, 6, 7, 8)$ & $1/30$ & $30$ & $3$ & $\heartsuit_{4,5}$ \\
75 & $X_{14, 18} \subset \mathbb{P} (1, 2, 6, 7, 8, 9)$ & $1/24$ & $24$ & $3$ & $\heartsuit_{3,5}$ \\
\rowcolor{lightgray}
76 & $X_{12, 20} \subset \mathbb{P} (1, 4, 5, 6, 7, 10)$ & $1/35$ & $35$ & $3$ & $\heartsuit_{3,5}$ \\
78 & $X_{16, 18} \subset \mathbb{P} (1, 4, 6, 7, 8, 9)$ & $1/42$ & $42$ & $3$ & $\heartsuit_{4,5}$ \\
\rowcolor{lightgray}
84 & $X_{18, 30} \subset \mathbb{P} (1, 6, 8, 9, 10, 15)$ & $1/120$ & $40$ & $11$ & $\heartsuit_{3,5}$ \\
85 & $X_{24, 30} \subset \mathbb{P} (1, 8, 9, 10, 12, 15)$ & $1/180$ & $90$ & $7$ & $\heartsuit_{4,5}$
\end{tabular}
\end{center}
\end{table}
\endgroup

\begin{lemma} \label{lem:wcilidcclub}
Let $X$ be a member of a family marked  $\clubsuit_{m, 5}$ for some $m \in \{3, 4\}$ in the $6$th column of Table \ref{table:FanoWCI} and we define
\[
l_{\mathrm{ic}} := \max \{\, \operatorname{lcm} (a_i, a_j) \mid i, j \in \{0, \dots, 4\} \setminus \{m\}\,\}.
\]
Then, for any point $P \in \operatorname{NQsm} (X)$, there is a $P$-isolating class $lA$ for some $l \le l_{\mathrm{ic}}$.
\end{lemma}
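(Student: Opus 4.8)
The plan is to mimic the proof of Lemma~\ref{lem:hyplidcclub}, but using part (1-c) of Lemma~\ref{lem:isoldivs} in place of (1-b), since $X$ now has codimension~$2$. I would apply Lemma~\ref{lem:isoldivs}(1-c) with $m_1 = m$ and $m_2 = 5$. Note that $\{0,\dots,4\}\setminus\{m\} = \{0,\dots,5\}\setminus\{m,5\}$, so the degree bound produced by that part of the lemma for this choice of $m_1, m_2$ is exactly the $l_{\mathrm{ic}}$ of the present statement. Since a prime Fano $3$-fold is normal, Lemma~\ref{lem:isoldivs} applies, and everything reduces to verifying its hypothesis \eqref{eq:isoldivsassump}: that the coordinate curve
\[
\Pi := \bigcap_{i \in \{0,\dots,5\}\setminus\{m,5\}} (x_i = 0) = \mathbb{P}(a_m, a_5) \cong \mathbb{P}^1
\]
is disjoint from $X$. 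Granting this, Lemma~\ref{lem:isoldivs}(1-c) yields a $P$-isolating class $lA$ with $l \le l_{\mathrm{ic}}$ for every point $P$ of $X$ lying in $\mathbb{P}\setminus\operatorname{Sing}(\mathbb{P})$, which by Setting~\ref{set:brWCI} contains $\operatorname{NQsm}(X)$; this is exactly what is asserted.

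So the task is to show $\Pi\cap X = \emptyset$ for each family marked $\clubsuit_{m,5}$. The starting observation, read off from Table~\ref{table:FanoWCI}, is that $d_1 = 2 a_m$ and $d_2 = 2 a_5$ in every such family. When $a_m < a_5$ (families \textnumero 47, \textnumero 51, \textnumero 59 and \textnumero 60), I would first show that $x_m^2 \in F_1$ and $x_5^2 \in F_2$ with nonzero coefficients. If $x_m^2 \notin F_1$, then $P_{x_m}\in X$ (the pure-power monomials of degree $d_1$, resp.\ $d_2$, forcing $P_{x_m}\notin X$ are absent by a weight count), and the row $\bigl(\partial F_1/\partial x_j\bigr)_j$ of the Jacobian of $(F_1, F_2)$ vanishes identically at the point of the affine cone over $P_{x_m}$ — for this one only needs that $a_j\notin\{a_m, 2a_m\}$ for $j\ne m$, which holds in these four families — so the Jacobian has rank at most $1$ there, contradicting quasismoothness of $X$ at $P_{x_m}\in\operatorname{Sing}(\mathbb{P})$ (Setting~\ref{set:brWCI}); the argument for $x_5^2\in F_2$ at $P_{x_5}$ is the same. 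A further weight count then shows that $x_m^2$ is the only monomial in $x_m, x_5$ of $\bm w$-degree $d_1$, so $F_1|_\Pi = c\,x_m^2$ with $c\ne 0$; hence $F_1 = 0$ on $\Pi$ forces $x_m = 0$, whose only solution on $\Pi$ is $P_{x_5}$, where $F_2$ equals the nonzero coefficient of $x_5^2$. Therefore $\Pi\cap X = \emptyset$.

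For the remaining family \textnumero 14, with $X_{6,6}\subset\mathbb{P}(1,2,2,2,3,3)$ and $a_m = a_5 = 3$, one has $\Pi = \mathbb{P}(3,3)\subset\operatorname{Sing}(\mathbb{P})$, so $X$ is quasismooth along $X\cap\Pi$. Here $F_1|_\Pi$ and $F_2|_\Pi$ are quasihomogeneous of degree $6$ in the two weight-$3$ variables, i.e.\ binary quadratic forms. If they had a common zero $Q\in\Pi$, then $Q\in X$; the light variables contribute nothing to the Jacobian at the cone point $\hat{Q}$ over $Q$ (a weight count: none of the weights $1,2,2,2$ equals $3(2-p-q)$ with $p+q\ge 0$), and by Euler's relation for quasihomogeneous polynomials both remaining rows of the Jacobian at $\hat{Q}$ are orthogonal to the direction of $Q$ in the plane of the weight-$3$ variables, hence proportional; so the Jacobian has rank at most $1$, contradicting quasismoothness at $Q\in\operatorname{Sing}(\mathbb{P})$. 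Hence $F_1|_\Pi$ and $F_2|_\Pi$ have no common zero and $\Pi\cap X = \emptyset$.

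I expect the only genuine work to be the elementary case-by-case bookkeeping: confirming $d_1 = 2a_m$ and $d_2 = 2a_5$ for the five families, and checking that the weight equations $a_j + k a_m = d_1$ and $a_j + k a_5 = d_2$ (together with their pure-power versions) have no unwanted nonnegative integer solutions. Conceptually nothing beyond Lemma~\ref{lem:isoldivs} and the quasismoothness hypothesis of Setting~\ref{set:brWCI} is involved.
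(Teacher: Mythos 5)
Your proposal is correct and follows the same route as the paper: the paper's proof likewise applies Lemma~\ref{lem:isoldivs}(1-c) with $(m_1, m_2) = (m, 5)$ and simply asserts that the emptiness condition \eqref{eq:isoldivsassump} ``is easy to see''. Your case-by-case verification of that condition --- using $d_1 = 2a_m$ and $d_2 = 2a_5$, quasismoothness at $P_{x_m}$ and $P_{x_5}$ to force $x_m^2 \in F_1$ and $x_5^2 \in F_2$, and the Euler-relation argument for family \textnumero 14 --- is correct and supplies exactly the details the paper omits.
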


\begin{proof}
We set $s := t$ (resp.\ $s := v$) if $m = 3$ (resp.\ $m = 4$.
Then it is easy to see that
\[
\bigcap_{u \in \{x, y, z, t\} \setminus \{s\}} (u = 0) \cap X = \emptyset.
\]
The assertion follows by applying (1-c) of Lemma~\ref{lem:isoldivs} for $(m_1, m_2) = (m, 5)$.
\end{proof}

\begin{lemma} \label{lem:wcilidcheart}
Let $X$ be a member of a family marked  $\heartsuit_{m, 5}$ for some $m \in \{3, 4\}$ in the 6th column of Table \ref{table:FanoWCI} and we define
\[
l_{\mathrm{ic}} := \max \{\, \operatorname{lcm} (a_r, a_j) \mid r \in \{1, 2\}, j \in \{0, \dots, 4\} \setminus \{m\}\,\}.
\]
Then the non-quasismooth locus $\operatorname{NQsm} (X)$ of $X$ is contained in $U_x \cup U_y$.
For any point $P \in U_x \cup U_y$, there exists a $P$-isolating class $l A$ for some $l \le l_{\mathrm{ic}}$.
\end{lemma}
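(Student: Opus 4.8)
The plan is to follow the template of Lemma~\ref{lem:hyplidcheart}, now with two defining equations instead of one. First I would set $f_i := F_i(0, 0, z, t, v, w)$ for $i = 1, 2$, the restrictions of the two defining polynomials of $X$ to the codimension-$2$ subspace $(x = y = 0) \subset \mathbb{P}$; their monomials can be read off family by family from the defining data in Table~\ref{table:FanoWCI} (in the manner of Remark~\ref{rem:monomWH table 1}). Since differentiation with respect to $z, t, v, w$ commutes with the substitution $x = y = 0$, any point $P \in (x = y = 0) \cap \operatorname{NQsm}(X)$ must lie in
\[
\Xi := (x = y = 0) \cap \left\{\, \operatorname{rank} \frac{\partial(f_1, f_2)}{\partial(z, t, v, w)} < 2 \,\right\} \cap X ,
\]
because a rank-$2$ value of this $2 \times 4$ Jacobian submatrix already forces the cone $C_X$ (of codimension $2$) to be smooth over $P$. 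I would then inspect the six $\heartsuit_{m, 5}$ families one at a time and verify that $\Xi \subseteq \{P_z, P_t, P_v, P_w\}$; when one of the optional monomials is allowed to vanish, $\Xi$ may a priori be a coordinate line, and in that situation I would argue exactly as in the exceptional cases (i)--(iii) of the proof of Lemma~\ref{lem:hyplidcheart}, using the full polynomials $F_1, F_2$ (not just their restrictions) to conclude that $X$ is nonetheless quasismooth along $(x = y = 0) \cap X$.

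Next, since in every $\heartsuit_{m, 5}$ family the coordinates $z, t, v, w$ all have weight at least $2$, each of $P_z, P_t, P_v, P_w$ lies in $\operatorname{Sing}(\mathbb{P})$, so Setting~\ref{set:brWCI} guarantees that $X$ is quasismooth at whichever of these points it contains. Combined with the previous step this gives $(x = y = 0) \cap \operatorname{NQsm}(X) = \emptyset$, that is, $\operatorname{NQsm}(X) \subseteq U_x \cup U_y$, which is the first assertion of the lemma.

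For the isolating-class statement I would take a point $P \in U_x \cup U_y$ and apply Lemma~\ref{lem:isoldivs}\,(2). The aim is to be allowed to drop the coordinate index $m$ from the isolating set, which by part (2-c) requires the emptiness of $\bigcap_{i \in \{0, \dots, 5\} \setminus \{m, 5\}} (x_i = 0) \cap X$; for each of the six families one checks, using the explicit equations, that $F_1$ and $F_2$ contain suitable (nearly) pure powers making this intersection empty, so that hypothesis~\eqref{eq:isoldivsassump} is satisfied for $(m_1, m_2) = (m, 5)$. Applying Lemma~\ref{lem:isoldivs}\,(2-c) with $r$ equal to the index of $x$ (if $P \in U_x$) or of $y$ (if $P \in U_y$) then yields a $P$-isolating class $lA$ whose degree $l$ is bounded by $\max\{\operatorname{lcm}(a_r, a_j) \mid r \in \{1, 2\},\ j \in \{0, \dots, 4\} \setminus \{m\}\} = l_{\mathrm{ic}}$.

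The main obstacle is the first step: the family-by-family verification that $\Xi$ collapses onto the four coordinate points, together with the book-keeping needed when optional monomials are absent, and the parallel check that the defining equations contain enough pure powers to force the emptiness used in the isolating-set argument. None of this is conceptually hard, but---as with the corresponding lemmas for Table~\ref{table:Fanohyp}---it requires a careful pass through each of the six concrete weighted complete intersections.
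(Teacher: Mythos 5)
Your proposal is correct and follows essentially the same route as the paper: the same rank condition on the $2\times 4$ Jacobian in $z,t,v,w$ defining $\Xi$, the same family-by-family computation with a separate argument (using the full $F_1,F_2$) for the degenerate case where an optional monomial is absent (which in the paper occurs only for family \textnumero 71 when $tz^2\notin F_2$), the same appeal to quasismoothness along $X\cap\operatorname{Sing}(\mathbb{P})$, and the same application of Lemma~\ref{lem:isoldivs}(2-c) with $(m_1,m_2)=(m,5)$. The only cosmetic discrepancy is that you apply Lemma~\ref{lem:isoldivs}(2-c) with $r\in\{0,1\}$ (the indices of $x$ and $y$), whose resulting bounds are dominated by the paper's $l_{\mathrm{ic}}$ taken over $r\in\{1,2\}$, so the stated inequality $l\le l_{\mathrm{ic}}$ still holds.
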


\begin{proof}
We consider the matrix
\[
J :=
\begin{pmatrix}
\partial f_1/\partial z & \partial f_1/\partial t & \partial f_1/\partial v & \partial f_1/\partial w \\
\partial f_2/\partial z & \partial f_2/\partial t & \partial f_2/\partial v & \partial f_2/\partial w
\end{pmatrix}
\]
and then define
\[
\Xi := \{\, P \in (x = y = 0) \cap X \mid \operatorname{rank} J (P) < 2\,\}.
\]
We see that $\operatorname{NQsm} (X) \cap (x = y = 0) \subset \Xi$.

By a straightforward computation in each instance, we can easily compute the set $\Xi$ and it is described in the 5th column of Table \ref{table:WCIeq}.

Suppose that $X$ belongs to the family \textnumero $71$ and $t z^2 \notin F_2$.
In this case $\Xi = (x = y = w = 0)$.
We have $z^3 x \in F_2$ (see Remark \ref{rem:monomWH table 2}) and we may assume that its coefficient is $1$ by rescaling $x$.
Then we can write
\[
\begin{split}
F_1 &= \alpha y z^2 + w t + g_1, \\
F_2 &= x z^3 + \beta t^2 y + g_2,
\end{split}
\]
where $\alpha, \beta \in \mathbb{C}$ and $g_i = g_i (x, y, z, t, w)$ is a homogeneous polynomials that is contained in the ideal $(x, y, v, w)^2$.
By the quasismoothness of $X$ at $P_z$ and $P_t$, we have $\alpha \ne 0$ and $\beta \ne 0$.
Then we see that the Jacob matrix of $X$ is of rank $2$ along the set $(z \ne 0) \cap (x = y = 0) \cap X$.
It follows that $\operatorname{NQsm} (X) \cap (x = y = 0) \subset \{P_t\}$.

In the other instances, we have $\Xi \subset X \cap \operatorname{Sing} (\mathbb{P})$, and hence we have $\operatorname{NQsm} (X) \subset X \cap \operatorname{Sing} (\mathbb{P})$ in all instances.
This proves the first assertion since $X$ is quasismooth along $X \cap \operatorname{Sing} (\mathbb{P})$.

We set $s := t$ (resp.\ $s := v$) if $m = 3$ (resp.\ $m = 4$).
Then it is easy to see that
\[
\bigcap_{u \in \{x, y, z, t\} \setminus \{s\}} (u = 0) \cap X = \emptyset.
\]
The second assertion follows by applying (2-c) of Lemma~\ref{lem:isoldivs} for $(m_1, m_2) = (m, 5)$.
\end{proof}

\begin{table}[h]
\caption{Equations for codimension $2$ WCIs belonging to families marked $\heartsuit$}
\label{table:WCIeq}
\centering
\begin{tabular}{cllccl}
\toprule
\textnumero & $F_1 (0,0,z,t,v,w)$ & $F_2 (0,0,z,t,v,w)$ & Case & $\Xi$ & Remark \\
\midrule
$71$ & $w t + v^2$ & $w^2 + t z^2$ & $t z^2 \in F_2$ & $\{P_z, P_t\}$ & \\
\cmidrule(lr){2-6}
& $w t + v^2$ & $w^2$ & $t z^2 \notin F_2$ & $(x = y = v = w = 0)$ & $z^3 x \in F_2$ \\
\cmidrule{1-6}
$75$ & $v z + t^2$ & $w^2 + z^3$ & & $\{P_v\}$ & \\
\cmidrule{1-6}
$76$ & $v z + t^2$ & $w^2 + v^2 t + z^4$ & & $\{P_v\}$ & \\
\cmidrule{1-6}
$78$ & $w t + v^2$ & $w^2 + z^3$ & & $\{P_t\}$ & \\
\cmidrule{1-6}
$84$ & $v z + t^2$ & $w^2 + v^3$ & & $\{P_z\}$ & \\
\cmidrule{1-6}
$85$ & $w z + v^2$ & $w^2 + v z^2 + t^3$ & & $\{P_z\}$ & \\
\bottomrule
\end{tabular}
\end{table}

\begin{remark} \label{rem:monomWH table 2}
We explain the equations given in Table \ref{table:WCIeq} in more detail.
In the $2$nd and $3$rd columns we give an explicit descriptions of $F_1 (0, 0, z, t, v, w)$ and $F_2 (0, 0, z, t, v, w)$, respectively, in the following manner:
\begin{itemize}
\item We have $d_2 = 2 a_5$ and we have $w^2 \in F_2$ by the quasismoothness assumption.
We may and do assume that $F (0, 0, z, t, v, w) = w^2 + g (z, t, v)$ for some homogeneous polynomial $g (z, t, v)$ of degree $d_2$.
This is possible since we can kill the terms divisible by $w$ by replacing $w \mapsto w - h (z, t, v)$ for a suitable $h$.

\item For family \textnumero $71$, we have case divisions as follows:
We are in the first case if $t z^2 \in F_1$, otherwise we are in the second case.
In the second case, we have $z^3 x \in F_2$ by the quasismoothness assumption.
\item If we rescale variables $z, t$ and $w$, then we may assume that the coefficients of these monomials in $F (0, 0, z, t, w)$ are all $1$.
\end{itemize}
\end{remark}

For a family listed in Table \ref{table:FanoWCI}, the positive integer $k_{\mathrm{cA}}$ in the $5$th column is defined by
\begin{equation} \label{eq:defkcAWCI}
k_{\mathrm{cA}} :=
\begin{dcases}
1, & \text{if $\bigstar$ is marked in the $6$th column}, \\
\lfloor \frac{4}{l_{\mathrm{ic}} (-K_X)^3} - 1 \rfloor, & \text{otherwise}.
\end{dcases}
\end{equation}

\begin{proposition} \label{prop:WCIexclcA}
Let $X$ be a member of a family listed in Table \ref{table:FanoWCI} satisfying the assumption of Setting \ref{set:brWCI} and let $k_{\mathrm{cA}}$ be the positive integer in the $5$th column of Table \ref{table:FanoWCI}.
Then no $cA_k$ point is a maximal center for $k \le k_{\mathrm{cA}}$.
\end{proposition}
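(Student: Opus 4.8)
The plan is to run the same contradiction argument used in the proof of Proposition~\ref{prop:HypexclcA}, now for codimension-$2$ weighted complete intersections. First I would set aside the families marked $\bigstar$, for which $k_{\mathrm{cA}} = 1$ and the statement is already contained in \cite[Proposition 4.9]{KOPP24}. For the remaining families I would argue by contradiction: suppose a $cA_k$ point $P \in X$ with $k \le k_{\mathrm{cA}}$ is a maximal center, so that there are a positive integer $n$ and a mobile linear system $\mathcal{M} \subset \left| -n K_X \right|$ with $(X, \tfrac{1}{n}\mathcal{M})$ non-canonical at $P$. Taking general members $D_1, D_2 \in \mathcal{M}$, Theorem~\ref{body-MainThm} yields the lower bound $\operatorname{mult}_P(D_1 \cdot D_2) > \tfrac{4}{k+1} n^2 \ge \tfrac{4}{k_{\mathrm{cA}}+1} n^2$.

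The next step is to produce a $P$-isolating class of small degree. Since $X$ is quasismooth along $X \cap \operatorname{Sing}(\mathbb{P})$ and a $cA_k$ point is a non-quotient (Gorenstein) singularity, $P$ lies in the smooth locus of $\mathbb{P}$ and in $\operatorname{NQsm}(X)$; hence Lemma~\ref{lem:wcilidcclub} (for families marked $\clubsuit_{m,5}$) or Lemma~\ref{lem:wcilidcheart} (for families marked $\heartsuit_{m,5}$) applies and gives a $P$-isolating class $l A$ with $l \le l_{\mathrm{ic}}$. From this I would extract an effective nef $\mathbb{Q}$-divisor $T \sim_{\mathbb{Q}} l A$ with $\operatorname{ord}_P(T) \ge 1$ whose support contains no component of the curve $D_1 \cdot D_2$ through $P$, and then apply Lemma~\ref{lem:intom} to obtain $l_{\mathrm{ic}} n^2 (-K_X^3) \ge (T \cdot D_1 \cdot D_2) \ge \operatorname{ord}_P(T)\,\operatorname{mult}_P(D_1 \cdot D_2) > \tfrac{4}{k_{\mathrm{cA}}+1} n^2$. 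This contradicts the definition \eqref{eq:defkcAWCI} of $k_{\mathrm{cA}}$, which was chosen precisely so that $l_{\mathrm{ic}}(-K_X^3)(k_{\mathrm{cA}}+1) \le 4$, and the proposition follows.

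The substantive content here is already packaged in Lemmas~\ref{lem:wcilidcclub} and~\ref{lem:wcilidcheart} (smallness of the non-quasismooth locus and the explicit construction of isolating polynomials via suitable projections), so the proposition itself reduces to the short numerical comparison above. The main point to verify is that, unlike Table~\ref{table:Fanohyp}, Table~\ref{table:FanoWCI} has no $\spadesuit$-type family with a distinguished $cA_1$ point that would require the sharper $2n^2$ bound; inspecting the ``Case'' column confirms that only $\bigstar$, $\clubsuit_{m,5}$ and $\heartsuit_{m,5}$ occur, so the uniform bound $\tfrac{4}{k+1}n^2$ from Theorem~\ref{body-MainThm} suffices for every family in the table and the hypersurface argument carries over essentially verbatim.
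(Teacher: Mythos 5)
Your proposal is correct and follows essentially the same argument as the paper, which itself simply notes that the proof is identical to that of Proposition~\ref{prop:HypexclcA}: exclude the $\bigstar$ families via \cite[Proposition 4.9]{KOPP24}, then combine the isolating classes from Lemmas~\ref{lem:wcilidcclub} and~\ref{lem:wcilidcheart} with Lemma~\ref{lem:intom} and Theorem~\ref{body-MainThm} to contradict the definition \eqref{eq:defkcAWCI} of $k_{\mathrm{cA}}$. Your observation that Table~\ref{table:FanoWCI} contains no $\spadesuit$-type family, so the special $2n^2$ branch of the hypersurface proof is not needed, is exactly the point that makes the argument carry over verbatim.
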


\begin{proof}
The proof is identical to that of Proposition~\ref{prop:HypexclcA}.
If $X$ belongs to a family marked $\bigstar$, then the assertion follows from \cite[Proposotion 4.9]{KOPP24} and we do not treat these families.

Suppose that $P \in X$ is a $cA_k$ point that is a maximal center, where $k \le k_{\mathrm{cA}}$.
Then there exist a positive integer $n$ and a mobile linear system $\mathcal{M} \subset \left|-nK_X\right|$ such that $P$ is a center of non-canonical singularities of the pair $(X, \frac{1}{n} \mathcal{M})$.
Let $D_1, D_2 \in \mathcal{M}$ be general members.
By Lemmas \ref{lem:wcilidcclub}, and \ref{lem:wcilidcheart}, there is a $P$-isolating class $l A$ for some $l \le l_{\mathrm{ic}}$.
It follows that there is an effective $\mathbb{Q}$-divisor $T \sim_{\mathbb{Q}} - l K_X$ such that $\operatorname{ord}_P (T) \ge 1$ and that $\operatorname{Supp} (T)$ does not contain any component of $D_1 \cdot D_2$ passing through $P$.
We have
\[
l_{\mathrm{ic}} n^2 (-K_X^3) \ge l n^2 (-K_X^3) =(T \cdot D_1 \cdot D_2) \ge \operatorname{ord}_P (T) \operatorname{mult}_P (D_1 \cdot D_2) > \frac{4}{k_{\mathrm{cA}} + 1} n^2,
\]
where the second inequality follows from Lemma~\ref{lem:intom} and the third inequality follows from Theorem~\ref{body-MainThm}.
This is impossible by \eqref{eq:defkcAWCI}.
Thus $P \in X$ cannot be a maximal center.
\end{proof}

\subsubsection{Completion of Proof of Theorem~\ref{thm:brWCI}}

Let $X$ be as in Theorem~\ref{thm:brWCI}.
Then, by \cite[Propositions 5.5 and 5.12]{KOPP24}, no curve and no smooth point is a maximal center of $X$.
Moreover, by \cite[Proposition 5.15]{KOPP24}, for each terminal quotient singular point $P \in X$, either it is not a maximal center, or there is an elementary self-link initiated by the Kawamata blow-up of $X$ at $P$.
Note that these results are due to \cite{CP17} as it is explained in \cite{KOPP24}.
Finally, by Proposition~\ref{prop:HypexclcA} and \ref{prop:WCIexclcA}, no $cA_k$ point is a maximal center for $k \le k_{\mathrm{cA}}$.
By Theorem~\ref{thm:chractBR}, $X$ is birationally rigid and Theorem~\ref{thm:brWCI} is proved.

\providecommand{\bysame}{\leavevmode\hbox to3em{\hrulefill}\thinspace}
\providecommand{\MR}{\relax\ifhmode\unskip\space\fi MR }
\providecommand{\MRhref}[2]{%
  \href{http://www.ams.org/mathscinet-getitem?mr=#1}{#2}
}
\providecommand{\href}[2]{#2}

\vspace{0.5\baselineskip}
\ShowAffiliations{\\[1\baselineskip]}%

\end{document}